\documentclass{article}
\usepackage{amsmath}
\usepackage{amssymb}

\setcounter{MaxMatrixCols}{10}

\newtheorem{theorem}{Theorem}[section]

\newtheorem{condition}{Condition}[section]

\newtheorem{corollary}{Corollary}

\newtheorem{definition}{Definition}[section]

\newtheorem{proposition}{Proposition}[section]

\newenvironment{proof}[1][Proof]{\noindent\textbf{#1.} }{\ \rule{0.5em}{0.5em}}

\input{tcilatex}
\begin{document}

\title{Automorphic Equivalence of Many-Sorted Algebras.}
\author{A. Tsurkov. \\
Institute of Mathematics and Statistics.\\
University S\~{a}o Paulo. \\
Rua do Mat\~{a}o, 1010 \\
Cidade Universit\'{a}ria \\
S\~{a}o Paulo - SP - Brasil - CEP 05508-090 \\
arkady.tsurkov@gmail.com}
\maketitle

\begin{abstract}
For every variety $\Theta $ of universal algebras we can consider the
category $\Theta ^{0}$\ of the finite generated free algebras of this
variety. The quotient group $\mathfrak{A/Y}$, where $\mathfrak{A}$ is a
group of the all automorphisms of the category $\Theta ^{0}$ and $\mathfrak{Y%
}$ is a subgroup of the all inner automorphisms of this category measures
difference between the geometric equivalence and automorphic equivalence of
algebras from the variety $\Theta $.

In \cite{PlotkinZhitAutCat} the simple and strong method of the verbal
operations was elaborated on for the calculation of the group $\mathfrak{A/Y}
$ in the case when the $\Theta $ is a variety of one-sorted algebras. In the
first part of our paper (Sections \ref{intro}, \ref{automorphisms} and \ref%
{verbal}) we prove that this method can be used in the case of many-sorted
algebras.

In the second part of our paper (Section \ref{alggeometry}) we apply the
results of the first part to the universal algebraic geometry of many-sorted
algebras and refine and reprove results of \cite{PlotkinSame} and \cite%
{TsurAutomEqAlg} for these algebras. For example we prove in the Theorem \ref%
{reduction} that the automorphic equivalence of algebras can be reduced to
the geometric equivalence if we change the operations in the one of these
algebras.

In the third part of this paper (Section \ref{examples}) we consider some
varieties of many-sorted algebras. We prove that automorphic equivalence
coincide with geometric equivalence in the variety of the all actions of
semigroups over sets and in the variety of the all automatons, because the
group $\mathfrak{A/Y}$ is trivial for this varieties. We also consider the
variety of the all representations of groups and the all representations of
Lie algebras. The group $\mathfrak{A/Y}$ is not trivial for these varieties
and for both these varieties we give an examples of the representations
which are automorphically equivalent but not geometrically equivalent.
\end{abstract}

\section{Introduction.\label{intro}}

\setcounter{equation}{0}

In this paper we consider many-sorted algebras. We suppose that there is a
finite set of names of sorts $\Gamma $. Many-sorted algebra, first of all,
is a set $A$ with the "sorting": mapping $\eta _{A}:A\rightarrow \Gamma $.
We call the set $\eta _{A}^{-1}\left( i\right) $ for $i\in \Gamma $ - set of
elements of the sort $i$ of the algebra $A$. We denote $\eta _{A}^{-1}\left(
i\right) =A^{\left( i\right) }$. If $a\in A^{\left( i\right) }$, then we
will many time denote $a=a^{\left( i\right) }$, with a view to emphasize
that $a$ is an element of the sort $i$. Contrary to the common approach we
allow that $A^{\left( i\right) }=\varnothing $. We denote $\mathrm{im}\eta
_{A}=\left\{ i\in \Gamma \mid A^{\left( i\right) }\neq \varnothing \right\}
=\Gamma _{A}$.

Also we suppose that there is a set of operations (signature) $\Omega $.
Every operation $\omega \in \Omega $ has a type $\tau _{\omega }=\left(
i_{1},\ldots ,i_{n};j\right) $, where $n\in 
\mathbb{N}
$, $i_{1},\ldots ,i_{n},j\in \Gamma $. Operation $\omega \in \Omega $ of the
type $\left( i_{1},\ldots ,i_{n};j\right) $ is a partially defined mapping $%
\omega :A^{n}\rightarrow A$. This mapping is defined only for tuples $\left(
a_{1},\ldots ,a_{n}\right) \in A^{n}$ such that $a_{k}\in $ $A^{\left(
i_{k}\right) }$, $1\leq k\leq n$. The images of these tuples are elements of
the sort $j$: $\omega \left( a_{1},\ldots ,a_{n}\right) \in A^{\left(
j\right) }$. We suppose that all operations $\omega \in \Omega $ are closed.
It means that for all $a_{k}\in $ $A^{\left( i_{k}\right) }$, $1\leq k\leq n$%
, there exists $\omega \left( a_{1},\ldots ,a_{n}\right) \in A^{\left(
j\right) }$.

If there is at least one $k\in \left\{ 1,\ldots ,n\right\} $ such that $%
A^{\left( i_{k}\right) }=\varnothing $, then the operation $\omega \in
\Omega $ with the type $\tau _{\omega }=\left( i_{1},\ldots ,i_{n};j\right) $
defined only on empty set. But we still consider algebra $A$ as algebra with
operation $\omega $. It is possible that $n=0$. In this case the operation $%
\omega \in \Omega $ with the type $\tau _{\omega }=\left( i_{1},\ldots
,i_{n};j\right) $ is the operation of the taking a constant $\omega
=c^{\left( j\right) }$ of the sort $j$.

Now we will define the notion of the homomorphism of the two many-sorted
algebras. We assume that two many-sorted algebras $A$ and $B$ have the same
set of names of sorts $\Gamma $. We denote the set of operations in the
algebra $A$ by $\Omega ^{A}=\left\{ \omega _{\alpha }^{A}\mid \alpha \in
I\right\} $ and the set of operations in the algebra $B$ by $\Omega
^{B}=\left\{ \omega _{\alpha }^{B}\mid \alpha \in I\right\} $. We assume
that between these sets there is a one-to-one and onto correspondence such
that operations $\omega _{\alpha }^{A}$ and $\omega _{\alpha }^{B}$ have the
same type $\tau _{\alpha }=\left( i_{1},\ldots ,i_{n};j\right) $.
Homomorphism from $A$ to $B$ is the mapping $\varphi :A\rightarrow B$, which
conforms with the "sorting" $\eta _{A}$ and $\eta _{B}$ and conforms with
operations $\Omega ^{A}$ and $\Omega ^{B}$. "Conforms with the sorting" it
means that 
\begin{equation}
\eta _{A}=\eta _{B}\varphi  \label{homom_cond_1}
\end{equation}%
(the diagram%
\begin{equation*}
\begin{array}{ccc}
A & \overset{\varphi }{\rightarrow } & B \\ 
\eta _{A}\searrow &  & \swarrow \eta _{B} \\ 
& \Gamma & 
\end{array}%
\end{equation*}%
is commutative). "Conforms with the operations" it means that for every $%
\alpha \in I$ and every $a^{\left( i_{k}\right) }\in A^{\left( i_{k}\right)
} $, $1\leq k\leq n$, fulfills 
\begin{equation}
\varphi \left( \omega _{\alpha }^{A}\left( a^{\left( i_{1}\right) },\ldots
,a^{\left( i_{n}\right) }\right) \right) =\omega _{\alpha }^{B}\left(
\varphi \left( a^{\left( i_{1}\right) }\right) ,\ldots ,\varphi \left(
a^{\left( i_{n}\right) }\right) \right) .  \label{homom_cond_2}
\end{equation}%
If for any $k$ such that $1\leq k\leq n$ the $A^{\left( i_{k}\right)
}=\varnothing $ holds then this equality fulfills by the principle of the
empty set. From (\ref{homom_cond_1}) we can conclude that if $i\in \Gamma $, 
$B^{\left( i\right) }=\varnothing $, $A^{\left( i\right) }\neq \varnothing $
then homomorphisms from $A$ to $B$ are not defined: $\mathrm{Hom}\left(
A,B\right) =\varnothing $. In other words, if $\Gamma _{A}\nsubseteq \Gamma
_{B}$ then $\mathrm{Hom}\left( A,B\right) =\varnothing $.

The notions of a congruence and a quotient algebra we define by natural way.
The congruence must be conform with the "sorting", so for every algebra $A$
and every congruence $T\subseteq A^{2}$ the $T\subseteq
\dbigcup\limits_{i\in \Gamma _{A}}\left( A^{\left( i\right) }\right) ^{2}$
holds. We denote by $\Delta _{A}$ the minimal congruence in the algebra $A$: 
$\Delta _{A}=\left\{ \left( a,a\right) \mid a\in A\right\} $.

Now we will define the notion of the varieties of the many-sorted algebras.
We fix the set of names of sorts $\Gamma $ and the signature $\Omega $. We
take a set $X$, which we will call an alphabet. We suppose that this set has
a "sorting": a mapping $\chi :X\rightarrow \Gamma $. After this we define an
algebra of terms over the alphabet $X$. The notion of the term over the
alphabet $X$ we define by the induction by the construction: if $x=x^{\left(
i\right) }\in \chi ^{-1}\left( i\right) =X^{\left( i\right) }$, where $i\in
\Gamma $, then $x^{\left( i\right) }$ is a term of the sort $i$. If $\omega
\in \Omega $ has the type $\tau _{\omega }=\left( i_{1},\ldots
,i_{n};j\right) $ and $t_{k}$ is a term of the sort $i_{k}$, $1\leq k\leq n$%
, then $\omega \left( t_{1},\ldots ,t_{n}\right) $ is a term of the sort $j$%
. We denote the algebra of terms over the alphabet $X$ by $\widetilde{F}=%
\widetilde{F}\left( X\right) $. It is clear that $\chi _{\mid X}=\left( \eta
_{\widetilde{F}}\right) _{\mid X}$.

Pairs $\left( w_{1},w_{2}\right) \in \left( \left( \widetilde{F}\left(
X\right) \right) ^{\left( i\right) }\right) ^{2}$, $i\in \Gamma $, which we
denote as $w_{1}=w_{2}$, will be identities. We denote by $X^{\prime }$ the
finite subset $X^{\prime }\subset X$ of the letters from alphabet $X$ which
are really included in the term $w_{1}$ or in the term $w_{2}$. The $%
w_{1},w_{2}\in \widetilde{F}\left( X^{\prime }\right) $ holds. Now\ we
consider an arbitrary algebra $A$ with the set of names of sorts $\Gamma $
and the signature $\Omega $. We say that the identity $w_{1}=w_{2}$ fulfills
in the algebra $A$ if for every $\varphi \in \mathrm{Hom}\left( \widetilde{F}%
\left( X^{\prime }\right) ,A\right) $ the $\varphi \left( w_{1}\right)
=\varphi \left( w_{2}\right) $ holds. If there is a $i\in \Gamma $ such that 
$\left( X^{\prime }\right) ^{\left( i\right) }=X^{\prime }\cap X^{\left(
i\right) }\neq \varnothing $ and $A^{\left( i\right) }=\varnothing $ then $%
w_{1}=w_{2}$ fulfills in the algebra $A$ by the principle of the empty set.

If $\mathfrak{I}\subset \dbigcup\limits_{i\in \Gamma }\left( \left( 
\widetilde{F}\left( X\right) \right) ^{\left( i\right) }\right) ^{2}$ then
the family of algebras in which fulfill all identities from $\mathfrak{I}$
called the variety of algebras defined by the identities $\mathfrak{I}$. We
denote this variety by $\Theta \left( \mathfrak{I}\right) =\Theta $.

We say that the algebra $F\in \Theta $ is a free algebra of this variety
with the set of free generators $X\subset F$ if for every algebra $A\in
\Theta $, such that $\Gamma _{A}\supseteq \eta _{F}\left( X\right) $, and
every mapping $f:X\rightarrow A$, such that $\eta _{A}f\left( x\right) =\eta
_{F}\left( x\right) $ for every $x\in X$, there exists only one homomorphism 
$\varphi :F\rightarrow A$, such that $\varphi \left( x\right) =f\left(
x\right) $ for every $x\in X$. This algebra we denote by $F=F\left( X\right) 
$. It is clear that the algebra $\widetilde{F}\left( X\right) $ of terms
over the alphabet $X$ is a free algebra with the free generators $X$ in the
variety $\Theta \left( \varnothing \right) $ defined by the empty set of
identities. When we will construct the free algebras $F\left( X\right) $ in
arbitrary variety $\Theta $, we must consider the set $\mathfrak{I}_{\Theta
}\left( X\right) $ of the all identities from $\dbigcup\limits_{i\in \Gamma
_{A}}\left( \left( \widetilde{F}\left( X\right) \right) ^{\left( i\right)
}\right) ^{2}$ which fulfill in the all algebras $A\in \Theta $. The set $%
\mathfrak{I}_{\Theta }\left( X\right) $ is a congruence and $F\left(
X\right) =\widetilde{F}\left( X\right) /\mathfrak{I}_{\Theta }\left(
X\right) $.

In the end of this section we will say that the first and the second
theorems of homomorphisms, the projective propriety of free algebras fulfill
according to our approach to the notions of many-sorted algebras, their
homomorphisms and their varieties. If $A\in \Theta $ and there exists $i\in
\Gamma $ such that $A^{\left( i\right) }=\varnothing $ then there are free
algebras $F\left( X\right) $ of the variety $\Theta $, such that $\mathrm{Hom%
}\left( F\left( X\right) ,A\right) =\varnothing $. But for every $A\in
\Theta $ there exists free algebra $F\left( X\right) \in \Theta $, such that 
$A\cong F\left( X\right) /T$, where $T$ is a congruence. The Birkhoff
theorem about varieties can be proved according to our approach.

Also we will say that our approach to the notions of many-sorted algebras,
their homomorphisms and their varieties coincide with the common approach,
for example, when our signature $\Omega $ has operation of the taking a
constant $c^{\left( j\right) }$ for the every sort $j\in \Gamma $. It means
that our approach coincide with the common one in the cases of
representations of groups, representations of linear algebras, actions of
monoids over sets with stationary points and many other cases.

\section{Category $\Theta ^{0}$ and its automorphisms. Decomposition
theorem. \label{automorphisms}}

\setcounter{equation}{0}

Now we consider an arbitrary variety $\Theta =\Theta \left( \mathfrak{I}%
\right) $ of the many-sorted algebras with the set of names of sorts $\Gamma 
$, the set of operations $\Omega $ and defined by the identities $\mathfrak{I%
}$. We take $X_{0}$ and $\chi :X_{0}\rightarrow \Gamma $ such that $\chi
^{-1}\left( i\right) =X_{0}^{\left( i\right) }$ is an infinite countable set
for every $i\in \Gamma $. Free algebras $F\left( X\right) $ such that $%
X\subset X_{0}$, $\left\vert X\right\vert <\infty $ will be objects of the
category $\Theta ^{0}$, the homomorphisms of these algebras will be
morphisms of this category.

From now on we assume that the following condition holds in our variety $%
\Theta $:

\begin{condition}
\label{monoiso}$\Phi \left( F\left( x^{\left( i\right) }\right) \right)
\cong F\left( x^{\left( i\right) }\right) $ for every automorphism $\Phi $
of the category $\Theta ^{0}$, every sort $i\in \Gamma $ and every element
of this sort $x^{\left( i\right) }\in X_{0}^{\left( i\right) }\subset X_{0}$.
\end{condition}

The following theorem is a generalization of the first part of the Theorem 1
from \cite{PlotkinZhitAutCat}.

\begin{theorem}
\label{potinere}If $\Phi $ is an automorphism of the category $\Theta ^{0}$
then for every $A\in \mathrm{Ob}\Theta ^{0}$ there exists a bijection $%
s_{A}:A\rightarrow \Phi \left( A\right) $ such that $\eta _{A}=\eta _{\Phi
\left( A\right) }s_{A}$ and for every $\mu \in \mathrm{Mor}_{\Theta
^{0}}\left( A,B\right) $ the 
\begin{equation}
\Phi \left( \mu \right) =s_{B}\mu s_{A}^{-1}  \label{pot_inner}
\end{equation}
holds.
\end{theorem}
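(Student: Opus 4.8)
The strategy is the standard one for "every automorphism of $\Theta^0$ is inner up to equivalence": first understand what $\Phi$ does to the one-generated free algebras $F(x^{(i)})$, then bootstrap to all objects via coproducts, and finally check naturality. The key structural fact available is Condition \ref{monoiso}: $\Phi(F(x^{(i)})) \cong F(x^{(i)})$ for every sort $i$. So I would begin by fixing, for each $i\in\Gamma$ and each $x^{(i)}\in X_0^{(i)}$, an isomorphism and use it to transport the canonical generator; the point is that $F(x^{(i)})$ is characterized in $\Theta^0$ by a categorical (representability) property, so $\Phi$ sends the "point" $x^{(i)}$ (i.e.\ the morphism picking it out, or equivalently the fact that $\mathrm{Mor}(F(x^{(i)}),A)\cong A^{(i)}$ functorially) to the corresponding point in $\Phi(F(x^{(i)}))\cong F(x^{(i)})$. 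This gives, for each object $A=F(x_1^{(i_1)},\dots,x_n^{(i_n)})$, a bijection $s_A:A\to\Phi(A)$ defined sort by sort: an element $a\in A^{(i)}$ corresponds to a morphism $F(x^{(i)})\to A$, apply $\Phi$, compose with the chosen identification, and read off an element of $\Phi(A)^{(i)}$. The condition $\eta_A = \eta_{\Phi(A)} s_A$ is then built in by construction, since everything is done sort-respectingly.

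The main work is to show $s_A$ is a bijection and that \eqref{pot_inner} holds, i.e.\ $\Phi(\mu)=s_B\mu s_A^{-1}$ for all $\mu\in\mathrm{Mor}_{\Theta^0}(A,B)$. For bijectivity I would argue that the free algebra $A$ on generators $x_1^{(i_1)},\dots,x_n^{(i_n)}$ is the coproduct in $\Theta^0$ of the $F(x_k^{(i_k)})$, that $\Phi$ preserves coproducts (being an autoequivalence, in fact automorphism, of the category), and that $\Phi(A)\cong F(x_1^{(i_1)},\dots,x_n^{(i_n)})$; then the Yoneda-type description $\mathrm{Mor}(A,B)=\prod_k B^{(i_k)}$ identifies $A$ itself (as a set, sortwise) with $\bigsqcup_i \mathrm{Mor}(F(x^{(i)}),A)$, and $\Phi$ acting on morphism sets is a bijection $\mathrm{Mor}(A,B)\to\mathrm{Mor}(\Phi A,\Phi B)$ compatible with these product decompositions — this is exactly what forces $s_A$ to be a bijection. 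For \eqref{pot_inner}: given $\mu:A\to B$ and $a\in A$ viewed as $\bar a:F(x^{(i)})\to A$ with $\bar a(x^{(i)})=a$, we have $\mu(a)=(\mu\bar a)(x^{(i)})$; applying $\Phi$ and using functoriality $\Phi(\mu\bar a)=\Phi(\mu)\Phi(\bar a)$, then unwinding the definitions of $s_A,s_B$ in terms of $\Phi$ on these representing morphisms, yields $s_B(\mu(a))=\Phi(\mu)(s_A(a))$, which is \eqref{pot_inner} evaluated at an arbitrary element.

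The main obstacle I anticipate is the careful handling of the degenerate/empty-sort phenomena that this paper explicitly allows: $A^{(i)}$ may be empty, $\mathrm{Hom}(A,B)$ may be empty when $\Gamma_A\nsubseteq\Gamma_B$, and one-generated free algebras $F(x^{(i)})$ need not be "small" in the naive sense. So I must verify that the representability statement "$\mathrm{Mor}_{\Theta^0}(F(x^{(i)}),A)$ is in natural bijection with $A^{(i)}$" is correct in this many-sorted setting with possibly empty sorts — this follows from the definition of free algebra given in Section \ref{intro}, but the hypothesis $\Gamma_A\supseteq\eta_F(X)$ there must be tracked, noting that $\eta_F(\{x^{(i)}\})=\{i\}$ so the condition is just $A^{(i)}\neq\varnothing$, and when $A^{(i)}=\varnothing$ both sides are empty and the bijection is trivial. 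A secondary subtlety is confirming that $\Phi$, being an automorphism of $\Theta^0$, genuinely preserves finite coproducts and the isomorphism type of each generator in a way compatible across sorts; Condition \ref{monoiso} is precisely what rules out $\Phi$ permuting or collapsing the "sizes" of generators of different sorts, so invoking it at the right moment is essential. Once these points are pinned down, the verification of \eqref{pot_inner} is a routine diagram chase of the kind carried out in \cite{PlotkinZhitAutCat}.
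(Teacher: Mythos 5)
Your proposal is correct and follows essentially the same route as the paper: define $s_A$ on each $a^{(i)}\in A^{(i)}$ via the representing morphism $F\left( x^{\left( i\right) }\right) \rightarrow A$, transport along the isomorphism $F\left( x^{\left( i\right) }\right) \cong \Phi \left( F\left( x^{\left( i\right) }\right) \right)$ supplied by Condition \ref{monoiso}, get bijectivity from the bijectivity of $\Phi $ on hom-sets together with the freeness identification $\mathrm{Mor}\left( F\left( x^{\left( i\right) }\right) ,A\right) \cong A^{\left( i\right) }$, and obtain (\ref{pot_inner}) by functoriality applied to $\mu \alpha =\beta $. The only divergence is your appeal to coproduct preservation, which is not needed here (it is the substance of the subsequent Theorem \ref{isom}, not of this one); the sortwise hom-set bijection already suffices.
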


\begin{proof}
We take $a^{\left( i\right) }\in A^{\left( i\right) }\subset A$, $x^{\left(
i\right) }\in X_{0}^{\left( i\right) }\subset X_{0}$ and $F\left( x^{\left(
i\right) }\right) \in \mathrm{Ob}\Theta ^{0}$. There exists one homomorphism 
$\alpha :F\left( x^{\left( i\right) }\right) \rightarrow A$ such that $%
\alpha \left( x^{\left( i\right) }\right) =a^{\left( i\right) }$. $\Phi
\left( \alpha \right) :\Phi \left( F\left( x^{\left( i\right) }\right)
\right) \rightarrow \Phi \left( A\right) $. By Condition \ref{monoiso} there
exists isomorphism $\sigma :F\left( x^{\left( i\right) }\right) \rightarrow
\Phi \left( F\left( x^{\left( i\right) }\right) \right) $. We define $%
s_{A}\left( a^{\left( i\right) }\right) =\Phi \left( \alpha \right) \sigma
\left( x^{\left( i\right) }\right) $. $\Phi \left( \alpha \right) \sigma
:F\left( x^{\left( i\right) }\right) \rightarrow \Phi \left( A\right) $ is a
homomorphism, so by (\ref{homom_cond_1}) $s_{A}\left( a^{\left( i\right)
}\right) \in \left( \Phi \left( A\right) \right) ^{\left( i\right) }$.
Therefore $\eta _{A}=\eta _{\Phi \left( A\right) }s_{A}$.

We take $b^{\left( i\right) }\in \left( \Phi \left( A\right) \right)
^{\left( i\right) }\subset \Phi \left( A\right) $. There exists homomorphism 
$\beta :F\left( x^{\left( i\right) }\right) \rightarrow \Phi \left( A\right) 
$ such that $\beta \left( x^{\left( i\right) }\right) =b^{\left( i\right) }$
and homomorphism $\beta \sigma ^{-1}:\Phi \left( F\left( x^{\left( i\right)
}\right) \right) \rightarrow \Phi \left( A\right) $. Therefore exists a
homomorphism $\Phi ^{-1}\left( \beta \sigma ^{-1}\right) :F\left( x^{\left(
i\right) }\right) \rightarrow A$. $\Phi ^{-1}\left( \beta \sigma
^{-1}\right) \left( x^{\left( i\right) }\right) =a^{\left( i\right) }\in
A^{\left( i\right) }$, so $\Phi ^{-1}\left( \beta \sigma ^{-1}\right)
=\alpha $, such that $\alpha \left( x^{\left( i\right) }\right) =a^{\left(
i\right) }$. Hence $\beta =\Phi \left( \alpha \right) \sigma $, $\Phi \left(
\alpha \right) \sigma \left( x^{\left( i\right) }\right) =b^{\left( i\right)
}$ and $b^{\left( i\right) }=s_{A}\left( a^{\left( i\right) }\right) $. So $%
s_{A}$ is a surjection.

We assume that $a_{1}^{\left( i\right) },a_{2}^{\left( i\right) }\in
A^{\left( i\right) }$ and $s_{A}\left( a_{1}^{\left( i\right) }\right)
=s_{A}\left( a_{2}^{\left( i\right) }\right) \in \left( \Phi \left( A\right)
\right) ^{\left( i\right) }$. We consider the homomorphisms $\alpha
_{1},\alpha _{2}:F\left( x^{\left( i\right) }\right) \rightarrow A$ such
that $\alpha _{j}\left( x^{\left( i\right) }\right) =a_{j}^{\left( i\right)
} $, $j=1,2$. We have that $\Phi \left( \alpha _{1}\right) \sigma \left(
x^{\left( i\right) }\right) =\Phi \left( \alpha _{2}\right) \sigma \left(
x^{\left( i\right) }\right) $. $\Phi \left( \alpha _{1}\right) \sigma ,\Phi
\left( \alpha _{2}\right) \sigma :F\left( x^{\left( i\right) }\right)
\rightarrow \Phi \left( A\right) $, so $\Phi \left( \alpha _{1}\right)
\sigma =\Phi \left( \alpha _{2}\right) \sigma $. Therefore $\alpha
_{1}=\alpha _{2}$ and $a_{1}^{\left( i\right) }=a_{2}^{\left( i\right) }$.
So $s_{A}$ is a injection.

We consider $\mu \in \mathrm{Mor}_{\Theta ^{0}}\left( A,B\right) $ and $%
a^{\left( i\right) }\in A^{\left( i\right) }\subset A$. $s_{B}\mu \left(
a^{\left( i\right) }\right) =\Phi \left( \beta \right) \sigma \left(
x^{\left( i\right) }\right) $, where $\sigma $ is an isomorphism $\sigma
:F\left( x^{\left( i\right) }\right) \rightarrow \Phi \left( F\left(
x^{\left( i\right) }\right) \right) $, $\beta $ is a homomorphism $\beta
:F\left( x^{\left( i\right) }\right) \rightarrow B$ such that $\beta \left(
x^{\left( i\right) }\right) =\mu \left( a^{\left( i\right) }\right) $. $\Phi
\left( \mu \right) s_{A}\left( a^{\left( i\right) }\right) =\Phi \left( \mu
\right) \Phi \left( \alpha \right) \sigma \left( x^{\left( i\right) }\right) 
$, where $\alpha $ is a homomorphism $\alpha :F\left( x^{\left( i\right)
}\right) \rightarrow A$ such that $\alpha \left( x^{\left( i\right) }\right)
=a^{\left( i\right) }$. $\mu \alpha \left( x^{\left( i\right) }\right) =\mu
\left( a^{\left( i\right) }\right) $, so $\mu \alpha =\beta $ and $\Phi
\left( \beta \right) =\Phi \left( \mu \right) \Phi \left( \alpha \right) $.
Therefore $s_{B}\mu \left( a^{\left( i\right) }\right) =\Phi \left( \mu
\right) s_{A}\left( a^{\left( i\right) }\right) $ and $\Phi \left( \mu
\right) =s_{B}\mu s_{A}^{-1}$.
\end{proof}

From this theorem we conclude that for every automorphism $\Phi $ of the
category $\Theta ^{0}$ and every $\mu \in \mathrm{Mor}_{\Theta ^{0}}\left(
A,B\right) $ the diagram%
\begin{equation*}
\begin{array}{ccc}
A & \underset{s_{A}}{\rightarrow } & \Phi \left( A\right) \\ 
\downarrow \mu &  & \Phi \left( \mu \right) \downarrow \\ 
B & \overset{s_{B}}{\rightarrow } & \Phi \left( B\right)%
\end{array}%
\end{equation*}%
commutes.

The following theorem is a generalization of the second part of the Theorem
1 from \cite{PlotkinZhitAutCat}.

\begin{theorem}
\label{isom}If $\Phi $ is an automorphism of the category $\Theta ^{0}$ then
for every $A\in \mathrm{Ob}\Theta ^{0}$ the $\Phi \left( A\right) \cong A$.
\end{theorem}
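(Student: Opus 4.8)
The plan is to realize every object of $\Theta^0$ as a finite coproduct of one-generated free algebras and to use that an automorphism of a category preserves coproducts, together with Condition \ref{monoiso}. Concretely: any $A\in\mathrm{Ob}\,\Theta^0$ has the form $A=F(X)$ with $X=\{x_1^{(i_1)},\dots,x_n^{(i_n)}\}\subset X_0$ a finite set of pairwise distinct letters (the sorts $i_k$ need not be distinct). Since $X=\{x_1^{(i_1)}\}\sqcup\dots\sqcup\{x_n^{(i_n)}\}$, and the coproduct of free algebras of a variety is the free algebra on the disjoint union of the generating sets, we have $F(X)=\coprod_{k=1}^n F(x_k^{(i_k)})$, with structural morphisms $\iota_k\colon F(x_k^{(i_k)})\to F(X)$ the homomorphisms determined by $\iota_k(x_k^{(i_k)})=x_k^{(i_k)}$. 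Each $F(x_k^{(i_k)})$ and $F(X)$ lie in $\mathrm{Ob}\,\Theta^0$, so this is also a coproduct inside the full subcategory $\Theta^0$.

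The first thing I would check carefully is that this coproduct description really is valid in the many-sorted setting: given $B\in\mathrm{Ob}\,\Theta^0$ and homomorphisms $\beta_k\colon F(x_k^{(i_k)})\to B$, the unique homomorphism $F(X)\to B$ sending $x_k^{(i_k)}\mapsto\beta_k(x_k^{(i_k)})$ is the needed factorization. The one point that differs from the one-sorted case of \cite{PlotkinZhitAutCat} is the behaviour on empty sorts: one must observe that $\mathrm{Mor}_{\Theta^0}(F(X),B)=\varnothing$ holds precisely when $\{i_1,\dots,i_n\}=\Gamma_{F(X)}\nsubseteq\Gamma_B$, i.e.\ precisely when $\mathrm{Mor}_{\Theta^0}(F(x_k^{(i_k)}),B)=\varnothing$ for some $k$, so that the canonical map $\mathrm{Mor}_{\Theta^0}(F(X),B)\to\prod_{k}\mathrm{Mor}_{\Theta^0}(F(x_k^{(i_k)}),B)$ is a bijection (natural in $B$) in all cases. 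I expect this empty-sort bookkeeping to be the only real subtlety; everything afterwards is formal.

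Finally, since $\Phi$ is an automorphism of $\Theta^0$, both $\Phi$ and $\Phi^{-1}$ are functors, and hence the image under $\Phi$ of a coproduct cocone is again a coproduct cocone (immediate from the universal property). Therefore $\Phi(A)=\Phi(F(X))$ is a coproduct of the objects $\Phi(F(x_k^{(i_k)}))$, $k=1,\dots,n$, with structural morphisms $\Phi(\iota_k)$. By Condition \ref{monoiso} each $\Phi(F(x_k^{(i_k)}))\cong F(x_k^{(i_k)})$, and replacing the summands of a coproduct by isomorphic objects does not change the coproduct up to isomorphism, so
\[
\Phi(A)\;\cong\;\coprod_{k=1}^{n}F(x_k^{(i_k)})\;=\;F(X)\;=\;A .
\]
If $A=F(\varnothing)\in\mathrm{Ob}\,\Theta^0$ (the case $n=0$), then $A$ is the initial object of $\Theta^0$, which $\Phi$ preserves, so again $\Phi(A)\cong A$. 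Note that Theorem \ref{potinere} is not needed for this argument: the sorting-preserving bijection $s_A$ it provides is in general not a homomorphism, and equality of the cardinalities of $A$ and $\Phi(A)$ by itself would not yield $A\cong\Phi(A)$.
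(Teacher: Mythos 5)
Your proof is correct, but it follows a genuinely different route from the paper's. The paper proves the theorem by hand: it sets $B=\Phi(A)$, $C=\Phi^{-1}(A)$, invokes Theorem \ref{potinere} to get the sorting-preserving bijections $s_A:A\rightarrow B$ and $s_C:C\rightarrow A$, defines $\sigma :A\rightarrow B$ and $\tau :A\rightarrow C$ as the homomorphisms extending $x\mapsto s_A(x)$ and $x\mapsto s_C^{-1}(x)$ on the free generators, and then checks $\Phi(\tau)\sigma=id_A$ and $\sigma\Phi(\tau)=id_B$ directly. You instead decompose $F(X)$ as the coproduct of the one-generated free algebras, use that an isomorphism of categories carries coproduct cocones to coproduct cocones, and conclude from Condition \ref{monoiso}; both ingredients are legitimate and are in fact asserted by the paper itself in Section \ref{examples}, and your handling of the empty-sort degeneracies is the right thing to check (one small imprecision: $\Gamma_{F(X)}$ can be strictly larger than $\{i_1,\ldots,i_n\}$ when $\Omega$ contains constants or cross-sort operations, but the equivalence you actually need, namely $\mathrm{Hom}(F(X),B)=\varnothing$ iff some $i_k\notin\Gamma_B$, still follows from the paper's definition of a free algebra). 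The trade-off is worth noting: your argument is cleaner and genuinely bypasses Theorem \ref{potinere}, but it yields only an abstract isomorphism $\Phi(A)\cong A$, whereas the paper's construction produces the specific isomorphism $\sigma_A:A\rightarrow\Phi(A)$ satisfying $\sigma_A(x)=s_A(x)$ for $x\in X$, and it is precisely this refined output, not the bare statement of Theorem \ref{isom}, that is quoted and used in the proof of the decomposition Theorem \ref{factorisation}. So your proof establishes the theorem as stated but would not by itself support the downstream application without an additional argument that $s_A(X)$ is a set of free generators of $\Phi(A)$.
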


\begin{proof}
We denote $\Phi \left( A\right) =B$, $\Phi ^{-1}\left( A\right) =C$. $%
A=F\left( X\right) $ such that $X\subset X_{0}$, $\left\vert X\right\vert
<\infty $. By Theorem \ref{potinere} there exist bijections $%
s_{C}:C\rightarrow A$, $s_{A}:A\rightarrow B$., so $\Gamma _{A}=\Gamma
_{B}=\Gamma _{C}$. Hence there exists one homomorphism $\sigma :A\rightarrow
B$, such that $\sigma \left( x\right) =s_{A}\left( x\right) $ and there
exists one homomorphism $\tau :A\rightarrow C$, such that $\tau \left(
x\right) =s_{C}^{-1}\left( x\right) $ for every $x\in X$. $\Phi \left( \tau
\right) :\Phi \left( A\right) =B\rightarrow \Phi \left( C\right) =A$. $\Phi
\left( \tau \right) \sigma \left( x\right) =s_{C}\tau s_{A}^{-1}\sigma
\left( x\right) =s_{C}\tau \left( x\right) =x$ holds for every $x\in X$.
Therefore $\Phi \left( \tau \right) \sigma =id_{A}$.

$\Phi ^{-1}\left( \sigma \right) :\Phi ^{-1}\left( A\right) =C\rightarrow
\Phi ^{-1}\left( B\right) =A$. $\Phi ^{-1}\left( \sigma \right) \tau \left(
x\right) =s_{A}^{-1}\sigma s_{C}\tau \left( x\right) =s_{A}^{-1}\sigma
\left( x\right) =x$ holds for every $x\in X$. Therefore $\Phi ^{-1}\left(
\sigma \right) \tau =id_{A}$. Automorphism $\Phi $ provides an isomorphisms
of monoids $\mathrm{End}A\rightarrow \mathrm{End}\Phi \left( A\right) $, $%
id_{A}$ is an unit of $\mathrm{End}A$, $id_{\Phi \left( A\right) }$ is an
unit of $\mathrm{End}\Phi \left( A\right) $, so $\Phi \left( \Phi
^{-1}\left( \sigma \right) \tau \right) =\sigma \Phi \left( \tau \right)
=id_{B}$.
\end{proof}

\begin{definition}
\label{inner}An automorphism $\Upsilon $ of an arbitrary category $\mathfrak{%
K}$ is \textbf{inner}, if it is isomorphic as a functor to the identity
automorphism of the category $\mathfrak{K}$.
\end{definition}

This means that for every $A\in \mathrm{Ob}\mathfrak{K}$ there exists an
isomorphism $s_{A}^{\Upsilon }:A\rightarrow \Upsilon \left( A\right) $ such
that for every $\mu \in \mathrm{Mor}_{\mathfrak{K}}\left( A,B\right) $ the
diagram%
\begin{equation*}
\begin{array}{ccc}
A & \overrightarrow{s_{A}^{\Upsilon }} & \Upsilon \left( A\right) \\ 
\downarrow \mu &  & \Upsilon \left( \mu \right) \downarrow \\ 
B & \underrightarrow{s_{B}^{\Upsilon }} & \Upsilon \left( B\right)%
\end{array}%
\end{equation*}%
\noindent commutes. It is clear that the set of all inner automorphisms of
an arbitrary category $\mathfrak{K}$ is a normal subgroup of the group of
all automorphisms of this category.

\begin{definition}
\label{str_stab_aut}\textbf{\hspace{-0.08in} }\textit{An automorphism $\Phi $
of the category }$\Theta ^{0}$\textit{\ is called \textbf{strongly stable}
if it satisfies the conditions:}

\begin{enumerate}
\item[A1)] $\Phi $\textit{\ preserves all objects of }$\Theta ^{0}$\textit{,}

\item[A2)] \textit{there exists a system of bijections }$S=\left\{
s_{F}:F\rightarrow F\mid F\in \mathrm{Ob}\Theta ^{0}\right\} $\textit{\ such
that all these bijections }conform with the sorting:%
\begin{equation*}
\eta _{F}=\eta _{F}s_{F}
\end{equation*}

\item[A3)] $\Phi $\textit{\ acts on the morphisms }$\mu \in \mathrm{Mor}%
_{\Theta ^{0}}\left( A,B\right) $\textit{\ of }$\Theta ^{0}$\textit{\ by
this way: }%
\begin{equation}
\Phi \left( \mu \right) =s_{B}\mu s_{A}^{-1},  \label{biject_action}
\end{equation}

\item[A4)] $s_{F}\mid _{X}=id_{X},$ \textit{\ for every }$F\left( X\right)
\in \mathrm{Ob}\Theta ^{0}$.
\end{enumerate}
\end{definition}

It is clear that the set of all strongly stable automorphisms of the
category $\Theta ^{0}$ is a subgroup of the group of all automorphisms of
this category. We denote by $\mathfrak{A}$ the group of the all
automorphisms of the category $\Theta ^{0}$, by $\mathfrak{Y}$ the group of
the all inner automorphisms of this category, by $\mathfrak{S}$ the group of
the all strongly stable automorphisms of this category.

The following decomposition theorem is a is a generalization of the Theorem
2 from \cite{PlotkinZhitAutCat}.

\begin{theorem}
\label{factorisation}$\mathfrak{A=YS=SY}$, where $\mathfrak{A}$ is a group
of the all automorphisms of the category $\Theta ^{0}$,
\end{theorem}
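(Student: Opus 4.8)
The plan is to show that every automorphism $\Phi \in \mathfrak{A}$ can be written as a product of an inner automorphism and a strongly stable one. Since $\mathfrak{Y}$ is a normal subgroup of $\mathfrak{A}$, once we establish $\mathfrak{A} = \mathfrak{YS}$ the equality $\mathfrak{A} = \mathfrak{SY}$ follows automatically (and conversely), so the whole content is in producing, for a given $\Phi$, a decomposition $\Phi = \Upsilon \circ \Phi'$ with $\Upsilon \in \mathfrak{Y}$ and $\Phi' \in \mathfrak{S}$.

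First I would invoke Theorem \ref{potinere} and Theorem \ref{isom}: for the given $\Phi$ there is a family of bijections $s_A : A \to \Phi(A)$ with $\eta_A = \eta_{\Phi(A)} s_A$ and $\Phi(\mu) = s_B \mu s_A^{-1}$ for every morphism $\mu$, and moreover $\Phi(A) \cong A$ for every object $A$. Choose for each object $A = F(X) \in \mathrm{Ob}\,\Theta^0$ an isomorphism $c_A : \Phi(A) \to A$; since $\Gamma_A = \Gamma_{\Phi(A)}$ by Theorem \ref{potinere}, such an isomorphism exists, and we may choose $c_A = \mathrm{id}_A$ whenever $\Phi(A) = A$ already. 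Define the functor $\Upsilon$ on objects by $\Upsilon(A) = \Phi(A)$ and on morphisms by $\Upsilon(\mu) = c_B^{-1} \mu c_A$ — wait, more carefully: I want $\Upsilon$ to be inner, i.e. isomorphic to the identity, so I set $\Upsilon(A) = \Phi(A)$ and let the natural isomorphism $\mathrm{id} \Rightarrow \Upsilon$ have components $c_A^{-1} : A \to \Phi(A)$; this forces $\Upsilon(\mu) = c_B^{-1}\mu c_A$. Then I define $\Phi' = \Upsilon^{-1} \circ \Phi$. On objects $\Phi'(A) = \Upsilon^{-1}(\Phi(A)) = A$ (since $\Upsilon^{-1}$ sends $\Phi(A)$ back to $A$), so A1 holds; and on morphisms $\Phi'(\mu) = \Upsilon^{-1}(s_B \mu s_A^{-1}) = c_B (s_B \mu s_A^{-1}) c_A^{-1} = (c_B s_B) \mu (c_A s_A)^{-1}$. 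Setting $t_A := c_A s_A : A \to \Phi'(A) = A$, which is a bijection conforming with the sorting (composition of a sorting-preserving bijection with a sorting-preserving isomorphism), we get $\Phi'(\mu) = t_B \mu t_A^{-1}$, which is A2 and A3.

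The remaining point — and this is where the real work lies — is condition A4: I must arrange that the bijections $t_A$ restrict to the identity on the free generators $X$ of each $A = F(X)$. The standard trick from \cite{PlotkinZhitAutCat} is to further correct $\Phi'$ by an inner automorphism coming from "renaming generators": for each $A = F(X)$, the map $t_A|_X : X \to A$ is a function sending generators to (arbitrary) elements of the correct sorts, hence by the universal property extends to a unique endomorphism; the subtlety is that we must show this endomorphism is in fact an automorphism and that the resulting correction is coherent across all objects (i.e. assembles into a genuine inner automorphism). Concretely, for $A = F(X)$ let $h_A : F(X) \to F(X)$ be the isomorphism induced by the bijection $t_A|_X$ of generators onto their images (one checks $t_A|_X$ maps $X$ bijectively onto a generating set of the same sorts, using that $t_A$ is a sorting-preserving bijection and $\Phi'$ already preserves objects); then $h_A^{-1} t_A$ restricts to the identity on $X$. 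One verifies the $h_A$ define an inner automorphism $\Upsilon'$ (naturality: for $\mu : F(X) \to F(Y)$, chase the generators), and replaces $\Phi'$ by $\Upsilon'^{-1}\Phi'$, whose defining bijections $h_B^{-1} t_B \cdot (\text{?})$ — here one must recompute — satisfy A4. The main obstacle is precisely this last coherence/naturality check: verifying that the generator-renaming maps $h_A$ patch together into an honest functor isomorphic to the identity, and that after correcting, the new system of bijections still satisfies A2 and A3 simultaneously with A4. In the many-sorted setting the only extra care needed over \cite{PlotkinZhitAutCat} is tracking sorts — that $t_A|_X$ lands in the right sorts and that the induced maps $h_A$ are well-defined isomorphisms — but the arguments of the one-sorted case go through verbatim once Theorems \ref{potinere} and \ref{isom} are in hand. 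Finally, $\mathfrak{S}$ being a subgroup and $\mathfrak{Y}$ being normal gives $\mathfrak{YS} = \mathfrak{SY}$, completing the proof that $\mathfrak{A} = \mathfrak{YS} = \mathfrak{SY}$.
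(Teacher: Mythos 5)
Your overall strategy is the paper's: write $\Phi=\Upsilon\Psi$ with $\Upsilon$ inner and $\Psi$ strongly stable, using the bijections $s_A$ of Theorem \ref{potinere} and the isomorphisms of Theorem \ref{isom}, then appeal to normality of $\mathfrak{Y}$. The difference is that the paper does the correction in a single step: the proof of Theorem \ref{isom} produces, for each $A=F(X)$, an isomorphism $\sigma_A:A\rightarrow\Phi(A)$ with $\sigma_A(x)=s_A(x)$ for all $x\in X$, and then $\Upsilon(\mu)=\sigma_B\mu\sigma_A^{-1}$, $\Psi(\mu)=\sigma_B^{-1}s_B\mu s_A^{-1}\sigma_A$ already satisfy A4, since $\sigma_A^{-1}s_A$ fixes $X$ pointwise. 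You instead pick arbitrary isomorphisms $c_A$ and then need a second inner correction $\Upsilon'$ to restore A4; this works, but it relocates and partly misidentifies the difficulty. The ``coherence/naturality check'' you single out as the main obstacle is in fact vacuous: \emph{any} family of isomorphisms $h_A:A\rightarrow A$ defines an inner automorphism by $\Upsilon'(\mu)=h_B\mu h_A^{-1}$, with naturality holding by construction, and then $(\Upsilon'^{-1}\Phi')(\mu)=(h_B^{-1}t_B)\mu(h_A^{-1}t_A)^{-1}$ with $h_A^{-1}t_A$ fixing $X$, so A2--A4 hold simultaneously with no further verification.

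The one step that genuinely needs an argument is the one you pass over with ``one checks'': that $t_A\mid_X$ extends to an \emph{automorphism} $h_A$ of $F(X)$. Knowing that $t_A$ is a sorting-preserving bijection of $A$ onto itself does not by itself imply that $t_A(X)$ generates $A$, so the induced endomorphism could a priori fail to be surjective. The correct justification is to rerun the argument of Theorem \ref{isom} on the automorphism $\Phi'$ (which preserves objects and acts by the bijections $t_A$): one builds the candidate inverse from $\Phi'^{-1}$ and the identity of the endomorphism monoid, exactly as in that proof. Once you make this explicit, your argument closes and is, up to the extra inner factor, the same as the paper's; but as written, the decomposition rests on an unproved claim at precisely the point where Theorem \ref{isom}'s \emph{proof} (not merely its statement $\Phi(A)\cong A$) must be invoked.
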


\begin{proof}
We consider $\Phi \in \mathfrak{A}$. In the Theorem \ref{potinere} we prove
that there is a system of bijections $\left\{ s_{A}\mid A\in \mathrm{Ob}%
\Theta ^{0}\right\} $ such that $s_{A}:A\rightarrow \Phi \left( A\right) $
and $\Phi \left( \mu \right) =s_{B}\mu s_{A}^{-1}$ if $\mu \in \mathrm{Mor}%
_{\Theta ^{0}}\left( A,B\right) $. In the Theorem \ref{isom} we prove that
for every $A=F\left( X\right) \in \mathrm{Ob}\Theta ^{0}$ there is an
isomorphism $\sigma _{A}:A\rightarrow \Phi \left( A\right) $, such that $%
\sigma _{A}\left( x\right) =s_{A}\left( x\right) $ for every $x\in X$.

Now we will define two automorphisms $\Upsilon $ and $\Psi $. $\Upsilon $ we
define by this way: $\Upsilon \left( A\right) =\Phi \left( A\right) $ for
every $A\in \mathrm{Ob}\Theta ^{0}$, $\Upsilon \left( \mu \right) =\sigma
_{B}\mu \sigma _{A}^{-1}$ for every $\mu \in \mathrm{Mor}_{\Theta
^{0}}\left( A,B\right) $. $\Psi $ we define by this way: $\Psi \left(
A\right) =A$ for every $A\in \mathrm{Ob}\Theta ^{0}$, $\Psi \left( \mu
\right) =\sigma _{B}^{-1}s_{B}\mu s_{A}^{-1}\sigma _{A}$ for every $\mu \in 
\mathrm{Mor}_{\Theta ^{0}}\left( A,B\right) $. It is easy to check that $%
\Upsilon $ and $\Psi $ are functors. Also we can remark that $\Upsilon $ and 
$\Psi $ have inverted functors $\Upsilon ^{-1}$ and $\Psi ^{-1}$: if we
define $\Upsilon ^{-1}\left( A\right) =\Phi ^{-1}\left( A\right) $ for every 
$A\in \mathrm{Ob}\Theta ^{0}$, $\Upsilon ^{-1}\left( \mu \right) =\sigma
_{\Phi ^{-1}\left( B\right) }^{-1}\mu \sigma _{\Phi ^{-1}\left( A\right) }$
for every $\mu \in \mathrm{Mor}_{\Theta ^{0}}\left( A,B\right) $ and $\Psi
^{-1}\left( A\right) =A$ for every $A\in \mathrm{Ob}\Theta ^{0}$, $\Psi
^{-1}\left( \mu \right) =s_{B}^{-1}\sigma _{B}\mu \sigma _{A}^{-1}s_{A}$ for
every $\mu \in \mathrm{Mor}_{\Theta ^{0}}\left( A,B\right) $ - then $%
\Upsilon ^{-1}\Upsilon =\Upsilon \Upsilon ^{-1}=\Psi ^{-1}\Psi =\Psi \Psi
^{-1}=id_{\Theta ^{0}}$. Therefore $\Upsilon $ and $\Psi $ are automorphisms.

If $A=F\left( X\right) \in \mathrm{Ob}\Theta ^{0}$ then $s_{A}^{-1}\sigma
_{A}\left( x\right) =x$ for every $x\in X$, so $\Psi $ is a strongly stable
automorphism.

We can conclude from $\Phi \left( \mu \right) =s_{B}\mu s_{A}^{-1}=\sigma
_{B}\left( \sigma _{B}^{-1}s_{B}\mu s_{A}^{-1}\sigma _{A}\right) \sigma
_{A}^{-1}$ for every $\mu \in \mathrm{Mor}_{\Theta ^{0}}\left( A,B\right) $
that $\Phi =\Upsilon \Psi $. It is well known that the group $\mathfrak{Y}$
is a normal subgroup of the group $\mathfrak{A}$. This completes the proof.
\end{proof}

\section{Strongly stable automorphisms and systems of verbal operations. 
\label{verbal}}

\setcounter{equation}{0}

\subsection{Strongly stable automorphisms and systems of bijections.}

If we have a strongly stable automorphism $\Phi $ of the category $\Theta
^{0}$, then by Definition \ref{str_stab_aut} there exists a system of
bijections $S=\left\{ s_{F}:F\rightarrow F\mid F\in \mathrm{Ob}\Theta
^{0}\right\} $, such that

\begin{enumerate}
\item[B1)] for every\textit{\ }$F\in \mathrm{Ob}\Theta ^{0}$ the $\eta
_{F}=\eta _{F}s_{F}$ holds,

\item[B2)] for every $A,B\in \mathrm{Ob}\Theta ^{0}$ and every $\mu \in 
\mathrm{Mor}_{\Theta ^{0}}\left( A,B\right) $ the mappings $s_{B}\mu
s_{A}^{-1},s_{B}^{-1}\mu s_{A}:A\rightarrow B$ are homomorphisms,

\item[B3)] for every\textit{\ }$F\left( X\right) \in \mathrm{Ob}\Theta ^{0}$
the $s_{F}\mid _{X}=id_{X}$ holds.
\end{enumerate}

\begin{proposition}
For strongly stable automorphism $\Phi $ of the category $\Theta ^{0}$ there
is only one system of bijections $S=\left\{ s_{F}:F\rightarrow F\mid F\in 
\mathrm{Ob}\Theta ^{0}\right\} $ such that $\Phi $ acts on homomorphisms by
these bijections and the system $S$ fulfills conditions B1) - B3).
\end{proposition}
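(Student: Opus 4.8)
The plan is to show that the action of $\Phi$ on morphisms determines each $s_F$ uniquely. Suppose $S = \{s_F\}$ and $S' = \{s'_F\}$ are two systems of bijections each satisfying B1)--B3) and each inducing the same $\Phi$ via (\ref{biject_action}). I would fix an object $F = F(X) \in \mathrm{Ob}\,\Theta^0$ and aim to prove $s_F = s'_F$. The key idea is to probe the element $a \in F$ by a morphism from a one-generator free algebra: for $a \in F^{(i)}$, take $x^{(i)} \in X_0^{(i)}$ and let $\alpha : F(x^{(i)}) \to F$ be the unique homomorphism with $\alpha(x^{(i)}) = a$. This is legitimate as a morphism of $\Theta^0$ because $F(x^{(i)}) \in \mathrm{Ob}\,\Theta^0$.

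The main step is then to compute $\Phi(\alpha)$ in two ways. Since $\Phi(\alpha) = s_F \alpha s_{F(x^{(i)})}^{-1}$ and, by B3), $s_{F(x^{(i)})}$ fixes $x^{(i)}$, we get $\Phi(\alpha)(x^{(i)}) = s_F \alpha(x^{(i)}) = s_F(a)$. The same computation with $S'$ gives $\Phi(\alpha)(x^{(i)}) = s'_F(a)$. Since $\Phi(\alpha)$ is one fixed morphism, $s_F(a) = s'_F(a)$. As $a \in F$ and the sort $i$ were arbitrary (using B1) to guarantee that $a$ lies in some $F^{(i)}$ with $i \in \Gamma_F$, so that the morphism $\alpha$ exists — here one uses that $X_0^{(i)}$ is nonempty, indeed countably infinite), this shows $s_F = s'_F$ for every $F$, hence $S = S'$.

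For existence, I would invoke the preceding development directly: the system $\{s_A : A \to \Phi(A)\}$ constructed in Theorem \ref{potinere} satisfies (\ref{pot_inner}), which for a strongly stable $\Phi$ (so $\Phi(A) = A$ by A1)) is exactly B2); and one checks B1) and B3) hold for this system — B1) is the sorting condition $\eta_A = \eta_{\Phi(A)} s_A$ from Theorem \ref{potinere} combined with $\Phi(A)=A$, while B3) follows because the construction $s_A(a^{(i)}) = \Phi(\alpha)\sigma(x^{(i)})$ applied to a free generator $x \in X$ of $A = F(X)$ reproduces $x$ itself once one notes that the canonical embedding morphism $F(x) \to F(X)$ is sent by a strongly stable $\Phi$ to a morphism fixing the generator (this is A4) of Definition \ref{str_stab_aut}, equivalently B3) as a hypothesis on $\Phi$ being strongly stable).

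The step I expect to be the main obstacle is the bookkeeping around empty sorts and the existence of the probing morphism $\alpha : F(x^{(i)}) \to F$: one must be sure that for every $a \in F$ the relevant sort $i = \eta_F(a)$ lies in $\Gamma_F$, that $F(x^{(i)}) \in \mathrm{Ob}\,\Theta^0$ has the same sort present, and hence — by the definition of free algebra and the fact that $\Gamma_{F(x^{(i)})} = \{i\} \subseteq \Gamma_F$ — that the homomorphism $\alpha$ sending $x^{(i)} \mapsto a$ really exists and is unique. Once that is in place the argument is a short diagram chase; the subtlety is purely in verifying that nothing degenerates when some sorts of other algebras are empty, but since here $F(x^{(i)})$ has exactly one nonempty sort and it appears in $F$, no such degeneration occurs.
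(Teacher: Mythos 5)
Your uniqueness argument coincides with the paper's: for $f^{\left( i\right) }\in F^{\left( i\right) }$ you probe with the morphism $\alpha :F\left( x^{\left( i\right) }\right) \rightarrow F$ sending $x^{\left( i\right) }$ to $f^{\left( i\right) }$, use B3) to cancel $s_{F\left( x^{\left( i\right) }\right) }^{-1}$ on the generator, and read off $s_{F}\left( f^{\left( i\right) }\right) =\Phi \left( \alpha \right) \left( x^{\left( i\right) }\right) $, which depends only on $\Phi $. That half is correct and complete, including the remark that the probing morphism exists because $X_{0}^{\left( i\right) }\neq \varnothing $ and $\Gamma _{F\left( x^{\left( i\right) }\right) }=\left\{ i\right\} \subseteq \Gamma _{F}$.

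The existence half has two genuine defects. First, condition B2) has two clauses: both $s_{B}\mu s_{A}^{-1}$ and $s_{B}^{-1}\mu s_{A}$ must be homomorphisms. The action formula only exhibits the first as the morphism $\Phi \left( \mu \right) $; you never address the second. The paper closes this by observing that $\Phi ^{-1}\left( \mu \right) $ is a morphism $A\rightarrow B$ and that applying the action formula to it forces $\Phi ^{-1}\left( \mu \right) =s_{B}^{-1}\mu s_{A}$, so this composite is a homomorphism; without some such remark B2) is not established. Second, your detour through Theorem \ref{potinere} is both unnecessary and, as written, would fail: the bijections constructed there are $s_{A}\left( a^{\left( i\right) }\right) =\Phi \left( \alpha \right) \sigma \left( x^{\left( i\right) }\right) $ for an isomorphism $\sigma :F\left( x^{\left( i\right) }\right) \rightarrow \Phi \left( F\left( x^{\left( i\right) }\right) \right) $ that is only guaranteed to exist by Condition \ref{monoiso}, not to be the identity. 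Even with $\Phi \left( F\left( x^{\left( i\right) }\right) \right) =F\left( x^{\left( i\right) }\right) $, a nontrivial automorphism $\sigma $ gives $\sigma \left( x^{\left( i\right) }\right) \neq x^{\left( i\right) }$, and then $s_{A}\left( x\right) =\Phi \left( \alpha \right) \left( \sigma \left( x^{\left( i\right) }\right) \right) $ need not equal $x$, so B3) fails; your appeal to A4) tacitly requires choosing $\sigma =id$, which you never do. The clean route, and the paper's, is simply that Definition \ref{str_stab_aut} already supplies a system satisfying A2)--A4), i.e.\ B1), B3) and the action formula, leaving only the $\Phi ^{-1}$ observation above to finish B2). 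Both defects are easily repaired, but as written the existence part is incomplete.
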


\begin{proof}
By Definition \ref{str_stab_aut} there exists a system of bijections $%
S=\left\{ s_{F}:F\rightarrow F\mid F\in \mathrm{Ob}\Theta ^{0}\right\} $\
such that for every\textit{\ }$F\in \mathrm{Ob}\Theta ^{0}$ the $\eta
_{F}=\eta _{F}s_{F}$ holds, for every $A,B\in \mathrm{Ob}\Theta ^{0}$ and
every $\mu \in \mathrm{Mor}_{\Theta ^{0}}\left( A,B\right) $ the $\Phi
\left( \mu \right) =s_{B}\mu s_{A}^{-1}$ holds and $s_{F}\mid _{X}=id_{X}$, 
\textit{\ }for every\textit{\ }$F\left( X\right) \in \mathrm{Ob}\Theta ^{0}$%
. $\Phi ^{-1}\left( \mu \right) =s_{B}^{-1}\mu s_{A}$ is also homomorphism,
so $S$ fulfills conditions B1) - B3). For every $F\in \mathrm{Ob}\Theta ^{0}$
and every $f^{\left( i\right) }\in F^{\left( i\right) }$, $i\in \Gamma $, we
take $A=F\left( x^{\left( i\right) }\right) $ where $x^{\left( i\right) }\in
X_{0}^{\left( i\right) }$. There exist homomorphism $\alpha :A\ni x^{\left(
i\right) }\rightarrow f^{\left( i\right) }\in F^{\left( i\right) }$. The $%
s_{F}\left( f^{\left( i\right) }\right) =s_{F}\alpha \left( x^{\left(
i\right) }\right) =s_{F}\alpha s_{A}^{-1}\left( x^{\left( i\right) }\right)
=\Phi \left( \alpha \right) \left( x^{\left( i\right) }\right) $ holds, so
all bijections of $S$ uniquely defined by automorphism $\Phi $.
\end{proof}

The systems of bijections $S$ which is subject of this Proposition we denote
by $S^{\Phi }$.

\begin{proposition}
\label{automorphismbijections}The mapping $\Phi \rightarrow S^{\Phi }$ is
one to one and onto correspondence between the family of the all strongly
stable automorphisms of the category $\Theta ^{0}$ and the family of the all
systems of bijections $S=\left\{ s_{F}:F\rightarrow F\mid F\in \mathrm{Ob}%
\Theta ^{0}\right\} $ which fulfills conditions B1) - B3).
\end{proposition}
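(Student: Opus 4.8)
The plan is to establish Proposition \ref{automorphismbijections} by exhibiting explicitly the inverse of the already-constructed map $\Phi \mapsto S^{\Phi}$. The previous proposition tells us this map is well-defined and injective (the system $S^{\Phi}$ is uniquely determined by $\Phi$), so the only real content left is surjectivity: given an arbitrary system of bijections $S = \left\{ s_{F} : F \rightarrow F \mid F \in \mathrm{Ob}\,\Theta^{0} \right\}$ satisfying B1)--B3), I must produce a strongly stable automorphism $\Phi$ with $S^{\Phi} = S$.

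First I would define $\Phi$ on objects as the identity, $\Phi(A) = A$ for all $A \in \mathrm{Ob}\,\Theta^{0}$ (forced by A1), and on a morphism $\mu \in \mathrm{Mor}_{\Theta^{0}}(A,B)$ by the formula $\Phi(\mu) = s_{B}\mu s_{A}^{-1}$. Condition B2) guarantees that $s_{B}\mu s_{A}^{-1}$ is again a homomorphism $A \rightarrow B$, so this is a well-defined assignment on morphisms. Next I would check functoriality: $\Phi(\mathrm{id}_{A}) = s_{A}\,\mathrm{id}_{A}\,s_{A}^{-1} = \mathrm{id}_{A}$, and for composable $\mu \in \mathrm{Mor}(A,B)$, $\nu \in \mathrm{Mor}(B,C)$ we get $\Phi(\nu)\Phi(\mu) = s_{C}\nu s_{B}^{-1}s_{B}\mu s_{A}^{-1} = s_{C}(\nu\mu)s_{A}^{-1} = \Phi(\nu\mu)$, so $\Phi$ is a functor. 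To see it is an automorphism, I would define $\Psi$ by $\Psi(A) = A$ and $\Psi(\mu) = s_{B}^{-1}\mu s_{A}$ for $\mu \in \mathrm{Mor}(A,B)$; the other half of B2) makes this well-defined, the same computation shows $\Psi$ is a functor, and $\Phi\Psi = \Psi\Phi = \mathrm{id}_{\Theta^{0}}$ by cancellation of the $s$'s. Hence $\Phi \in \mathfrak{A}$, and conditions A1)--A3) hold by construction while A4) is exactly B3), so $\Phi \in \mathfrak{S}$.

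Finally I would verify that $S^{\Phi} = S$, i.e.\ that the canonical system of bijections attached to the strongly stable automorphism $\Phi$ just built coincides with the system $S$ we started from. Since $S^{\Phi}$ is characterized (by the construction in the previous proposition) via $s^{\Phi}_{F}(f^{(i)}) = \Phi(\alpha)(x^{(i)})$ where $\alpha : F(x^{(i)}) \ni x^{(i)} \mapsto f^{(i)}$, and $\Phi(\alpha) = s_{F}\alpha s_{F(x^{(i)})}^{-1}$ with $s_{F(x^{(i)})}^{-1}(x^{(i)}) = x^{(i)}$ by B3), we get $s^{\Phi}_{F}(f^{(i)}) = s_{F}\alpha(x^{(i)}) = s_{F}(f^{(i)})$ for every sort $i$ and every $f^{(i)} \in F^{(i)}$. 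As every element of $F$ lies in some $F^{(i)}$, this gives $s^{\Phi}_{F} = s_{F}$ for all $F$, hence $S^{\Phi} = S$. Combined with the injectivity from the preceding proposition, this shows $\Phi \mapsto S^{\Phi}$ is a bijection.

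I do not expect a serious obstacle here: the argument is essentially bookkeeping, and the conditions B1)--B3) were evidently designed precisely so that this back-and-forth works. The one point deserving a little care is that surjectivity genuinely requires \emph{both} inclusions in B2) (homomorphy of $s_{B}\mu s_{A}^{-1}$ for defining $\Phi$, and of $s_{B}^{-1}\mu s_{A}$ for defining the inverse $\Psi$); without the second one cannot conclude that $\Phi$ is invertible in $\mathfrak{A}$. One should also note that B1) is needed only implicitly, to ensure the $s_{F}$ are the sort-preserving bijections appearing in Definition \ref{str_stab_aut}, condition A2).
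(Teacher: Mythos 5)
Your proposal is correct and follows essentially the same route as the paper: construct $\Phi$ from $S$ via $\Phi(\mu)=s_{B}\mu s_{A}^{-1}$, exhibit the inverse functor using the other half of B2), check A1)--A4), and verify $S^{\Phi}=S$. The only slight imprecision is attributing injectivity to the preceding proposition, which actually gives uniqueness of $S^{\Phi}$ given $\Phi$ (i.e.\ well-definedness) rather than the converse; injectivity itself is immediate, since a strongly stable automorphism fixes all objects and acts on every morphism by the formula $s_{B}\mu s_{A}^{-1}$, so $S^{\Phi}=S^{\Psi}$ forces $\Phi=\Psi$ --- exactly the one-line argument the paper supplies.
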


\begin{proof}
If we have a system of bijections $S$ which fulfills conditions B1) - B3) we
can define a functor $\Phi :\Theta ^{0}\rightarrow \Theta ^{0}$, such that
preserves all objects of $\Theta ^{0}$ and for every $A,B\in \mathrm{Ob}%
\Theta ^{0}$ and every $\mu \in \mathrm{Mor}_{\Theta ^{0}}\left( A,B\right) $
the $\Phi \left( \mu \right) =s_{B}\mu s_{A}^{-1}$ holds. There is an
inverse functor $\Phi :\Theta ^{0}\rightarrow \Theta ^{0}$, such that the $%
\Phi ^{-1}\left( \mu \right) =s_{B}^{-1}\mu s_{A}$ holds. Therefore $\Phi $
is a strongly stable automorphism of the category $\Theta ^{0}$ and $S^{\Phi
}=S$. Hence our correspondence is onto.

If $\Phi $ and $\Psi $ are strongly stable automorphisms of the category $%
\Theta ^{0}$ and $S^{\Phi }=S^{\Psi }=S=\left\{ s_{F}:F\rightarrow F\mid
F\in \mathrm{Ob}\Theta ^{0}\right\} $, then $\Phi =\Psi $, because, for $%
F\in \mathrm{Ob}\Theta ^{0}$ the $\Phi \left( F\right) =\Psi \left( F\right)
=F$ holds and for every $A,B\in \mathrm{Ob}\Theta ^{0}$ and every $\mu \in 
\mathrm{Mor}_{\Theta ^{0}}\left( A,B\right) $ the $\Phi \left( \mu \right)
=s_{B}\mu s_{A}^{-1}=\Psi \left( \mu \right) $ holds. Hence our
correspondence is one to one.
\end{proof}

\subsection{Systems of bijections and systems of verbal operations.}

We take the word $w=w\left( x_{1},\ldots ,x_{n}\right) \in F\left(
x_{1},\ldots ,x_{n}\right) =F\in \mathrm{Ob}\Theta ^{0}$. For every algebra $%
H\in \Theta $ we can define an operation $w_{H}^{\ast }$: if $h_{1},\ldots
,h_{n}\in H$ such that $\eta _{H}\left( h_{i}\right) =\eta _{F}\left(
x_{i}\right) $, where $1\leq i\leq n$, then $w_{H}^{\ast }\left(
h_{1},\ldots ,h_{n}\right) =\alpha \left( w\right) $, where $\alpha
:F\rightarrow H$ homomorphism such that $\alpha \left( x_{i}\right) =h_{i}$.
If $\eta _{F}\left\{ x_{1},\ldots ,x_{n}\right\} \nsubseteq \Gamma _{H}$
then the operation $w_{H}^{\ast }$ is defined on the empty subset of $H^{n}$%
. The operation $w_{H}^{\ast }$ is called the verbal operation defined by
the word $w$. This operation we consider as the operation of the type $%
\left( \eta _{F}\left( x_{1}\right) ,\ldots ,\eta _{F}\left( x_{n}\right)
;\eta _{F}\left( w\right) \right) $ even if not all free generators $%
x_{1},\ldots ,x_{n}$ really enter to the word $w$.

If we have a system of words $W=\left\{ w_{i}\mid i\in I\right\} $ then for
every $H\in \Theta $ we denote by $H_{W}^{\ast }$ the universal algebra
which coincide with $H$ as a set with the "sorting", but has only verbal
operations defined by the words from $W$. It is easy to prove that if $%
H_{1},H_{2}\in \Theta $ and $\varphi :H_{1}\rightarrow H_{2}$ is a
homomorphism then $\varphi \left( w_{H_{1}}^{\ast }\left( h_{1},\ldots
,h_{n}\right) \right) =w_{H_{2}}^{\ast }\left( \varphi \left( h_{1}\right)
,\ldots ,\varphi \left( h_{n}\right) \right) $ if both sides of this
equality are defined. So, if $W$ is a system of words and $\varphi
:H_{1}\rightarrow H_{2}$ is a homomorphism then $\varphi :\left(
H_{1}\right) _{W}^{\ast }\rightarrow \left( H_{2}\right) _{W}^{\ast }$ is a
homomorphism too.

Now we assume that there exists a system of bijections $S=\left\{
s_{A}:A\rightarrow A\mid A\in \mathrm{Ob}\Theta ^{0}\right\} $ which
fulfills conditions B1) - B3).

For the operation $\omega \in \Omega $ which has a type $\tau _{\omega
}=\left( i_{1},\ldots ,i_{n};j\right) $, we take $A_{\omega }=F\left(
X_{\omega }\right) \in \mathrm{Ob}\Theta ^{0}$ such that $X_{\omega
}=\left\{ x^{\left( i_{1}\right) },\ldots ,x^{\left( i_{n}\right) }\right\} $%
, $\eta _{A_{\omega }}\left( x^{\left( i_{k}\right) }\right) =i_{k}$, $1\leq
k\leq n$. 
\begin{equation}
s_{A_{\omega }}\left( \omega \left( x^{\left( i_{1}\right) },\ldots
,x^{\left( i_{n}\right) }\right) \right) =w_{\omega }\left( x^{\left(
i_{1}\right) },\ldots ,x^{\left( i_{n}\right) }\right) \in A_{\omega }.
\label{wordformula}
\end{equation}%
For every $H\in \Theta $ we define by the word $w_{\omega }$ the verbal
operation $\omega _{H}^{\ast }$. We denote $W=\left\{ w_{\omega }\mid \omega
\in \Omega \right\} $. The types of the operations $\omega $ and $\omega
^{\ast }$ coincides.

\begin{proposition}
\label{*isom}For every $F\in \mathrm{Ob}\Theta ^{0}$ the bijection $s_{F}$
is an isomorphism $s_{F}:F\rightarrow F_{W}^{\ast }$.
\end{proposition}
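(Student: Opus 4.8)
The plan is to deduce the proposition from the fact that a bijective homomorphism of many-sorted algebras is automatically an isomorphism: by B1) each $s_F$ is a bijection with $\eta_F = \eta_F s_F$, and $F_W^{\ast}$ has the same underlying set and sorting as $F$, so it suffices to check that $s_F$ conforms with the operations of $F_W^{\ast}$, i.e. that it is a homomorphism $F \to F_W^{\ast}$. Then the set-theoretic inverse $s_F^{-1}$ conforms with the sorting (since $\eta_{F_W^{\ast}} = \eta_F$) and conforms with the operations by the usual one-line computation, so $s_F$ is an isomorphism.

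To check the homomorphism condition, fix an operation $\omega \in \Omega$ of type $\tau_\omega = (i_1,\dots,i_n;j)$ and elements $f_k \in F^{(i_k)}$, $1 \le k \le n$. If $F^{(i_k)} = \varnothing$ for some $k$, there is nothing to prove, because then both $\omega$ on $F$ and the verbal operation $\omega_F^{\ast}$ on $F_W^{\ast}$ are defined only on the empty subset of $F^n$; so assume all the $F^{(i_k)}$ are nonempty. Take $A_\omega = F(X_\omega)$ as fixed before (\ref{wordformula}) and let $\alpha\colon A_\omega \to F$ be the unique homomorphism with $\alpha(x^{(i_k)}) = f_k$. By B2) the map $\alpha' = s_F\,\alpha\, s_{A_\omega}^{-1}\colon A_\omega \to F$ is again a homomorphism, and by B3) $\alpha'(x^{(i_k)}) = s_F\alpha(x^{(i_k)}) = s_F(f_k)$.

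Now I would compute both sides of the desired equality. On one side, since $\alpha$ is a homomorphism, $\omega(f_1,\dots,f_n) = \alpha\big(\omega(x^{(i_1)},\dots,x^{(i_n)})\big)$; applying $s_F$ and rewriting $\omega(x^{(i_1)},\dots,x^{(i_n)}) = s_{A_\omega}^{-1}\big(w_\omega(x^{(i_1)},\dots,x^{(i_n)})\big)$ via (\ref{wordformula}) gives
\[
s_F\big(\omega(f_1,\dots,f_n)\big) = s_F\,\alpha\, s_{A_\omega}^{-1}\big(w_\omega(x^{(i_1)},\dots,x^{(i_n)})\big) = \alpha'\big(w_\omega(x^{(i_1)},\dots,x^{(i_n)})\big).
\]
On the other side, by the very definition of the verbal operation $\omega_F^{\ast}$ attached to $w_\omega$, one has $\omega_F^{\ast}\big(s_F(f_1),\dots,s_F(f_n)\big) = \beta\big(w_\omega(x^{(i_1)},\dots,x^{(i_n)})\big)$ where $\beta\colon A_\omega \to F$ is the unique homomorphism with $\beta(x^{(i_k)}) = s_F(f_k)$. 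Since $\alpha'$ is such a homomorphism, $\beta = \alpha'$, and the two computations coincide. Hence $s_F$ conforms with every operation of $F_W^{\ast}$, and we conclude as above. The only place demanding care is the bookkeeping that identifies $\alpha'$ with the homomorphism $\beta$ implicit in the definition of the verbal operation (this is just the uniqueness of homomorphisms out of a free algebra), together with the separate treatment of the degenerate case $F^{(i_k)} = \varnothing$; the rest is formal.
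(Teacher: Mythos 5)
Your proof is correct and follows essentially the same route as the paper: both arguments hinge on identifying the homomorphism $\beta:A_{\omega}\rightarrow F$ sending $x^{\left( i_{k}\right) }$ to $s_{F}\left( f_{k}\right) $ with $s_{F}\alpha s_{A_{\omega }}^{-1}$ via B2) and B3), and then unwinding formula (\ref{wordformula}). The only additions are your explicit handling of the degenerate case $F^{\left( i_{k}\right) }=\varnothing $ and the remark that a bijective homomorphism is an isomorphism, both of which the paper leaves implicit.
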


\begin{proof}
We consider the operation $\omega \in \Omega $ which has a type $\tau
_{\omega }=\left( i_{1},\ldots ,i_{n};j\right) $. We need to prove that for
every $f^{\left( i_{1}\right) },\ldots ,f^{\left( i_{n}\right) }\in F$ such
that $\eta _{F}\left( f^{\left( i_{r}\right) }\right) =i_{r}$, $1\leq r\leq
n $, the $s_{F}\omega \left( f^{\left( i_{1}\right) },\ldots ,f^{\left(
i_{n}\right) }\right) =\omega ^{\ast }\left( s_{F}\left( f^{\left(
i_{1}\right) }\right) ,\ldots ,s_{F}\left( f^{\left( i_{n}\right) }\right)
\right) $ holds. We consider the homomorphisms $\alpha ,\beta :A_{\omega
}\rightarrow F$ such that $\alpha \left( x^{\left( i_{r}\right) }\right)
=f^{\left( i_{r}\right) }$, $\beta \left( x^{\left( i_{r}\right) }\right)
=s_{F}\left( f^{\left( i_{r}\right) }\right) $, $1\leq r\leq n$. By our
assumption $s_{F}\alpha s_{A_{\omega }}^{-1}$ is also homomorphism from $A$
to $F$. $s_{F}\alpha s_{A_{\omega }}^{-1}\left( x^{\left( i_{r}\right)
}\right) =s_{F}\alpha \left( x^{\left( i_{r}\right) }\right) =s_{F}\left(
f^{\left( i_{r}\right) }\right) $, therefore $\beta =s_{F}\alpha
s_{A_{\omega }}^{-1}$. So 
\begin{equation*}
s_{F}\omega \left( f^{\left( i_{1}\right) },\ldots ,f^{\left( i_{n}\right)
}\right) =s_{F}\alpha \omega \left( x^{\left( i_{1}\right) },\ldots
,x^{\left( i_{n}\right) }\right) =s_{F}\alpha s_{A_{\omega
}}^{-1}s_{A_{\omega }}\omega \left( x^{\left( i_{1}\right) },\ldots
,x^{\left( i_{n}\right) }\right) =
\end{equation*}%
\begin{equation*}
\beta w_{\omega }\left( x^{\left( i_{1}\right) },\ldots ,x^{\left(
i_{n}\right) }\right) =\omega ^{\ast }\left( s_{F}\left( f^{\left(
i_{1}\right) }\right) ,\ldots ,s_{F}\left( f^{\left( i_{n}\right) }\right)
\right) .
\end{equation*}
\end{proof}

Now we assume that there exists a system of words $W$ such that

\begin{enumerate}
\item[Op1)] $W=\left\{ w_{\omega }\in A_{\omega }\mid \omega \in \Omega
\right\} $, where $A_{\omega }$ is defined as above, and

\item[Op2)] for every $F\left( X\right) \in \mathrm{Ob}\Theta ^{0}$ there
exists a bijection $s_{F}:F\rightarrow F$ such that $\left( s_{F}\right)
_{\mid X}=id_{X}$ and $s_{F}:F\rightarrow F_{W}^{\ast }$ is an isomorphism.
\end{enumerate}

The verbal operations defined by the word $w_{\omega }$ we denote by $\omega
^{\ast }$. We have a new signature $\Omega ^{\ast }=\left\{ \omega ^{\ast
}\mid \omega \in \Omega \right\} $.

Because $s_{F}:F\rightarrow F_{W}^{\ast }$ is an isomorphism, $F_{W}^{\ast }$
such that $F\in \mathrm{Ob}\Theta ^{0}$ is also a free algebra in the
variety $\Theta $.

By our assumption there is a symmetry between the signatures $\Omega $ and $%
\Omega ^{\ast }$.

\begin{proposition}
\label{inverswords}Every $\omega \in \Omega $ is a verbal operations defined
by the some word written in the signature $\Omega ^{\ast }$.
\end{proposition}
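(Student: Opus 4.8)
The plan is to exploit the symmetry between the situation of Proposition \ref{*isom} and the present claim. We are given a system of words $W=\{w_\omega\mid\omega\in\Omega\}$ satisfying Op1) and Op2), so we have the signature $\Omega^\ast$ and for every $F\left(X\right)\in\mathrm{Ob}\,\Theta^0$ a bijection $s_F:F\rightarrow F$ with $\left(s_F\right)_{\mid X}=id_X$ which is an isomorphism $s_F:F\rightarrow F_W^\ast$. The first step is to observe that the inverse bijections $t_F:=s_F^{-1}:F\rightarrow F$ form a system that again satisfies conditions B1)--B3): condition B1) is immediate since $\eta_F=\eta_F s_F$ gives $\eta_F=\eta_F s_F^{-1}$; condition B3) holds because $\left(s_F\right)_{\mid X}=id_X$ forces $\left(s_F^{-1}\right)_{\mid X}=id_X$; and condition B2) for the system $\{t_F\}$ is exactly condition B2) for $\{s_F\}$ with the roles of $s_B\mu s_A^{-1}$ and $s_B^{-1}\mu s_A$ interchanged. (If one prefers to stay closer to the text: by Proposition \ref{automorphismbijections} the system $\{s_F\}$ comes from a strongly stable automorphism $\Phi$, and $\{s_F^{-1}\}$ is then the system $S^{\Phi^{-1}}$ of the inverse automorphism $\Phi^{-1}$, which is also strongly stable.)

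The second step is to apply the construction preceding Proposition \ref{*isom} to the system $\{t_F\}=\{s_F^{-1}\}$. For each $\omega\in\Omega$ with type $\tau_\omega=\left(i_1,\ldots,i_n;j\right)$, set, with $A_\omega=F\left(X_\omega\right)$ as before,
\begin{equation*}
u_\omega\left(x^{\left(i_1\right)},\ldots,x^{\left(i_n\right)}\right)=s_{A_\omega}^{-1}\left(\omega\left(x^{\left(i_1\right)},\ldots,x^{\left(i_n\right)}\right)\right)\in A_\omega .
\end{equation*}
Let $V=\{u_\omega\mid\omega\in\Omega\}$. By Proposition \ref{*isom} applied to the system $\{s_F^{-1}\}$, each $s_F^{-1}:F\rightarrow F_V^\ast$ is an isomorphism, equivalently $s_F:F_V^\ast\rightarrow F$ is an isomorphism, where $F_V^\ast$ carries the operations $\omega^{\sharp}$ defined by the words $u_\omega$.

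The third step identifies the signature $\Omega^\sharp=\{\omega^\sharp\}$ with words in $\Omega^\ast$. Here is the point: $u_\omega=s_{A_\omega}^{-1}\left(\omega\left(x^{\left(i_1\right)},\ldots\right)\right)$ is an element of the free algebra $A_\omega$, hence can be written as a term $\omega\left(x^{\left(i_1\right)},\ldots\right)$ read off in the \emph{original} signature $\Omega$; but by Op2), $s_{A_\omega}:A_\omega\rightarrow\left(A_\omega\right)_W^\ast$ is an isomorphism with $\left(s_{A_\omega}\right)_{\mid X_\omega}=id$, so $s_{A_\omega}^{-1}$ carries $\Omega$-terms in the generators to the \emph{same} element expressed via the $\Omega^\ast$-operations: concretely, $u_\omega$ is precisely the element obtained by taking the $\Omega^\ast$-term tree that the single operation symbol $\omega$ unfolds to under $s_{A_\omega}^{-1}$. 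Spelling this out, $\omega\left(x^{\left(i_1\right)},\ldots,x^{\left(i_n\right)}\right)=s_{A_\omega}\left(u_\omega\right)=u_{\omega,A_\omega}^{\ast}\left(x^{\left(i_1\right)},\ldots,x^{\left(i_n\right)}\right)$, reading $u_\omega$ now as a word in $\Omega^\ast$; that is, $\omega$ acting on the free generators of $A_\omega$ equals the $\Omega^\ast$-verbal operation determined by $u_\omega$. Since $A_\omega$ is free on $X_\omega$ and both $\omega$ and this verbal operation have the same type, the identity holds in every $H\in\Theta$, which is the assertion. I expect the main obstacle to be purely bookkeeping: being careful about in which signature a given element of $A_\omega$ is being read as a term, and checking that "inverting" the system of bijections really does land us back in the hypotheses of Proposition \ref{*isom} (rather than giving some unrelated system of words). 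Once the symmetry $W\leftrightarrow V$, $\Omega^\ast\leftrightarrow\Omega$ via $s_F\leftrightarrow s_F^{-1}$ is set up cleanly, the statement follows from Proposition \ref{*isom} applied in the reverse direction together with the freeness of $A_\omega$.
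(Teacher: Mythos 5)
Your core argument --- setting $u_{\omega }=s_{A_{\omega }}^{-1}\left( \omega \left( x^{\left( i_{1}\right) },\ldots ,x^{\left( i_{n}\right) }\right) \right) $, using that $s_{A_{\omega }}:A_{\omega }\rightarrow \left( A_{\omega }\right) _{W}^{\ast }$ is a generator-fixing isomorphism to read off $\omega \left( x^{\left( i_{1}\right) },\ldots ,x^{\left( i_{n}\right) }\right) =u_{\omega }^{\ast }\left( x^{\left( i_{1}\right) },\ldots ,x^{\left( i_{n}\right) }\right) $ as an $\Omega ^{\ast }$-word, and then transporting this identity to an arbitrary $H\in \Theta $ by freeness of $A_{\omega }$ --- is exactly the paper's proof and is correct. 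To make the last step fully explicit you should note, as the paper does, that the homomorphism $\alpha :A_{\omega }\rightarrow H$ sending the generators to $h^{\left( i_{1}\right) },\ldots ,h^{\left( i_{n}\right) }$ is also a homomorphism $\left( A_{\omega }\right) _{W}^{\ast }\rightarrow H_{W}^{\ast }$, so it carries the $\Omega ^{\ast }$-term $u_{\omega }^{\ast }\left( x^{\left( i_{1}\right) },\ldots ,x^{\left( i_{n}\right) }\right) $ to the $\Omega ^{\ast }$-verbal operation evaluated at the $h$'s.

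Your first two steps, however, contain a circularity that you should excise rather than try to repair. At this point in the development the only hypotheses are Op1) and Op2); condition B2) for the system $\left\{ s_{F}\right\} $ is \emph{not} yet available. That $s_{B}\mu s_{A}^{-1}:A\rightarrow B$ is a homomorphism is precisely Corollary \ref{bijectionsproprieties}, which the paper deduces \emph{from} Proposition \ref{inverswords} via Corollary \ref{invershomomorphism}: from Op2) and the remark that a $\Theta$-homomorphism is automatically a homomorphism of the starred algebras one gets that $s_{B}^{-1}\mu s_{A}$ is a homomorphism, but $s_{B}\mu s_{A}^{-1}$ is a priori only a homomorphism $A_{W}^{\ast }\rightarrow B_{W}^{\ast }$, and converting that into a homomorphism $A\rightarrow B$ is exactly what the present proposition is for. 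The same objection applies to your parenthetical appeal to Proposition \ref{automorphismbijections}, whose hypotheses include B2). Fortunately none of this detour is needed: the identification in your third step uses only Op2) and the freeness of $A_{\omega }$, so the statement follows without inverting the system of bijections or reapplying Proposition \ref{*isom}. Drop steps 1 and 2 and the proof coincides with the paper's.
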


\begin{proof}
$s_{A_{\omega }}$ is a bijection, so there is $u_{\omega }\left( x^{\left(
i_{1}\right) },\ldots ,x^{\left( i_{n}\right) }\right) \in A_{\omega }$ such
that $s_{A_{\omega }}\left( u_{\omega }\left( x^{\left( i_{1}\right)
},\ldots ,x^{\left( i_{n}\right) }\right) \right) =\omega \left( x^{\left(
i_{1}\right) },\ldots ,x^{\left( i_{n}\right) }\right) $. $s_{A_{\omega }}$
is an isomorphism and $\left( s_{F}\right) _{\mid X}=id_{X}$, so $\omega
\left( x^{\left( i_{1}\right) },\ldots ,x^{\left( i_{n}\right) }\right)
=u_{\omega }^{\ast }\left( x^{\left( i_{1}\right) },\ldots ,x^{\left(
i_{n}\right) }\right) $ where $u_{\omega }^{\ast }\left( x^{\left(
i_{1}\right) },\ldots ,x^{\left( i_{n}\right) }\right) $ we achieve from the
word $u_{\omega }\left( x^{\left( i_{1}\right) },\ldots ,x^{\left(
i_{n}\right) }\right) $ when we change the all operations from $\Omega $ by
corresponding operation from $\Omega ^{\ast }$. So for every $H\in \Theta $
and every $h^{\left( i_{k}\right) }\in H^{\left( i_{k}\right) }$, $1\leq
k\leq n$, $\omega _{H}\left( h^{\left( i_{1}\right) },\ldots ,h^{\left(
i_{n}\right) }\right) =\alpha \omega \left( x^{\left( i_{1}\right) },\ldots
,x^{\left( i_{n}\right) }\right) =\alpha u_{\omega }^{\ast }\left( x^{\left(
i_{1}\right) },\ldots ,x^{\left( i_{n}\right) }\right) $, where $\alpha
:A_{\omega }\ni x^{\left( i_{k}\right) }\rightarrow h^{\left( i_{k}\right)
}\in H$, $1\leq k\leq n$, is a homomorphism. But also $\alpha $ is a
homomorphism from $\left( A_{\omega }\right) _{W}^{\ast }$ to $H_{W}^{\ast }$%
.
\end{proof}

\begin{corollary}
\label{invershomomorphism}If $H_{1},H_{2}\in \Theta $ and $\mu :\left(
H_{1}\right) _{W}^{\ast }\rightarrow \left( H_{2}\right) _{W}^{\ast }$ is a
homomorphism then $\mu :H_{1}\rightarrow H_{2}$ is also a homomorphism.
\end{corollary}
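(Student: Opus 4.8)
The plan is to read Proposition \ref{inverswords} in both directions. That proposition is, for the pair of signatures $\Omega ^{*}$ and $\Omega $, the symmetric counterpart of the passage from $\Omega $ to the system of words $W$: in its proof it is shown that for every $\omega \in \Omega $ of type $\tau _{\omega }=\left( i_{1},\ldots ,i_{n};j\right) $, every $H\in \Theta $ and all $h^{\left( i_{k}\right) }\in H^{\left( i_{k}\right) }$, $1\leq k\leq n$, one has
\begin{equation*}
\omega _{H}\left( h^{\left( i_{1}\right) },\ldots ,h^{\left( i_{n}\right) }\right) =\alpha \left( u_{\omega }^{*}\left( x^{\left( i_{1}\right) },\ldots ,x^{\left( i_{n}\right) }\right) \right) ,
\end{equation*}
where $\alpha :A_{\omega }\rightarrow H$ is the homomorphism with $\alpha \left( x^{\left( i_{k}\right) }\right) =h^{\left( i_{k}\right) }$, the right-hand side is built using only operations from $\Omega ^{*}$, and this same $\alpha $ is simultaneously a homomorphism $\left( A_{\omega }\right) _{W}^{*}\rightarrow H_{W}^{*}$. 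Thus the original operations $\omega $ are verbal operations over the signature $\Omega ^{*}$, and the corollary is just the statement --- the counterpart, in the variety with signature $\Omega ^{*}$, of the remark preceding Proposition \ref{*isom} --- that homomorphisms of $\Omega ^{*}$-algebras commute with these verbal operations.

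Concretely, I would fix $H_{1},H_{2}\in \Theta $ and a homomorphism $\mu :\left( H_{1}\right) _{W}^{*}\rightarrow \left( H_{2}\right) _{W}^{*}$. Since $\left( H_{i}\right) _{W}^{*}$ has the same underlying sorted set as $H_{i}$, the mapping $\mu $ already conforms with the sorting of $H_{1}$ and $H_{2}$, so it remains only to check that $\mu $ conforms with every $\omega \in \Omega $. Fixing such an $\omega $ and elements $h^{\left( i_{k}\right) }\in H_{1}^{\left( i_{k}\right) }$ (if some $H_{1}^{\left( i_{k}\right) }$ is empty the required equality holds by the principle of the empty set), I let $\alpha :A_{\omega }\rightarrow H_{1}$ be the homomorphism sending $x^{\left( i_{k}\right) }\mapsto h^{\left( i_{k}\right) }$. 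Then $\mu \alpha :\left( A_{\omega }\right) _{W}^{*}\rightarrow \left( H_{2}\right) _{W}^{*}$ is a composite of homomorphisms of $\Omega ^{*}$-algebras, hence itself such a homomorphism, and $\mu \alpha \left( x^{\left( i_{k}\right) }\right) =\mu \left( h^{\left( i_{k}\right) }\right) $.

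Applying the displayed formula once to $H_{1}$ with the homomorphism $\alpha $ and once to $H_{2}$ with the homomorphism $\mu \alpha $, I obtain
\begin{equation*}
\mu \left( \omega _{H_{1}}\left( h^{\left( i_{1}\right) },\ldots ,h^{\left( i_{n}\right) }\right) \right) =\mu \alpha \left( u_{\omega }^{*}\left( x^{\left( i_{1}\right) },\ldots ,x^{\left( i_{n}\right) }\right) \right) =\omega _{H_{2}}\left( \mu \left( h^{\left( i_{1}\right) }\right) ,\ldots ,\mu \left( h^{\left( i_{n}\right) }\right) \right) ,
\end{equation*}
which is exactly the condition (\ref{homom_cond_2}) that $\mu :H_{1}\rightarrow H_{2}$ conforms with $\omega $. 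Since $\omega $ was arbitrary, $\mu :H_{1}\rightarrow H_{2}$ is a homomorphism. I do not expect any real obstacle: the entire content is carried by Proposition \ref{inverswords}, and the proof merely sandwiches $\mu $ between the two incarnations of this formula; the only points to keep track of are that $\mu \alpha $ is genuinely a homomorphism of the starred algebras and the routine bookkeeping for empty sorts, handled exactly as in the earlier arguments.
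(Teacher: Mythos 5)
Your proof is correct and follows exactly the route the paper intends: the Corollary is stated without proof as an immediate consequence of Proposition \ref{inverswords}, the point being precisely that every $\omega \in \Omega$ is a verbal operation given by an $\Omega^{\ast}$-word $u_{\omega}^{\ast}$, so any homomorphism of the starred algebras preserves it. Your explicit verification (sandwiching $\mu$ between the two evaluations of $u_{\omega}^{\ast}$, with the remark that $\mu\alpha$ is an $\Omega^{\ast}$-homomorphism agreeing with the canonical evaluation on the free generators of $\left( A_{\omega}\right) _{W}^{\ast}$) is exactly the argument left implicit in the paper.
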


\begin{corollary}
\label{bijectionsproprieties}For every $A,B\in \mathrm{Ob}\Theta ^{0}$ and
every $\mu \in \mathrm{Mor}_{\Theta ^{0}}\left( A,B\right) $ the mappings $%
s_{B}\mu s_{A}^{-1},s_{B}^{-1}\mu s_{A}:A\rightarrow B$ are also
homomorphisms.
\end{corollary}

\setcounter{corollary}{0}

\begin{proposition}
\label{inthevariety}If $H\in \Theta $ then $H_{W}^{\ast }\in \Theta $.
\end{proposition}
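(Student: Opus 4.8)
*If $H\in \Theta $ then $H_{W}^{\ast }\in \Theta $.*

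The plan is to show that $H_W^\ast$ satisfies every identity defining $\Theta$, by reducing everything to the free algebras in $\mathrm{Ob}\,\Theta^0$, where the situation is already understood through Proposition \ref{*isom} (or rather its companion in the Op1)--Op2) setup). First I would recall the standard fact that a many-sorted algebra $K$ with signature $\Omega^\ast$ belongs to the variety $\Theta'$ defined (in the signature $\Omega^\ast$) by the $\ast$-translated identities if and only if every identity holds, which in turn is equivalent to: for every finite $\Omega^\ast$-word identity $w_1^\ast = w_2^\ast$ valid in all free algebras $F_W^\ast$ with $F\in\mathrm{Ob}\,\Theta^0$, and every choice of elements of the appropriate sorts in $K$, the two sides agree. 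So it suffices to check that $H_W^\ast$ satisfies exactly the $\Omega^\ast$-identities that hold in every $F_W^\ast$, $F=F(X)\in\mathrm{Ob}\,\Theta^0$.

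The key step is the following. Take any $\Omega^\ast$-identity $v_1(x^{(i_1)},\ldots,x^{(i_n)}) = v_2(x^{(i_1)},\ldots,x^{(i_n)})$ (written in the signature $\Omega^\ast$) that is valid in all the free algebras $F_W^\ast$ with finite $X\subset X_0$; equivalently, taking $A = F(X_\omega\text{-style alphabet}) = F(x^{(i_1)},\ldots,x^{(i_n)})\in\mathrm{Ob}\,\Theta^0$, the identity holds in $A_W^\ast$, i.e. $v_1$ and $v_2$ evaluate to the same element of $A_W^\ast$. By Op2) the bijection $s_A\colon A\to A_W^\ast$ is an isomorphism and $(s_A)_{\mid X}=\mathrm{id}_X$; hence pulling the common value back through $s_A^{-1}$ shows that the $\Omega$-words obtained from $v_1,v_2$ by replacing each $\omega^\ast$ with the corresponding $\omega$ — call them $\bar v_1,\bar v_2$ — give $s_A^{-1}(v_1^{A_W^\ast}) = \bar v_1^{A}$ and likewise for $v_2$, so $\bar v_1 = \bar v_2$ in $A$, which is a free algebra of $\Theta$; thus $\bar v_1 = \bar v_2$ is an identity of $\Theta$ in the original signature $\Omega$. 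Now let $H\in\Theta$ and pick any $h^{(i_1)},\ldots,h^{(i_n)}\in H$ of the matching sorts. Let $\alpha\colon A\to H$ be the homomorphism with $\alpha(x^{(i_k)}) = h^{(i_k)}$. By the remark preceding Proposition \ref{*isom} (verbal operations commute with homomorphisms), $\alpha$ is also a homomorphism $A_W^\ast \to H_W^\ast$, so it carries the identity $v_1 = v_2$ holding in $A_W^\ast$ over to $v_1(h^{(i_1)},\ldots,h^{(i_n)}) = v_2(h^{(i_1)},\ldots,h^{(i_n)})$ in $H_W^\ast$. Since the $h^{(i_k)}$ were arbitrary, $v_1 = v_2$ holds in $H_W^\ast$.

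The empty-sort case needs a word of attention: when some $(X')^{(i)}\neq\varnothing$ but $H^{(i)}=\varnothing$, the identity holds by the principle of the empty set, exactly as in the definitions in Section \ref{intro}; since $H_W^\ast$ has the same sorting as $H$, the same $\Gamma_{H_W^\ast} = \Gamma_H$ makes this automatic, so no extra argument is required. Finally, I would note that $H_W^\ast$ has precisely the signature $\Omega^\ast$ and satisfies every identity that holds in all free $\Omega^\ast$-algebras $F_W^\ast$; but by Proposition \ref{inverswords} and Corollary \ref{invershomomorphism} the signatures $\Omega$ and $\Omega^\ast$ are interchangeable and the free algebras $F_W^\ast$ ($F\in\mathrm{Ob}\,\Theta^0$) are free algebras of $\Theta$ — so the variety generated (in the $\Omega^\ast$-description) by these free algebras is $\Theta$ itself, and $H_W^\ast\in\Theta$.

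I expect the main obstacle to be purely bookkeeping: keeping straight which signature ($\Omega$ or $\Omega^\ast$) each word is written in, and invoking the right translation — the content is entirely in Op1)--Op2), Proposition \ref{*isom}, Proposition \ref{inverswords} and the homomorphism-compatibility of verbal operations, all already available. One should be slightly careful that it genuinely suffices to test identities on the free objects $F(x^{(i_1)},\ldots,x^{(i_n)})$ of $\Theta^0$ rather than on arbitrarily large free algebras, but this is standard: every identity of $\Theta$ involves only finitely many variables, and a term in $n$ variables of prescribed sorts lives in the corresponding finitely generated free algebra, which is an object of $\Theta^0$ up to renaming generators inside $X_0$.
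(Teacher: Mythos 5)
Your proof is correct, but it takes a genuinely different route from the paper's. The paper argues in essentially one line: choose an epimorphism $\varphi \colon F\rightarrow H$ with $F\in \mathrm{Ob}\,\Theta ^{0}$; since verbal operations commute with homomorphisms, $\varphi $ is also an epimorphism $F_{W}^{\ast }\rightarrow H_{W}^{\ast }$, and composing with the isomorphism $s_{F}\colon F\rightarrow F_{W}^{\ast }$ from Op2) exhibits $H_{W}^{\ast }$ as a homomorphic image of the free algebra $F\in \Theta $, whence $H_{W}^{\ast }\in \Theta $ by closure of varieties under quotients. You instead verify the identities of $\Theta $ directly: each identity, evaluated at the generators of the finitely generated free algebra $A$ on its variables, is transported by the isomorphism $s_{A}$ into $A_{W}^{\ast }$ and then pushed into $H_{W}^{\ast }$ along the induced homomorphism $\alpha \colon A_{W}^{\ast }\rightarrow H_{W}^{\ast }$. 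Your version is longer, and the bookkeeping between $\Omega $ and $\Omega ^{\ast }$ could be tightened: the only inclusion you actually need is that the translated defining identities of $\Theta $ hold in every $F_{W}^{\ast }$, which is immediate from $F_{W}^{\ast }\cong F$ (the paper states this right after Op2)); the converse translation via $s_{A}^{-1}$ and the closing remark about the variety generated by the $F_{W}^{\ast }$ are not needed for the conclusion. On the other hand, your argument applies verbatim to an arbitrary $H\in \Theta $, whereas the paper's choice of an epimorphism onto $H$ from an object of $\Theta ^{0}$ is only available when $H$ is finitely generated; for general $H$ the reduction of each identity to its finitely many variables is exactly what is required, so your route is the more robust of the two.
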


\begin{proof}
There is $F\in \mathrm{Ob}\Theta ^{0}$ such that exists epimorphism $\varphi
:F\rightarrow H$. $\varphi $ is also an epimorphism from $F_{W}^{\ast }$ to $%
H_{W}^{\ast }$. Therefore $\varphi s_{F}$ is also an epimorphism from $F$ to 
$H_{W}^{\ast }$.
\end{proof}

If we have a system of words $W$ which fulfills the conditions Op1) and Op2)
then by Proposition \ref{inthevariety} the bijections $\left\{
s_{F}:F\rightarrow F\mid F\in \mathrm{Ob}\Theta ^{0}\right\} $ which are
subjects of the condition Op2) uniquely defined by the system of words $W$.
This system of bijections we denote by $S^{W}$.

\begin{proposition}
\label{wordsbijections}The mapping $W\rightarrow S^{W}$ is one to one and
onto correspondence between the family of the all systems of words which
fulfill the conditions Op1) and Op2) and the family of the all systems of
bijections \linebreak $S=\left\{ s_{F}:F\rightarrow F\mid F\in \mathrm{Ob}%
\Theta ^{0}\right\} $ which fulfills conditions B1) - B3).
\end{proposition}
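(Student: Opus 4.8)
\textbf{Proof plan for Proposition \ref{wordsbijections}.}
The plan is to establish the bijection $W \mapsto S^W$ by assembling the pieces already proved, and then closing the loop with the analogous construction in the reverse direction. First I would show the map is well defined: given a system of words $W$ satisfying Op1) and Op2), Proposition \ref{inthevariety} together with the paragraph following it guarantees that the bijections of condition Op2) are uniquely determined, so $S^W$ is well defined; it remains to check that $S^W$ satisfies B1) -- B3). Conditions B1) and B3) are immediate from Op2) (the bijections conform with the sorting because isomorphisms $F \to F_W^\ast$ do, and $(s_F)_{\mid X} = \mathrm{id}_X$ is part of Op2)). Condition B2) is exactly Corollary \ref{bijectionsproprieties}, which was derived from Op1) and Op2). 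Hence $S^W$ lies in the target family.

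Next I would show the map is onto. Given a system of bijections $S$ satisfying B1) -- B3), formula (\ref{wordformula}) produces a system of words $W = \{ w_\omega \mid \omega \in \Omega \}$ with $w_\omega \in A_\omega$, so Op1) holds. Proposition \ref{*isom} shows that each $s_F$ is an isomorphism $F \to F_W^\ast$, and by B3) we have $(s_F)_{\mid X} = \mathrm{id}_X$; thus Op2) holds for this $W$ with the same bijections $s_F$. Since the bijections attached to $W$ by Op2) are uniquely determined, $S^W = S$. This gives surjectivity.

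Finally, injectivity: suppose $W_1$ and $W_2$ both fulfill Op1), Op2) and $S^{W_1} = S^{W_2} = S$. For each $\omega \in \Omega$ of type $(i_1,\ldots,i_n;j)$, apply the bijection $s_{A_\omega}$ (the same for both, since it is a member of $S$) to the element $\omega(x^{(i_1)},\ldots,x^{(i_n)}) \in A_\omega$; by (\ref{wordformula}) the image is $w_\omega$ for the corresponding system, so $w_\omega$ is forced by $S$ and hence the same in $W_1$ and $W_2$. Therefore $W_1 = W_2$. I do not expect a serious obstacle here: the content has essentially been front-loaded into Propositions \ref{*isom}, \ref{inthevariety} and Corollary \ref{bijectionsproprieties}, and the only point requiring care is to keep straight that the correspondence between $W$ and $S$ is mediated through formula (\ref{wordformula}) in one direction and through Op2) in the other, checking that these two passages are mutually inverse — which is precisely what Proposition \ref{*isom} (for the direction $S \to W \to S$) and the uniqueness statement before the proposition (for $W \to S \to W$) supply.
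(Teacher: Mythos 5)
Your proposal is correct and follows essentially the same route as the paper: B1) and B3) are read off from Op2), B2) is Corollary \ref{bijectionsproprieties}, surjectivity comes from formula (\ref{wordformula}) together with Proposition \ref{*isom}, and injectivity from evaluating $s_{A_\omega}$ on $\omega\left( x^{\left( i_{1}\right) },\ldots ,x^{\left( i_{n}\right) }\right)$. Your explicit remark that the two passages $W\rightarrow S$ and $S\rightarrow W$ must be checked to be mutually inverse is, if anything, slightly more careful than the paper's own write-up.
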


\begin{proof}
We consider a system of words $W$ which fulfills the conditions Op1) and
Op2). By condition Op2) for every\textit{\ }$F\left( X\right) \in \mathrm{Ob}%
\Theta ^{0}$ the $s_{F}\mid _{X}=id_{X}$ holds and all\ the bijections $%
s_{F}:F\rightarrow F$ conform with the sorting of the algebras $F$. So the
system of bijections $S^{W}$ fulfills the conditions B1) and B2). By
Corollary \ref{bijectionsproprieties} from the Proposition \ref{inverswords}
the system of bijections $S^{W}$ fulfills the condition B2).

If we have the systems of bijections $S$ which fulfills conditions B1) - B3)
we can define the system of words $W=\left\{ w_{\omega }\in A_{\omega }\mid
\omega \in \Omega \right\} $ by formula (\ref{wordformula}). This system of
words fulfills the condition Op1) and by Proposition \ref{*isom} - the
condition Op2). The $S^{W}=S$ holds, so our correspondence is onto.

We assume that we have two systems of words $W=\left\{ w_{\omega }^{W}\in
A_{\omega }\mid \omega \in \Omega \right\} $ and $V=\left\{ w_{\omega
}^{V}\in A_{\omega }\mid \omega \in \Omega \right\} $ which fulfill the
conditions Op1) and Op2) and $S^{W}=S=S^{V}$. We take $\omega \in \Omega $
such that $\tau _{\omega }=\left( i_{1},\ldots ,i_{n};j\right) $, $%
i_{1},\ldots ,i_{n},j\in \Gamma $, $A_{\omega }=F\left( X_{\omega }\right) $
as above and $x^{\left( i_{k}\right) }$ such that $\eta _{A_{\omega }}\left(
x^{\left( i_{k}\right) }\right) =i_{k}$, $1\leq k\leq n$.. By condition Op2) 
$s_{A_{\omega }}^{W}\left( \omega \left( x^{\left( i_{1}\right) },\ldots
,x^{\left( i_{n}\right) }\right) \right) =w_{\omega }^{W}\left( x^{\left(
i_{1}\right) },\ldots ,x^{\left( i_{n}\right) }\right) =s_{A_{\omega
}}^{V}\left( \omega \left( x^{\left( i_{1}\right) },\ldots ,x^{\left(
i_{n}\right) }\right) \right) =w_{\omega }^{V}\left( x^{\left( i_{1}\right)
},\ldots ,x^{\left( i_{n}\right) }\right) $. Therefore $W=V$ and our
correspondence is one to one.
\end{proof}

From Propositions \ref{automorphismbijections} and \ref{wordsbijections} we
conclude the

\begin{theorem}
\label{methodofverbaloperations}There is one to one and onto correspondence $%
\Phi \rightarrow W^{\Phi }$ between the family of the all strongly stable
automorphisms of the category $\Theta ^{0}$ and the family of the all
systems of words which fulfill the conditions Op1) and Op2). The systems of
words $W^{\Phi }=W$ defined by formula (\ref{wordformula}) where bijections $%
s_{A_{\omega }}$ are subjects of the items A2) - A4) of the Definition \ref%
{str_stab_aut}.
\end{theorem}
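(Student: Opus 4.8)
The plan is to deduce Theorem \ref{methodofverbaloperations} directly from the two correspondences already established, by composing them. The key observation is that the statement asserts exactly that the map $\Phi \mapsto W^{\Phi}$ is a bijection, and we have two bijections available: Proposition \ref{automorphismbijections} gives a one-to-one and onto correspondence $\Phi \mapsto S^{\Phi}$ between strongly stable automorphisms and systems of bijections satisfying B1)--B3), while Proposition \ref{wordsbijections} gives a one-to-one and onto correspondence $W \mapsto S^{W}$ between systems of words satisfying Op1)--Op2) and the same systems of bijections satisfying B1)--B3). So the composite $\Phi \mapsto S^{\Phi} \mapsto W$, where $W$ is the unique system of words with $S^{W} = S^{\Phi}$, is a bijection between strongly stable automorphisms and systems of words satisfying Op1)--Op2). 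I would simply define $W^{\Phi} := (S^{W} \mapsto W)^{-1}(S^{\Phi})$, i.e.\ the preimage of $S^{\Phi}$ under the correspondence of Proposition \ref{wordsbijections}, and note that a composition of bijections is a bijection.

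The second sentence of the theorem — that $W^{\Phi}$ is given concretely by formula (\ref{wordformula}) with the bijections $s_{A_\omega}$ taken from items A2)--A4) of Definition \ref{str_stab_aut} — needs a short unwinding of the definitions rather than any real argument. First I would recall that $S^{\Phi}$ is precisely the system of bijections $\{s_F\}$ attached to $\Phi$ by Definition \ref{str_stab_aut} (this is how $S^{\Phi}$ was defined, via the uniqueness Proposition preceding Proposition \ref{automorphismbijections}). Then I would recall, from the proof of Proposition \ref{wordsbijections}, that the system of words $W$ corresponding to a given system of bijections $S$ is built by formula (\ref{wordformula}): one sets $s_{A_\omega}(\omega(x^{(i_1)},\dots,x^{(i_n)})) = w_\omega(x^{(i_1)},\dots,x^{(i_n)})$ for each $\omega \in \Omega$. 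Applying this with $S = S^{\Phi}$ and $s_{A_\omega} = s_{A_\omega}^{\Phi}$ gives exactly the claimed description of $W^{\Phi}$.

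I do not expect any genuine obstacle here; the theorem is a formal consequence of Propositions \ref{automorphismbijections} and \ref{wordsbijections}, and the only thing to be careful about is making the identification of the three objects $S^{\Phi}$ (attached to $\Phi$), the system of bijections in Proposition \ref{automorphismbijections}, and the system of bijections $S^{W}$ in Proposition \ref{wordsbijections} explicit enough that the composition is visibly well-defined. The mildly delicate point, if any, is purely bookkeeping: one must check that the system of bijections produced from $\Phi$ (via A2)--A4)) and the system of bijections consumed by the $W$-construction are literally the same family $\{s_F : F \in \mathrm{Ob}\,\Theta^0\}$, which is immediate since both are characterised by conditions B1)--B3) together with $\Phi(\mu) = s_B \mu s_A^{-1}$, and such a family is unique. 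The whole proof is therefore two or three lines: "The correspondences $\Phi \mapsto S^{\Phi}$ and $W \mapsto S^{W}$ of Propositions \ref{automorphismbijections} and \ref{wordsbijections} are bijections onto the same set of systems of bijections satisfying B1)--B3); composing the first with the inverse of the second yields the bijection $\Phi \mapsto W^{\Phi}$, and the formula for $W^{\Phi}$ is read off from (\ref{wordformula}) applied to $S^{\Phi}$."
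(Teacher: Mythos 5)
Your proposal is correct and is exactly the paper's argument: the paper derives Theorem \ref{methodofverbaloperations} with no separate proof, simply stating that it follows from Propositions \ref{automorphismbijections} and \ref{wordsbijections} by composing the bijection $\Phi \mapsto S^{\Phi}$ with the inverse of $W \mapsto S^{W}$, and reading off the formula (\ref{wordformula}). Your extra remark that the two systems of bijections coincide by the uniqueness of the family satisfying B1)--B3) is the right (and only) bookkeeping point.
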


We will denote by $W\rightarrow \Phi ^{W}$ the correspondence which inverse
to the correspondence $\Phi \rightarrow W^{\Phi }$.

\subsection{Strongly stable automorphisms and inner automorphisms.}

\begin{proposition}
\label{intersectioncriterion}The strongly stable automorphism $\Phi $ which
corresponds to the system of words $W^{\Phi }=W$ is inner if and only if
there is a system of isomorphisms $\left\{ \tau _{F}:F\rightarrow
F_{W}^{\ast }\mid F\in \mathrm{Ob}\Theta ^{0}\right\} $ such that for every $%
A,B\in \mathrm{Ob}\Theta ^{0}$ and every $\mu \in \mathrm{Mor}_{\Theta
^{0}}\left( A,B\right) $ the 
\begin{equation}
\tau _{B}\mu =\mu \tau _{A}  \label{comm}
\end{equation}%
holds.
\end{proposition}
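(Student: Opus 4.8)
The statement asserts an equivalence. In one direction, assume $\Phi = \Phi^{W}$ is inner; in the other, assume a system of isomorphisms $\tau_F : F \to F^{\ast}_W$ satisfying the naturality square \eqref{comm} exists, and conclude $\Phi$ is inner. The natural object to work with throughout is the identity-on-objects functor $\Phi$ together with the "twisting" of the operations encoded by $W$: recall from Proposition~\ref{*isom} that each $s_F$ is an isomorphism $F \to F^{\ast}_W$, and that $\Phi(\mu) = s_B \mu s_A^{-1}$ on morphisms. The subtlety is that $s_F$ is only a bijection of the underlying set (an isomorphism to the $\ast$-algebra), whereas an inner automorphism requires genuine isomorphisms $A \to \Phi(A)$ in $\Theta^0$ itself — and here $\Phi(A) = A$, so we need isomorphisms $A \to A$ in the original signature. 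This is exactly the gap that the $\tau_F$ are meant to bridge: $\tau_F$ is an isomorphism $F \to F^{\ast}_W$, hence by Corollary~\ref{invershomomorphism} it is simultaneously a homomorphism $F \to F$, and being a bijection with a two-sided inverse that is also a homomorphism (again via Corollary~\ref{invershomomorphism}), it is an automorphism of $F$ in $\Theta^0$.

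**Direction 1 ($\Phi$ inner $\Rightarrow$ the $\tau_F$ exist).** Suppose $\Phi$ is inner. By Definition~\ref{inner} there is a system of isomorphisms $s_A^{\Phi} : A \to \Phi(A) = A$ in $\Theta^0$ making the naturality square commute: $s_B^{\Phi} \mu = \Phi(\mu) s_A^{\Phi}$ for every $\mu \in \mathrm{Mor}_{\Theta^0}(A,B)$. I would set $\tau_F := s_F^{-1} \circ s_F^{\Phi}$, or more precisely analyze how $s_F^{\Phi}$ interacts with the verbal operations. The key point: since $\Phi(\mu) = s_B \mu s_A^{-1}$ and also $\Phi(\mu) = s_B^{\Phi}\mu (s_A^{\Phi})^{-1}$ for all $\mu$, the composite $s_A^{-1} s_A^{\Phi}$ commutes with every morphism, i.e. it defines a natural transformation from the identity functor to itself. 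Writing $\rho_A := s_A^{-1} s_A^{\Phi}$ (a bijection $A \to A$), we get $\rho_B \mu = \mu \rho_A$ for all $\mu$; and since $s_A^{\Phi}$ is a homomorphism $A \to A$ in $\Theta^0$ while $s_A^{-1} : A \to F^{\ast}_W$ is a homomorphism (the inverse of the isomorphism of Proposition~\ref{*isom}), the composite $\tau_A := s_A^{-1} s_A^{\Phi} : A \to A^{\ast}_W$ is a homomorphism, in fact an isomorphism $A \to A^{\ast}_W$ because $s_A^{\Phi}$ is an isomorphism $A \to A$ and $s_A^{-1}$ is an isomorphism $A \to A^{\ast}_W$. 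The naturality $\tau_B \mu = \mu \tau_A$ follows from the naturality of $s^{\Phi}$ together with the action formula for $\Phi$. One must be slightly careful that $\mu$ on the right-hand side of \eqref{comm} is read as the same set-map $\mu$ viewed as a homomorphism $A^{\ast}_W \to B^{\ast}_W$, which is legitimate by the remarks preceding Proposition~\ref{*isom}.

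**Direction 2 (the $\tau_F$ exist $\Rightarrow$ $\Phi$ inner).** Conversely, given the system $\{\tau_F : F \to F^{\ast}_W\}$ with $\tau_B \mu = \mu \tau_A$, put $s_A^{\Phi} := s_A \tau_A : A \to A$. Each $\tau_A$ is an isomorphism $A \to A^{\ast}_W$, hence by Corollary~\ref{invershomomorphism} also a homomorphism $A \to A$, and its inverse is a homomorphism by the same corollary, so $\tau_A \in \mathrm{Aut}_{\Theta^0}(A)$; composing with the isomorphism $s_A : A \to A^{\ast}_W$ of Proposition~\ref{*isom} (which, read the other way, is what makes $s_A$ an isomorphism in $\Theta^0$ onto $A^{\ast}_W$, so $s_A$ as a set-map is an automorphism when the target is re-equipped with $\Omega$ — here using that $\tau_A$ lands in $A^{\ast}_W$) yields that $s_A^{\Phi}$ is an automorphism of $A$ in $\Theta^0$. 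Finally I would check the naturality square for $s^{\Phi}$: for $\mu \in \mathrm{Mor}_{\Theta^0}(A,B)$, compute $\Phi(\mu) s_A^{\Phi} = (s_B \mu s_A^{-1})(s_A \tau_A) = s_B \mu \tau_A = s_B \tau_B \mu = s_B^{\Phi}\mu$, where the middle equality is precisely \eqref{comm}. This exhibits $\Phi$ as isomorphic to the identity functor, i.e. inner, completing the proof.

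**Main obstacle.** The real care is bookkeeping about which signature each map is a homomorphism for: $s_F$ is an isomorphism $F \to F^{\ast}_W$, not an endomorphism of $F$; $\tau_F$ is likewise an isomorphism $F \to F^{\ast}_W$ but — crucially via Corollary~\ref{invershomomorphism}, which uses the symmetry of signatures established by Proposition~\ref{inverswords} — is simultaneously a homomorphism $F \to F$, hence an automorphism. Getting the direction of every arrow right, and making sure that the set-map $\mu$ occurring in \eqref{comm} is legitimately read as a homomorphism between the relevant $\ast$-algebras, is the only delicate part; the algebra is then a one-line composition on each side. I do not expect any genuinely hard step beyond this.
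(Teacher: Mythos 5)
You have identified the right mechanism --- the proposition is exactly about trading $\Theta ^{0}$-isomorphisms $A\rightarrow A$ for isomorphisms $A\rightarrow A_{W}^{\ast }$ by composing with the bijection $s_{A}$, and your two naturality computations are formally the ones the paper performs --- but in both directions you compose in the wrong order, and in the converse direction this is a genuine error, not a cosmetic one. The claim that $s_{A}^{-1}$ is a homomorphism $A\rightarrow A_{W}^{\ast }$ is false: the inverse of the isomorphism $s_{A}:A\rightarrow A_{W}^{\ast }$ of Proposition \ref{*isom} is an isomorphism $A_{W}^{\ast }\rightarrow A$, and in general $s_{A}^{-1}$ does not intertwine $\Omega $ with $\Omega ^{\ast }$ (in the variety of group representations with $w_{\lambda }\left( x^{\left( 2\right) }\right) =\varphi \left( \lambda \right) x^{\left( 2\right) }$ one gets $s_{F}^{-1}\left( \lambda v\right) =\varphi ^{-1}\left( \lambda \right) s_{F}^{-1}\left( v\right) $, which has the required form only when $\varphi ^{2}=\mathrm{id}_{k}$). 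Hence your $\tau _{A}:=s_{A}^{-1}s_{A}^{\Phi }$ is an isomorphism $A_{W}^{\ast }\rightarrow A$ rather than $A\rightarrow A_{W}^{\ast }$; the paper takes exactly the inverse composite $\tau _{A}=\left( s_{A}^{\Phi }\right) ^{-1}s_{A}$, which lands where the statement requires and still satisfies (\ref{comm}). This half is repaired by one inversion.

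The converse direction is where the argument breaks. Corollary \ref{invershomomorphism} converts homomorphisms $\left( H_{1}\right) _{W}^{\ast }\rightarrow \left( H_{2}\right) _{W}^{\ast }$ into homomorphisms $H_{1}\rightarrow H_{2}$; it does not apply to a map $\tau _{A}:A\rightarrow A_{W}^{\ast }$, which intertwines $\Omega $ with $\Omega ^{\ast }$ and is in general not an $\Omega $-endomorphism of $A$ at all (the map $\tau _{F}\left( g\right) =g^{-1}$ used for $\Phi \left( \mathrm{id}_{k},\sigma \right) $ in the group-representation variety is an anti-homomorphism of the group part). Consequently your $s_{A}^{\Phi }:=s_{A}\tau _{A}$ is an isomorphism of $A$ onto the double-starred algebra $\left( A_{W}^{\ast }\right) _{W}^{\ast }$, not an automorphism of $A$ in $\Theta ^{0}$, and then the identity $\Phi \left( \mu \right) =s_{B}^{\Phi }\mu \left( s_{A}^{\Phi }\right) ^{-1}$ does not witness innerness --- producing genuine $\Theta ^{0}$-isomorphisms is the whole content of this direction. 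A concrete failure: in the variety of representations of Lie algebras take $\Phi =\Phi \left( \mathrm{id}_{k},a\right) $ with $a^{2}\neq 1$; there $\tau _{F}\left( f\right) =a^{-1}f$ and $s_{F}$ multiplies a Lie monomial of degree $d$ by $a^{d-1}$, so $s_{F}\tau _{F}$ multiplies it by $a^{d-2}$ and does not preserve the bracket. The correct composite is $\sigma _{A}:=s_{A}\tau _{A}^{-1}=\left( \tau _{A}s_{A}^{-1}\right) ^{-1}$: here $\tau _{A}s_{A}^{-1}:A_{W}^{\ast }\rightarrow A_{W}^{\ast }$ is an isomorphism, Corollary \ref{invershomomorphism} does apply to it, and (\ref{comm}) gives $\Phi \left( \mu \right) =\sigma _{B}\mu \sigma _{A}^{-1}$. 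This is the paper's proof; the ``twists'' must cancel, not compound.
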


\begin{proof}
If automorphism $\Phi $ is inner then by Definition \ref{inner} there is a
system of isomorphisms $\left\{ \sigma _{F}:F\rightarrow F\mid F\in \mathrm{%
Ob}\Theta ^{0}\right\} $ such that for every $A,B\in \mathrm{Ob}\Theta ^{0}$
and every $\mu \in \mathrm{Mor}_{\Theta ^{0}}\left( A,B\right) $ the $\Phi
\left( \mu \right) =\sigma _{B}\mu \sigma _{A}^{-1}=s_{B}\mu s_{A}^{-1}$
holds, where bijections $s_{A},s_{B}$ are subjects of the items A2) - A4) of
the Definition \ref{str_stab_aut}. So the $\mu \sigma _{A}^{-1}s_{A}=\sigma
_{B}^{-1}s_{B}\mu $ holds. By Proposition \ref{*isom} $\tau _{A}=\sigma
_{A}^{-1}s_{A}:A\rightarrow A_{W}^{\ast }$ is an isomorphism.

Now we assume that $\Phi $ is a strongly stable automorphism and (\ref{comm}%
) holds. $s_{A},s_{B}$ are subjects of the items A2) - A4) of the Definition %
\ref{str_stab_aut}. From (\ref{comm}) we conclude $\Phi \left( \mu \right)
=s_{B}\mu s_{A}^{-1}=s_{B}\tau _{B}^{-1}\mu \tau _{A}s_{A}^{-1}$. By
Proposition \ref{*isom} $\tau _{A}s_{A}^{-1}:A_{W}^{\ast }\rightarrow
A_{W}^{\ast }$ is an isomorphism. So, by Corollary \ref{invershomomorphism}
from Proposition \ref{inverswords} $\tau _{A}s_{A}^{-1}:A\rightarrow A$ is
an isomorphism. Hence $\sigma _{A}=\left( \tau _{A}s_{A}^{-1}\right)
^{-1}:A\rightarrow A$ is an isomorphism and $\Phi \left( \mu \right) =\sigma
_{B}\mu \sigma _{A}^{-1}$.
\end{proof}

\section{Application to the universal algebraic geometry of many-sorted
algebras.\label{alggeometry}}

The basic notion of the universal algebraic geometry defined in \cite%
{PlotkinVarCat}, \cite{PlotkinNotions} and \cite{PlotkinSame}. These notion
can by defined for many-sorted algebras. The universal algebraic geometry of
same classes of many-sorted algebras was considered in \cite{Repofgr} and 
\cite{ShestTsur}. We can consider the universal algebraic geometry over the
arbitrary variety of algebras $\Theta $. Ours equations will be pairs of the
elements of the free algebras of the variety $\Theta $. It is natural that
in the case of many-sorted algebras we compare elements of the same sort, so
for the equation $\left( t_{1},t_{2}\right) \in F^{2}$, where $F\in \mathrm{%
Ob}\Theta ^{0}$, the $\left( t_{1},t_{2}\right) \in \left( F^{\left(
i\right) }\right) ^{2}$ holds, where $i\in \Gamma $. Therefore, $T\subseteq
\dbigcup\limits_{i\in \Gamma }\left( \left( F\right) ^{\left( i\right)
}\right) ^{2}$ holds for every system of equations $T$. Solutions of this
system of equations we can find in arbitrary algebra $H\in \Theta $. It will
be a homomorphisms $\varphi \in \mathrm{Hom}\left( F,H\right) $, such that $%
\varphi \left( t_{1}\right) =\varphi \left( t_{2}\right) $ holds for every $%
\left( t_{1},t_{2}\right) \in T$. We denote by $T_{H}^{\prime }$ the set of
the all solutions of the system of equations $T$ in the algebra $H$. The $%
T_{H}^{\prime }=\left\{ \varphi \in \mathrm{Hom}\left( F,H\right) \mid
T\subseteq \ker \varphi \right\} $ holds. Also we can consider the algebraic
closure of the system $T$ over the algebra $H$: $T_{H}^{\prime \prime
}=\bigcap\limits_{\varphi \in T_{H}^{\prime }}\ker \varphi =\bigcap\limits 
_{\substack{ \varphi \in \mathrm{Hom}\left( F,H\right) ,  \\ T\subseteq \ker
\varphi }}\ker \varphi $. $T_{H}^{\prime \prime }$ is a set of all equations
which we can conclude from the system $T$ in the algebra $H$ or the maximal
system of equations which has in the algebra $H$ same solutions as the
system $T$. It is clear that algebraic closure of arbitrary system $%
T\subseteq \dbigcup\limits_{i\in \Gamma }\left( \left( F\right) ^{\left(
i\right) }\right) ^{2}$ is a congruence and also the $T_{H}^{\prime \prime
}\subseteq \dbigcup\limits_{i\in \Gamma }\left( \left( F\right) ^{\left(
i\right) }\right) ^{2}$ must fulfill. If $\mathrm{Hom}\left( F,H\right)
=\varnothing $, then for every $T\subseteq \dbigcup\limits_{i\in \Gamma
}\left( \left( F\right) ^{\left( i\right) }\right) ^{2}$ the $T_{H}^{\prime
}=\varnothing $ holds and $T_{H}^{\prime \prime }=\bigcap\limits_{\varphi
\in T_{H}^{\prime }}\ker \varphi =\dbigcup\limits_{i\in \Gamma }\left(
\left( F\right) ^{\left( i\right) }\right) ^{2}$ fulfills by the principle
of the empty set.

\begin{definition}
\cite{PlotkinVarCat}The set $T\subseteq \dbigcup\limits_{i\in \Gamma }\left(
\left( F\right) ^{\left( i\right) }\right) ^{2}$ is called \textbf{%
algebraically closed} over the algebra $H$ if $T_{H}^{\prime \prime }=T$.
\end{definition}

If $F\in \mathrm{Ob}\Theta ^{0}$ then the family of the subsets of $%
\dbigcup\limits_{i\in \Gamma }\left( \left( F\right) ^{\left( i\right)
}\right) ^{2}$ which are algebraically closed over the algebra $H\in \Theta $
we denote by $Cl_{H}(F)$. The minimal subset in the $Cl_{H}(F)$ is $\left(
\Delta _{F}\right) _{H}^{\prime \prime }=\bigcap\limits_{\varphi \in \mathrm{%
Hom}\left( F,H\right) }\ker \varphi $.$\left( {}\right) $

\begin{definition}
\cite{PlotkinVarCat}We say that \textit{algebras }$H_{1},H_{2}\in \Theta $%
\textit{\ are \textbf{geometrically equivalent}} if for every $F\in \mathrm{%
Ob}\Theta ^{0}$ and every $T\subseteq \dbigcup\limits_{i\in \Gamma }\left(
\left( F\right) ^{\left( i\right) }\right) ^{2}$ the $T_{H_{1}}^{\prime
\prime }=T_{H_{2}}^{\prime \prime }$.
\end{definition}

It is clear that algebras $H_{1},H_{2}$\ are geometrically equivalent if and
only if the $Cl_{H_{1}}(F)=Cl_{H_{2}}(F)$ holds for every $F\in \mathrm{Ob}%
\Theta ^{0}$.

\subsection{Automorphic equivalence of many-sorted algebras.\label%
{Automorphic_equivalence}}

\begin{definition}
\label{Autom_equiv}\cite{PlotkinSame}We say that \textit{algebras }$%
H_{1},H_{2}\in \Theta $\textit{\ are \textbf{automorphically equivalent} if
there exist an automorphism }$\Phi :\Theta ^{0}\rightarrow \Theta ^{0}$%
\textit{\ and the bijections}%
\begin{equation*}
\alpha (\Phi )_{A}:Cl_{H_{1}}(A)\rightarrow Cl_{H_{2}}(\Phi (A))
\end{equation*}%
for every $A\in \mathrm{Ob}\Theta ^{0}$, \textit{coordinated in the
following sense: if }$A_{1},A_{2}\in \mathrm{Ob}\Theta ^{0}$\textit{, }$\mu
_{1},\mu _{2}\in \mathrm{Hom}\left( A_{1},A_{2}\right) $\textit{, }$T\in
Cl_{H_{1}}(A_{2})$\textit{\ then}%
\begin{equation*}
\tau \mu _{1}=\tau \mu _{2},
\end{equation*}%
\textit{if and only if }%
\begin{equation*}
\widetilde{\tau }\Phi \left( \mu _{1}\right) =\widetilde{\tau }\Phi \left(
\mu _{2}\right) ,
\end{equation*}%
\textit{where }$\tau :A_{2}\rightarrow A_{2}/T$\textit{, }$\widetilde{\tau }%
:\Phi \left( A_{2}\right) \rightarrow \Phi \left( A_{2}\right) /\alpha (\Phi
)_{A_{2}}\left( T\right) $\textit{\ are the natural epimorphisms.}
\end{definition}

If $A,B\in \mathrm{Ob}\Theta ^{0}$, $f:A\rightarrow B$ is a mapping, $%
T\subseteq A^{2}$ then we denote $f\left( T\right) =\left\{ \left( f\left(
t_{1}\right) ,f\left( t_{2}\right) \right) \mid \left( t_{1},t_{2}\right)
\in T\right\} $.

This result is a refinement of \cite[Proposition 8]{PlotkinSame}.

\begin{theorem}
\label{alfaformula}If \textit{an automorphism }$\Phi :\Theta ^{0}\rightarrow
\Theta ^{0}$ provides an automorphic equivalence of algebras $H_{1},H_{2}\in
\Theta $ and $\Phi $ acts on the morphisms of $\Theta ^{0}$ by formula (\ref%
{pot_inner}) with the system of bijections $\left\{ s_{A}:A\rightarrow \Phi
\left( A\right) \mid A\in \mathrm{Ob}\Theta ^{0}\right\} $, then $\alpha
(\Phi )_{A}\left( T\right) =s_{A}\left( T\right) $ holds for every $T\in
Cl_{H_{1}}(A)$.

Vice versa, if \textit{an automorphism }$\Phi :\Theta ^{0}\rightarrow \Theta
^{0}$ acts on the morphisms of $\Theta ^{0}$ by formula (\ref{pot_inner})
and for every $A\in \mathrm{Ob}\Theta ^{0}$ the $s_{A}:Cl_{H_{1}}(A)%
\rightarrow Cl_{H_{2}}(\Phi (A))$ is a bijection then the \textit{%
automorphism }$\Phi :\Theta ^{0}\rightarrow \Theta ^{0}$ provides an
automorphic equivalence of algebras $H_{1},H_{2}\in \Theta $ with $\alpha
(\Phi )_{A}=s_{A}$.
\end{theorem}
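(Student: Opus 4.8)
The plan is to unpack Definition \ref{Autom_equiv} in terms of the bijections $s_A$ and to show that the "coordination" condition in that definition is precisely equivalent to formula (\ref{pot_inner}) for $\Phi$. First I would record the elementary observation that for $A\in\mathrm{Ob}\,\Theta^0$, $\mu\in\mathrm{Hom}(A_1,A_2)$ and $T\subseteq A_2^2$, writing $\tau:A_2\to A_2/T$ for the natural epimorphism, the equality $\tau\mu_1=\tau\mu_2$ is equivalent to $(\mu_1\times\mu_2)(\Delta_{A_1})\subseteq T$, i.e.\ to $\ker(\tau\mu_1\mid\ker(\tau\mu_2))$ — more simply, to the inclusion $\{(\mu_1(a),\mu_2(a))\mid a\in A_1\}\subseteq T$. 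Hence the left-hand condition $\tau\mu_1=\tau\mu_2$ depends only on the set $T$ and the pair $(\mu_1,\mu_2)$, never on the algebra $H_1$; the role of $H_1$ is only to restrict attention to those $T$ lying in $Cl_{H_1}(A_2)$.

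Next I would treat the forward direction. Assume $\Phi$ provides an automorphic equivalence via the family $\{\alpha(\Phi)_A\}$. Fix $A\in\mathrm{Ob}\,\Theta^0$ and $T\in Cl_{H_1}(A)$. I want to show $\alpha(\Phi)_A(T)=s_A(T)$. The idea is to choose the test data in Definition \ref{Autom_equiv} so as to pin down $\alpha(\Phi)_A(T)$ element by element. Take $A_2=A$, and for an arbitrary pair $(a_1,a_2)\in A^2$ of equal sort take $A_1=F(x^{(i)})$ a free algebra on one generator of that sort, with $\mu_1,\mu_2:A_1\to A$ sending $x^{(i)}$ to $a_1$ and $a_2$ respectively. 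Then $\tau\mu_1=\tau\mu_2$ holds iff $(a_1,a_2)\in T$ (by the observation above, since the image set is just $\{(a_1,a_2)\}$ once we note $\mu_j$ is determined by $\mu_j(x^{(i)})$). On the other side, $\Phi(\mu_j)=s_A\mu_j s_{A_1}^{-1}$; applying these to a generator of $\Phi(A_1)$ shows that $\widetilde\tau\Phi(\mu_1)=\widetilde\tau\Phi(\mu_2)$ holds iff $(s_A(a_1),s_A(a_2))\in\alpha(\Phi)_A(T)$. By the "if and only if" of the coordination condition, $(a_1,a_2)\in T\iff(s_A(a_1),s_A(a_2))\in\alpha(\Phi)_A(T)$, i.e.\ $\alpha(\Phi)_A(T)=s_A(T)$ as subsets of $\Phi(A)^2$. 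One should also check that $s_A(T)$ really lies in $Cl_{H_2}(\Phi(A))$; but this is automatic since $s_A(T)=\alpha(\Phi)_A(T)\in Cl_{H_2}(\Phi(A))$ by hypothesis, and conversely surjectivity of $\alpha(\Phi)_A$ together with the just-proved formula shows $s_A$ maps $Cl_{H_1}(A)$ onto $Cl_{H_2}(\Phi(A))$.

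For the converse, assume $\Phi$ acts by (\ref{pot_inner}) with bijections $s_A$ and that each $s_A$ restricts to a bijection $Cl_{H_1}(A)\to Cl_{H_2}(\Phi(A))$. Define $\alpha(\Phi)_A:=s_A$ (meaning $T\mapsto s_A(T)$). I must verify the coordination condition for arbitrary $A_1,A_2,\mu_1,\mu_2,T\in Cl_{H_1}(A_2)$. Using the observation from the first paragraph, $\tau\mu_1=\tau\mu_2\iff\{(\mu_1(a),\mu_2(a))\mid a\in A_1\}\subseteq T$; applying the bijection $s_{A_2}$ to both sides and using $\Phi(\mu_j)=s_{A_2}\mu_j s_{A_1}^{-1}$, one gets $\{(\Phi(\mu_1)(b),\Phi(\mu_2)(b))\mid b\in\Phi(A_1)\}=s_{A_2}\bigl(\{(\mu_1(a),\mu_2(a))\mid a\in A_1\}\bigr)$, so this set is contained in $s_{A_2}(T)=\alpha(\Phi)_{A_2}(T)$ iff the original set is contained in $T$, which is iff $\widetilde\tau\Phi(\mu_1)=\widetilde\tau\Phi(\mu_2)$. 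That is exactly the required equivalence, and since each $\alpha(\Phi)_A$ is a bijection by hypothesis, $\Phi$ provides an automorphic equivalence.

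The main obstacle I anticipate is bookkeeping rather than conceptual: one must be careful that in the forward direction the one-generator test algebra $F(x^{(i)})$ exists in $\mathrm{Ob}\,\Theta^0$ of the correct sort and that the homomorphisms $\mu_j$ are genuinely determined by the image of the single generator (this uses the freeness and Condition \ref{monoiso} only implicitly, through the setup), and that all congruences and image sets are taken within the appropriate $\dbigcup_{i\in\Gamma}(F^{(i)})^2$ so that sorts match — in particular that $s_A$ preserves sorts, which is $\eta_A=\eta_{\Phi(A)}s_A$ from Theorem \ref{potinere}. A secondary point to handle cleanly is the degenerate case $\mathrm{Hom}(A_1,A_2)=\varnothing$ or empty sorts, where the equalities $\tau\mu_1=\tau\mu_2$ hold vacuously on both sides; this is consistent and requires only a remark.
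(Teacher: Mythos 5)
Your proposal follows essentially the same route as the paper's proof: test the coordination condition of Definition \ref{Autom_equiv} against one-generator free algebras $F(x^{(i)})$ to pin down $\alpha(\Phi)_A(T)$ pointwise, and run the same computation backwards (using bijectivity of the $s_A$) for the converse. The converse direction as you wrote it is correct and coincides with the paper's argument.

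There is, however, one justification in the forward direction that is wrong as written. For $\mu_1,\mu_2:F(x^{(i)})\to A$ with $\mu_j(x^{(i)})=a_j$, the set $\left\{ \left(\mu_1(f),\mu_2(f)\right)\mid f\in F(x^{(i)})\right\}$ is \emph{not} the singleton $\left\{(a_1,a_2)\right\}$: it contains the pair $(\mu_1(f),\mu_2(f))$ for every term $f$ built from $x^{(i)}$ by the operations of $\Omega$. The equivalence $\tau\mu_1=\tau\mu_2\iff(a_1,a_2)\in T$ is still true, but the implication from $(a_1,a_2)\in T$ requires that $T$ is a congruence (which it is, being algebraically closed) together with an induction on the length of terms. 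Likewise, on the $\Phi$-side the claim that $\widetilde{\tau}\Phi(\mu_1)=\widetilde{\tau}\Phi(\mu_2)$ follows from $(s_A(a_1),s_A(a_2))\in\alpha(\Phi)_A(T)$ needs that $\alpha(\Phi)_A(T)$ is a congruence and that $\Phi(F(x^{(i)}))$ is generated by the single element $s_F(x^{(i)})$ of the right sort; the latter is supplied by the isomorphism $\sigma:F(x^{(i)})\to\Phi(F(x^{(i)}))$ with $\sigma(x^{(i)})=s_F(x^{(i)})$ from the proof of Theorem \ref{isom}, which your write-up invokes only implicitly. These are precisely the "induction by the length of the terms" steps the paper spells out. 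With that repair the argument is complete.
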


\begin{proof}
We assume that $T\in Cl_{H_{1}}(A)$. We suppose that $\left(
t_{1},t_{2}\right) \in \left( A^{\left( i\right) }\right) ^{2}\cap T$, where 
$i\in \Gamma $. We consider $F=F\left( x^{\left( i\right) }\right) \in 
\mathrm{Ob}\Theta ^{0}$ and two homomorphisms $\mu _{1},\mu
_{2}:F\rightarrow A$, such that $\mu _{j}\left( x^{\left( i\right) }\right)
=t_{j}$, $j=1,2$. $T$ is a congruence, so we can prove by induction by the
length of the terms in the algebra $F$, that $\left( \mu _{1}\left( f\right)
,\mu _{2}\left( f\right) \right) \in T$ holds for every $f\in F$. Therefore $%
\tau \mu _{1}=\tau \mu _{2}$ and $\widetilde{\tau }\Phi \left( \mu
_{1}\right) =\widetilde{\tau }\Phi \left( \mu _{2}\right) $, where $\tau
:A\rightarrow A/T$\textit{, }$\widetilde{\tau }:\Phi \left( A\right)
\rightarrow \Phi \left( A\right) /\alpha (\Phi )_{A}\left( T\right) $\textit{%
\ }are the natural epimorphisms. So $\left( s_{A}\mu _{1}s_{F}^{-1}\left(
u\right) ,s_{A}\mu _{2}s_{F}^{-1}\left( u\right) \right) \in \alpha (\Phi
)_{A}\left( T\right) $ holds for every $u\in \Phi \left( F\right) $. We take 
$u=s_{F}\left( x^{\left( i\right) }\right) \in \Phi \left( F\right) $ and
conclude that $\left( s_{A}\left( t_{1}\right) ,s_{A}\left( t_{2}\right)
\right) \in \alpha (\Phi )_{A}\left( T\right) $. So $s_{A}\left( T\right)
\subseteq \alpha (\Phi )_{A}\left( T\right) $.

Now we suppose that $\left( u_{1},u_{2}\right) \in \left( \left( \Phi \left(
A\right) \right) ^{\left( i\right) }\right) ^{2}\cap \alpha (\Phi
)_{A}\left( T\right) $, $i\in \Gamma $. We consider $F=F\left( x^{\left(
i\right) }\right) \in \mathrm{Ob}\Theta ^{0}$ and two homomorphisms $\varphi
_{1},\varphi _{2}:F\rightarrow \Phi \left( A\right) $, such that $\varphi
_{j}\left( x^{\left( i\right) }\right) =u_{j}$, $j=1,2$. By proof of the
Theorem \ref{isom} there is an isomorphism $\sigma :F\rightarrow \Phi \left(
F\right) $, such that $\sigma \left( x^{\left( i\right) }\right)
=s_{F}\left( x^{\left( i\right) }\right) $. We denote $\nu _{j}=\varphi
_{j}\sigma ^{-1}$, $j=1,2$. $\nu _{j}$ are homomorphisms from $\Phi \left(
F\right) $ to $\Phi \left( A\right) $. $\nu _{j}\left( \sigma \left(
x^{\left( i\right) }\right) \right) =u_{j}$, $j=1,2$. $\alpha (\Phi
)_{A}\left( T\right) $ is a congruence, $\sigma \left( x^{\left( i\right)
}\right) $ is a free generator of $\Phi \left( F\right) $, so we can prove
by induction by the length of the terms in the algebra $F$ that $\widetilde{%
\tau }\nu _{1}=\widetilde{\tau }\nu _{2}$. $\nu _{j}=\Phi \left( \mu
_{j}\right) $, where $j=1,2$ and $\mu _{j}$ are homomorphisms from $F$ to $A$%
. Hence, by (\ref{pot_inner}), $s_{A}\mu _{j}\left( x^{\left( i\right)
}\right) =\nu _{j}s_{F}\left( x^{\left( i\right) }\right) =\varphi
_{j}\sigma ^{-1}s_{F}\left( x^{\left( i\right) }\right) =\varphi _{j}\left(
x^{\left( i\right) }\right) =u_{j}$, $j=1,2$. We denote $t_{j}=\mu
_{j}\left( x^{\left( i\right) }\right) $, $j=1,2$. By Definition \ref%
{Autom_equiv} we have that $\tau \mu _{1}=\tau \mu _{2}$, so $\left(
t_{1},t_{2}\right) \in T$ and $\left( u_{1},u_{2}\right) \in s_{A}\left(
T\right) $. Hence, $\alpha (\Phi )_{A}\left( T\right) \subseteq s_{A}\left(
T\right) $.

Now we prove the second statement of the Theorem. We assume that $%
A_{1},A_{2}\in \mathrm{Ob}\Theta ^{0}$\textit{, }$\mu _{1},\mu _{2}\in 
\mathrm{Hom}\left( A_{1},A_{2}\right) $\textit{, }$T\in Cl_{H_{1}}(A_{2})$
and $\tau :A_{2}\rightarrow A_{2}/T$, $\widetilde{\tau }:\Phi \left(
A_{2}\right) \rightarrow \Phi \left( A_{2}\right) /s_{A_{2}}\left( T\right) $%
\ are the natural epimorphisms. If $\tau \mu _{1}=\tau \mu _{2}$ holds, then
for every $a\in A_{1}$ the $\left( \mu _{1}\left( a\right) ,\mu _{2}\left(
a\right) \right) \in T$ holds. Therefore we have for every $u\in \Phi \left(
A_{1}\right) $ that $\left( \left( \Phi \left( \mu _{1}\right) \right)
\left( u\right) ,\left( \Phi \left( \mu _{2}\right) \right) \left( u\right)
\right) =\left( s_{A_{2}}\mu _{1}s_{A_{1}}^{-1}\left( u\right) ,s_{A_{2}}\mu
_{2}s_{A_{1}}^{-1}\left( u\right) \right) \in s_{A_{2}}\left( T\right) $
holds. So $\widetilde{\tau }\Phi \left( \mu _{1}\right) =\widetilde{\tau }%
\Phi \left( \mu _{2}\right) $.

If $\widetilde{\tau }\Phi \left( \mu _{1}\right) =\widetilde{\tau }\Phi
\left( \mu _{2}\right) $, then for every $u\in \Phi \left( A_{1}\right) $
the $\left( \left( \Phi \left( \mu _{1}\right) \right) \left( u\right)
,\left( \Phi \left( \mu _{2}\right) \right) \left( u\right) \right) =\left(
s_{A_{2}}\mu _{1}s_{A_{1}}^{-1}\left( u\right) ,s_{A_{2}}\mu
_{2}s_{A_{1}}^{-1}\left( u\right) \right) \in s_{A_{2}}\left( T\right) $.
Because $s_{A_{1}}$ is a bijection, we have that for every $a\in A_{1}$ the $%
\left( \mu _{1}\left( a\right) ,\mu _{2}\left( a\right) \right) \in T$ holds
and $\tau \mu _{1}=\tau \mu _{2}$.
\end{proof}

\begin{corollary}
\label{equivalence}Automorphic equivalence of algebras in $\Theta $ is
equivalence.
\end{corollary}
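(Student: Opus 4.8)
The plan is to reduce everything to Theorem \ref{alfaformula} and then check the three axioms of an equivalence relation by exhibiting, in each case, an explicit automorphism together with an explicit system of bijections. Recall that by Theorem \ref{potinere} every automorphism $\Phi$ of $\Theta ^{0}$ acts on morphisms by formula (\ref{pot_inner}) with some system of bijections $\left\{ s_{A}:A\rightarrow \Phi \left( A\right) \mid A\in \mathrm{Ob}\Theta ^{0}\right\}$, and, by the two directions of Theorem \ref{alfaformula}, the assertion ``$\Phi $ provides an automorphic equivalence of $H_{1}$ and $H_{2}$'' is equivalent to ``each $s_{A}$ restricts to a bijection $Cl_{H_{1}}(A)\rightarrow Cl_{H_{2}}(\Phi (A))$'' (with $\alpha (\Phi )_{A}=s_{A}$). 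Thus it suffices, in each of the three cases, to produce an automorphism of $\Theta ^{0}$ acting on morphisms via (\ref{pot_inner}) with a system of bijections that are bijections between the corresponding families of algebraically closed sets.

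For reflexivity I would take $\Phi =\mathrm{id}_{\Theta ^{0}}$ and $s_{A}=\mathrm{id}_{A}$: this clearly acts by (\ref{pot_inner}), and $\mathrm{id}_{A}:Cl_{H}(A)\rightarrow Cl_{H}(A)$ is a bijection, so $H$ is automorphically equivalent to itself. For symmetry, assume $H_{1}$ is automorphically equivalent to $H_{2}$, witnessed by $\Phi $ and $\left\{ s_{A}:A\rightarrow \Phi (A)\right\}$ as above. I would take $\Phi ^{-1}$ and, for $B\in \mathrm{Ob}\Theta ^{0}$, set $t_{B}=s_{\Phi ^{-1}(B)}^{-1}:B\rightarrow \Phi ^{-1}(B)$ (this makes sense since $s_{\Phi ^{-1}(B)}:\Phi ^{-1}(B)\rightarrow \Phi (\Phi ^{-1}(B))=B$). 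A short computation gives $\Phi ^{-1}(\mu )=t_{B'}\mu t_{A'}^{-1}$ for $\mu \in \mathrm{Mor}_{\Theta ^{0}}(A',B')$, so $\Phi ^{-1}$ acts via (\ref{pot_inner}) with the system $\left\{ t_{B}\right\}$; and since $s_{\Phi ^{-1}(B)}:Cl_{H_{1}}(\Phi ^{-1}(B))\rightarrow Cl_{H_{2}}(B)$ is a bijection, so is its inverse $t_{B}:Cl_{H_{2}}(B)\rightarrow Cl_{H_{1}}(\Phi ^{-1}(B))$. By Theorem \ref{alfaformula}, $\Phi ^{-1}$ provides an automorphic equivalence of $H_{2}$ and $H_{1}$.

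For transitivity, assume $H_{1}$ is automorphically equivalent to $H_{2}$ via $\Phi $ with $\left\{ s_{A}:A\rightarrow \Phi (A)\right\}$, and $H_{2}$ automorphically equivalent to $H_{3}$ via $\Psi $ with $\left\{ r_{B}:B\rightarrow \Psi (B)\right\}$. I would take the composite $\Psi \Phi $ and set $q_{A}=r_{\Phi (A)}s_{A}:A\rightarrow \Psi (\Phi (A))$. Then for $\mu \in \mathrm{Mor}_{\Theta ^{0}}(A,B)$ one gets $\Psi (\Phi (\mu ))=r_{\Phi (B)}(s_{B}\mu s_{A}^{-1})r_{\Phi (A)}^{-1}=q_{B}\mu q_{A}^{-1}$, so $\Psi \Phi $ acts via (\ref{pot_inner}) with $\left\{ q_{A}\right\}$; and $q_{A}$ is the composition of the bijections $s_{A}:Cl_{H_{1}}(A)\rightarrow Cl_{H_{2}}(\Phi (A))$ and $r_{\Phi (A)}:Cl_{H_{2}}(\Phi (A))\rightarrow Cl_{H_{3}}(\Psi (\Phi (A)))$, hence a bijection $Cl_{H_{1}}(A)\rightarrow Cl_{H_{3}}((\Psi \Phi )(A))$. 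Again by Theorem \ref{alfaformula}, $\Psi \Phi $ provides an automorphic equivalence of $H_{1}$ and $H_{3}$.

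I do not expect a genuine obstacle here: all the substantive work is contained in Theorem \ref{alfaformula} (and behind it Theorems \ref{potinere} and \ref{isom}). The only thing requiring care is the bookkeeping of indices of the various systems of bijections and the routine verification that each constructed system $\left\{ t_{B}\right\}$, $\left\{ q_{A}\right\}$ genuinely realizes the corresponding automorphism through formula (\ref{pot_inner}); these are direct substitutions. If one wished to avoid invoking Theorem \ref{alfaformula} and argue straight from Definition \ref{Autom_equiv}, the families $\alpha (\Phi )_{A}$ would have to be composed and inverted in the same pattern, but the reduction via Theorem \ref{alfaformula} makes the argument cleaner.
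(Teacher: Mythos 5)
Your proposal is correct and follows essentially the same route as the paper: both reduce the three axioms to Theorem \ref{alfaformula} by exhibiting the identity system for reflexivity, the inverted system $\left\{ \left( s_{\Phi ^{-1}\left( A\right) }\right) ^{-1}\right\} $ for symmetry, and the composed system $\left\{ s_{\Phi \left( A\right) }^{\Psi }s_{A}^{\Phi }\right\} $ for transitivity, checking in each case that the new system realizes the corresponding automorphism via formula (\ref{pot_inner}) and restricts to bijections of the families of algebraically closed sets. No gaps.
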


\begin{proof}
We need to prove that automorphic equivalence is reflexive, symmetric and
reflexive relation. The identity automorphism provides automorphic
equivalence of the algebra $H\in \Theta $ to itself, so this relation is
reflexive.

We assume that the automorphism $\Phi \in \mathfrak{A}$ provides automorphic
equivalence of the algebra $H_{1}$ to the algebra $H_{2}$, where $%
H_{1},H_{2}\in \Theta $, and acts on the morphisms of $\Theta ^{0}$ by
formula (\ref{pot_inner}) with the system of bijections \linebreak $\left\{
s_{A}^{\Phi }:A\rightarrow \Phi \left( A\right) \mid A\in \mathrm{Ob}\Theta
^{0}\right\} $. Then by Theorem \ref{alfaformula} $s_{\Phi ^{-1}\left(
A\right) }^{\Phi }:Cl_{H_{1}}\Phi ^{-1}(A)\rightarrow Cl_{H_{2}}A$ is a
bijection. So $\left( s_{\Phi ^{-1}\left( A\right) }^{\Phi }\right)
^{-1}:Cl_{H_{2}}A\rightarrow Cl_{H_{1}}\Phi ^{-1}(A)$ is also a bijection.
But we can conclude from consideration of the formula (\ref{pot_inner}) that
automorphism $\Phi ^{-1}$ acts on the morphisms of $\Theta ^{0}$ by formula (%
\ref{pot_inner}) with the system of bijections$\left\{ s_{A}^{\Phi
^{-1}}=\left( s_{\Phi ^{-1}\left( A\right) }^{\Phi }\right)
^{-1}:A\rightarrow \Phi ^{-1}\left( A\right) \mid A\in \mathrm{Ob}\Theta
^{0}\right\} $. Therefore the automorphism $\Phi ^{-1}$ provides automorphic
equivalence of the algebra $H_{2}$ to the algebra $H_{1}$.

Now we assume that the automorphism $\Phi \in \mathfrak{A}$ provides
automorphic equivalence of the algebra $H_{1}$ to the algebra $H_{2}$ and
the automorphism $\Psi \in \mathfrak{A}$ provides automorphic equivalence of
the algebra $H_{2}$ to the algebra $H_{3}$, where $H_{1},H_{2},H_{3}\in
\Theta $. We by $\left\{ s_{A}^{\Phi }\right\} $ and $\left\{ s_{A}^{\Psi
}\right\} $ the systems of bijections which present by formula (\ref%
{pot_inner}) the acting of automorphisms $\Phi $ and $\Psi $ correspondingly
on the morphisms of $\Theta ^{0}$. By Theorem \ref{alfaformula} $s_{A}^{\Phi
}:Cl_{H_{1}}A\rightarrow Cl_{H_{2}}\Phi \left( A\right) $ and $s_{\Phi
\left( A\right) }^{\Psi }:Cl_{H_{2}}\Phi \left( A\right) \rightarrow
Cl_{H_{3}}\Psi \Phi \left( A\right) $ are bijections. So $s_{\Phi \left(
A\right) }^{\Psi }s_{A}^{\Phi }:Cl_{H_{1}}A\rightarrow Cl_{H_{3}}\Psi \Phi
\left( A\right) $ is also a bijection. But we can conclude from
consideration of the formula (\ref{pot_inner}) that automorphism $\Psi \Phi $
acts on the morphisms of $\Theta ^{0}$ by formula (\ref{pot_inner}) with the
system of bijections $\left\{ s_{A}^{\Psi \Phi }=s_{\Phi \left( A\right)
}^{\Psi }s_{A}^{\Phi }:A\rightarrow \Psi \Phi \left( A\right) \mid A\in 
\mathrm{Ob}\Theta ^{0}\right\} $. Therefore the automorphism $\Psi \Phi $
provides automorphic equivalence of the algebra $H_{1}$ to the algebra $%
H_{3} $.
\end{proof}

\setcounter{corollary}{0}

\begin{definition}
We say that the system of bijections $\left\{ c_{F}:F\rightarrow F\mid F\in 
\mathrm{Ob}\Theta ^{0}\right\} $ is a \textbf{central function} if $c_{B}\mu
=\mu c_{A}$ holds for every $\mu \in \mathrm{Mor}_{\Theta ^{0}}\left(
A,B\right) $.
\end{definition}

\begin{proposition}
\label{centrarfunction}If $A\in \mathrm{Ob}\Theta ^{0}$, $T\subseteq
\dbigcup\limits_{i\in \Gamma }\left( \left( A\right) ^{\left( i\right)
}\right) ^{2}$ is a congruence and\linebreak $\left\{ c_{F}:F\rightarrow
F\mid F\in \mathrm{Ob}\Theta ^{0}\right\} $ is a central function then $%
c_{A}\left( T\right) =T$.
\end{proposition}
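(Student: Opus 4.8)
The plan is to use the one-generator free algebras to probe the congruence $T$, exactly as in the proof of Theorem~\ref{alfaformula}. First I would reduce the claim to the single inclusion $c_A(T)\subseteq T$. Note that $c_B\mu=\mu c_A$ for every $\mu$ implies $c_B^{-1}\mu=\mu c_A^{-1}$ for every $\mu$ (multiply the identity on the left by $c_B^{-1}$ and on the right by $c_A^{-1}$), so the system $\{c_F^{-1}\mid F\in \mathrm{Ob}\Theta^0\}$ is again a central function. Hence, once $c_A(T)\subseteq T$ is known for every central function, applying it to $\{c_F^{-1}\}$ gives $c_A^{-1}(T)\subseteq T$, and then $T=c_A(c_A^{-1}(T))\subseteq c_A(T)\subseteq T$, so $c_A(T)=T$. (No finiteness of $T$ is needed for this.)

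To prove $c_A(T)\subseteq T$, take a pair $(t_1,t_2)\in T$; since a congruence conforms with the sorting, both $t_1,t_2$ lie in $A^{\left( i\right) }$ for some $i\in \Gamma$, and $i\in \Gamma_A$ because $A^{\left( i\right) }\neq \varnothing$. Fix $x^{\left( i\right) }\in X_0^{\left( i\right) }$, put $F=F\left( x^{\left( i\right) }\right)\in \mathrm{Ob}\Theta^0$, and let $\mu_1,\mu_2:F\rightarrow A$ be the unique homomorphisms with $\mu_j\left( x^{\left( i\right) }\right)=t_j$, $j=1,2$. A straightforward induction on the length of the terms of $F$ shows that $\left( \mu_1\left( f\right),\mu_2\left( f\right)\right)\in T$ for every $f\in F$: the base case $f=x^{\left( i\right) }$ is the hypothesis $(t_1,t_2)\in T$, and if $f=\omega\left( f_1,\ldots,f_n\right)$ with $\left( \mu_1\left( f_k\right),\mu_2\left( f_k\right)\right)\in T$ for all $k$, then $\left( \mu_1\left( f\right),\mu_2\left( f\right)\right)=\left( \omega\left( \mu_1\left( f_1\right),\ldots,\mu_1\left( f_n\right)\right),\omega\left( \mu_2\left( f_1\right),\ldots,\mu_2\left( f_n\right)\right)\right)\in T$ because $T$ is a congruence.

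Now apply the central-function identity to the morphisms $\mu_j$: $c_A\mu_j=\mu_j c_F$ for $j=1,2$. Evaluating at $x^{\left( i\right) }$ and writing $f:=c_F\left( x^{\left( i\right) }\right)\in F$, we obtain $c_A\left( t_j\right)=c_A\mu_j\left( x^{\left( i\right) }\right)=\mu_j\left( f\right)$, hence $\left( c_A\left( t_1\right),c_A\left( t_2\right)\right)=\left( \mu_1\left( f\right),\mu_2\left( f\right)\right)\in T$ by the previous paragraph. This gives $c_A(T)\subseteq T$, which together with the first paragraph completes the proof. I do not expect a genuine obstacle here; the only point that deserves a word of care is that a central function is not assumed to respect the sorting, but this is immaterial, since the computation shows directly that each image pair $\left( c_A\left( t_1\right),c_A\left( t_2\right)\right)$ has equal sorts (namely $\eta_F\left( f\right)$) and lies in $T$.
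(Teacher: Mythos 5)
Your proof is correct and follows essentially the same route as the paper: probing $T$ with the two homomorphisms $\mu_{1},\mu_{2}:F\left( x^{\left( i\right) }\right) \rightarrow A$, an induction on term length to get $\left( \mu _{1}\left( f\right) ,\mu _{2}\left( f\right) \right) \in T$ for all $f$, the identity $c_{A}\mu _{j}=\mu _{j}c_{F}$ evaluated at $x^{\left( i\right) }$, and then passing to the inverse central function $\left\{ c_{F}^{-1}\right\} $ for the reverse inclusion. The only difference is that you spell out the last reduction in slightly more detail than the paper does; there is nothing to correct.
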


\begin{proof}
We assume that $\left( a_{1},a_{2}\right) \in T\cap \left( \left( A\right)
^{\left( i\right) }\right) ^{2}$, where $i\in \Gamma $. We consider $%
F=F\left( x^{\left( i\right) }\right) \in \mathrm{Ob}\Theta ^{0}$ and two
homomorphisms $\mu _{j}:F\rightarrow A$, $j=1,2$, such that $\mu _{j}\left(
x^{\left( i\right) }\right) =a_{j}$. We can prove by induction by the length
of the terms in the algebra $F$, that $\left( \mu _{1}\left( f\right) ,\mu
_{2}\left( f\right) \right) \in T$ holds for every $f\in F$. The $%
c_{A}\left( a_{j}\right) =c_{A}\mu _{j}\left( x^{\left( i\right) }\right)
=\mu _{j}c_{F}\left( x^{\left( i\right) }\right) $ holds for $j=1,2$.
Therefore $\left( c_{A}\left( a_{1}\right) ,c_{A}\left( a_{2}\right) \right)
\in T$ and $c_{A}\left( T\right) \subseteq T$.

If $\left\{ c_{F}:F\rightarrow F\mid F\in \mathrm{Ob}\Theta ^{0}\right\} $
is a central function then $\left\{ c_{F}^{-1}:F\rightarrow F\mid F\in 
\mathrm{Ob}\Theta ^{0}\right\} $ is also a central function, so $%
c_{A}^{-1}\left( T\right) \subseteq T$.
\end{proof}

\begin{corollary}
\label{independence}The bijections $\left\{ \alpha (\Phi )_{A}\mid A\in 
\mathrm{Ob}\Theta ^{0}\right\} $ (see Definition \ref{Autom_equiv}) are
depend only from the automorphism $\Phi $ and not from the presentation of
the action of it automorphism over morphisms by bijections.
\end{corollary}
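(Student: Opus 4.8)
The plan is to exploit the fact that any two systems of bijections which present the action of the \emph{same} automorphism $\Phi$ on the morphisms of $\Theta^0$ differ by a central function, together with Proposition \ref{centrarfunction}, which tells us that a central function fixes every congruence of an object of $\Theta^0$ — and in particular every algebraically closed set occurring in Definition \ref{Autom_equiv}.

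Concretely, suppose $\Phi \in \mathfrak{A}$ provides an automorphic equivalence of $H_1, H_2 \in \Theta$, and let $\left\{ s_A : A \to \Phi(A) \mid A \in \mathrm{Ob}\Theta^0 \right\}$ and $\left\{ s'_A : A \to \Phi(A) \mid A \in \mathrm{Ob}\Theta^0 \right\}$ be two systems of bijections, each conforming with the sorting, such that $\Phi(\mu) = s_B \mu s_A^{-1} = s'_B \mu (s'_A)^{-1}$ for every $\mu \in \mathrm{Mor}_{\Theta^0}(A,B)$; at least one such system exists by Theorem \ref{potinere}, and these are precisely the ``presentations of the action of $\Phi$ over morphisms by bijections''. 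First I would set $c_A = s_A^{-1} s'_A : A \to A$ for every $A \in \mathrm{Ob}\Theta^0$ (this is a self-map of $A$ since $s_A$ and $s'_A$ have the same source $A$ and the same target $\Phi(A)$). Writing $s'_A = s_A c_A$ and substituting into $s_B \mu s_A^{-1} = s'_B \mu (s'_A)^{-1} = s_B c_B \mu c_A^{-1} s_A^{-1}$ yields $\mu = c_B \mu c_A^{-1}$, hence $c_B \mu = \mu c_A$ for every $\mu \in \mathrm{Mor}_{\Theta^0}(A,B)$, so $\left\{ c_A \mid A \in \mathrm{Ob}\Theta^0 \right\}$ is a central function. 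Now take any $A \in \mathrm{Ob}\Theta^0$ and any $T \in Cl_{H_1}(A)$. Since $T = T_{H_1}^{\prime\prime}$ is algebraically closed over $H_1$, it is a congruence of $A$, so Proposition \ref{centrarfunction} gives $c_A(T) = T$, whence $s'_A(T) = s_A(c_A(T)) = s_A(T)$.

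Finally, by the first statement of Theorem \ref{alfaformula} applied to each of the two presentations, $\alpha(\Phi)_A(T) = s_A(T)$ and $\alpha(\Phi)_A(T) = s'_A(T)$; these two expressions coincide by the previous paragraph, and so $\alpha(\Phi)_A(T) = s_A(T)$ independently of which presentation was chosen. As $A$ and $T \in Cl_{H_1}(A)$ were arbitrary, the whole system $\left\{ \alpha(\Phi)_A \mid A \in \mathrm{Ob}\Theta^0 \right\}$ is determined by $\Phi$ alone. There is no genuine obstacle: the only points needing a little care are the order-of-composition bookkeeping in deriving $c_B \mu = \mu c_A$, and the (immediate) observation that a member of $Cl_{H_1}(A)$ is a congruence, so that Proposition \ref{centrarfunction} is applicable; everything else is a direct invocation of the results already established.
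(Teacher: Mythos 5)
Your proof is correct and follows exactly the route the paper intends: the corollary is stated as a consequence of Proposition \ref{centrarfunction}, and the argument is precisely that two presentations $\left\{ s_{A}\right\} $, $\left\{ s_{A}^{\prime }\right\} $ of the action of the same $\Phi $ differ by the central function $c_{A}=s_{A}^{-1}s_{A}^{\prime }$, which fixes every congruence, hence every $T\in Cl_{H_{1}}(A)$, so that $s_{A}\left( T\right) =s_{A}^{\prime }\left( T\right) =\alpha (\Phi )_{A}\left( T\right) $ by Theorem \ref{alfaformula}. Your bookkeeping in deriving $c_{B}\mu =\mu c_{A}$ and your observation that members of $Cl_{H_{1}}(A)$ are congruences are both accurate, so nothing is missing.
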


\setcounter{corollary}{0}

\begin{theorem}
\label{innerautomprop}(see \cite[Proposition 9]{PlotkinSame}) If an inner
automorphism $\Upsilon $ provides the automorphic equivalence of the
algebras $H_{1}$ and $H_{2}$, where $H_{1},H_{2}\in \Theta $, then $H_{1}$
and $H_{2}$ are geometrically equivalent.
\end{theorem}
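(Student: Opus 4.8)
The plan is to turn the inner-ness of $\Upsilon$ into a concrete presentation of its action on morphisms by a system of \emph{isomorphisms}, feed this presentation into Theorem \ref{alfaformula}, and then read off that $Cl_{H_{1}}(A)=Cl_{H_{2}}(A)$ for every $A\in\mathrm{Ob}\Theta^{0}$, which is exactly geometric equivalence of $H_{1}$ and $H_{2}$.

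Before that I would establish one auxiliary fact: if $\phi:A\to A'$ is an isomorphism in $\Theta^{0}$ and $H\in\Theta$, then the map $T\mapsto\phi(T)$ is a bijection from $Cl_{H}(A)$ onto $Cl_{H}(A')$. This is a short direct computation with the definitions of $T_{H}^{\prime}$ and $T_{H}^{\prime\prime}$: composition with $\phi$ is a bijection $\mathrm{Hom}(A',H)\to\mathrm{Hom}(A,H)$ that matches $(\phi(T))_{H}^{\prime}$ with $T_{H}^{\prime}$, from which one gets $(\phi(T))_{H}^{\prime\prime}=\phi(T_{H}^{\prime\prime})$; hence $\phi(T)$ is algebraically closed over $H$ exactly when $T$ is. The degenerate case $\mathrm{Hom}(A,H)=\varnothing$, where both $Cl$-families are one-element, is handled the same way.

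Now, since $\Upsilon$ is inner, Definition \ref{inner} gives isomorphisms $s_{A}^{\Upsilon}:A\to\Upsilon(A)$, $A\in\mathrm{Ob}\Theta^{0}$, for which the naturality square reads $\Upsilon(\mu)=s_{B}^{\Upsilon}\mu(s_{A}^{\Upsilon})^{-1}$ for every $\mu\in\mathrm{Mor}_{\Theta^{0}}(A,B)$; that is, $\Upsilon$ acts on morphisms by formula (\ref{pot_inner}) with the system $\{s_{A}^{\Upsilon}\}$. By hypothesis $\Upsilon$ also provides the automorphic equivalence of $H_{1}$ and $H_{2}$, so Theorem \ref{alfaformula} applies and yields $\alpha(\Upsilon)_{A}(T)=s_{A}^{\Upsilon}(T)$ for every $T\in Cl_{H_{1}}(A)$; in particular the map $T\mapsto s_{A}^{\Upsilon}(T)$ restricts to a bijection $Cl_{H_{1}}(A)\to Cl_{H_{2}}(\Upsilon(A))$.

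Finally I would fix $A\in\mathrm{Ob}\Theta^{0}$ and compare two bijections with common target $Cl_{H_{2}}(\Upsilon(A))$: the one just obtained, which is $T\mapsto s_{A}^{\Upsilon}(T)$ on $Cl_{H_{1}}(A)$, and the one supplied by the auxiliary fact applied to the isomorphism $s_{A}^{\Upsilon}:A\to\Upsilon(A)$ with $H=H_{2}$, which is $T\mapsto s_{A}^{\Upsilon}(T)$ on $Cl_{H_{2}}(A)$. Both are restrictions of one and the same injective map $T\mapsto s_{A}^{\Upsilon}(T)$ on subsets of $\bigcup_{i\in\Gamma}(A^{(i)})^{2}$, and they have the same image $Cl_{H_{2}}(\Upsilon(A))$; since the map is injective, their domains coincide, i.e. $Cl_{H_{1}}(A)=Cl_{H_{2}}(A)$. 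As $A$ was arbitrary, $H_{1}$ and $H_{2}$ are geometrically equivalent. I do not anticipate a real obstacle here; the only points demanding care are the bookkeeping in the auxiliary fact (that an algebra isomorphism really carries algebraically closed sets to algebraically closed sets, degenerate cases included) and making sure that the presentation of $\Upsilon$ extracted from Definition \ref{inner} is precisely the one to which Theorem \ref{alfaformula} is being applied, so that the identity $\alpha(\Upsilon)_{A}=s_{A}^{\Upsilon}$ is legitimate.
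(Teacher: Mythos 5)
Your proof is correct and follows essentially the same route as the paper: both rest on Theorem \ref{alfaformula} to identify $\alpha (\Upsilon )_{A}$ with $T\mapsto s_{A}^{\Upsilon }\left( T\right) $ and on the fact that an algebra isomorphism carries algebraically closed sets to algebraically closed sets (your auxiliary fact is exactly the kernel computation $T=\bigcap \ker \varphi \sigma _{A}$ that the paper performs in-line). The only difference is the endgame: the paper proves $Cl_{H_{1}}(A)\subseteq Cl_{H_{2}}(A)$ and obtains the reverse inclusion by passing to $\Upsilon ^{-1}$ via Corollary \ref{equivalence}, whereas you get both inclusions at once by observing that two restrictions of the injective map $T\mapsto s_{A}^{\Upsilon }\left( T\right) $ with the same image $Cl_{H_{2}}(\Upsilon (A))$ must have the same domain --- a slightly tidier finish that avoids invoking the inverse automorphism.
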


\begin{proof}
By Definition \ref{inner} the automorphism $\Upsilon $ acts on the morphisms
of $\Theta ^{0}$ by system of isomorphisms $\left\{ \sigma _{A}:A\rightarrow
\Phi \left( A\right) \mid A\in \mathrm{Ob}\Theta ^{0}\right\} $. By Theorem %
\ref{alfaformula} $\sigma _{A}:Cl_{H_{1}}A\rightarrow Cl_{H_{2}}\Phi \left(
A\right) $ is a bijection for $A\in \mathrm{Ob}\Theta ^{0}$. We take the set 
$T\in Cl_{H_{1}}A$. $\sigma _{A}\left( T\right) \in Cl_{H_{2}}\Phi \left(
A\right) $, so $\sigma _{A}\left( T\right) =\bigcap\limits_{\substack{ %
\varphi \in \mathrm{Hom}\left( \Phi \left( A\right) ,H_{2}\right) ,  \\ %
\sigma _{A}\left( T\right) \subseteq \ker \varphi }}\ker \varphi $.
Therefore 
\begin{equation*}
T=\sigma _{A}^{-1}\sigma _{A}\left( T\right) =\bigcap\limits_{\substack{ %
\varphi \in \mathrm{Hom}\left( \Phi \left( A\right) ,H_{2}\right) ,  \\ %
\sigma _{A}\left( T\right) \subseteq \ker \varphi }}\sigma _{A}^{-1}\ker
\varphi =\bigcap\limits_{\substack{ \varphi \in \mathrm{Hom}\left( \Phi
\left( A\right) ,H_{2}\right) ,  \\ \sigma _{A}\left( T\right) \subseteq
\ker \varphi }}\ker \varphi \sigma _{A}.
\end{equation*}%
If $\sigma _{A}\left( T\right) \subseteq \ker \varphi $ then $T\subseteq
\ker \varphi \sigma _{A}$, so $T\supseteq \bigcap\limits_{\substack{ \psi
\in \mathrm{Hom}\left( A,H_{2}\right) ,  \\ T\subseteq \ker \psi }}\ker \psi
=T_{H_{2}}^{\prime \prime }$ and $T\in Cl_{H_{2}}A$. Therefore $%
Cl_{H_{1}}A\subseteq Cl_{H_{2}}A$.

Hear we must remark that if $\left\{ \varphi \in \mathrm{Hom}\left( \Phi
\left( A\right) ,H_{2}\right) \mid \sigma _{A}\left( T\right) \subseteq \ker
\varphi \right\} =\varnothing $, in particular, because $\Gamma _{\Phi
\left( A\right) }\nsubseteq \Gamma _{H_{2}}$, and $\sigma _{A}\left(
T\right) =\dbigcup\limits_{i\in \Gamma }\left( \left( \Phi \left( A\right)
\right) ^{\left( i\right) }\right) ^{2}\in Cl_{H_{2}}\Phi \left( A\right) $
then $\left\{ \psi \in \mathrm{Hom}\left( A,H_{2}\right) \mid T\subseteq
\ker \psi \right\} =\varnothing $, in particular, because $\Gamma
_{A}=\Gamma _{\Phi \left( A\right) }\nsubseteq \Gamma _{H_{2}}$, and $%
T=\dbigcup\limits_{i\in \Gamma }\left( A^{\left( i\right) }\right) ^{2}\in
Cl_{H_{2}}A$.

By Corollary \ref{equivalence} from the Theorem \ref{alfaformula} the
automorphism $\Upsilon ^{-1}$ provides the automorphic equivalence of the
algebras $H_{2}$ and $H_{1}$ and by similar consideration we conclude that $%
Cl_{H_{2}}A\subseteq Cl_{H_{1}}A$.
\end{proof}

From this Theorem we conclude that the possible difference between the
automorphic equivalence and geometric equivalence is measured by quotient
group $\mathfrak{A/Y\cong S/S\cap Y}$. The elements of the group $\mathfrak{S%
}$ we can find by using of the Theorem \ref{methodofverbaloperations}. The
elements of the group $\mathfrak{S\cap Y}$ we can find by using of the
Proposition \ref{intersectioncriterion}.

From here to the end of the Subsection we assume the system of words $%
W=\left\{ w_{\omega }\mid \omega \in \Omega \right\} $ is a subject of
conditions Op1) and Op2), $\left\{ s_{F}\mid F\in \mathrm{Ob}\Theta
^{0}\right\} $ is a system of bijection subject of condition Op2), $\Phi \in 
\mathfrak{S}$ corresponds to the system of words $W$.

\begin{proposition}
\label{sbijection}For every $H\in \Theta $ and every $F\in \mathrm{Ob}\Theta
^{0}$ the mapping $s_{F}^{-1}:Cl_{H}\left( F\right) \rightarrow
Cl_{H_{W}^{\ast }}\left( F\right) $ is a bijection.
\end{proposition}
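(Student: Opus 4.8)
The plan is to show that the bijection $s_{F}$ intertwines the algebraic‑closure operators over $H$ and over $H_{W}^{\ast }$, so that the asserted map is just the restriction of a bijection of power sets. First note that by condition B1) the map $s_{F}$ conforms with the sorting, hence restricts to a bijection of each $F^{\left( i\right) }$; so both $s_{F}$ and $s_{F}^{-1}$ act on $\dbigcup\limits_{i\in \Gamma }\left( \left( F\right) ^{\left( i\right) }\right) ^{2}$ by $s_{F}\left( T\right) =\left\{ \left( s_{F}\left( t_{1}\right) ,s_{F}\left( t_{2}\right) \right) \mid \left( t_{1},t_{2}\right) \in T\right\} $, and commute with arbitrary intersections and fix $\dbigcup\limits_{i\in \Gamma }\left( \left( F\right) ^{\left( i\right) }\right) ^{2}$ itself.

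The main step — and the one where the machinery of Section \ref{verbal} is really used — is to produce a bijection
\[
\mathrm{Hom}\left( F,H\right) \longrightarrow \mathrm{Hom}\left( F,H_{W}^{\ast }\right) ,\qquad \varphi \longmapsto \varphi s_{F},
\]
and to observe that $\ker \left( \varphi s_{F}\right) =s_{F}^{-1}\left( \ker \varphi \right) $. The kernel identity is immediate from the definition of $\ker $ and the fact that $s_{F}$ is a bijection. For the correspondence of homomorphisms: if $\varphi \in \mathrm{Hom}\left( F,H\right) $ then $\varphi $ preserves verbal operations (the remark preceding Proposition \ref{*isom}), so $\varphi :F_{W}^{\ast }\rightarrow H_{W}^{\ast }$ is a homomorphism, and composing with the isomorphism $s_{F}:F\rightarrow F_{W}^{\ast }$ of condition Op2) yields $\varphi s_{F}\in \mathrm{Hom}\left( F,H_{W}^{\ast }\right) $; conversely, for $\psi \in \mathrm{Hom}\left( F,H_{W}^{\ast }\right) $ the map $\psi s_{F}^{-1}:F_{W}^{\ast }\rightarrow H_{W}^{\ast }$ is a homomorphism, hence by Corollary \ref{invershomomorphism} (from Proposition \ref{inverswords}) $\psi s_{F}^{-1}\in \mathrm{Hom}\left( F,H\right) $, and $\psi \mapsto \psi s_{F}^{-1}$ is inverse to $\varphi \mapsto \varphi s_{F}$. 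I expect the only real care here to be the bookkeeping of which signature ($\Omega $ or $\Omega ^{\ast }$, i.e.\ over $F$ or $F_{W}^{\ast }$, over $H$ or $H_{W}^{\ast }$) each arrow lives; everything else is formal. One should also keep in mind the empty‑set conventions of Section \ref{alggeometry}: if $\mathrm{Hom}\left( F,H\right) =\varnothing $ then $\mathrm{Hom}\left( F,H_{W}^{\ast }\right) =\varnothing $ as well, and the relevant closures are then $\dbigcup\limits_{i\in \Gamma }\left( \left( F\right) ^{\left( i\right) }\right) ^{2}$.

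Granting this, for any $T\subseteq \dbigcup\limits_{i\in \Gamma }\left( \left( F\right) ^{\left( i\right) }\right) ^{2}$, reparametrising $\psi =\varphi s_{F}$ and using that $s_{F}^{-1}\left( T\right) \subseteq \ker \psi =s_{F}^{-1}\left( \ker \varphi \right) $ iff $T\subseteq \ker \varphi $, I obtain
\[
\left( s_{F}^{-1}\left( T\right) \right) _{H_{W}^{\ast }}^{\prime \prime }=\bigcap\limits_{\substack{ \varphi \in \mathrm{Hom}\left( F,H\right) ,\\ T\subseteq \ker \varphi }}s_{F}^{-1}\left( \ker \varphi \right) =s_{F}^{-1}\left( \bigcap\limits_{\substack{ \varphi \in \mathrm{Hom}\left( F,H\right) ,\\ T\subseteq \ker \varphi }}\ker \varphi \right) =s_{F}^{-1}\left( T_{H}^{\prime \prime }\right) ,
\]
valid also in the empty case by the remarks above. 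Hence if $T\in Cl_{H}(F)$ then $\left( s_{F}^{-1}\left( T\right) \right) _{H_{W}^{\ast }}^{\prime \prime }=s_{F}^{-1}\left( T_{H}^{\prime \prime }\right) =s_{F}^{-1}\left( T\right) $, so $s_{F}^{-1}\left( T\right) \in Cl_{H_{W}^{\ast }}(F)$; thus $s_{F}^{-1}$ maps $Cl_{H}(F)$ into $Cl_{H_{W}^{\ast }}(F)$, and it is injective there because it is injective on all subsets. For surjectivity, given $T^{\prime }\in Cl_{H_{W}^{\ast }}(F)$ set $T=s_{F}\left( T^{\prime }\right) $; the displayed identity with $s_{F}^{-1}\left( T\right) =T^{\prime }$ gives $s_{F}^{-1}\left( T_{H}^{\prime \prime }\right) =\left( T^{\prime }\right) _{H_{W}^{\ast }}^{\prime \prime }=T^{\prime }$, whence $T_{H}^{\prime \prime }=s_{F}\left( T^{\prime }\right) =T$, i.e.\ $T\in Cl_{H}(F)$ with $s_{F}^{-1}\left( T\right) =T^{\prime }$. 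This gives that $s_{F}^{-1}:Cl_{H}(F)\rightarrow Cl_{H_{W}^{\ast }}(F)$ is a bijection.
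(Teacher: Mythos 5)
Your proof is correct and follows essentially the same route as the paper: the key step in both is the bijection $\mathrm{Hom}\left( F,H\right) \ni \varphi \mapsto \varphi s_{F}\in \mathrm{Hom}\left( F,H_{W}^{\ast }\right) $ obtained from Op2) and Corollary \ref{invershomomorphism}, together with $\ker \left( \varphi s_{F}\right) =s_{F}^{-1}\left( \ker \varphi \right) $ and the separate treatment of the case $\mathrm{Hom}\left( F,H\right) =\varnothing $. The only (cosmetic) difference is that you package the argument as the single identity $\left( s_{F}^{-1}\left( T\right) \right) _{H_{W}^{\ast }}^{\prime \prime }=s_{F}^{-1}\left( T_{H}^{\prime \prime }\right) $ and read off both directions from it, whereas the paper checks the two inclusions for $s_{F}^{-1}$ and for $s_{F}$ separately.
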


\begin{proof}
Algebras $H$ and $H_{W}^{\ast }$ have different operations but same sets of
elements, so $\Gamma _{H}=\Gamma _{H_{W}^{\ast }}$. If $\Gamma
_{H}\nsupseteq \Gamma _{F}$ then $Cl_{H}\left( F\right) =Cl_{H_{W}^{\ast
}}\left( F\right) =\left\{ \dbigcup\limits_{i\in \Gamma _{F}}\left(
F^{\left( i\right) }\right) ^{2}\right\} $ and $s_{F}^{-1}\left(
\dbigcup\limits_{i\in \Gamma _{F}}\left( F^{\left( i\right) }\right)
^{2}\right) =\dbigcup\limits_{i\in \Gamma _{F}}\left( F^{\left( i\right)
}\right) ^{2}$, because $s_{F}^{-1}$ is a bijection which conforms with the $%
\eta _{F}$. So in this case the Proposition is proved.

Now we consider the case when $\Gamma _{H}\supseteq \Gamma _{F}$ and $%
\mathrm{Hom}\left( F,H\right) \neq \varnothing $. In this case also $\mathrm{%
Hom}\left( F,H_{W}^{\ast }\right) \neq \varnothing $. We consider the diagram%
\begin{equation*}
\begin{array}{ccc}
F & \underset{s_{F}}{\rightarrow } & F \\ 
\downarrow \psi &  & \varphi \downarrow \\ 
H_{W}^{\ast } &  & H%
\end{array}%
.
\end{equation*}%
$s_{F}:F\rightarrow F_{W}^{\ast }$ is an isomorphism. If $\varphi \in 
\mathrm{Hom}\left( F,H\right) $ then by Corollary \ref{invershomomorphism}
from Proposition \ref{inverswords} $\varphi \in \mathrm{Hom}\left(
F_{W}^{\ast },H_{W}^{\ast }\right) $ and $\varphi s_{F}\in \mathrm{Hom}%
\left( F,H_{W}^{\ast }\right) $. If $\psi \in \mathrm{Hom}\left(
F,H_{W}^{\ast }\right) $ then $\psi s_{F}^{-1}\in \mathrm{Hom}\left(
F_{W}^{\ast },H_{W}^{\ast }\right) $ and also by Corollary \ref%
{invershomomorphism} from Proposition \ref{inverswords} $\psi s_{F}^{-1}\in 
\mathrm{Hom}\left( F,H\right) $.

If $T\in Cl_{H}\left( F\right) $, then $T=\bigcap\limits_{\varphi \in
T_{H}^{\prime }}\ker \varphi $. If $\varphi \in T_{H}^{\prime }$ then $%
\varphi \in \mathrm{Hom}\left( F,H\right) $, $\ker \varphi \supseteq T$. So $%
\varphi s_{F}\in \mathrm{Hom}\left( F,H_{W}^{\ast }\right) $ and $\ker
\varphi s_{F}=s_{F}^{-1}\ker \varphi \supseteq s_{F}^{-1}T$. Hence $\varphi
s_{F}\in \left( s_{F}^{-1}T\right) _{H_{W}^{\ast }}^{\prime }$. Therefore $%
\bigcap\limits_{\psi \in \left( s_{F}^{-1}T\right) _{H_{W}^{\ast }}^{\prime
}}\ker \psi \subseteq \bigcap\limits_{\varphi \in T_{H}^{\prime }}\ker
\varphi s_{F}=s_{F}^{-1}\left( \bigcap\limits_{\varphi \in T_{H}^{\prime
}}\ker \varphi \right) =s_{F}^{-1}T$. So $s_{F}^{-1}T\in Cl_{H_{W}^{\ast
}}\left( F\right) $. Therefore $s_{F}^{-1}$ is a mapping from $Cl_{H}\left(
F\right) $ to $Cl_{H_{W}^{\ast }}\left( F\right) $.

We need to prove that $s_{F}^{-1}$ is a bijection. For this purpose we will
prove that $s_{F}$ is a mapping from $Cl_{H_{W}^{\ast }}\left( F\right) $ to 
$Cl_{H}\left( F\right) $. If $T\in Cl_{H_{W}^{\ast }}\left( F\right) $ then $%
T=\bigcap\limits_{\psi \in T_{H_{W}^{\ast }}^{\prime }}\ker \psi $. If $\psi
\in T_{H_{W}^{\ast }}^{\prime }$ then $\psi \in \mathrm{Hom}\left(
F,H_{W}^{\ast }\right) $, $\ker \psi \supseteq T$. So $\psi s_{F}^{-1}\in 
\mathrm{Hom}\left( F,H\right) $ and $\ker \psi s_{F}^{-1}=s_{F}\ker \psi
\supseteq s_{F}T$. Hence $\psi s_{F}^{-1}\in \left( s_{F}T\right)
_{H}^{\prime }$. Therefore $\bigcap\limits_{\varphi \in \left( s_{F}T\right)
_{H}^{\prime }}\ker \varphi \subseteq \bigcap\limits_{\psi \in
T_{H_{W}^{\ast }}^{\prime }}\ker \psi s_{F}^{-1}=s_{F}\left(
\bigcap\limits_{\psi \in T_{H_{W}^{\ast }}^{\prime }}\ker \psi \right)
=s_{F}T$. So $s_{F}T\in Cl_{H}\left( F\right) $. Therefore $s_{F}$ is a
mapping from $Cl_{H_{W}^{\ast }}\left( F\right) $ to $Cl_{H}\left( F\right) $%
. $s_{F}$ is an inverse mapping for the $s_{F}^{-1}$, so both of them are
bijections.
\end{proof}

\begin{corollary}
\label{hwautomequiv}For every $H\in \Theta $ the automorphism $\Phi ^{-1}$
provides an automorphic equivalence of algebras $H$ and $H_{W}^{\ast }$.
\end{corollary}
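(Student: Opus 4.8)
The plan is to read off the result from the second statement of Theorem \ref{alfaformula} together with Proposition \ref{sbijection}. First I would record how $\Phi ^{-1}$ acts on morphisms. Since $\Phi \in \mathfrak{S}$ corresponds to the system of words $W$, it acts by $\Phi \left( \mu \right) =s_{B}\mu s_{A}^{-1}$ for $\mu \in \mathrm{Mor}_{\Theta ^{0}}\left( A,B\right) $, where $\left\{ s_{F}\mid F\in \mathrm{Ob}\Theta ^{0}\right\} $ is the system of bijections subject to condition Op2). Because $\mathfrak{S}$ is a group, $\Phi ^{-1}$ is again strongly stable, hence preserves all objects, and a direct check gives $\Phi ^{-1}\left( \mu \right) =s_{B}^{-1}\mu s_{A}$. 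Thus $\Phi ^{-1}$ acts on the morphisms of $\Theta ^{0}$ by formula (\ref{pot_inner}) with the system of bijections $\left\{ s_{A}^{-1}:A\rightarrow \Phi ^{-1}\left( A\right) =A\mid A\in \mathrm{Ob}\Theta ^{0}\right\} $.

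Next I would invoke Proposition \ref{sbijection} with this same system of words $W$: it states exactly that $s_{A}^{-1}:Cl_{H}\left( A\right) \rightarrow Cl_{H_{W}^{\ast }}\left( A\right) $ is a bijection for every $A\in \mathrm{Ob}\Theta ^{0}$. Since $\Phi ^{-1}\left( A\right) =A$, this is precisely the hypothesis demanded by the second half of Theorem \ref{alfaformula}, applied to the automorphism $\Phi ^{-1}$ with $H_{1}=H$ and $H_{2}=H_{W}^{\ast }$. That theorem then yields the conclusion: $\Phi ^{-1}$ provides an automorphic equivalence of $H$ and $H_{W}^{\ast }$, with $\alpha \left( \Phi ^{-1}\right) _{A}=s_{A}^{-1}$.

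There is no genuine obstacle here, since all the substance has already been proved in Proposition \ref{sbijection} and Theorem \ref{alfaformula}. The only point requiring attention is the bookkeeping of directions: one must be careful that passing from $\Phi $ to $\Phi ^{-1}$ replaces the system $\left\{ s_{A}\right\} $ by $\left\{ s_{A}^{-1}\right\} $, so that in the resulting automorphic equivalence it is $H$ — whose closure families $Cl_{H}\left( A\right) $ occur as the source — that plays the role of $H_{1}$, while $H_{W}^{\ast }$ plays the role of $H_{2}$, in agreement with the direction of the bijections supplied by Proposition \ref{sbijection}.
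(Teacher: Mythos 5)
Your argument is correct and is exactly the route the paper takes: its entire proof is ``By Theorem \ref{alfaformula}.'', implicitly combined with Proposition \ref{sbijection} (to which the corollary is attached) and the observation that $\Phi ^{-1}$ acts on morphisms via the bijections $s_{A}^{-1}$. You have merely made explicit the bookkeeping that the paper leaves to the reader.
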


\begin{proof}
By Theorem \ref{alfaformula}.
\end{proof}

\setcounter{corollary}{0}

\begin{theorem}
\label{reduction}Algebras $H_{1},H_{2}\in \Theta $ are automorphically
equivalent if and only if there is a system of words $W=\left\{ w_{\omega
}\mid \omega \in \Omega \right\} $ subject of conditions Op1) and Op2), such
that algebras $H_{1}$ and $\left( H_{2}\right) _{W}^{\ast }$ are
geometrically equivalent.
\end{theorem}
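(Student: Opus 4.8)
The plan is to combine the structural theorems of the first part of the paper (the decomposition $\mathfrak{A}=\mathfrak{SY}$, the correspondence between strongly stable automorphisms and systems of words, and the criterion for an inner automorphism) with the transitivity and symmetry of automorphic equivalence (Corollary \ref{equivalence}) and with Corollary \ref{hwautomequiv}. The "if" direction is the easy one: suppose $W$ is a system of words subject to Op1) and Op2), and let $\Phi=\Phi^{W}\in\mathfrak{S}$ be the corresponding strongly stable automorphism. By Corollary \ref{hwautomequiv}, the automorphism $\Phi^{-1}$ provides an automorphic equivalence between $H_{2}$ and $(H_{2})_{W}^{\ast}$. If in addition $H_{1}$ and $(H_{2})_{W}^{\ast}$ are geometrically equivalent, then (since geometric equivalence trivially implies automorphic equivalence, via the identity automorphism, by Theorem \ref{alfaformula} applied with $\Phi=\mathrm{id}$, using $Cl_{H_{1}}=Cl_{(H_{2})_{W}^{\ast}}$) $H_{1}$ is automorphically equivalent to $(H_{2})_{W}^{\ast}$, hence to $H_{2}$ by transitivity (Corollary \ref{equivalence}).

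For the "only if" direction, suppose an automorphism $\Phi\in\mathfrak{A}$ provides the automorphic equivalence of $H_{1}$ and $H_{2}$. By the decomposition Theorem \ref{factorisation}, write $\Phi=\Upsilon\Psi$ with $\Upsilon\in\mathfrak{Y}$ inner and $\Psi\in\mathfrak{S}$ strongly stable. The key point is that the inner part carries no geometric information: I would argue that $\Psi=\Upsilon^{-1}\Phi$ also provides an automorphic equivalence of $H_{1}$ and $H_{2}$. This should follow because $\Upsilon^{-1}$ is an inner automorphism, and an inner automorphism provides an automorphic equivalence of every algebra to itself — indeed $\Upsilon^{-1}$ provides the automorphic equivalence of $H_{1}$ with $H_{1}$ by Theorem \ref{innerautomprop} together with the fact that $\Upsilon^{-1}$ is isomorphic to the identity functor — and then transitivity (Corollary \ref{equivalence}) gives that $\Psi=\Upsilon^{-1}\Phi$ provides an automorphic equivalence of $H_{1}$ and $H_{2}$. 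Strictly, I should check that Theorem \ref{innerautomprop} indeed applies: an inner automorphism trivially provides the automorphic equivalence of any $H$ to itself, using the system of isomorphisms $\{s_{A}^{\Upsilon}\}$ from Definition \ref{inner} together with the second statement of Theorem \ref{alfaformula} (one must verify $s_{A}^{\Upsilon}:Cl_{H}(A)\to Cl_{H}(\Upsilon(A))$ is a bijection, which holds because $s_{A}^{\Upsilon}$ is an isomorphism of free algebras).

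Now $\Psi\in\mathfrak{S}$, so by Theorem \ref{methodofverbaloperations} there is a system of words $W=W^{\Psi}$ subject to Op1) and Op2), with associated system of bijections $\{s_{F}\mid F\in\mathrm{Ob}\,\Theta^{0}\}$, and $\Psi$ acts on morphisms by $\Psi(\mu)=s_{B}\mu s_{A}^{-1}$. By Corollary \ref{hwautomequiv}, $\Psi^{-1}$ provides an automorphic equivalence of $H_{2}$ and $(H_{2})_{W}^{\ast}$. Composing: $\Psi$ provides automorphic equivalence of $H_{1}$ with $H_{2}$, and $\Psi^{-1}$ of $H_{2}$ with $(H_{2})_{W}^{\ast}$, so by transitivity $H_{1}$ and $(H_{2})_{W}^{\ast}$ are automorphically equivalent — and the automorphism realizing this last equivalence is $\Psi^{-1}\Psi=\mathrm{id}_{\Theta^{0}}$. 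It remains to deduce that being automorphically equivalent via the identity automorphism forces geometric equivalence. By the first statement of Theorem \ref{alfaformula}, when $\Phi=\mathrm{id}$ acts by the trivial bijections $s_{A}=\mathrm{id}_{A}$, we get $\alpha(\mathrm{id})_{A}(T)=T$, so $Cl_{H_{1}}(A)=Cl_{(H_{2})_{W}^{\ast}}(A)$ for every $A\in\mathrm{Ob}\,\Theta^{0}$, which is exactly geometric equivalence of $H_{1}$ and $(H_{2})_{W}^{\ast}$.

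The main obstacle I anticipate is the bookkeeping around which automorphism realizes each equivalence and making sure the composite is genuinely the identity (rather than merely inner) — one must be careful that $\Psi^{-1}$ in Corollary \ref{hwautomequiv} is exactly the inverse of the $\Psi$ appearing in the first step, with the same system of words $W$, so that the two automorphic equivalences compose to the identity and not to some other element of $\mathfrak{A}$. A secondary subtlety is the verification that an inner automorphism provides the self-equivalence of $H_{1}$, i.e. that Theorem \ref{innerautomprop} can be read in that direction; this is immediate from Definition \ref{inner} and the second half of Theorem \ref{alfaformula}, but should be stated. Everything else is an assembly of already-established facts.
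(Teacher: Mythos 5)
Your proof is correct and follows essentially the same route as the paper's: decompose the given automorphism via Theorem \ref{factorisation} into a strongly stable factor (which is absorbed into $\left( H_{2}\right) _{W}^{\ast }$ by Corollary \ref{hwautomequiv}) and an inner factor, then use the symmetry/transitivity machinery from the proof of Corollary \ref{equivalence} together with Theorem \ref{alfaformula}. The only differences are cosmetic: the paper composes so that the inner automorphism $\Phi ^{-1}\Psi $ directly provides the equivalence of $H_{1}$ and $\left( H_{2}\right) _{W}^{\ast }$ and then quotes Theorem \ref{innerautomprop}, whereas you first strip off the inner factor (for the composition $\Upsilon ^{-1}\Phi $ to parse with the paper's transitivity convention, $\Upsilon ^{-1}$ should provide the self-equivalence of $H_{2}$, not of $H_{1}$ --- a harmless slip since your self-equivalence lemma for inner automorphisms holds for any algebra) and reduce to the identity automorphism, which yields geometric equivalence directly from Theorem \ref{alfaformula}.
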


\begin{proof}
We assume that the automorphism $\Psi \in \mathfrak{A}$ provides an
automorphic equivalence of algebras $H_{1}$ and $H_{2}$. By Theorem \ref%
{factorisation} we can present the automorphism $\Psi $ in the form $\Psi
=\Phi \Upsilon $, where $\Phi \in \mathfrak{S}$, $\Upsilon \in \mathfrak{Y}$%
. Therefore $\Upsilon =\Phi ^{-1}\Psi $. By Corollary \ref{hwautomequiv}
from the Proposition \ref{sbijection} the automorphism $\Phi ^{-1}$ provides
an automorphic equivalence of algebras $H_{2}$ and $\left( H_{2}\right)
_{W}^{\ast }$, where $W=\left\{ w_{\omega }\mid \omega \in \Omega \right\} $
is the system of the words, which is a subject of conditions Op1) and Op2)
and corresponds to the automorphism $\Phi $ by the Theorem \ref%
{methodofverbaloperations}. By the Proof of the Corollary \ref{equivalence}
from the Theorem \ref{alfaformula} we have that the inner automorphism $%
\Upsilon =\Phi ^{-1}\Psi $ provides an automorphic equivalesnce of algebras $%
H_{1}$ and $\left( H_{2}\right) _{W}^{\ast }$. By the Theorem \ref%
{innerautomprop} we conclude that algebras $H_{1}$ and $\left( H_{2}\right)
_{W}^{\ast }$ are geometrically equivalent.

Now we assume that there is a system of words $W=\left\{ w_{\omega }\mid
\omega \in \Omega \right\} $ subject of conditions Op1) and Op2), such that
algebras $H_{1}$ and $\left( H_{2}\right) _{W}^{\ast }$ are geometrically
equivalent. By definition for every $F\in \mathrm{Ob}\Theta ^{0}$ there is
system a bijection $id_{F}:Cl_{H_{1}}\left( F\right) \rightarrow Cl_{\left(
H_{2}\right) _{W}^{\ast }}\left( F\right) $. The identity automorphism of
the category $\Theta ^{0}$ acts on the morphisms of $\Theta ^{0}$ by formula
(\ref{pot_inner}) with bijections $\left\{ id_{F}:F\rightarrow F\mid F\in 
\mathrm{Ob}\Theta ^{0}\right\} $. So, by Theorem \ref{alfaformula}, the
identity automorphism of the category $\Theta ^{0}$ provides an automorphic
equivalence of algebras $H_{1}$ and $\left( H_{2}\right) _{W}^{\ast }$. We
denote by $\Phi $ the strongly stable automorphism which corresponds to the
system of words $W$ by the Theorem \ref{methodofverbaloperations}. From
Corollary \ref{hwautomequiv} from the Proposition \ref{sbijection} and from
the Proof of the Corollary \ref{equivalence} from the Theorem \ref%
{alfaformula} we conclude that automorphism $\Phi $ provides an automorphic
equivalence of algebras $\left( H_{2}\right) _{W}^{\ast }$ and $H_{2}$ and
same automorphism provides an automorphic equivalence of algebras $H_{1}$
and $H_{2}$.
\end{proof}

\subsection{Automorphic equivalence and coordinate algebras.}

Categories $C_{\Theta }\left( H\right) $ of the coordinate algebras were
defined in \cite{PlotkinVarCat}. Here $\Theta $ is an arbitrary variety of
algebras, $H\in \Theta $. The objects of the category $C_{\Theta }\left(
H\right) $ are algebras $F/T$ where $F\in \mathrm{Ob}\Theta ^{0}$, $T\in
Cl_{H}(F)$. Morphisms of the category $C_{\Theta }\left( H\right) $ are are
homomorphisms of these algebras. Now we will formulate the notion of the
automorphic equivalence of algebras in the language of the coordinate
algebras.

\begin{theorem}
Automorphism $\Phi :\Theta ^{0}\rightarrow \Theta ^{0}$ provides an
automorphic equivalence of algebras $H_{1},H_{2}\in \Theta $ if and only if
there is an isomorphism $\Psi :C_{\Theta }\left( H_{1}\right) \rightarrow
C_{\Theta }\left( H_{2}\right) $ subject of conditions:

\begin{enumerate}
\item for every $F\in \mathrm{Ob}\Theta ^{0}$ and every $T\in Cl_{H_{1}}(F)$
the $\Psi \left( F/T\right) =\Phi \left( F\right) /\widetilde{T}$ holds,
where $\widetilde{T}\in Cl_{H_{2}}(\Phi \left( F\right) )$;

\item for every $F\in \mathrm{Ob}\Theta ^{0}$ the $\Psi \left( F/\left(
\Delta _{F}\right) _{H_{1}}^{\prime \prime }\right) =\Phi \left( F\right)
/\left( \Delta _{\Phi \left( F\right) }\right) _{H_{2}}^{\prime \prime }$
holds,

\item for every $F\in \mathrm{Ob}\Theta ^{0}$ and every $T\in Cl_{H_{1}}(F)$
the isomorphism $\Psi $ transforms the natural epimorphism $\overline{\tau }%
:F/\left( \Delta _{F}\right) _{H_{1}}^{\prime \prime }\rightarrow F\left(
X\right) /T$ to the natural epimorphism $\Psi \left( \overline{\tau }\right)
:\Phi \left( F\right) /\left( \Delta _{\Phi \left( F\right) }\right)
_{H_{2}}^{\prime \prime }\rightarrow \Psi \left( F/T\right) $;

\item for every $F_{1},F_{2}\in \mathrm{Ob}\Theta ^{0}$ and every $\nu \in 
\mathrm{Mor}_{C_{\Theta }\left( H_{1}\right) }\left( F_{1}/\left( \Delta
_{F_{1}}\right) _{H_{1}}^{\prime \prime },F_{2}/\left( \Delta
_{F_{2}}\right) _{H_{1}}^{\prime \prime }\right) $ if the diagram%
\begin{equation*}
\begin{array}{ccc}
F_{1} & \underset{\delta _{1}}{\rightarrow } & F_{1}/\left( \Delta
_{F_{1}}\right) _{H_{1}}^{\prime \prime } \\ 
\downarrow \mu &  & \nu \downarrow \\ 
F_{2} & \overset{\delta _{2}}{\rightarrow } & F_{2}/\left( \Delta
_{F_{2}}\right) _{H_{1}}^{\prime \prime }%
\end{array}%
\end{equation*}%
is commutative then the diagram%
\begin{equation*}
\begin{array}{ccc}
\Phi \left( F_{1}\right) & \underset{\widetilde{\delta _{1}}}{\rightarrow }
& \Phi \left( F_{1}\right) /\left( \Delta _{\Phi \left( F_{1}\right)
}\right) _{H_{2}}^{\prime \prime } \\ 
\downarrow \Phi \left( \mu \right) &  & \Psi \left( \nu \right) \downarrow
\\ 
\Phi \left( F_{2}\right) & \overset{\widetilde{\delta _{2}}}{\rightarrow } & 
\Phi \left( F_{2}\right) /\left( \Delta _{\Phi \left( F_{2}\right) }\right)
_{H_{2}}^{\prime \prime }%
\end{array}%
\end{equation*}%
is also commutative, where $\delta _{i}$ and $\widetilde{\delta _{i}}$ are
the natural epimorphisms, $i=1,2$, and $\mu \in \mathrm{Mor}_{\Theta
^{0}}\left( F_{1},F_{2}\right) $. The isomorphism $\Psi $ is uniquely
determined by automorphism $\Phi :\Theta ^{0}\rightarrow \Theta ^{0}$.
\end{enumerate}
\end{theorem}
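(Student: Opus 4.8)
The plan is to construct the isomorphism $\Psi$ directly from the system of bijections $\{s_{A}:A\rightarrow \Phi(A)\}$ supplied by Theorem \ref{potinere} (together with the identification $\alpha(\Phi)_{A}=s_{A}$ of Theorem \ref{alfaformula}), and conversely to read off the coordinating bijections $\alpha(\Phi)_{A}$ from the data of $\Psi$. So in the forward direction I assume $\Phi$ provides an automorphic equivalence of $H_{1}$ and $H_{2}$; fix $\{s_{A}:A\rightarrow \Phi(A)\}$ with $\Phi(\mu)=s_{B}\mu s_{A}^{-1}$, so that by Theorem \ref{alfaformula} each $s_{F}$ restricts to a bijection $Cl_{H_{1}}(F)\rightarrow Cl_{H_{2}}(\Phi(F))$ with $\alpha(\Phi)_{F}(T)=s_{F}(T)$. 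I would define $\Psi$ on objects by $\Psi(F/T)=\Phi(F)/s_{F}(T)$. Since $s_{F}$ is induced by a sort-preserving bijection $F\rightarrow\Phi(F)$ it is an order isomorphism of the posets $Cl_{H_{i}}$, hence carries the minimal element $\left(\Delta_{F}\right)_{H_{1}}^{\prime\prime}$ to $\left(\Delta_{\Phi(F)}\right)_{H_{2}}^{\prime\prime}$; this gives conditions 1 and 2.

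For a morphism $\nu:F_{1}/T_{1}\rightarrow F_{2}/T_{2}$ of $C_{\Theta}(H_{1})$ I would first note $\Gamma_{F_{1}}=\Gamma_{F_{1}/T_{1}}\subseteq\Gamma_{F_{2}/T_{2}}=\Gamma_{F_{2}}$, so $\mathrm{Hom}(F_{1},F_{2})\neq\varnothing$ and, $F_{1}$ being free, $\nu\delta_{1}$ lifts to some $\mu\in\mathrm{Mor}_{\Theta^{0}}(F_{1},F_{2})$ with $\delta_{2}\mu=\nu\delta_{1}$, where $\delta_{i}$ are the natural epimorphisms onto $F_{i}/T_{i}$. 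One checks $\mu(T_{1})\subseteq T_{2}$ and that $\Phi(\mu)=s_{F_{2}}\mu s_{F_{1}}^{-1}$ carries $s_{F_{1}}(T_{1})$ into $s_{F_{2}}(T_{2})$, so $\Phi(\mu)$ descends to a homomorphism $\Psi(\nu):\Phi(F_{1})/s_{F_{1}}(T_{1})\rightarrow\Phi(F_{2})/s_{F_{2}}(T_{2})$. Independence of $\Psi(\nu)$ from the chosen lift is precisely where the ``coordinated'' clause of Definition \ref{Autom_equiv} is used: two lifts $\mu,\mu'$ satisfy $\delta_{2}\mu=\delta_{2}\mu'$, hence $\widetilde{\delta_{2}}\Phi(\mu)=\widetilde{\delta_{2}}\Phi(\mu')$, i.e. $\Phi(\mu)$ and $\Phi(\mu')$ induce the same map modulo $s_{F_{2}}(T_{2})=\alpha(\Phi)_{F_{2}}(T_{2})$. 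Functoriality of $\Psi$ is then routine; conditions 3 and 4 hold because $\Psi$ is induced by $\Phi$ on representing morphisms (take $\mu=id_{F}$ for condition 3), and $\Psi$ is invertible because, by Corollary \ref{equivalence}, $\Phi^{-1}$ also provides an automorphic equivalence and produces the inverse functor via the bijections $\left(s_{\Phi^{-1}(A)}\right)^{-1}$.

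For the converse I would assume $\Psi:C_{\Theta}(H_{1})\rightarrow C_{\Theta}(H_{2})$ satisfies 1--4 and set $\alpha(\Phi)_{F}(T)=\widetilde{T}$, the congruence furnished by condition 1; since $\Psi$ is bijective on objects and $\Phi$ is bijective on $\mathrm{Ob}\,\Theta^{0}$, this defines a bijection $Cl_{H_{1}}(F)\rightarrow Cl_{H_{2}}(\Phi(F))$. It then remains to verify the coordination condition of Definition \ref{Autom_equiv}. Given $\mu_{1},\mu_{2}:A_{1}\rightarrow A_{2}$ and $T\in Cl_{H_{1}}(A_{2})$, each $\mu_{j}$ induces $\overline{\mu}_{j}:A_{1}/\left(\Delta_{A_{1}}\right)_{H_{1}}^{\prime\prime}\rightarrow A_{2}/\left(\Delta_{A_{2}}\right)_{H_{1}}^{\prime\prime}$ making the square with the natural epimorphisms commute, and the natural epimorphism $\overline{\tau}:A_{2}/\left(\Delta_{A_{2}}\right)_{H_{1}}^{\prime\prime}\rightarrow A_{2}/T$ satisfies $\tau\mu_{j}=\overline{\tau}\,\overline{\mu}_{j}\,q$ with $q$ the natural epimorphism from $A_{1}$. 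Applying $\Psi$ and using conditions 2, 3, 4 one obtains $\widetilde{\tau}\Phi(\mu_{j})=\Psi(\overline{\tau}\,\overline{\mu}_{j})\,\widetilde{q}$ with $\widetilde{q}$ surjective; since the $\delta_{i},q$ are epimorphisms and $\Psi$ is faithful, this yields the chain of equivalences $\tau\mu_{1}=\tau\mu_{2}\iff\overline{\tau}\,\overline{\mu}_{1}=\overline{\tau}\,\overline{\mu}_{2}\iff\Psi(\overline{\tau}\,\overline{\mu}_{1})=\Psi(\overline{\tau}\,\overline{\mu}_{2})\iff\widetilde{\tau}\Phi(\mu_{1})=\widetilde{\tau}\Phi(\mu_{2})$, which is exactly what Definition \ref{Autom_equiv} requires.

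Finally, uniqueness of $\Psi$: conditions 1--3 pin down $\Psi$ on objects and on all natural epimorphisms, and an arbitrary morphism $\nu:F_{1}/T_{1}\rightarrow F_{2}/T_{2}$ factors as $\nu\overline{\tau}_{1}=\overline{\tau}_{2}\overline{\mu}$ through the minimal coordinate algebras, so condition 4 together with surjectivity of $\Psi(\overline{\tau}_{1})$ forces $\Psi(\nu)$. The main obstacle I expect is precisely the well-definedness of $\Psi$ on morphisms in the forward direction, and dually the coordination check in the converse: both amount to transporting a commuting square through $\Phi$ and $\Psi$, and they succeed only because the ``coordinated'' clause of Definition \ref{Autom_equiv} and condition 4 are exactly the statements guaranteeing this transport, while the fact that every coordinate algebra receives a natural epimorphism from $F/\left(\Delta_{F}\right)^{\prime\prime}$ is what reduces general morphisms to ones liftable to $\Theta^{0}$.
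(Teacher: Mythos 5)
Your proposal is correct and follows essentially the same route as the paper: defining $\Psi(F/T)=\Phi(F)/s_{F}(T)$ via the bijections of Theorem \ref{potinere} and Theorem \ref{alfaformula}, lifting morphisms through the projective property with well-definedness secured by the coordination clause, inverting $\Psi$ via $\Phi^{-1}$, and in the converse transporting the commuting squares through the minimal coordinate algebras $F/\left( \Delta _{F}\right) _{H_{1}}^{\prime \prime }$ using conditions 2--4. The uniqueness argument likewise matches the paper's.
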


\begin{proof}
We suppose that there are an automorphism $\Phi :\Theta ^{0}\rightarrow
\Theta ^{0}$ and an isomorphism $\Psi :C_{\Theta }\left( H_{1}\right)
\rightarrow C_{\Theta }\left( H_{2}\right) $ subject of conditions 1.-4. We
will prove that the automorphism $\Phi :\Theta ^{0}\rightarrow \Theta ^{0}$
provide an automorphic equivalence of algebras $H_{1},H_{2}\in \Theta $. If $%
F/T\in \mathrm{Ob}C_{\Theta }\left( H_{1}\right) $ then by condition 1.
there exists $\widetilde{T}\in Cl_{H_{2}}(\Phi \left( F\right) )$ such that $%
\Psi \left( F/T\right) =\Phi \left( F\right) /\widetilde{T}$. We denote by $%
\alpha \left( \Phi \right) _{F}\left( T\right) =\widetilde{T}$. $\alpha
\left( \Phi \right) _{F}$ is a mapping $Cl_{H_{1}}(F)\rightarrow
Cl_{H_{2}}(\Phi \left( F\right) )$. We consider $A_{1},A_{2}\in \mathrm{Ob}%
\Theta ^{0}$\textit{. }If $\mathrm{Hom}\left( A_{1},A_{2}\right)
=\varnothing $, in particular, because $\Gamma _{A_{1}}\nsubseteq \Gamma
_{A_{2}}$, then condition: for every $\mu _{1},\mu _{2}\in \mathrm{Hom}%
\left( A_{1},A_{2}\right) $ and every $T\in Cl_{H_{1}}(A_{2})$ the $\tau \mu
_{1}=\tau \mu _{2}$ holds if and only if when the $\widetilde{\tau }\Phi
\left( \mu _{1}\right) =\widetilde{\tau }\Phi \left( \mu _{2}\right) $
holds, where $\tau :A_{2}\rightarrow A_{2}/T$\textit{, }$\widetilde{\tau }%
:\Phi \left( A_{2}\right) \rightarrow \Phi \left( A_{2}\right) /\alpha
\left( \Phi \right) _{A_{2}}\left( T\right) $\ are the natural epimorphisms
- fulfills. Now we consider the case when $\Gamma _{A_{1}}\subseteq \Gamma
_{A_{2}}$. We take $\mu _{1},\mu _{2}\in \mathrm{Hom}\left(
A_{1},A_{2}\right) $\textit{, }$T\in Cl_{H_{1}}(A_{2})$\textit{\ }and
suppose that 
\begin{equation}
\tau \mu _{1}=\tau \mu _{2},  \label{taucondition}
\end{equation}%
where $\tau :A_{2}\rightarrow A_{2}/T$ is the natural epimorphism. We will
consider the diagrams%
\begin{equation}
\begin{array}{ccc}
A_{1} & \underset{\delta _{1}}{\rightarrow } & A_{1}/\left( \Delta
_{A_{1}}\right) _{H_{1}}^{\prime \prime } \\ 
\downarrow \mu _{i} &  & \overline{\mu _{i}}\downarrow \\ 
A_{2} & \overset{\delta _{2}}{\rightarrow } & A_{2}/\left( \Delta
_{A_{2}}\right) _{H_{1}}^{\prime \prime }%
\end{array}%
,  \label{deltadiagram}
\end{equation}%
where $\delta _{i}$ are the natural epimorphisms, $i=1,2$. If $\left(
a_{1},a_{2}\right) \in \left( \Delta _{A_{1}}\right) _{H_{1}}^{\prime \prime
}$ then for every $\varphi \in \mathrm{Hom}\left( A_{2},H_{1}\right) $ the $%
\varphi \mu _{i}\left( a_{1}\right) =\varphi \mu _{i}\left( a_{2}\right) $
holds, because $\varphi \mu _{i}\in \mathrm{Hom}\left( A_{1},H_{1}\right) $.
So $\mu _{i}\left( \Delta _{A_{1}}\right) _{H_{1}}^{\prime \prime }\subseteq
\left( \Delta _{A_{2}}\right) _{H_{1}}^{\prime \prime }$. In the degenerate
cases, if $\Gamma _{A_{1}}\nsubseteq \Gamma _{H_{1}}$ then $\left( \Delta
_{A_{1}}\right) _{H_{1}}^{\prime \prime }=\dbigcup\limits_{i\in \Gamma
}\left( A_{1}^{\left( i\right) }\right) ^{2}$. If in this case $\Gamma
_{A_{2}}\subseteq \Gamma _{H_{1}}$ then $\Gamma _{A_{1}}\subseteq \Gamma
_{A_{2}}\subseteq \Gamma _{H_{1}}$ and we have a contradiction. So $\Gamma
_{A_{2}}\nsubseteq \Gamma _{H_{1}}$ and $\left( \Delta _{A_{2}}\right)
_{H_{1}}^{\prime \prime }=\dbigcup\limits_{i\in \Gamma }\left( A_{2}^{\left(
i\right) }\right) ^{2}$. From this equality the $\mu _{i}\left( \Delta
_{A_{1}}\right) _{H_{1}}^{\prime \prime }\subseteq \left( \Delta
_{A_{2}}\right) _{H_{1}}^{\prime \prime }$ is concluded in all cases.
Therefore in the all cases the homomorphism $\overline{\mu _{i}}$, which
closes the diagram (\ref{deltadiagram}) commutative, exists.

We will consider the natural epimorphisms $\overline{\tau }:A_{2}/\left(
\Delta _{A_{2}}\right) _{H_{1}}^{\prime \prime }\rightarrow A_{2}/T$. The $%
\tau =\overline{\tau }\delta _{2}$ holds. From commutativity of the diagrams
(\ref{deltadiagram}) and from (\ref{taucondition}) we conclude that $%
\overline{\tau }\delta _{2}\mu _{1}=\overline{\tau }\delta _{2}\mu _{2}=%
\overline{\tau }\,\overline{\mu _{1}}\delta _{1}=\overline{\tau }\,\overline{%
\mu _{2}}\delta _{1}$. So $\overline{\tau }\,\overline{\mu _{1}}=\overline{%
\tau }\,\overline{\mu _{2}}$ and $\Psi \left( \overline{\tau }\right) \Psi
\left( \overline{\mu _{1}}\right) =\Psi \left( \overline{\tau }\right) \Psi
\left( \overline{\mu _{2}}\right) $, because $\overline{\tau },\overline{\mu
_{1}},\overline{\mu _{2}}\in \mathrm{Mor}C_{\Theta }\left( H_{1}\right) $.

By condition 4. and 2. we have that the diagrams%
\begin{equation}
\begin{array}{ccc}
\Phi \left( A_{1}\right) & \underset{\widetilde{\delta _{1}}}{\rightarrow }
& \Phi \left( A_{1}\right) /\left( \Delta _{\Phi \left( A_{1}\right)
}\right) _{H_{2}}^{\prime \prime } \\ 
\downarrow \Phi \left( \mu _{i}\right) &  & \Psi \left( \overline{\mu _{i}}%
\right) \downarrow \\ 
\Phi \left( A_{2}\right) & \overset{\widetilde{\delta _{2}}}{\rightarrow } & 
\Phi \left( A_{2}\right) /\left( \Delta _{\Phi \left( A_{2}\right) }\right)
_{H_{2}}^{\prime \prime }%
\end{array}%
,  \label{postdeltadiagram}
\end{equation}%
where $\widetilde{\delta _{i}}$ are the natural epimorphisms, $i=1,2$, is
also commutative. From commutativity of these diagrams we have that $\Psi
\left( \overline{\tau }\right) \Psi \left( \overline{\mu _{1}}\right) 
\widetilde{\delta _{1}}=\Psi \left( \overline{\tau }\right) \Psi \left( 
\overline{\mu _{2}}\right) \widetilde{\delta _{1}}=\Psi \left( \overline{%
\tau }\right) \widetilde{\delta _{2}}\Phi \left( \mu _{1}\right) =\Psi
\left( \overline{\tau }\right) \widetilde{\delta _{2}}\Phi \left( \mu
_{2}\right) $. By condition 3. $\Psi \left( \overline{\tau }\right) 
\widetilde{\delta _{2}}=\widetilde{\tau }$, where $\widetilde{\tau }:\Phi
\left( A_{2}\right) \rightarrow \Phi \left( A_{2}\right) /\alpha \left( \Phi
\right) _{A_{2}}\left( T\right) $ is the natural epimorphism. So $\widetilde{%
\tau }\Phi \left( \mu _{1}\right) =\widetilde{\tau }\Phi \left( \mu
_{2}\right) $.

Vice versa, if the $\widetilde{\tau }\Phi \left( \mu _{1}\right) =\widetilde{%
\tau }\Phi \left( \mu _{2}\right) $ holds, then from commutativity of the
diagrams (\ref{postdeltadiagram}) we, as above, conclude that $\Psi \left( 
\overline{\tau }\right) \Psi \left( \overline{\mu _{1}}\right) =\Psi \left( 
\overline{\tau }\right) \Psi \left( \overline{\mu _{2}}\right) $. So $%
\overline{\tau }\,\overline{\mu _{1}}=\overline{\tau }\,\overline{\mu _{2}}$%
. And from commutativity of the diagrams (\ref{deltadiagram}) we have that $%
\tau \mu _{1}=\tau \mu _{2}$.

We also must prove that $\alpha \left( \Phi \right) _{F}$ is a bijection for
every $F\in \mathrm{Ob}\Theta ^{0}$.If $R\in Cl_{H_{2}}(\Phi \left( F\right)
)$, then $\Phi \left( F\right) /R\in \mathrm{Ob}C_{\Theta }\left(
H_{2}\right) $. There exists $F_{1}/T\in \mathrm{Ob}C_{\Theta }\left(
H_{1}\right) $ such that $\Phi \left( F\right) /R=\Psi \left( F_{1}/T\right)
=\Phi \left( F_{1}\right) /\alpha \left( \Phi \right) _{F_{1}}\left(
T\right) $. So $\Phi \left( F\right) =\Phi \left( F_{1}\right) $, $F=F_{1}$
and $R=\alpha \left( \Phi \right) _{F}\left( T\right) $. So $\alpha \left(
\Phi \right) _{F}$ is a surjection. If $T_{1},T_{2}\in Cl_{H_{1}}(F)$ and $%
\alpha \left( \Phi \right) _{F}\left( T_{1}\right) =\alpha \left( \Phi
\right) _{F}\left( T_{2}\right) =\widetilde{T}$. Then $\Psi \left(
F/T_{1}\right) =\Psi \left( F/T_{2}\right) =\Phi \left( F\right) /\widetilde{%
T}$, so $F/T_{1}=F/T_{2}$ and $T_{1}=T_{2}$. So $\alpha \left( \Phi \right)
_{F}$ is an injection.

Now suppose that the automorphism $\Phi :\Theta ^{0}\rightarrow \Theta ^{0}$
provides an automorphic equivalence of algebras $H_{1},H_{2}\in \Theta $. By
Theorem \ref{potinere} there exists a system of bijections $\left\{
s_{F}:F\rightarrow \Phi \left( F\right) \mid F\in \mathrm{Ob}\Theta
^{0}\right\} $, such that $\Phi $ acts on the morphisms of $\Theta ^{0}$
according the formula (\ref{pot_inner}). By Theorem \ref{alfaformula} $%
s_{F}:Cl_{H_{1}}(F)\rightarrow Cl_{H_{2}}(\Phi (F))$ is the bijection
subject of the Definition \ref{Autom_equiv}. We will construct the functor $%
\Psi :C_{\Theta }\left( H_{1}\right) \rightarrow C_{\Theta }\left(
H_{2}\right) $. We define for every $F/T\in \mathrm{Ob}C_{\Theta }\left(
H_{1}\right) $ that $\Psi \left( F/T\right) =\Phi (F)/s_{F}\left( T\right)
\in \mathrm{Ob}C_{\Theta }\left( H_{2}\right) $.

We consider $\nu \in \mathrm{Mor}_{C_{\Theta }\left( H_{1}\right) }\left(
F_{1}/T_{1},F_{2}/T_{2}\right) $. By projective propriety of the free
algebras there exists homomorphism $\mu $ such that the diagram%
\begin{equation}
\begin{array}{ccc}
F_{1} & \underset{\tau _{1}}{\rightarrow } & F_{1}/T_{1} \\ 
\downarrow \mu &  & \nu \downarrow \\ 
F_{2} & \overset{\tau _{2}}{\rightarrow } & F_{2}/T_{2}%
\end{array}%
,  \label{taudiagram}
\end{equation}%
is commutative. Here $\tau _{i}$ are the natural epimorphisms, $i=1,2$. If $%
\left( f_{1},f_{2}\right) \in T_{1}$ then $\tau _{2}\mu \left( f_{1}\right)
=\nu \tau _{1}\left( f_{1}\right) =\nu \tau _{1}\left( f_{2}\right) =\tau
_{2}\mu \left( f_{2}\right) $. Therefore $\mu \left( T_{1}\right) \subseteq
T_{2}$ and $\Phi \left( \mu \right) \left( s_{F_{1}}\left( T_{1}\right)
\right) =s_{F_{2}}\mu s_{F_{1}}^{-1}s_{F_{1}}\left( T_{1}\right) \subseteq
s_{F_{2}}\left( T_{2}\right) $. Hence there exists a homomorphism $\overline{%
\nu }:\Phi (F_{1})/s_{F_{1}}\left( T_{1}\right) \rightarrow \Phi
(F_{2})/s_{F_{2}}\left( T_{2}\right) $ such that the diagram%
\begin{equation}
\begin{array}{ccc}
\Phi \left( F_{1}\right) & \underset{\widetilde{\tau _{1}}}{\rightarrow } & 
\Psi \left( F_{1}/T_{1}\right) =\Phi (F_{1})/s_{F_{1}}\left( T_{1}\right) \\ 
\downarrow \Phi \left( \mu \right) &  & \overline{\nu }\downarrow \\ 
\Phi \left( F_{2}\right) & \overset{\widetilde{\tau _{2}}}{\rightarrow } & 
\Psi \left( F_{2}/T_{2}\right) =\Phi (F_{2})/s_{F_{2}}\left( T_{2}\right)%
\end{array}%
,  \label{posttaudiagram}
\end{equation}%
where $\widetilde{\tau _{i}}$ are the natural epimorphisms, $i=1,2$, is
commutative.

The homomorphism $\overline{\nu }$ is not depend on the choice of the
homomorphism $\mu $, which make the diagram (\ref{taudiagram}) commutative.
Indeed, if two homomorphisms $\mu _{1},\mu _{2}:F_{1}\rightarrow F_{2}$ make
the diagram (\ref{taudiagram}) commutative, then $\tau _{2}\mu _{1}=\nu \tau
_{1}=\tau _{2}\mu _{2}$. So by Definition \ref{Autom_equiv} and Theorem \ref%
{alfaformula} $\widetilde{\tau _{2}}\Phi \left( \mu _{1}\right) =\widetilde{%
\tau _{2}}\Phi \left( \mu _{2}\right) $. So if we have two commutative
diagrams%
\begin{equation*}
\begin{array}{ccc}
\Phi \left( F_{1}\right) & \underset{\widetilde{\tau _{1}}}{\rightarrow } & 
\Phi (F_{1})/s_{F_{1}}\left( T_{1}\right) \\ 
\downarrow \Phi \left( \mu _{1}\right) &  & \overline{\nu _{1}}\downarrow \\ 
\Phi \left( F_{2}\right) & \overset{\widetilde{\tau _{2}}}{\rightarrow } & 
\Phi (F_{2})/s_{F_{2}}\left( T_{2}\right)%
\end{array}%
\text{ and }%
\begin{array}{ccc}
\Phi \left( F_{1}\right) & \underset{\widetilde{\tau _{1}}}{\rightarrow } & 
\Phi (F_{1})/s_{F_{1}}\left( T_{1}\right) \\ 
\downarrow \Phi \left( \mu _{2}\right) &  & \overline{\nu _{2}}\downarrow \\ 
\Phi \left( F_{2}\right) & \overset{\widetilde{\tau _{2}}}{\rightarrow } & 
\Phi (F_{2})/s_{F_{2}}\left( T_{2}\right)%
\end{array}%
\end{equation*}%
then $\overline{\nu _{1}}\widetilde{\tau _{1}}=\widetilde{\tau _{2}}\Phi
\left( \mu _{1}\right) =\widetilde{\tau _{2}}\Phi \left( \mu _{2}\right) =%
\overline{\nu _{2}}\widetilde{\tau _{1}}$ and $\overline{\nu _{1}}=\overline{%
\nu _{2}}$. We denote $\overline{\nu }=\Psi \left( \nu \right) $, where $%
\overline{\nu }$ is the homomorphism from the diagram (\ref{posttaudiagram}).

Now we will check that $\Psi $ is a functor. If $\nu _{1}\in \mathrm{Mor}%
_{C_{\Theta }\left( H_{1}\right) }\left( F_{1}/T_{1},F_{2}/T_{2}\right) $, $%
\nu _{2}\in \mathrm{Mor}_{C_{\Theta }\left( H_{1}\right) }\left(
F_{2}/T_{2},F_{3}/T_{3}\right) $ then, by the consideration of the two
diagrams which are similar to the diagram (\ref{taudiagram}), we conclude
that the diagram%
\begin{equation}
\begin{array}{ccc}
F_{1} & \underset{\tau _{1}}{\rightarrow } & F_{1}/T_{1} \\ 
\downarrow \mu _{1} &  & \nu _{1}\downarrow \\ 
F_{2} & \underset{\tau _{2}}{\rightarrow } & F_{2}/T_{2} \\ 
\downarrow \mu _{2} &  & \nu _{2}\downarrow \\ 
F_{3} & \overset{\tau _{3}}{\rightarrow } & F_{3}/T_{3}%
\end{array}
\label{bigtaudiagram}
\end{equation}%
is commutative. After this, by the consideration of the two diagrams which
are similar to the diagram (\ref{posttaudiagram}), we conclude that the
diagram%
\begin{equation}
\begin{array}{ccc}
\Phi \left( F_{1}\right) & \underset{\widetilde{\tau _{1}}}{\rightarrow } & 
\Psi \left( F_{1}/T_{1}\right) \\ 
\downarrow \Phi \left( \mu _{1}\right) &  & \Psi \left( \nu _{1}\right)
\downarrow \\ 
\Phi \left( F_{2}\right) & \underset{\widetilde{\tau _{2}}}{\rightarrow } & 
\Psi \left( F_{2}/T_{2}\right) \\ 
\downarrow \Phi \left( \mu _{2}\right) &  & \Psi \left( \nu _{2}\right)
\downarrow \\ 
\Phi \left( F_{3}\right) & \overset{\widetilde{\tau _{3}}}{\rightarrow } & 
\Psi \left( F_{3}/T_{3}\right)%
\end{array}
\label{bigposttaudiagram}
\end{equation}%
is commutative. And now, by the consideration of the big squares of the
diagrams (\ref{bigtaudiagram}) and (\ref{bigposttaudiagram}), we conclude
that $\Psi \left( \nu _{1}\nu _{2}\right) =\Psi \left( \nu _{1}\right) \Psi
\left( \nu _{2}\right) $. So $\Psi $ is a functor.

Now we will prove that $\Psi $ is a an isomorphism. By Corollary \ref%
{equivalence} from the Theorem \ref{alfaformula} the automorphism $\Phi
^{-1} $ provides an automorphic equivalence of algebras $H_{2}$ and $H_{1}.$%
By using of the automorphism $\Phi ^{-1}$ and the system of bijections $%
\left\{ s_{F}^{\Phi ^{-1}}=\left( s_{\Phi ^{-1}\left( F\right) }\right)
^{-1}:F\rightarrow \Phi ^{-1}\left( F\right) \mid F\in \mathrm{Ob}\Theta
^{0}\right\} $ we construct the functor $\widetilde{\Psi }$. If $F/T\in 
\mathrm{Ob}C_{\Theta }\left( H_{1}\right) $ then $\Psi \left( F/T\right)
=\Phi (F)/s_{F}\left( T\right) \in \mathrm{Ob}C_{\Theta }\left( H_{2}\right) 
$, $s_{F}\left( T\right) \in Cl_{H_{2}}(\Phi \left( F\right) )$ and $%
\widetilde{\Psi }\Psi \left( F/T\right) =\widetilde{\Psi }\left( \Phi
(F)/s_{F}\left( T\right) \right) =\Phi ^{-1}\Phi (F)/s_{\Phi (F)}^{\Phi
^{-1}}s_{F}\left( T\right) $. But $s_{\Phi (F)}^{\Phi ^{-1}}s_{F}\left(
T\right) =\left( s_{\Phi ^{-1}\left( \Phi (F)\right) }\right)
^{-1}s_{F}\left( T\right) =T$, so $\widetilde{\Psi }\Psi \left( F/T\right)
=F/T$. If $F/T\in \mathrm{Ob}C_{\Theta }\left( H_{2}\right) $ then $%
\widetilde{\Psi }\left( F/T\right) =\Phi ^{-1}(F)/\left( s_{\Phi ^{-1}\left(
F\right) }\right) ^{-1}\left( T\right) \in \mathrm{Ob}C_{\Theta }\left(
H_{1}\right) $, \linebreak $\left( s_{\Phi ^{-1}\left( F\right) }\right)
^{-1}\left( T\right) \in Cl_{H_{1}}(\Phi ^{-1}\left( F\right) )$ and $\Psi 
\widetilde{\Psi }\left( F/T\right) =\Phi \Phi ^{-1}(F)/s_{\Phi ^{-1}\left(
F\right) }\left( s_{\Phi ^{-1}\left( F\right) }\right) ^{-1}\left( T\right)
=F/T$. Now we consider $\nu \in \mathrm{Mor}_{C_{\Theta }\left( H_{1}\right)
}\left( F_{1}/T_{1},F_{2}/T_{2}\right) $. There exists homomorphism $\mu
:F_{1}\rightarrow F_{2}$ such that the diagram (\ref{taudiagram}) and the
diagram (\ref{posttaudiagram}) with $\overline{\nu }=\Psi \left( \nu \right) 
$ are commutative. So, as above, the diagram%
\begin{equation}
\begin{array}{ccc}
\Phi ^{-1}\Phi \left( F_{1}\right) & \underset{\widetilde{\widetilde{\tau
_{1}}}}{\rightarrow } & \widetilde{\Psi }\Psi \left( F_{1}/T_{1}\right) \\ 
\downarrow \Phi ^{-1}\Phi \left( \mu \right) &  & \widetilde{\Psi }\Psi
\left( \nu \right) \downarrow \\ 
\Phi ^{-1}\Phi \left( F_{2}\right) & \overset{\widetilde{\widetilde{\tau _{2}%
}}}{\rightarrow } & \widetilde{\Psi }\Psi \left( F_{2}/T_{2}\right)%
\end{array}%
,  \label{doublediagram}
\end{equation}%
where $\widetilde{\widetilde{\tau _{i}}}$ are the natural epimorphisms, $%
i=1,2$, is also commutative. But $\Phi ^{-1}\Phi \left( F_{i}\right) =F_{i}$%
, $\Phi ^{-1}\Phi \left( \mu \right) =\mu $, $\widetilde{\Psi }\Psi \left(
F_{i}/T_{i}\right) =F_{i}/T_{i}$, $i=1,2$. So $\widetilde{\widetilde{\tau
_{i}}}=\tau _{i}$, $i=1,2$, and diagram (\ref{doublediagram}) coincide with
the diagram (\ref{taudiagram}), only instead the homomorphism $\nu $ we have
a homomorphism $\widetilde{\Psi }\Psi \left( \nu \right) $. From $\nu \tau
_{1}=\widetilde{\Psi }\Psi \left( \nu \right) \tau _{1}$ we conclude that $%
\nu =\widetilde{\Psi }\Psi \left( \nu \right) $. Now we consider $\nu \in 
\mathrm{Mor}_{C_{\Theta }\left( H_{2}\right) }\left(
F_{1}/T_{1},F_{2}/T_{2}\right) $. We denote by $\mu $ the homomorphism which
makes the diagram (\ref{taudiagram}) commutative. As above, the diagram%
\begin{equation*}
\begin{array}{ccc}
\Phi ^{-1}\left( F_{1}\right) & \underset{\widetilde{\tau _{1}}}{\rightarrow 
} & \widetilde{\Psi }\left( F_{1}/T_{1}\right) \\ 
\downarrow \Phi ^{-1}\left( \mu \right) &  & \widetilde{\Psi }\left( \nu
\right) \downarrow \\ 
\Phi ^{-1}\left( F_{2}\right) & \overset{\widetilde{\tau _{2}}}{\rightarrow }
& \widetilde{\Psi }\left( F_{2}/T_{2}\right)%
\end{array}%
,
\end{equation*}%
is commutative. So the diagram 
\begin{equation}
\begin{array}{ccc}
\Phi \Phi ^{-1}\left( F_{1}\right) & \underset{\widetilde{\widetilde{\tau
_{1}}}}{\rightarrow } & \Psi \widetilde{\Psi }\left( F_{1}/T_{1}\right) \\ 
\downarrow \Phi \Phi ^{-1}\left( \mu \right) &  & \Psi \widetilde{\Psi }%
\left( \nu \right) \downarrow \\ 
\Phi \Phi ^{-1}\left( F_{2}\right) & \overset{\widetilde{\widetilde{\tau _{2}%
}}}{\rightarrow } & \Psi \widetilde{\Psi }\left( F_{2}/T_{2}\right)%
\end{array}%
,  \label{seconddoublediagram}
\end{equation}%
where $\widetilde{\widetilde{\tau _{i}}}$ are the natural epimorphisms, $%
i=1,2$, is commutative. $\Phi \Phi ^{-1}\left( F_{i}\right) =F_{i}$, $\Psi 
\widetilde{\Psi }\left( F_{i}/T_{i}\right) =F_{i}/T_{i}$, $i=1,2$, $\Phi
^{-1}\Phi \left( \mu \right) =\mu $. So $\widetilde{\widetilde{\tau _{i}}}%
=\tau _{i}$, $i=1,2$, and diagram (\ref{seconddoublediagram}) coincide with
the diagram (\ref{taudiagram}), only instead the homomorphism $\nu $ we have
a homomorphism $\Psi \widetilde{\Psi }\left( \nu \right) $. And as above we
have that $\Psi \widetilde{\Psi }\left( \nu \right) =\nu $. So the functor $%
\Psi $ is an isomorphism, because it has an inverse functor $\widetilde{\Psi 
}$.

Now we must prove that the isomorphism $\Psi $ is a subject of the
conditions 1.-4. Condition 1. fulfills by the definition of $\Psi $.
Condition 2. follows from the monotony of the bijection $s_{F}:Cl_{H_{1}}(F)%
\rightarrow Cl_{H_{2}}(\Phi (F))$. We consider the commutative diagram%
\begin{equation*}
\begin{array}{ccc}
F & \underset{\delta }{\rightarrow } & F/\left( \Delta _{F}\right)
_{H_{1}}^{\prime \prime } \\ 
\downarrow id_{F} &  & \overline{\tau }\downarrow \\ 
F & \overset{\tau }{\rightarrow } & F/T%
\end{array}%
,
\end{equation*}%
where $F\in \mathrm{Ob}\Theta ^{0}$, $T\in Cl_{H_{1}}(F)$, $\delta ,\tau ,%
\overline{\tau }$ are the natural epimorphisms. As above, the diagram%
\begin{equation*}
\begin{array}{ccc}
\Phi \left( F\right) & \underset{\widetilde{\delta }}{\rightarrow } & \Phi
\left( F\right) /\left( \Delta _{\Phi \left( F\right) }\right)
_{H_{2}}^{\prime \prime } \\ 
\downarrow id_{\Phi \left( F\right) } &  & \Psi \left( \overline{\tau }%
\right) \downarrow \\ 
\Phi \left( F\right) & \overset{\widetilde{\tau }}{\rightarrow } & \Psi
\left( F/T\right)%
\end{array}%
,
\end{equation*}%
where $\widetilde{\delta }$ and $\widetilde{\tau }$ are the natural
epimorphisms, is commutative. From $\Psi \left( \overline{\tau }\right) 
\widetilde{\delta }=\widetilde{\tau }$ we conclude that $\Psi \left( 
\overline{\tau }\right) $ is the natural epimorphism. So, condition 3.
holds. Condition 4. we can conclude from condition 2. and the definition of $%
\Psi $.
\end{proof}

\begin{proposition}
If there is a pair: an automorphism $\Phi :\Theta ^{0}\rightarrow \Theta
^{0} $ and an isomorphism $\Psi :C_{\Theta }\left( H_{1}\right) \rightarrow
C_{\Theta }\left( H_{2}\right) $ subject of conditions 1. - 4., where $%
H_{1},H_{2}\in \Theta $, then the isomorphism $\Psi $ is uniquely determined
by the automorphism $\Phi $.
\end{proposition}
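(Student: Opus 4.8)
The plan is to take two isomorphisms $\Psi_{1},\Psi_{2}:C_{\Theta}(H_{1})\rightarrow C_{\Theta}(H_{2})$ that both satisfy conditions 1.--4. with the same automorphism $\Phi$, and to show that $\Psi_{1}=\Psi_{2}$ on objects and on morphisms. On objects I would argue as follows. By the preceding theorem each $\Psi_{k}$ witnesses that $\Phi$ provides an automorphic equivalence of $H_{1}$ and $H_{2}$, the associated coordinated family of bijections of Definition \ref{Autom_equiv} being, by condition 1., $\alpha_{k}(\Phi)_{F}(T)=\widetilde{T}_{k}$, where $\Psi_{k}(F/T)=\Phi(F)/\widetilde{T}_{k}$. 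Fixing, via Theorem \ref{potinere}, a system of bijections $\{s_{F}:F\rightarrow\Phi(F)\mid F\in\mathrm{Ob}\,\Theta^{0}\}$ that presents the action of $\Phi$ on morphisms through formula (\ref{pot_inner}), the first part of Theorem \ref{alfaformula} forces $\widetilde{T}_{k}=s_{F}(T)$ for every $T\in Cl_{H_{1}}(F)$, a value independent of $k$; hence $\Psi_{1}(F/T)=\Psi_{2}(F/T)$ for every object $F/T$. (Equivalently one may invoke Corollary \ref{independence}.)

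For morphisms I would fix $\nu\in\mathrm{Mor}_{C_{\Theta}(H_{1})}(F_{1}/T_{1},F_{2}/T_{2})$ and abbreviate $N_{i}=(\Delta_{F_{i}})_{H_{1}}''$, $\widetilde{N}_{i}=(\Delta_{\Phi(F_{i})})_{H_{2}}''$, together with the natural epimorphisms $\delta_{i}:F_{i}\rightarrow F_{i}/N_{i}$, $\widetilde{\delta}_{i}:\Phi(F_{i})\rightarrow\Phi(F_{i})/\widetilde{N}_{i}$, $\tau_{i}:F_{i}\rightarrow F_{i}/T_{i}$, and $\overline{\tau}_{i}:F_{i}/N_{i}\rightarrow F_{i}/T_{i}$, so that $\tau_{i}=\overline{\tau}_{i}\delta_{i}$. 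Since $\nu$ exists we have $\Gamma_{F_{1}}\subseteq\Gamma_{F_{2}}$ (a congruence does not shrink the set of sorts), so $\mathrm{Hom}(F_{1},F_{2})\neq\varnothing$ and, by the projective property of $F_{1}$, there is $\mu\in\mathrm{Mor}_{\Theta^{0}}(F_{1},F_{2})$ with $\tau_{2}\mu=\nu\tau_{1}$. Exactly as in the proof of the preceding theorem one gets $\mu(N_{1})\subseteq N_{2}$, which yields $\overline{\mu}:F_{1}/N_{1}\rightarrow F_{2}/N_{2}$ with $\overline{\mu}\delta_{1}=\delta_{2}\mu$, and then $\overline{\tau}_{2}\overline{\mu}\delta_{1}=\overline{\tau}_{2}\delta_{2}\mu=\tau_{2}\mu=\nu\tau_{1}=\nu\overline{\tau}_{1}\delta_{1}$ gives $\overline{\tau}_{2}\overline{\mu}=\nu\overline{\tau}_{1}$, because $\delta_{1}$ is an epimorphism.

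Applying the functor $\Psi_{k}$ to this last equality gives
\[
\Psi_{k}(\nu)\,\Psi_{k}(\overline{\tau}_{1})=\Psi_{k}(\overline{\tau}_{2})\,\Psi_{k}(\overline{\mu}),\qquad k=1,2 .
\]
By condition 3., $\Psi_{k}(\overline{\tau}_{i})$ is the natural epimorphism $\Phi(F_{i})/\widetilde{N}_{i}\rightarrow\Psi_{k}(F_{i}/T_{i})$, and by the first paragraph the objects $\Psi_{1}(F_{i}/T_{i})$ and $\Psi_{2}(F_{i}/T_{i})$ coincide; since the canonical projection between two prescribed quotient algebras is unique, $\Psi_{1}(\overline{\tau}_{i})=\Psi_{2}(\overline{\tau}_{i})=:\rho_{i}$, a surjective and hence epic morphism. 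By condition 2., both $\Psi_{1}(\overline{\mu})$ and $\Psi_{2}(\overline{\mu})$ are morphisms $\Phi(F_{1})/\widetilde{N}_{1}\rightarrow\Phi(F_{2})/\widetilde{N}_{2}$, and by condition 4. applied to the commutative square $\overline{\mu}\delta_{1}=\delta_{2}\mu$ each of them satisfies $\Psi_{k}(\overline{\mu})\,\widetilde{\delta}_{1}=\widetilde{\delta}_{2}\,\Phi(\mu)$; since $\widetilde{\delta}_{1}$ is an epimorphism this equation has at most one solution, so $\Psi_{1}(\overline{\mu})=\Psi_{2}(\overline{\mu})$. Therefore the right-hand side of the displayed identity is the same for $k=1$ and $k=2$, whence $\Psi_{1}(\nu)\rho_{1}=\Psi_{2}(\nu)\rho_{1}$, and cancelling the epimorphism $\rho_{1}$ gives $\Psi_{1}(\nu)=\Psi_{2}(\nu)$. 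Since $\nu$ was an arbitrary morphism of $C_{\Theta}(H_{1})$, we conclude $\Psi_{1}=\Psi_{2}$.

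I expect the bulk of the effort to be the bookkeeping already carried out in the proof of the preceding theorem — the existence of the lift $\mu$ and its compatibility $\mu(N_{1})\subseteq N_{2}$ — and checking that the degenerate situations ($\mathrm{Hom}=\varnothing$, failures of inclusions of sort sets) do not interfere; here they do not, since a morphism $\nu:F_{1}/T_{1}\rightarrow F_{2}/T_{2}$ already forces $\Gamma_{F_{1}}\subseteq\Gamma_{F_{2}}$. The one genuinely new point, and the step I would be most careful with, is reading off from condition 3. that $\Psi_{k}(\overline{\tau}_{i})$ is determined by $\Phi$ alone; this reduces to the uniqueness of the natural epimorphism between two fixed quotient algebras, once we know the source and target objects agree for $k=1,2$.
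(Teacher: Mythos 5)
Your proposal is correct and follows essentially the same route as the paper: uniqueness on objects via Theorem \ref{alfaformula} (equivalently Corollary \ref{independence}), and uniqueness on morphisms by lifting $\nu$ to $\mu$, passing to $\overline{\mu}$ on the $\left( \Delta _{F_{i}}\right) _{H_{1}}^{\prime \prime }$-quotients, applying conditions 3.\ and 4., and cancelling an epimorphism. The only cosmetic difference is that you isolate $\Psi _{1}\left( \overline{\mu }\right) =\Psi _{2}\left( \overline{\mu }\right) $ before cancelling $\widetilde{\tau _{1}}$, whereas the paper composes with $\widetilde{\delta _{1}}$ and cancels the composite epimorphism $\widetilde{\tau _{1}}\widetilde{\delta _{1}}$ in one step.
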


\begin{proof}
If we have an automorphism $\Phi :\Theta ^{0}\rightarrow \Theta ^{0}$ and an
isomorphism $\Psi :C_{\Theta }\left( H_{1}\right) \rightarrow C_{\Theta
}\left( H_{2}\right) $ connected with $\Phi $ by conditions 1. - 4., then as
it was proved in the previous Theorem, the automorphism $\Phi :\Theta
^{0}\rightarrow \Theta ^{0}$ provides an automorphic equivalence of algebras 
$H_{1},H_{2}$ and the mapping $\alpha \left( \Phi \right)
_{F}:Cl_{H_{1}}(F)\ni T\rightarrow \widetilde{T}\in Cl_{H_{2}}(\Phi \left(
F\right) )$ is a subject of the Definition \ref{Autom_equiv}. By Theorem \ref%
{alfaformula} and Corollary \ref{independence} from Proposition \ref%
{centrarfunction} $\alpha \left( \Phi \right) _{F}=s_{F}^{\Phi }$, when
automorphism $\Phi $ acts on the morphisms of the category $\Theta ^{0}$ by
formula (\ref{pot_inner}) with the system of bijections $\left\{ s_{F}^{\Phi
}:F\rightarrow \Phi \left( F\right) \mid F\in \mathrm{Ob}\Theta ^{0}\right\} 
$, and $\alpha \left( \Phi \right) _{F}$ uniquely determined by the
automorphism $\Phi $. So by condition 1. we have that for every $F/T\in
C_{\Theta }\left( H_{1}\right) $ the $\Psi \left( F/T\right) $ uniquely
determined.

We just have to prove that the acting of the isomorphism $\Psi $ on
morphisms of the category $C_{\Theta }\left( H_{1}\right) $ also uniquely
determined. We consider \linebreak $\nu \in \mathrm{Mor}_{C_{\Theta }\left(
H_{1}\right) }\left( F_{1}/T_{1},F_{2}/T_{2}\right) $. As it was proved in
the previous Theorem, there exists homomorphism $\mu :F_{1}\rightarrow F_{2}$
such that the diagram (\ref{taudiagram}) is commutative. Also, as it was
proved in the previous Theorem when diagrams (\ref{deltadiagram}) were
considered, there exists homomorphism $\overline{\mu }:F_{1}/\left( \Delta
_{F_{1}}\right) _{H_{1}}^{\prime \prime }\rightarrow F_{2}/\left( \Delta
_{F_{2}}\right) _{H_{1}}^{\prime \prime }$ such that 
\begin{equation}
\delta _{2}\mu =\overline{\mu }\delta _{1},  \label{deltacommut}
\end{equation}%
where $\delta _{i}:$ $F_{i}\rightarrow F_{i}/\left( \Delta _{F_{i}}\right)
_{H_{1}}^{\prime \prime }$ are the natural epimorphisms, $i=1,2$. As above
we denote by $\overline{\tau _{i}}:F_{i}/\left( \Delta _{F_{i}}\right)
_{H_{1}}^{\prime \prime }\rightarrow F_{i}/T_{i}$ the natural epimorphisms, $%
i=1,2$. The $\tau _{i}=\overline{\tau _{i}}\delta _{i}$ holds, where $\tau
_{i}:$ $F_{i}\rightarrow F_{i}/T_{i}$ the natural epimorphisms. So $\nu 
\overline{\tau _{1}}\delta _{1}=\nu \tau _{1}=\tau _{2}\mu =\overline{\tau
_{2}}\delta _{2}\mu =\overline{\tau _{2}}\overline{\mu }\delta _{1}$,
therefore 
\begin{equation}
\nu \overline{\tau _{1}}=\overline{\tau _{2}}\overline{\mu }.
\label{incategcommut}
\end{equation}%
We suppose that there are two isomorphisms $\Psi _{1},\Psi _{2}:C_{\Theta
}\left( H_{1}\right) \rightarrow C_{\Theta }\left( H_{2}\right) $ subjects
of conditions 1. - 4. and will prove that $\Psi _{1}\left( \nu \right) =\Psi
_{2}\left( \nu \right) $. All homomorphisms in the (\ref{incategcommut}) are
morphisms of the category $C_{\Theta }\left( H_{1}\right) $. We apply to (%
\ref{incategcommut}) both isomorphisms: as $\Psi _{1}$, as $\Psi _{2}$ -
achieve two commutative diagrams%
\begin{equation*}
\begin{array}{ccc}
\Psi _{i}\left( F_{1}/\left( \Delta _{F_{1}}\right) _{H_{1}}^{\prime \prime
}\right) & \underset{\Psi _{i}\left( \overline{\tau _{1}}\right) }{%
\rightarrow } & \Psi _{i}\left( F_{1}/T_{1}\right) \\ 
\downarrow \Psi _{i}\left( \overline{\mu }\right) &  & \Psi _{i}\left( \nu
\right) \downarrow \\ 
\Psi _{i}\left( F_{2}/\left( \Delta _{F_{2}}\right) _{H_{1}}^{\prime \prime
}\right) & \overset{\Psi _{i}\left( \overline{\tau _{2}}\right) }{%
\rightarrow } & \Psi _{i}\left( F_{2}/T_{2}\right)%
\end{array}%
,
\end{equation*}%
where $i=1,2$. As it was proved above, the corresponding objects of these
diagrams are coincide. The morphisms in the rows of these diagrams are also
coincide, because by condition 3. they are natural epimorphisms which exists
between the corresponding objects. That is, in fact, we have these two
commutative diagrams%
\begin{equation*}
\begin{array}{ccc}
\Phi \left( F_{1}\right) /\left( \Delta _{\Phi \left( F_{1}\right) }\right)
_{H_{2}}^{\prime \prime } & \underset{\widetilde{\tau _{1}}}{\rightarrow } & 
\Phi \left( F_{1}\right) /s_{F_{1}}^{\Phi }\left( T_{1}\right) \\ 
\downarrow \Psi _{i}\left( \overline{\mu }\right) &  & \Psi _{i}\left( \nu
\right) \downarrow \\ 
\Phi \left( F_{2}\right) /\left( \Delta _{\Phi \left( F_{2}\right) }\right)
_{H_{2}}^{\prime \prime } & \overset{\widetilde{\tau _{2}}}{\rightarrow } & 
\Phi \left( F_{2}\right) /s_{F_{2}}^{\Phi }\left( T_{2}\right)%
\end{array}%
,
\end{equation*}%
where $i=1,2$, automorphism $\Phi $ acts on the morphisms of the category $%
\Theta ^{0}$ by formula (\ref{pot_inner}) with the system of bijections $%
\left\{ s_{F}^{\Phi }:F\rightarrow \Phi \left( F\right) \mid F\in \mathrm{Ob}%
\Theta ^{0}\right\} $, $\widetilde{\tau _{1}}$, $\widetilde{\tau _{2}}$ the
natural epimorphisms. Now we consider the diagrams%
\begin{equation*}
\begin{array}{ccccc}
\Phi \left( F_{1}\right) & \underset{\widetilde{\delta _{1}}}{\rightarrow }
& \Phi \left( F_{1}\right) /\left( \Delta _{\Phi \left( F_{1}\right)
}\right) _{H_{2}}^{\prime \prime } & \underset{\widetilde{\tau _{1}}}{%
\rightarrow } & \Phi \left( F_{1}\right) /s_{F_{1}}^{\Phi }\left(
T_{1}\right) \\ 
\downarrow \Phi \left( \mu \right) &  & \Psi _{i}\left( \overline{\mu }%
\right) \downarrow &  & \Psi _{i}\left( \nu \right) \downarrow \\ 
\Phi \left( F_{2}\right) & \overset{\widetilde{\delta _{2}}}{\rightarrow } & 
\Phi \left( F_{2}\right) /\left( \Delta _{\Phi \left( F_{2}\right) }\right)
_{H_{2}}^{\prime \prime } & \overset{\widetilde{\tau _{2}}}{\rightarrow } & 
\Phi \left( F_{2}\right) /s_{F_{2}}^{\Phi }\left( T_{2}\right)%
\end{array}%
,
\end{equation*}%
where $i=1,2$, $\widetilde{\delta _{1}}$, $\widetilde{\delta _{2}}$ the
natural epimorphisms. From (\ref{deltacommut}) and condition 4. we can
conclude that the left small squares of these diagrams are commutative.
Therefore both these diagrams are commutative and $\widetilde{\tau _{2}}%
\widetilde{\delta _{2}}\Phi \left( \mu \right) =\Psi _{1}\left( \nu \right) 
\widetilde{\tau _{1}}\widetilde{\delta _{1}}=\Psi _{2}\left( \nu \right) 
\widetilde{\tau _{1}}\widetilde{\delta _{1}}$. $\widetilde{\tau _{1}}%
\widetilde{\delta _{1}}$ - is an epimorphism so $\Psi _{1}\left( \nu \right)
=\Psi _{2}\left( \nu \right) $.
\end{proof}

\section{Examples.\label{examples}}

\setcounter{equation}{0}

In this Section we will consider some varieties of many-sorted algebras. We
will calculate the group $\mathfrak{A/Y}$ for these varieties and if this
group is not trivial we will give examples of algebras which are
automorphically equivalent but are not geometrically equivalent. We must
prove in all these cosiderations that in our varieties Condition \ref%
{monoiso} fulfills.

We use for this aim the algebraic proprieties of the automorphisms of the
category $\Theta ^{0}$. First of all we must say that every automorphism $%
\Phi $ of the category $\Theta ^{0}$ transforms every identity morphism $%
id_{F}\in \mathrm{Mor}_{\Theta ^{0}}\left( F,F\right) $, where $F\in \mathrm{%
Ob}\Theta ^{0}$, to the identity morphism $id_{\Phi \left( F\right) }\in 
\mathrm{Mor}_{\Theta ^{0}}\left( \Phi \left( F\right) ,\Phi \left( F\right)
\right) $, because $id_{F}$ is an unit of the monoid $\mathrm{Mor}_{\Theta
^{0}}\left( F,F\right) $. Therefore every automorphism $\Phi $ of the
category $\Theta ^{0}$ transforms the isomorphism $\alpha :F_{1}\rightarrow
F_{2}$, where $F_{1},F_{2}\in \mathrm{Ob}\Theta ^{0}$, to the to the
isomorphism $\Phi \left( \alpha \right) :\Phi \left( F_{1}\right)
\rightarrow \Phi \left( F_{2}\right) $, because isomorphisms are the
invertible morphisms. Also every automorphism $\Phi $ of the category $%
\Theta ^{0}$ preserves the coproducts: $\Phi \left( \coprod\limits_{i\in
I}F_{i}\right) \cong \coprod\limits_{i\in I}\Phi \left( F_{i}\right) $,
where $F_{i}\in \mathrm{Ob}\Theta ^{0}$ - because the coproduct is defined
by algebraic conditions on the morphisms.

\begin{definition}
\label{IBN}We say that the variety $\Theta $ has an \textbf{IBN propriety}
if for every $F\left( X\right) ,F\left( Y\right) \in \mathrm{Ob}\Theta ^{0}$
the $F\left( X\right) \cong F\left( Y\right) $ holds if and only if the $%
\left\vert X^{\left( i\right) }\right\vert =\left\vert Y^{\left( i\right)
}\right\vert $ holds for every $i\in \Gamma $.
\end{definition}

It is clear that if $F\left( X\right) ,F\left( Y\right) \in \mathrm{Ob}%
\Theta ^{0}$, then $F\left( X\right) \sqcup F\left( Y\right) \cong F\left(
Z\right) $, where $\left\vert X^{\left( i\right) }\right\vert +\left\vert
Y^{\left( i\right) }\right\vert =\left\vert Z^{\left( i\right) }\right\vert $%
, $i\in \Gamma $. Therefore, as it was explained in \cite[Proposition 5.2]%
{ShestTsur}, if the variety $\Theta $ has an IBN propriety every
automorphism $\Phi $ of the category $\Theta ^{0}$ induces the automorphism $%
\varphi $ of the additive monoid $%
\mathbb{N}
^{\left\vert \Gamma \right\vert }$, such that $\varphi \left( \left(
n_{i}\right) _{i\in \Gamma }\right) =\left( m_{i}\right) _{i\in \Gamma }$ if
and only if $\Phi \left( F\left( X\right) \right) \cong F\left( Y\right) $,
where $\left\vert X^{\left( i\right) }\right\vert =n_{i}$, $\left\vert
Y^{\left( i\right) }\right\vert =m_{i}$, $i\in \Gamma $. The automorphism $%
\varphi $ transforms the minimal set of the generators of the $%
\mathbb{N}
^{\left\vert \Gamma \right\vert }$: $\left\{ e_{i}\mid i\in \Gamma \right\} $%
, where $e_{i}=\left( 0,\ldots ,0,1,0,\ldots ,0\right) $, $1$ is located on
the $i$ place - to itself. Its enough to prove Condition \ref{monoiso} if $%
\left\vert \Gamma \right\vert =1$. In the case $\left\vert \Gamma
\right\vert >1$, we must make additional efforts.

\begin{definition}
We say that homomorphism $\iota :A\rightarrow B$, where $A,B\in \Theta $, is
an \textbf{embedding} if $\ker \iota =\Delta _{A}$.
\end{definition}

We denote $\iota :A\hookrightarrow B$.

\begin{proposition}
\label{injection}We consider $F_{1},F_{2}\in \mathrm{Ob}\Theta ^{0}$. A
homomorphism $\iota :F_{1}\rightarrow F_{2}$ is an embedding if and only if
for every $F\in \mathrm{Ob}\Theta ^{0}$ and every $\alpha ,\beta \in \mathrm{%
Hom}\left( F,F_{1}\right) $ from $\iota \alpha =\iota \beta $ we can
conclude that $\alpha =\beta $ holds.
\end{proposition}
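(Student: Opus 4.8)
The plan is to prove the two implications separately; the forward direction is essentially trivial, and the backward direction uses the one-generator free algebras $F\left( x^{\left( i\right) }\right)$, $i\in \Gamma $, as test objects, exactly as in the proofs of Theorems \ref{potinere} and \ref{alfaformula}.

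First I would treat the "only if" direction. Suppose $\iota :F_{1}\rightarrow F_{2}$ is an embedding, that is, $\ker \iota =\Delta _{F_{1}}$. Then $\iota $ is injective as a map of sets. Hence for every $F\in \mathrm{Ob}\Theta ^{0}$ and every $\alpha ,\beta \in \mathrm{Hom}\left( F,F_{1}\right) $ with $\iota \alpha =\iota \beta $ we have $\iota \left( \alpha \left( f\right) \right) =\iota \left( \beta \left( f\right) \right) $ for every $f\in F$, and injectivity of $\iota $ yields $\alpha \left( f\right) =\beta \left( f\right) $ for every $f\in F$, so $\alpha =\beta $.

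Next I would treat the "if" direction. Assume $\iota $ has the stated left-cancellation property against all objects of $\Theta ^{0}$; I claim $\iota $ is injective, which (since $\ker \iota $ is a congruence) is equivalent to $\ker \iota =\Delta _{F_{1}}$. Take $a_{1},a_{2}\in F_{1}$ with $\iota \left( a_{1}\right) =\iota \left( a_{2}\right) $. Because $\ker \iota $ is a congruence it conforms with the sorting, so $\ker \iota \subseteq \bigcup_{i\in \Gamma _{F_{1}}}\left( F_{1}^{\left( i\right) }\right) ^{2}$; hence there is $i\in \Gamma $ with $a_{1},a_{2}\in F_{1}^{\left( i\right) }$, and in particular $F_{1}^{\left( i\right) }\neq \varnothing $, i.e. $i\in \Gamma _{F_{1}}$. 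Pick $x^{\left( i\right) }\in X_{0}^{\left( i\right) }$ and form $F\left( x^{\left( i\right) }\right) \in \mathrm{Ob}\Theta ^{0}$. Since $\eta _{F\left( x^{\left( i\right) }\right) }\left( x^{\left( i\right) }\right) =i\in \Gamma _{F_{1}}$, there exist homomorphisms $\alpha ,\beta :F\left( x^{\left( i\right) }\right) \rightarrow F_{1}$ with $\alpha \left( x^{\left( i\right) }\right) =a_{1}$ and $\beta \left( x^{\left( i\right) }\right) =a_{2}$. Then $\iota \alpha \left( x^{\left( i\right) }\right) =\iota \left( a_{1}\right) =\iota \left( a_{2}\right) =\iota \beta \left( x^{\left( i\right) }\right) $, and since a homomorphism out of $F\left( x^{\left( i\right) }\right) $ is determined by the image of the generator, $\iota \alpha =\iota \beta $. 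The cancellation hypothesis gives $\alpha =\beta $, hence $a_{1}=\alpha \left( x^{\left( i\right) }\right) =\beta \left( x^{\left( i\right) }\right) =a_{2}$. Thus $\iota $ is injective, so $\ker \iota =\Delta _{F_{1}}$.

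I do not expect a genuine obstacle here: the argument is the familiar "monomorphism $=$ injective, tested on free algebras" fact. The only place requiring a little care — bookkeeping rather than difficulty — is the treatment of sorts and of possibly empty carriers: one must observe that $\ker \iota $ pairs only elements of the same sort (so the relevant test object is the free algebra on a single generator of that sort) and that this sort lies in $\Gamma _{F_{1}}$, which is precisely what guarantees that the homomorphisms $\alpha $ and $\beta $ exist.
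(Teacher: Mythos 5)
Your proposal is correct and follows essentially the same route as the paper: the nontrivial direction is handled by testing $\iota$ against the one-generator free algebras $F\left( x^{\left( i\right) }\right) $, the paper merely phrasing this contrapositively (a non-embedding yields $\alpha \neq \beta $ with $\iota \alpha =\iota \beta $) while you argue directly, and the other direction is the obvious injectivity argument in both. Your extra bookkeeping about sorts and $\Gamma _{F_{1}}$ is a harmless refinement of what the paper leaves implicit.
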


\begin{proof}
We assume that homomorphism $\iota :F_{1}\rightarrow F_{2}$ is not an
embedding, so there are $f_{1}^{\left( i\right) },f_{2}^{\left( i\right)
}\in F_{1}^{\left( i\right) }\subseteq F_{1}$, where $i\in \Gamma $, such
that $f_{1}^{\left( i\right) }\neq f_{2}^{\left( i\right) }$ but $\iota
\left( f_{1}^{\left( i\right) }\right) =\iota \left( f_{2}^{\left( i\right)
}\right) $. We consider $F=F\left( x^{\left( i\right) }\right) \in \mathrm{Ob%
}\Theta ^{0}$ and homomorphisms $\alpha ,\beta \in \mathrm{Hom}\left(
F,F_{1}\right) $ such that $\alpha \left( x^{\left( i\right) }\right)
=f_{1}^{\left( i\right) }$, $\beta \left( x^{\left( i\right) }\right)
=f_{2}^{\left( i\right) }$. $\alpha \neq \beta $, but $\iota \alpha =\iota
\beta $.

The proof of the converse is obvious.
\end{proof}

Therefore embeddings in the category $\Theta ^{0}$ can be defined by
algebraic propriety of morphisms. So every automorphism of the category $%
\Theta ^{0}$ transforms an embedding to the embedding. We will use this fact
in the prove of Condition \ref{monoiso}.

\subsection{Actions of semigroups over sets.}

In this Subsection $\Theta $ is a variety of the all actions of semigroups
over sets. $\Gamma =\left\{ 1,2\right\} $: the first sort is a sort of
elements of semigroups, the second sort is a sort of elements of sets. $%
\Omega =\left\{ \cdot ,\circ \right\} $: $\cdot $ is a multiplication in the
semigroup, $\circ $ is an action of the elements of the semigroup over the
elements of the set. $\tau _{\cdot }=\left( 1,1;1\right) $, $\tau _{\circ
}=\left( 1,2;2\right) $. A free algebras $F\left( X\right) $ in our variety
have this form: $\left( F\left( X\right) \right) ^{\left( 1\right) }=S\left(
X^{\left( 1\right) }\right) $, $\left( F\left( X\right) \right) ^{\left(
2\right) }=\left( S\left( X^{\left( 1\right) }\right) \circ X^{\left(
2\right) }\right) \cup X^{\left( 2\right) }$, where $S\left( X^{\left(
1\right) }\right) \circ X^{\left( 2\right) }=\left\{ s\circ x^{\left(
2\right) }\mid s\in S\left( X^{\left( 1\right) }\right) ,x^{\left( 2\right)
}\in X^{\left( 2\right) }\right\} $, $S\left( X^{\left( 1\right) }\right) $
is a free semigroup generated by the set of free generators $X^{\left(
1\right) }$.

\begin{proposition}
\label{ActIBN}The variety of the all actions of semigroups over sets has an
IBN propriety.
\end{proposition}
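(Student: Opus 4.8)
The plan is to show directly that for free algebras $F(X)$ and $F(Y)$ in this variety one has $F(X) \cong F(Y)$ if and only if $|X^{(1)}| = |Y^{(1)}|$ and $|X^{(2)}| = |Y^{(2)}|$. The ``if'' direction is immediate: a bijection of generating sets respecting the sorting extends to an isomorphism by the universal property of free algebras. The real content is the ``only if'' direction, and the natural approach is to recover the cardinalities $|X^{(1)}|$ and $|X^{(2)}|$ from the abstract algebra $F(X)$ (more precisely, from isomorphism-invariant data), using the explicit description of $F(X)$ recalled just before the proposition: $(F(X))^{(1)} = S(X^{(1)})$ is the free semigroup on $X^{(1)}$, and $(F(X))^{(2)} = (S(X^{(1)}) \circ X^{(2)}) \cup X^{(2)}$, a disjoint union.

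First I would handle the sort-$1$ part. The subalgebra of elements of sort $1$ is the free semigroup $S(X^{(1)})$, and it is classical (the free semigroup has ``IBN'': its cardinality of a minimal generating set, or equivalently the set of elements not expressible as a product $a\cdot b$, is exactly $X^{(1)}$) that $S(X^{(1)}) \cong S(Y^{(1)})$ forces $|X^{(1)}| = |Y^{(1)}|$. So any isomorphism $\varphi : F(X) \to F(Y)$ restricts to an isomorphism of sort-$1$ parts and yields $|X^{(1)}| = |Y^{(1)}|$. Next, for the sort-$2$ part, I would identify $X^{(2)}$ inside $(F(X))^{(2)}$ intrinsically: an element $m$ of sort $2$ lies in $X^{(2)}$ if and only if it is \emph{not} of the form $s \circ n$ for any $s$ of sort $1$ and $n$ of sort $2$ --- that is, $X^{(2)}$ is precisely the set of sort-$2$ elements not in the image of the action operation $\circ$. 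This is a first-order (indeed equational-image) condition preserved by any isomorphism, so $\varphi$ carries $X^{(2)}$ bijectively onto $Y^{(2)}$, giving $|X^{(2)}| = |Y^{(2)}|$.

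The main obstacle --- really the only point needing care --- is verifying that the set $X^{(2)}$ is genuinely \emph{not} hit by $\circ$, i.e. that $s \circ x^{(2)} \neq y^{(2)}$ for all $s \in S(X^{(1)})$, $x^{(2)}, y^{(2)} \in X^{(2)}$, and more generally that the union $(S(X^{(1)}) \circ X^{(2)}) \cup X^{(2)}$ is genuinely disjoint and that $s \circ x^{(2)} = s' \circ x'^{(2)}$ forces $s = s'$ and $x^{(2)} = x'^{(2)}$. This is exactly the faithful/free behaviour of the action in the free algebra, which follows from the explicit construction of $F(X)$ quoted in the paper (there are no identities relating $\circ$-images to generators, and the action of the free semigroup on the free set is free). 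Once this disjointness is in hand, both cardinalities are read off isomorphism-invariantly and the proposition follows. I would therefore organize the write-up as: (1) recall the explicit form of $F(X)$ and note the disjointness of the union; (2) recover $|X^{(1)}|$ via IBN for free semigroups; (3) characterize $X^{(2)}$ as the complement of $\mathrm{im}(\circ)$ in sort $2$ and recover $|X^{(2)}|$; (4) conclude.
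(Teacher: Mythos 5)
Your proposal is correct and follows essentially the same route as the paper: it recovers $\left\vert X^{\left( 1\right) }\right\vert $ from the indecomposable elements of the free semigroup (the paper counts the quotient $S/S^{2}$, which is the same invariant) and recovers $\left\vert X^{\left( 2\right) }\right\vert $ isomorphism-invariantly from the structure of the sort-$2$ part. The only cosmetic difference is that for sort $2$ the paper counts the classes of the equivalence generated by $s\circ y\sim y$, whereas you count the sort-$2$ elements outside the image of $\circ $; both rest on the same disjointness and freeness of $\left( S\left( X^{\left( 1\right) }\right) \circ X^{\left( 2\right) }\right) \cup X^{\left( 2\right) }$ that you correctly flag as the one point needing verification.
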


\begin{proof}
We consider $F\left( X\right) \in \mathrm{Ob}\Theta ^{0}$. We denote $\left(
F\left( X\right) \right) ^{\left( 1\right) }=S$. The quotient semigroup $%
S/S^{2}$ has $\left\vert X^{\left( 1\right) }\right\vert +1$ elements.

We consider in $\left( F\left( X\right) \right) ^{\left( 2\right) }$ the
relation $R=\left\{ \left( s\circ y,y\right) \mid s\in S,y\in \left( F\left(
X\right) \right) ^{\left( 2\right) }\right\} $ and the relation $Q$ - the
minimal equivalence in $\left( F\left( X\right) \right) ^{\left( 2\right) }$
which contain $R$. Quotient set $\left( F\left( X\right) \right) ^{\left(
2\right) }/Q$ has $\left\vert X^{\left( 2\right) }\right\vert $ elements.
\end{proof}

\begin{proposition}
\label{ActCond}Condition \ref{monoiso} fulfills in the variety of the all
actions of semigroups over sets.
\end{proposition}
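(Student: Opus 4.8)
The plan is to combine the IBN property just established (Proposition \ref{ActIBN}) with the fact that an automorphism of $\Theta ^{0}$ induces isomorphisms of the endomorphism monoids of objects (the fact already exploited in the proof of Theorem \ref{isom}). The IBN property reduces the verification of Condition \ref{monoiso} to excluding a single pathological case, and the endomorphism monoids provide an invariant sharp enough to exclude it.

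Concretely, since $|\Gamma|=2$ and the variety has IBN, by the discussion preceding Definition \ref{IBN} (following \cite[Proposition 5.2]{ShestTsur}) every automorphism $\Phi$ of $\Theta ^{0}$ induces an automorphism $\varphi$ of the additive monoid $\mathbb{N}^{2}$ that permutes the basis $\{e_{1},e_{2}\}$, and $\Phi(F(X))\cong F(Y)$ precisely when $\varphi$ carries the type of $X$ to the type of $Y$. Hence $\Phi(F(x^{(1)}))$ is isomorphic either to $F(x^{(1)})$ (if $\varphi(e_{1})=e_{1}$) or to a free algebra $F(y^{(2)})$ on a single generator of the second sort (if $\varphi(e_{1})=e_{2}$), and symmetrically for $\Phi(F(x^{(2)}))$. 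In the first alternative Condition \ref{monoiso} holds for both sorts, so the whole content of the proposition is to rule out the second alternative, in which $\varphi$ swaps $e_{1}$ and $e_{2}$.

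So suppose $\varphi$ swaps $e_{1}$ and $e_{2}$, i.e.\ $\Phi(F(x^{(1)}))\cong F(y^{(2)})$. I would read off the two endomorphism monoids from the explicit form of the free algebras recorded before Proposition \ref{ActIBN}. The free semigroup on the empty set is empty, so $F(y^{(2)})$ consists of the single element $y^{(2)}$, whence $\mathrm{End}_{\Theta ^{0}}(F(y^{(2)}))$ is trivial. On the other hand the sort-$1$ part of $F(x^{(1)})$ is the infinite cyclic semigroup $\{x^{(1)},(x^{(1)})^{2},\ldots\}$, so the assignments $x^{(1)}\mapsto (x^{(1)})^{n}$, $n\geq 1$, yield infinitely many distinct endomorphisms of $F(x^{(1)})$. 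Since $\Phi$ restricts to an isomorphism $\mathrm{End}_{\Theta ^{0}}(F(x^{(1)}))\to \mathrm{End}_{\Theta ^{0}}(\Phi(F(x^{(1)})))\cong \mathrm{End}_{\Theta ^{0}}(F(y^{(2)}))$, this would identify an infinite monoid with a one-element monoid, a contradiction. Therefore the swap cannot happen and Condition \ref{monoiso} holds.

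I do not expect a genuine obstacle here. The only delicate points are the bookkeeping with empty sorts --- in particular the observation that $F(x^{(2)})$ is a one-element algebra because $X^{(1)}=\varnothing$ forces its semigroup component to be the empty free semigroup --- and the passage through the induced monoid automorphism $\varphi$, both of which are routine given the cited results.
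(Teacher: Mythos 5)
Your proof is correct, and it shares the paper's first half (the reduction, via the IBN property and the induced automorphism $\varphi$ of $\mathbb{N}^{2}$, to excluding the swap $\varphi(e_{1})=e_{2}$), but it excludes the swap by a genuinely different argument. The paper rules out the swap by exhibiting an embedding $F\left( x_{1}^{\left( 1\right) },\ldots ,x_{n}^{\left( 1\right) }\right) \hookrightarrow F\left( x_{1}^{\left( 1\right) },x_{2}^{\left( 1\right) }\right)$ (free semigroups of any finite rank embed into the free semigroup of rank $2$), invoking Proposition \ref{injection} to see that $\Phi$ must carry it to an embedding, and then observing that a set with $n>2$ elements cannot embed into a set with $2$ elements. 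You instead compare endomorphism monoids: $\mathrm{End}\,F\left( x^{\left( 1\right) }\right)$ is infinite (the maps $x^{\left( 1\right) }\mapsto \left( x^{\left( 1\right) }\right)^{n}$), while $F\left( y^{\left( 2\right) }\right)$ is a one-element algebra — because empty sorts are allowed and the free semigroup on no generators is empty — so its endomorphism monoid is trivial; since a category automorphism induces isomorphisms of endomorphism monoids (the fact already used in Theorem \ref{isom}), the swap is impossible. Your route is more elementary for this particular variety: it needs neither Proposition \ref{injection} nor the rank-two embedding of free semigroups, only the explicit description of the two rank-one free algebras. What the paper's embedding technique buys is uniformity: it is reused essentially verbatim for automatons, representations of groups and representations of Lie algebras, where the single-generator free algebras of the various sorts all have infinite endomorphism monoids and a cruder cardinality count of $\mathrm{End}$ would not separate the sorts, whereas your argument exploits the specific degeneracy that $F\left( y^{\left( 2\right) }\right)$ collapses to a point in this variety.
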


\begin{proof}
By previous Proposition every automorphism $\Phi $\ of the category $\Theta
^{0}$ induces the automorphism $\varphi $ of the additive monoid $%
\mathbb{N}
^{2}$, such that $\varphi \left( \left( n_{1},n_{2}\right) _{i\in \Gamma
}\right) =\left( m_{1},m_{2}\right) _{i\in \Gamma }$ if and only if $\Phi
\left( F\left( X\right) \right) \cong F\left( Y\right) $, where $\left\vert
X^{\left( i\right) }\right\vert =n_{i}$, $\left\vert Y^{\left( i\right)
}\right\vert =m_{i}$, $i\in \left\{ 1,2\right\} $. This automorphism
transforms the minimal set of the generators of the $%
\mathbb{N}
^{2}$: $\left\{ \left( 1,0\right) ,\left( 0,1\right) \right\} $ to itself.
We will prove that it is not possible that $\varphi \left( 1,0\right)
=\left( 0,1\right) $, $\varphi \left( 0,1\right) =\left( 1,0\right) $.
Indeed, if $\varphi \left( 1,0\right) =\left( 0,1\right) $, then $\Phi
\left( F\left( x^{\left( 1\right) }\right) \right) \cong F\left( x^{\left(
2\right) }\right) $ and $\Phi \left( F\left( x_{1}^{\left( 1\right) },\ldots
,x_{n}^{\left( 1\right) }\right) \right) \cong F\left( x_{1}^{\left(
2\right) },\ldots ,x_{n}^{\left( 2\right) }\right) $, $n\geq 1$. $F\left(
x_{1}^{\left( 1\right) },\ldots ,x_{n}^{\left( 1\right) }\right) =S\left(
x_{1}^{\left( 1\right) },\ldots ,x_{n}^{\left( 1\right) }\right) $ is a free
semigroup with $n$ free generators. $F\left( x_{1}^{\left( 2\right) },\ldots
,x_{n}^{\left( 2\right) }\right) =\left\{ x_{1}^{\left( 2\right) },\ldots
,x_{n}^{\left( 2\right) }\right\} $ is a set with $n$ elements. There is an
embedding $\iota :F\left( x_{1}^{\left( 1\right) },\ldots ,x_{n}^{\left(
1\right) }\right) \hookrightarrow F\left( x_{1}^{\left( 1\right)
},x_{2}^{\left( 1\right) }\right) $, because in the semigroup $S\left(
x_{1}^{\left( 1\right) },x_{2}^{\left( 1\right) }\right) $, for example,
elements $x_{1}^{\left( 1\right) }x_{2}^{\left( 1\right) }x_{1}^{\left(
1\right) }$, $x_{1}^{\left( 1\right) }\left( x_{2}^{\left( 1\right) }\right)
^{2}x_{1}^{\left( 1\right) }$, $x_{1}^{\left( 1\right) }\left( x_{2}^{\left(
1\right) }\right) ^{3}x_{1}^{\left( 1\right) }$ and so on are free. By
Proposition \ref{injection} automorphism $\Phi $ transforms this embedding
to the embedding $\Phi \left( \iota \right) :\Phi \left( F\left(
x_{1}^{\left( 1\right) },\ldots ,x_{n}^{\left( 1\right) }\right) \right)
\hookrightarrow \Phi \left( F\left( x_{1}^{\left( 1\right) },x_{2}^{\left(
1\right) }\right) \right) $, where $\Phi \left( F\left( x_{1}^{\left(
1\right) },\ldots ,x_{n}^{\left( 1\right) }\right) \right) \cong F\left(
x_{1}^{\left( 2\right) },\ldots ,x_{n}^{\left( 2\right) }\right) $, $\Phi
\left( F\left( x_{1}^{\left( 1\right) },x_{2}^{\left( 1\right) }\right)
\right) \cong F\left( x_{1}^{\left( 2\right) },x_{2}^{\left( 2\right)
}\right) $. But it is not possible, because when $n>2$ we can not embed the
set with $n$ elements to the set with $2$ elements. Therefore $\varphi
\left( 1,0\right) =\left( 1,0\right) $, $\varphi \left( 0,1\right) =\left(
0,1\right) $ must fulfills. So $\Phi \left( F\left( x^{\left( 1\right)
}\right) \right) \cong F\left( x^{\left( 1\right) }\right) $, $\Phi \left(
F\left( x^{\left( 2\right) }\right) \right) \cong F\left( x^{\left( 2\right)
}\right) $.
\end{proof}

\begin{theorem}
\label{assTheorem}The automorphic equivalence in the variety of the all
actions of semigroups over sets coincides with the geometric equivalence.
\end{theorem}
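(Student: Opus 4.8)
The plan is to invoke Theorem \ref{reduction} and to show that in this variety the only system of words $W=\{w_{\cdot},w_{\circ}\}$ satisfying conditions Op1) and Op2) is the trivial one, namely $w_{\cdot}=x_{1}^{(1)}\cdot x_{2}^{(1)}$ and $w_{\circ}=x^{(1)}\circ x^{(2)}$. For this $W$ one has $\left(H_{2}\right)_{W}^{\ast}=H_{2}$ as $\Theta$-algebras, so Theorem \ref{reduction} reads: $H_{1},H_{2}$ are automorphically equivalent if and only if $H_{1}$ and $H_{2}$ are geometrically equivalent, which is exactly the assertion. The needed background is in force: Condition \ref{monoiso} holds here by Proposition \ref{ActCond}, so Theorems \ref{factorisation}, \ref{methodofverbaloperations} and \ref{reduction} apply; and the direction ``geometric equivalence yields automorphic equivalence'' also follows directly from the identity automorphism together with the second part of Theorem \ref{alfaformula}.

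First I would determine the algebras $A_{\cdot}$, $A_{\circ}$ and the admissible shapes of $w_{\cdot}$, $w_{\circ}$. Since $\tau_{\cdot}=(1,1;1)$, the algebra $A_{\cdot}=F(x_{1}^{(1)},x_{2}^{(1)})$ has empty sort-$2$ part and sort-$1$ part the free semigroup $S(x_{1}^{(1)},x_{2}^{(1)})$, so $w_{\cdot}$ is an arbitrary nonempty word $w_{\cdot}(x_{1}^{(1)},x_{2}^{(1)})$ in these two letters. Since $\tau_{\circ}=(1,2;2)$, the algebra $A_{\circ}=F(x^{(1)},x^{(2)})$ has, by the explicit description of free algebras in this variety, sort-$2$ part $\{(x^{(1)})^{j}\circ x^{(2)}\mid j\geq 1\}\cup\{x^{(2)}\}$; hence $w_{\circ}=x^{(2)}$ or $w_{\circ}=(x^{(1)})^{k}\circ x^{(2)}$ for some $k\geq 1$. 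The value $w_{\circ}=x^{(2)}$ is ruled out by Op2): then the starred action $s\circ^{\ast}m$ would be trivial, so the subalgebra of $\left(A_{\circ}\right)_{W}^{\ast}$ generated by $\{x^{(1)},x^{(2)}\}$ would omit every element $(x^{(1)})^{j}\circ x^{(2)}$ and would be proper, contradicting that $s_{A_{\circ}}\colon A_{\circ}\to\left(A_{\circ}\right)_{W}^{\ast}$ is an isomorphism fixing the generators.

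The main step is to exploit the compatibility identity $(st)\circ m=s\circ(t\circ m)$ of $\Theta$. By Proposition \ref{inthevariety}, $F_{W}^{\ast}\in\Theta$ for every $F\in\mathrm{Ob}\Theta^{0}$; taking $F=F(x_{1}^{(1)},x_{2}^{(1)},x^{(2)})$, the algebra $F_{W}^{\ast}$ satisfies $(s\cdot^{\ast}t)\circ^{\ast}m=s\circ^{\ast}(t\circ^{\ast}m)$ in its starred operations. Evaluating at $s=x_{1}^{(1)}$, $t=x_{2}^{(1)}$, $m=x^{(2)}$ and unwinding the verbal operations $\cdot^{\ast}$ (defined by $w_{\cdot}$) and $\circ^{\ast}$ (defined by $w_{\circ}=(x^{(1)})^{k}\circ x^{(2)}$) inside the \emph{original} free algebra $F$, the left-hand side becomes $\bigl(w_{\cdot}(x_{1}^{(1)},x_{2}^{(1)})\bigr)^{k}\circ x^{(2)}$ and the right-hand side, after one application of the original action axiom, becomes $\bigl((x_{1}^{(1)})^{k}(x_{2}^{(1)})^{k}\bigr)\circ x^{(2)}$. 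Since the map $s\mapsto s\circ x^{(2)}$ is injective on the sort-$1$ part of $F$, this forces $\bigl(w_{\cdot}(x_{1}^{(1)},x_{2}^{(1)})\bigr)^{k}=(x_{1}^{(1)})^{k}(x_{2}^{(1)})^{k}$ in $S(x_{1}^{(1)},x_{2}^{(1)})$. Comparing lengths gives $|w_{\cdot}|=2$, and checking the four two-letter words against all $k\geq 1$ leaves only $w_{\cdot}=x_{1}^{(1)}x_{2}^{(1)}$ with $k=1$. Thus $W$ is trivial, $\left(H_{2}\right)_{W}^{\ast}=H_{2}$, and the theorem follows from Theorem \ref{reduction}.

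I expect the only delicate point to be the bookkeeping in the main step: one must use correctly that ``$F_{W}^{\ast}\in\Theta$ satisfies the action axiom written in the starred operations'', and must remember that the verbal operations $\cdot^{\ast}$, $\circ^{\ast}$ are evaluated in the original algebra $F$ and not in $F_{W}^{\ast}$, so that the resulting equality really lands in the free semigroup $S(x_{1}^{(1)},x_{2}^{(1)})$; after that the word combinatorics is routine. Excluding $w_{\circ}=x^{(2)}$ also needs a little care with the degenerate generator situation, but it is immediate from the explicit form of the free algebras recalled above.
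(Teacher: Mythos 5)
Your proof is correct and rests on the same engine as the paper's: Condition \ref{monoiso} via Proposition \ref{ActCond}, then the method of verbal operations to show that the only system $W=\{w_{\cdot},w_{\circ}\}$ satisfying Op1) and Op2) is the trivial one, with the mixed associativity identity $(st)\circ m=s\circ (t\circ m)$ doing the decisive work. The differences are in execution and packaging. The paper first asserts separately that $w_{\cdot}$ must have length exactly $2$ and that $w_{\circ}$ must be exactly $x^{(1)}\circ x^{(2)}$ ("can not be shorter or longer, because otherwise $s_{F}$ cannot be an isomorphism"), leaving those length restrictions essentially unproved, and only then uses the action axiom to kill the remaining candidate $w_{\cdot}=x_{2}^{(1)}x_{1}^{(1)}$. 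You instead parametrize $w_{\circ}=(x^{(1)})^{k}\circ x^{(2)}$ (after excluding $w_{\circ}=x^{(2)}$ by a generation argument), and extract the single equation $\bigl(w_{\cdot}(x_{1}^{(1)},x_{2}^{(1)})\bigr)^{k}=(x_{1}^{(1)})^{k}(x_{2}^{(1)})^{k}$ in the free semigroup, which simultaneously forces $|w_{\cdot}|=2$, $w_{\cdot}=x_{1}^{(1)}x_{2}^{(1)}$ and $k=1$; this is a tighter and more self-contained version of the classification, and it fills in exactly the gaps the paper glosses over. Finally, you conclude through Theorem \ref{reduction} (trivial $W$ gives $(H_{2})_{W}^{\ast}=H_{2}$), whereas the paper concludes through $\mathfrak{S}=\{1\}$, hence $\mathfrak{A}=\mathfrak{Y}$, plus Theorem \ref{innerautomprop}; these are equivalent routes, since Theorem \ref{reduction} is itself assembled from Theorems \ref{factorisation} and \ref{innerautomprop}. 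No gaps.
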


\begin{proof}
By previous Proposition we can use the method of verbal operations for
calculating group $\mathfrak{A/Y}$ for the variety $\Theta $. We will find
the systems of words $W=\left\{ w_{\cdot },w_{\circ }\right\} $ subjects of
conditions Op1) and Op2). Any such sequence corresponds to the some strongly
stable automorphism of the category $\Theta ^{0}$. By Op1) $w_{\cdot }\in
F\left( x_{1}^{\left( 1\right) },x_{2}^{\left( 1\right) }\right) =S\left(
x_{1}^{\left( 1\right) },x_{2}^{\left( 1\right) }\right) $. We have only two
possibility: $w_{\cdot }\left( x_{1}^{\left( 1\right) },x_{2}^{\left(
1\right) }\right) =x_{1}^{\left( 1\right) }x_{2}^{\left( 1\right) }$ or $%
w_{\cdot }\left( x_{1}^{\left( 1\right) },x_{2}^{\left( 1\right) }\right)
=x_{2}^{\left( 1\right) }x_{1}^{\left( 1\right) }$. $w_{\cdot }\left(
x_{1}^{\left( 1\right) },x_{2}^{\left( 1\right) }\right) $ can not be
shorter or longer than these words, because, otherwise, the mapping $%
s_{F\left( x_{1}^{\left( 1\right) },x_{2}^{\left( 1\right) }\right)
}:F\left( x_{1}^{\left( 1\right) },x_{2}^{\left( 1\right) }\right)
\rightarrow F\left( x_{1}^{\left( 1\right) },x_{2}^{\left( 1\right) }\right)
_{W}^{\ast }$ which preserves elements $x_{1}^{\left( 1\right) }$ and $%
x_{2}^{\left( 1\right) }$ and will be a homomorphism can not be an
isomorphism.

$w_{\circ }\in F\left( x^{\left( 1\right) },x^{\left( 2\right) }\right) $.
Here we have only one possibility: $w_{\circ }\left( x^{\left( 1\right)
},x^{\left( 2\right) }\right) =x^{\left( 1\right) }\circ x^{\left( 2\right)
} $. Also $w_{\circ }\left( x^{\left( 1\right) },x^{\left( 2\right) }\right) 
$ can not be shorter or longer, because, otherwise, we can not construct a
bijection $s_{F\left( x^{\left( 1\right) },x^{\left( 2\right) }\right)
}:F\left( x^{\left( 1\right) },x^{\left( 2\right) }\right) \rightarrow
F\left( x^{\left( 1\right) },x^{\left( 2\right) }\right) $ subject of
condition Op2).

But the system of words $\left\{ w_{\cdot }\left( x_{1}^{\left( 1\right)
},x_{2}^{\left( 1\right) }\right) =x_{2}^{\left( 1\right) }x_{1}^{\left(
1\right) },w_{\circ }\left( x^{\left( 1\right) },x^{\left( 2\right) }\right)
=x^{\left( 1\right) }\circ x^{\left( 2\right) }\right\} $ also is not a
subject of conditions Op1) and Op2). Indeed, if we have an isomorphism $%
s_{F\left( x_{1}^{\left( 1\right) },x_{2}^{\left( 1\right) },,x^{\left(
2\right) }\right) }:F\left( x_{1}^{\left( 1\right) },x_{2}^{\left( 1\right)
},x^{\left( 2\right) }\right) \rightarrow F\left( x_{1}^{\left( 1\right)
},x_{2}^{\left( 1\right) },x^{\left( 2\right) }\right) _{W}^{\ast }$, then,
because $\left( x_{1}^{\left( 1\right) }x_{2}^{\left( 1\right) }\right)
\circ x^{\left( 2\right) }=x_{1}^{\left( 1\right) }\circ \left(
x_{2}^{\left( 1\right) }\circ x^{\left( 2\right) }\right) $, the $w_{\circ
}\left( w_{\cdot }\left( x_{1}^{\left( 1\right) },x_{2}^{\left( 1\right)
}\right) ,x^{\left( 2\right) }\right) =w_{\circ }\left( x_{1}^{\left(
1\right) },w_{\circ }\left( x_{2}^{\left( 1\right) },x^{\left( 2\right)
}\right) \right) $ must hold. But 
\begin{equation*}
w_{\circ }\left( w_{\cdot }\left( x_{1}^{\left( 1\right) },x_{2}^{\left(
1\right) }\right) ,x^{\left( 2\right) }\right) =w_{\circ }\left(
x_{2}^{\left( 1\right) }x_{1}^{\left( 1\right) },x^{\left( 2\right) }\right)
=\left( x_{2}^{\left( 1\right) }x_{1}^{\left( 1\right) }\right) \circ
x^{\left( 2\right) }
\end{equation*}
and 
\begin{equation*}
w_{\circ }\left( x_{1}^{\left( 1\right) },w_{\circ }\left( x_{2}^{\left(
1\right) },x^{\left( 2\right) }\right) \right) =w_{\circ }\left(
x_{1}^{\left( 1\right) },x_{2}^{\left( 1\right) }\circ x^{\left( 2\right)
}\right) =x_{1}^{\left( 1\right) }\circ \left( x_{2}^{\left( 1\right) }\circ
x^{\left( 2\right) }\right) =\left( x_{1}^{\left( 1\right) }x_{2}^{\left(
1\right) }\right) \circ x^{\left( 2\right) }.
\end{equation*}
This contradiction proves that there is only one possibility for the system $%
W$: 
\begin{equation*}
W=\left\{ w_{\cdot }\left( x_{1}^{\left( 1\right) },x_{2}^{\left( 1\right)
}\right) =x_{1}^{\left( 1\right) }x_{2}^{\left( 1\right) },w_{\circ }\left(
x^{\left( 1\right) },x^{\left( 2\right) }\right) =x^{\left( 1\right) }\circ
x^{\left( 2\right) }\right\} .
\end{equation*}
So $\mathfrak{S=}\left\{ 1\right\} $ in our variety $\Theta $ and by Theorem %
\ref{factorisation} $\mathfrak{A/Y=}\left\{ 1\right\} $. By Theorem \ref%
{innerautomprop} the proof is complete.
\end{proof}

\subsection{Automatons.}

In this Subsection $\Theta $ is a variety of the all automatons. $\Gamma
=\left\{ 1,2,3\right\} $: the first sort is a sort of input signals, the
second sort is a sort of statements of automatons, the third sort is a sort
of output signals. $\Omega =\left\{ \ast ,\circ \right\} $. Operation $\ast $
gives as a new statement of automaton according an input signal and a
previous statement of automaton. $\tau _{\ast }=\left( 1,2;2\right) $.
Operation $\circ $ gives as an output signal according an input signal and a
statement of automaton. $\tau _{\circ }=\left( 1,2;3\right) $. A free
algebras $F\left( X\right) $ in our variety has this form: $\left( F\left(
X\right) \right) ^{\left( 1\right) }=X^{\left( 1\right) }$, $\left( F\left(
X\right) \right) ^{\left( 2\right) }=\left( X^{\left( 1\right) }\right)
^{\infty }\ast X^{\left( 2\right) }=\left\{ x_{i_{n}}^{\left( 1\right) }\ast
\left( \ldots \ast \left( x_{i_{1}}^{\left( 1\right) }\ast x_{j}^{\left(
2\right) }\right) \right) \mid x_{j}^{\left( 2\right) }\in X^{\left(
2\right) },x_{i_{n}}^{\left( 1\right) },\ldots ,x_{i_{1}}^{\left( 1\right)
}\in X^{\left( 1\right) },n\in 
\mathbb{N}
\right\} $. Here, when $n=0$, we must understand $x_{i_{n}}^{\left( 1\right)
}\ast \left( \ldots \ast \left( x_{i_{1}}^{\left( 1\right) }\ast
x_{j}^{\left( 2\right) }\right) \right) =x_{j}^{\left( 2\right) }$. $\left(
F\left( X\right) \right) ^{\left( 3\right) }=\left( \left( X^{\left(
1\right) }\right) ^{\infty }\circ \left( F\left( X\right) \right) ^{\left(
2\right) }\right) \cup X^{\left( 3\right) }$, where $\left( X^{\left(
1\right) }\right) ^{\infty }\circ \left( F\left( X\right) \right) ^{\left(
2\right) }=\left\{ x_{i_{n}}^{\left( 1\right) }\circ \left( \ldots \circ
\left( x_{i_{1}}^{\left( 1\right) }\circ y^{\left( 2\right) }\right) \right)
\mid y^{\left( 2\right) }\in \left( F\left( X\right) \right) ^{\left(
2\right) },x_{i_{n}}^{\left( 1\right) },\ldots ,x_{i_{1}}^{\left( 1\right)
}\in X^{\left( 1\right) },n\geq 1\right\} $.

\begin{proposition}
The variety of the all automatons has an IBN propriety.
\end{proposition}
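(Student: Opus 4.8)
The plan is to extract the three cardinalities $\left\vert X^{(1)}\right\vert$, $\left\vert X^{(2)}\right\vert$, $\left\vert X^{(3)}\right\vert$ from the isomorphism type of $F\left( X\right)$, using the explicit description of the free automatons given above together with the fact that the variety of automatons is defined by the empty set of identities; hence $F\left( X\right) =\widetilde{F}\left( X\right)$ is a term algebra, two terms are equal in $F\left( X\right)$ only when they are literally the same term, and in particular a free generator (being a variable) is never the value of an operation.

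The ``if'' direction is the routine one for free algebras. If $\left\vert X^{(i)}\right\vert =\left\vert Y^{(i)}\right\vert$ for every $i\in\Gamma$, then $X^{(i)}=\varnothing$ exactly when $Y^{(i)}=\varnothing$, so $\eta_{F(X)}(X)=\eta_{F(Y)}(Y)$; choosing a sort-preserving bijection $g:X\to Y$ and invoking the universal property of free algebras, I would extend $g$ and $g^{-1}$ to homomorphisms $\varphi:F\left( X\right) \to F\left( Y\right)$ and $\psi:F\left( Y\right) \to F\left( X\right)$, and since $\psi\varphi$ and $\varphi\psi$ fix the free generators they are the identity homomorphisms, so $F\left( X\right) \cong F\left( Y\right)$.

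For the ``only if'' direction I would first identify the free generators of each sort intrinsically. No operation has target sort $1$, so the set of free generators of sort $1$ is all of $F\left( X\right) ^{(1)}$. The only operation with target sort $2$ is $\ast$, and by the description $\ast\bigl(F(X)^{(1)}\times F(X)^{(2)}\bigr)$ is exactly the set of sort-$2$ elements whose term has $\ast$ at the head, i.e. $F\left( X\right) ^{(2)}$ with the case $n=0$ deleted; hence the free generators of sort $2$ form the set $F(X)^{(2)}\setminus\ast\bigl(F(X)^{(1)}\times F(X)^{(2)}\bigr)$. Similarly the only operation with target sort $3$ is $\circ$, and $\circ\bigl(F(X)^{(1)}\times F(X)^{(2)}\bigr)=\bigl(X^{(1)}\bigr)^{\infty}\circ F(X)^{(2)}$, so the free generators of sort $3$ form $F(X)^{(3)}\setminus\circ\bigl(F(X)^{(1)}\times F(X)^{(2)}\bigr)$. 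Now let $\varphi:F\left( X\right) \to F\left( Y\right)$ be an isomorphism. It conforms with the sorting, so it restricts to bijections $F(X)^{(i)}\to F(Y)^{(i)}$, which already gives $\left\vert X^{(1)}\right\vert =\left\vert Y^{(1)}\right\vert$. It conforms with the operations and is surjective, so it carries $\ast\bigl(F(X)^{(1)}\times F(X)^{(2)}\bigr)$ onto $\ast\bigl(F(Y)^{(1)}\times F(Y)^{(2)}\bigr)$ and $\circ\bigl(F(X)^{(1)}\times F(X)^{(2)}\bigr)$ onto $\circ\bigl(F(Y)^{(1)}\times F(Y)^{(2)}\bigr)$; consequently $\varphi$ restricts to bijections between the corresponding complements, i.e. between the free generators of sort $2$ (resp. $3$) of $F\left( X\right)$ and of $F\left( Y\right)$, whence $\left\vert X^{(2)}\right\vert =\left\vert Y^{(2)}\right\vert$ and $\left\vert X^{(3)}\right\vert =\left\vert Y^{(3)}\right\vert$. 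I do not anticipate a genuine obstacle: the only delicate points are the empty-sort bookkeeping in the ``if'' direction and the remark that the free generators are precisely the non-values of the operations, which works here only because the variety of automatons carries no identities --- in contrast with the variety of actions of semigroups of the previous subsection, where associativity of the multiplication made the analogous identification fail and forced the detour through $S/S^{2}$.
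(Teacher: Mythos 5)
Your proof is correct, and for sorts $1$ and $3$ it is essentially the paper's argument: the paper also reads $\left\vert X^{\left( 1\right) }\right\vert $ off as $\left\vert \left( F\left( X\right) \right) ^{\left( 1\right) }\right\vert $, and its sort-$3$ invariant, the complement of the subalgebra generated by $\left( F\left( X\right) \right) ^{\left( 1\right) }\cup \left( F\left( X\right) \right) ^{\left( 2\right) }$, has sort-$3$ part equal to the image of $\circ $, so it coincides with your complement. The only real divergence is at sort $2$: where you take the complement of $\ast \bigl(F\left( X\right) ^{\left( 1\right) }\times F\left( X\right) ^{\left( 2\right) }\bigr)$, the paper instead counts the classes of the quotient of $\left( F\left( X\right) \right) ^{\left( 2\right) }$ by the equivalence generated by $s\ast y\sim y$ (the same device it uses for semigroup actions); both constructions are isomorphism-invariant and both return $\left\vert X^{\left( 2\right) }\right\vert $, and yours is arguably the more direct one given that the free automaton is a term algebra. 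One correction to your closing aside: the identification of the free generators with the non-values of the operations does \emph{not} fail for free semigroup actions --- in $S\left( X^{\left( 1\right) }\right) $ the complement of $S^{2}$ is exactly $X^{\left( 1\right) }$, and associativity collapses the image of the action onto $S\left( X^{\left( 1\right) }\right) \circ X^{\left( 2\right) }$, whose complement is exactly $X^{\left( 2\right) }$ --- so the paper's passage to $S/S^{2}$ there is a choice of invariant, not a forced detour. That slip concerns a different proposition and does not affect the argument you give here.
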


\begin{proof}
We consider $F\left( X\right) \in \mathrm{Ob}\Theta ^{0}$. $\left\vert
X^{\left( 1\right) }\right\vert =\left\vert \left( F\left( X\right) \right)
^{\left( 1\right) }\right\vert $. As in Proposition \ref{ActIBN} we consider
in $\left( F\left( X\right) \right) ^{\left( 2\right) }$ the relation $%
R=\left\{ \left( s\ast y,y\right) \mid s\in \left( F\left( X\right) \right)
^{\left( 1\right) },y\in \left( F\left( X\right) \right) ^{\left( 2\right)
}\right\} $ and the relation $Q$ - the minimal equivalence in $\left(
F\left( X\right) \right) ^{\left( 2\right) }$ which contain $R$. Quotient
set $\left( F\left( X\right) \right) ^{\left( 2\right) }/Q$ has $\left\vert
X^{\left( 2\right) }\right\vert $ elements. \linebreak $\left\vert X^{\left(
3\right) }\right\vert =\left\vert \left( F\left( X\right) \right) ^{\left(
3\right) }\setminus \left\langle \left( F\left( X\right) \right) ^{\left(
1\right) }\cup \left( F\left( X\right) \right) ^{\left( 2\right)
}\right\rangle ^{\left( 3\right) }\right\vert $, where $\left\langle \left(
F\left( X\right) \right) ^{\left( 1\right) }\cup \left( F\left( X\right)
\right) ^{\left( 2\right) }\right\rangle $ is a subalgebra of $F\left(
X\right) $ generated by set $\left( F\left( X\right) \right) ^{\left(
1\right) }\cup \left( F\left( X\right) \right) ^{\left( 2\right) }$.
\end{proof}

\begin{proposition}
Condition \ref{monoiso} fulfills in the variety of the all automatons.
\end{proposition}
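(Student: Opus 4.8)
The plan is to follow the strategy of Proposition \ref{ActCond}. Since the variety of automatons has the IBN property (the previous Proposition), every automorphism $\Phi$ of $\Theta^0$ induces an automorphism $\varphi$ of the additive monoid $\mathbb{N}^3$ that permutes the standard generators $e_1,e_2,e_3$; write $\varphi(e_i)=e_{\sigma(i)}$ for a permutation $\sigma$ of $\{1,2,3\}$, equivalently $\Phi(F(x^{(i)}))\cong F(x^{(\sigma(i))})$ for each sort $i$. The goal is to prove $\sigma=\mathrm{id}$, which, since by IBN all free algebras on a single generator of a fixed sort are isomorphic, is exactly Condition \ref{monoiso}.

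First I would read off, from the explicit description of $F(X)$ given above, which sorts are occupied by a few small free algebras: $\Gamma_{F(x^{(i)})}=\{i\}$ for $i=1,2,3$; $\Gamma_{F(x^{(1)},x^{(2)})}=\{1,2,3\}$ (the two generators already give sorts $1$ and $2$, and $x^{(1)}\circ x^{(2)}$ is of sort $3$); $\Gamma_{F(x^{(1)},x^{(3)})}=\{1,3\}$; and $\Gamma_{F(x^{(2)},x^{(3)})}=\{2,3\}$ (with no generator of sort $1$ present, neither $\ast$ nor $\circ$ can be applied, so only the two generators survive). I would then invoke the two standing facts of the Examples section: $\Phi$ preserves coproducts, and $\mathrm{Hom}(A,B)=\varnothing$ if and only if $\mathrm{Hom}(\Phi(A),\Phi(B))=\varnothing$. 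Combined with $\mathrm{Hom}(F(x^{(i)}),A)\neq\varnothing\iff i\in\Gamma_A$ and with $F(x^{(i)})\cong\Phi(F(x^{(\sigma^{-1}(i))}))$, the second fact gives $\Gamma_{\Phi(A)}=\sigma(\Gamma_A)$ for every $A\in\mathrm{Ob}\,\Theta^0$.

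Applying this to $A=F(x^{(1)},x^{(2)})\cong F(x^{(1)})\sqcup F(x^{(2)})$: on one hand $\Gamma_{\Phi(A)}=\sigma(\{1,2,3\})=\{1,2,3\}$; on the other hand $\Phi(A)\cong F(x^{(\sigma(1))})\sqcup F(x^{(\sigma(2))})\cong F(x^{(\sigma(1))},x^{(\sigma(2))})$, so $\Gamma_{F(x^{(\sigma(1))},x^{(\sigma(2))})}=\{1,2,3\}$. From the list above, a free algebra on two generators of distinct sorts has all three sorts occupied only when those sorts are $1$ and $2$; hence $\{\sigma(1),\sigma(2)\}=\{1,2\}$ and $\sigma(3)=3$. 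It remains to exclude $\sigma(1)=2$, $\sigma(2)=1$. For this I would compare hom-set cardinalities into the common object $B=F(x^{(1)},x^{(2)})$: since $\mathrm{Hom}(F(x^{(i)}),B)$ is in bijection with $B^{(i)}$, we get $|\mathrm{Hom}(F(x^{(1)}),B)|=|B^{(1)}|=1$ while $|\mathrm{Hom}(F(x^{(2)}),B)|=|B^{(2)}|$ is infinite. As $\{\sigma(1),\sigma(2)\}=\{1,2\}$ we have $\Phi(B)\cong B$, and $\Phi$ restricts to a bijection on each hom-set, so $|\mathrm{Hom}(F(x^{(\sigma(1))}),B)|=|\mathrm{Hom}(F(x^{(1)}),B)|=1$; this forces $\sigma(1)=1$, hence $\sigma=\mathrm{id}$ and $\Phi(F(x^{(i)}))\cong F(x^{(i)})$ for every sort $i$.

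The main obstacle is the last step, distinguishing the input sort $1$ from the state sort $2$: on one-generator algebras these two sorts look identical (trivial endomorphism monoids, singleton $\Gamma$, and all cross-homomorphisms empty), so the induced permutation argument alone cannot separate them. The feature that breaks the symmetry is that $\ast$ can manufacture new elements of sort $2$ whereas no operation produces elements of sort $1$, which is exactly why $F(x^{(1)},x^{(2)})$ has a single element of sort $1$ but infinitely many of sort $2$. An equivalent route for this step is to use Proposition \ref{injection}: $F(x^{(2)},x^{(2)})$ embeds into $F(x^{(1)},x^{(2)})$ (send the two generators to $x^{(2)}$ and $x^{(1)}\ast x^{(2)}$) while $F(x^{(1)},x^{(1)})$ does not (both generators would collapse onto the unique element of sort $1$), and $\Phi$ preserves embeddings.
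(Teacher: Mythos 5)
Your proof is correct, but it reaches the conclusion by a genuinely different route than the paper's. The paper constructs, for every $n>1$, an explicit embedding $\iota\colon F(x_{1}^{(1)},x_{2}^{(1)},x_{1}^{(2)},\ldots ,x_{n}^{(2)})\hookrightarrow F(x_{1}^{(1)},x_{2}^{(1)},x_{1}^{(2)})$ and then runs through all five non-identity permutations of $\{e_{1},e_{2},e_{3}\}$, showing in each case that $\Phi (\iota )$ would have to be an embedding whose source has $n$ elements of some sort while its target has only one element of that sort; this leans on Proposition \ref{injection} (automorphisms preserve embeddings). You instead observe that the occupancy set $\Gamma _{A}$ is detected by the emptiness or non-emptiness of $\mathrm{Hom}\left( F\left( x^{\left( i\right) }\right) ,A\right) $, hence $\Gamma _{\Phi \left( A\right) }=\sigma \left( \Gamma _{A}\right) $; applied to $F\left( x^{\left( 1\right) },x^{\left( 2\right) }\right) $, the only two-generator, two-sort free automaton whose occupancy set is all of $\Gamma $, this forces $\sigma \left( 3\right) =3$ and $\left\{ \sigma \left( 1\right) ,\sigma \left( 2\right) \right\} =\left\{ 1,2\right\} $ in one stroke. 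The remaining transposition is excluded by the cardinalities $\left\vert \mathrm{Hom}\left( F\left( x^{\left( i\right) }\right) ,B\right) \right\vert =\left\vert B^{\left( i\right) }\right\vert $ for $B=F\left( x^{\left( 1\right) },x^{\left( 2\right) }\right) $, which are $1$ and infinite for $i=1,2$ respectively and which any category automorphism must preserve. Your argument needs only that an automorphism is a bijection on hom-sets and preserves coproducts, and it dispenses with both Proposition \ref{injection} and the explicit embedding; the paper's argument is less economical here (a five-case check) but reuses the same embedding template that drives the proofs for the other varieties in Section \ref{examples}. Both proofs ultimately exploit the same asymmetry you identify: $\ast $ manufactures infinitely many sort-$2$ elements from one generator of each of sorts $1$ and $2$, whereas the sort-$1$ part of a free automaton is just its set of sort-$1$ generators. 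Your alternative closing step via the embedding $F\left( x_{1}^{\left( 2\right) },x_{2}^{\left( 2\right) }\right) \hookrightarrow F\left( x^{\left( 1\right) },x^{\left( 2\right) }\right) $ is sound as well and is essentially the paper's device specialized to $n=2$.
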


\begin{proof}
The automorphism $\varphi $ of the additive monoid $%
\mathbb{N}
^{3}$ induced by automorphism $\Phi $\ of the category $\Theta ^{0}$
permutes the minimal set of generators of this monoid $E=\left\{
e_{1},e_{2},e_{3}\right\} $. We will prove that only the identity
permutation is possible. For this aim we will construct the embedding $\iota
:A=F\left( x_{1}^{\left( 1\right) },x_{2}^{\left( 1\right) },x_{1}^{\left(
2\right) },\ldots ,x_{n}^{\left( 2\right) }\right) \hookrightarrow B=F\left(
x_{1}^{\left( 1\right) },x_{2}^{\left( 1\right) },x_{1}^{\left( 2\right)
}\right) $ for every $n>1$. There exists homomorphism $\iota :A\rightarrow B$
such that $\iota \left( x_{i}^{\left( 1\right) }\right) =x_{i}^{\left(
1\right) }$, $i=1,2$, $\iota \left( x_{j}^{\left( 2\right) }\right)
=x_{2}^{\left( 1\right) }\ast \left( x_{1}^{\left( 1\right) }\ast \left(
\ldots \ast \left( x_{1}^{\left( 1\right) }\ast x_{1}^{\left( 2\right)
}\right) \right) \right) =y_{j}^{\left( 2\right) }$, where $x_{1}^{\left(
1\right) }$ appear $j$ times in the sequence $x_{2}^{\left( 1\right) }\ast
\left( x_{1}^{\left( 1\right) }\ast \left( \ldots \ast \left( x_{1}^{\left(
1\right) }\ast x_{1}^{\left( 2\right) }\right) \right) \right) $ and $1\leq
j\leq n$. This homomorphism is embedding, because for every element of the
subalgebra $\left\langle x_{1}^{\left( 1\right) },x_{2}^{\left( 1\right)
},y_{1}^{\left( 2\right) },\ldots ,y_{n}^{\left( 2\right) }\right\rangle
\subset F\left( x_{1}^{\left( 1\right) },x_{2}^{\left( 1\right)
},x_{1}^{\left( 2\right) }\right) $ we can uniquely calculate its coimage
according $\iota $.

If automorphism $\varphi $ of the additive monoid $%
\mathbb{N}
^{3}$ induced by automorphism $\Phi $\ of the category $\Theta ^{0}$
permutes set $E$ by permutation $\left( 1,2\right) $, i., e., $\Phi \left(
F\left( x^{\left( 1\right) }\right) \right) =F\left( x^{\left( 2\right)
}\right) $, $\Phi \left( F\left( x^{\left( 2\right) }\right) \right)
=F\left( x^{\left( 1\right) }\right) $, $\Phi \left( F\left( x^{\left(
3\right) }\right) \right) =F\left( x^{\left( 3\right) }\right) $, then $\Phi
\left( \iota \right) $ must be an embedding $\Phi \left( A\right) =F\left(
x_{1}^{\left( 1\right) },\ldots ,x_{n}^{\left( 1\right) },x_{1}^{\left(
2\right) },x_{2}^{\left( 2\right) }\right) \hookrightarrow \Phi \left(
B\right) =F\left( x_{1}^{\left( 1\right) },x_{1}^{\left( 2\right)
},x_{2}^{\left( 2\right) }\right) $. But this embedding can not exist,
because $\left\vert \left( \Phi \left( A\right) \right) ^{\left( 1\right)
}\right\vert =n$, $\left\vert \left( \Phi \left( B\right) \right) ^{\left(
1\right) }\right\vert =1$. If $\varphi $ permutes $E$ by permutation $\left(
1,3\right) $, i., e., $\Phi \left( F\left( x^{\left( 1\right) }\right)
\right) =F\left( x^{\left( 3\right) }\right) $, $\Phi \left( F\left(
x^{\left( 3\right) }\right) \right) =F\left( x^{\left( 1\right) }\right) $, $%
\Phi \left( F\left( x^{\left( 2\right) }\right) \right) =F\left( x^{\left(
2\right) }\right) $, then $\Phi \left( \iota \right) $ must be an embedding $%
\Phi \left( A\right) =F\left( x_{1}^{\left( 2\right) },\ldots ,x_{n}^{\left(
2\right) },x_{1}^{\left( 3\right) },x_{2}^{\left( 3\right) }\right)
\hookrightarrow \Phi \left( B\right) =F\left( x_{1}^{\left( 2\right)
},x_{1}^{\left( 3\right) },x_{2}^{\left( 3\right) }\right) $. But this
embedding can not exist, because $\left\vert \left( \Phi \left( A\right)
\right) ^{\left( 2\right) }\right\vert =n$, $\left\vert \left( \Phi \left(
B\right) \right) ^{\left( 2\right) }\right\vert =1$. If $\varphi $ permutes $%
E$ by permutation $\left( 2,3\right) $, then $\Phi \left( \iota \right) $
must be an embedding $\Phi \left( A\right) =F\left( x_{1}^{\left( 1\right)
},x_{2}^{\left( 1\right) },x_{1}^{\left( 3\right) },\ldots ,x_{n}^{\left(
3\right) }\right) \hookrightarrow \Phi \left( B\right) =F\left(
x_{1}^{\left( 1\right) },x_{2}^{\left( 1\right) },x_{1}^{\left( 3\right)
}\right) $. But this embedding can not exist, because $\left\vert \left(
\Phi \left( A\right) \right) ^{\left( 3\right) }\right\vert =n$, $\left\vert
\left( \Phi \left( B\right) \right) ^{\left( 3\right) }\right\vert =1$. If $%
\varphi $ permutes $E$ by permutation $\left( 1,2,3\right) $, then $\Phi
\left( \iota \right) $ must be an embedding $\Phi \left( A\right) =F\left(
x_{1}^{\left( 2\right) },x_{2}^{\left( 2\right) },x_{1}^{\left( 3\right)
},\ldots ,x_{n}^{\left( 3\right) }\right) \hookrightarrow \Phi \left(
B\right) =F\left( x_{1}^{\left( 2\right) },x_{2}^{\left( 2\right)
},x_{1}^{\left( 3\right) }\right) $. But this embedding can not exist,
because $\left\vert \left( \Phi \left( A\right) \right) ^{\left( 3\right)
}\right\vert =n$, $\left\vert \left( \Phi \left( B\right) \right) ^{\left(
3\right) }\right\vert =1$. If $\varphi $ permutes $E$ by permutation $\left(
1,3,2\right) $, then $\Phi \left( \iota \right) $ must be an embedding $\Phi
\left( A\right) =F\left( x_{1}^{\left( 1\right) },\ldots ,x_{n}^{\left(
1\right) },x_{1}^{\left( 3\right) },x_{2}^{\left( 3\right) }\right)
\hookrightarrow \Phi \left( B\right) =F\left( x_{1}^{\left( 1\right)
},x_{1}^{\left( 3\right) },x_{2}^{\left( 3\right) }\right) $. But this
embedding can not exist, because $\left\vert \left( \Phi \left( A\right)
\right) ^{\left( 1\right) }\right\vert =n$, $\left\vert \left( \Phi \left(
B\right) \right) ^{\left( 1\right) }\right\vert =1$. So only the identity
permutation is possible. Therefore $\Phi \left( F\left( x^{\left( i\right)
}\right) \right) =F\left( x^{\left( i\right) }\right) $, $i=1,2,3$.
\end{proof}

\begin{theorem}
The automorphic equivalence in the variety of the all automatons coincides
with the geometric equivalence.
\end{theorem}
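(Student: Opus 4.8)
The plan is to argue exactly as in the proof of Theorem~\ref{assTheorem}. By the previous Proposition, Condition~\ref{monoiso} holds in the variety $\Theta$ of all automatons, so by Theorems~\ref{factorisation} and~\ref{methodofverbaloperations} the group $\mathfrak{A}/\mathfrak{Y}$ can be computed through systems of verbal operations, and it suffices to show that $\mathfrak{S}=\{1\}$, i.e. that the only system of words $W=\{w_{\ast},w_{\circ}\}$ satisfying Op1) and Op2) is the trivial one $w_{\ast}(x^{(1)},x^{(2)})=x^{(1)}\ast x^{(2)}$, $w_{\circ}(x^{(1)},x^{(2)})=x^{(1)}\circ x^{(2)}$. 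Once this is done, $\mathfrak{A}=\mathfrak{Y}$, every automorphism of $\Theta^{0}$ is inner, and Theorem~\ref{innerautomprop} (together with the trivial fact that the identity automorphism turns geometric equivalence into automorphic equivalence) shows the two equivalences coincide.

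First I would pin down the shape of the candidate words from Op1). Both $w_{\ast}$ and $w_{\circ}$ lie in $F(x^{(1)},x^{(2)})$, with $w_{\ast}$ of sort $2$ and $w_{\circ}$ of sort $3$. Since $\ast$ is the only operation with value sort $2$, with argument sorts $(1,2)$, and $x^{(1)}$ is the only sort-$1$ generator, every sort-$2$ element of $F(x^{(1)},x^{(2)})$ is of the form $x^{(1)}\ast(x^{(1)}\ast(\cdots\ast x^{(2)}))$ with $x^{(1)}$ occurring $k\ge 0$ times; likewise, since $\circ$ is the only operation with value sort $3$ and no operation takes an argument of sort $3$, every sort-$3$ element is $x^{(1)}\circ z$ for a sort-$2$ element $z$. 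Hence $w_{\ast}$ is the $k$-fold iterate above and $w_{\circ}=x^{(1)}\circ z_{0}$ where $z_{0}=x^{(1)}\ast(\cdots\ast x^{(2)})$ has $\ell\ge 0$ occurrences of $x^{(1)}$.

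The main step is to rule out all values $k\neq 1$, $\ell\neq 0$ using Op2). Put $F=F(x^{(1)},x^{(2)})$. By Op2) there is a bijection $s_{F}\colon F\to F$ fixing $x^{(1)},x^{(2)}$ which is an isomorphism $F\to F_{W}^{\ast}$; since $F$ is free in $\Theta$ and $F_{W}^{\ast}$ again lies in $\Theta$ (the variety of all automatons imposes no identities), $s_{F}$ must be the unique homomorphism $F\to F_{W}^{\ast}$ fixing the generators, and in particular a sort-preserving bijection. Counting occurrences of $\ast$, one checks that $s_{F}$ carries the sort-$2$ element with $m$ occurrences of $x^{(1)}$ to the one with $mk$ occurrences, so its sort-$2$ image consists only of iterates whose length is a multiple of $k$; surjectivity on sort $2$ forces $k=1$. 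On sort $3$, $s_{F}$ sends $x^{(1)}\circ z$ to $x^{(1)}\circ\big(x^{(1)}\ast(\cdots\ast s_{F}(z))\big)$ with $\ell$ occurrences of $x^{(1)}$; since $s_{F}$ is already bijective on sort $2$, surjectivity on sort $3$ would require every sort-$2$ element to be an $\ell$-fold iterate, which fails for $x^{(2)}$ unless $\ell=0$. Thus only the trivial $W$ survives, $\mathfrak{S}=\{1\}$, and the theorem follows as above. I expect the careful bookkeeping of occurrences of $\ast$ in iterated terms and the handling of the degenerate free algebras (where some sort is empty) to be the only delicate points; in contrast to the semigroup case no associativity-type identity needs to be invoked, because the two operations share the same pair of distinct argument sorts and so cannot be permuted.
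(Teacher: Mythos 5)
Your proof is correct and takes essentially the same route as the paper's: reduce everything to computing $\mathfrak{S}$ via systems of verbal operations, show that the only system of words satisfying Op1) and Op2) is the trivial one, and conclude via the decomposition $\mathfrak{A}=\mathfrak{Y}\mathfrak{S}$ and the theorem on inner automorphisms. The only difference is that you spell out the counting argument (the $m$-fold iterate maps to the $mk$-fold iterate, so surjectivity of $s_{F}$ forces $k=1$ and then $\ell=0$) which the paper compresses into the remark that the words ``can not be shorter or longer.''
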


\begin{proof}
We will find systems of words $W=\left\{ w_{\ast },w_{\circ }\right\} $
subjects of conditions Op1) and Op2). But we have only one possibility: $%
w_{\ast }\left( x^{\left( 1\right) },x^{\left( 2\right) }\right) =x^{\left(
1\right) }\ast x^{\left( 2\right) }$, $w_{\circ }\left( x^{\left( 1\right)
},x^{\left( 2\right) }\right) =x^{\left( 1\right) }\circ x^{\left( 2\right)
} $. As in the proof of Theorem \ref{assTheorem} these words can not be
shorter or longer, because, otherwise, we can not construct a bijection $%
s_{F\left( x^{\left( 1\right) },x^{\left( 2\right) }\right) }:F\left(
x^{\left( 1\right) },x^{\left( 2\right) }\right) \rightarrow F\left(
x^{\left( 1\right) },x^{\left( 2\right) }\right) $ subject of condition Op2).
\end{proof}

\subsection{Representations of groups.}

In this Subsection we reprove one result of \cite{PlotkinZhitAutCat}. $%
\Theta $ will be a variety of the all representations of groups over linear
spaces over field $k$. We assume that $k$ has a characteristic $0$. $\Gamma
=\left\{ 1,2\right\} $: the first sort is a sort of elements of groups, the
second sort is a sort of vectors of linear spaces. $\Omega =\left\{
1,-1,\cdot ,0,-,\lambda \left( \lambda \in k\right) ,+,\circ \right\} $. $1$
is a $0$-ary operation of the taking a unit in the group, $\tau _{1}=\left(
1\right) $. $-1$ is an unary operation of the taking an inverse element in
the group, $\tau _{-1}=\left( 1;1\right) $. $\cdot $ is an operation of the
multiplication of elements of the group, $\tau _{\cdot }=\left( 1,1;1\right) 
$. $0$ is $0$-ary operation of the taking a zero vector in the linear space, 
$\tau _{0}=\left( 2\right) $. $-$ is an unary operation of the taking a
negative vector in the linear space, $\tau _{-}=\left( 2;2\right) $. For
every $\lambda \in k$ we have an unary operation: multiplication of vectors
from the linear space by scalar $\lambda $. We denote this operation by $%
\lambda $ and $\tau _{\lambda }=\left( 2;2\right) $. $+$ is an operation of
the addition of vectors of the linear space, $\tau _{-}=\left( 2;2\right) $. 
$\tau _{+}=\left( 2,2;2\right) $. $\circ $ is an operation of the action of
elements of the group on vectors from the linear space, $\tau _{\circ
}=\left( 1,2;2\right) $.

A free algebras $F\left( X\right) $ in our variety has this form: $\left(
F\left( X\right) \right) ^{\left( 1\right) }=G\left( X^{\left( 1\right)
}\right) $, where $G\left( X^{\left( 1\right) }\right) $ is a free group
with the set of free generators $X^{\left( 1\right) }$. $\left( F\left(
X\right) \right) ^{\left( 2\right) }=kG\left( X^{\left( 1\right) }\right)
\circ X^{\left( 2\right) }=\bigoplus\limits_{i\in I}\left( kG\left(
X^{\left( 1\right) }\right) \circ x_{i}^{\left( 2\right) }\right) $, where $%
kG\left( X^{\left( 1\right) }\right) $ is a group $k$-algebra of the group $%
G\left( X^{\left( 1\right) }\right) $ and $kG\left( X^{\left( 1\right)
}\right) \circ X^{\left( 2\right) }$ is a free $kG\left( X^{\left( 1\right)
}\right) $-module generated by the set $X^{\left( 2\right) }=\left\{
x_{i}^{\left( 2\right) }\mid i\in I\right\} $ of the free generators. We
understand $\left( \sum\limits_{g\in I}\lambda _{g}g\right) \circ
x_{i}^{\left( 2\right) }$, where $g\in G\left( X^{\left( 1\right) }\right) $%
, $\lambda _{g}\in k$, $I\subset G\left( X^{\left( 1\right) }\right) $, $%
\left\vert I\right\vert <\infty $, as $\sum\limits_{g\in I}\lambda
_{g}\left( g\circ x_{i}^{\left( 2\right) }\right) $.

\begin{proposition}
The variety of the all representations of groups over linear spaces over
field $k$ has an IBN propriety.
\end{proposition}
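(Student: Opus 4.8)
The "if" direction is immediate from the universal property of free algebras, so the content is the converse: from an isomorphism $\Phi:F(X)\rightarrow F(Y)$ in $\Theta^{0}$ I must recover $\left\vert X^{(1)}\right\vert =\left\vert Y^{(1)}\right\vert $ and $\left\vert X^{(2)}\right\vert =\left\vert Y^{(2)}\right\vert $. Since $\Phi$ conforms with the sorting, it restricts to a bijection on each sort; in particular it restricts to a group isomorphism $\theta:\left( F(X)\right) ^{(1)}=G\left( X^{(1)}\right) \rightarrow G\left( Y^{(1)}\right) =\left( F(Y)\right) ^{(1)}$ and to a bijection $\Phi ^{(2)}:\left( F(X)\right) ^{(2)}\rightarrow \left( F(Y)\right) ^{(2)}$ compatible with $0,-,\lambda ,+$ and with $\circ $.

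For the first sort I would argue by abelianization: $G\left( X^{(1)}\right) $ and $G\left( Y^{(1)}\right) $ are finitely generated free groups, and $\theta $ induces an isomorphism of their abelianizations, i.e. $\mathbb{Z}^{\left\vert X^{(1)}\right\vert }\cong \mathbb{Z}^{\left\vert Y^{(1)}\right\vert }$, whence $\left\vert X^{(1)}\right\vert =\left\vert Y^{(1)}\right\vert $. Put $G=G\left( X^{(1)}\right) $ and extend $\theta $ $k$-linearly to a $k$-algebra isomorphism $\widetilde{\theta }:kG\left( X^{(1)}\right) \rightarrow kG\left( Y^{(1)}\right) $. For the second sort, note that $\Phi ^{(2)}$ is $k$-linear and satisfies $\Phi ^{(2)}\left( g\circ v\right) =\theta (g)\circ \Phi ^{(2)}(v)$, hence $\Phi ^{(2)}\left( a\circ v\right) =\widetilde{\theta }(a)\circ \Phi ^{(2)}(v)$ for every $a\in kG\left( X^{(1)}\right) $. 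Transporting the $kG\left( Y^{(1)}\right) $-module structure of $\left( F(Y)\right) ^{(2)}$ along $\widetilde{\theta }$ to a $kG\left( X^{(1)}\right) $-module structure turns $\Phi ^{(2)}$ into a $kG\left( X^{(1)}\right) $-module isomorphism, and the transported module is free over $kG\left( X^{(1)}\right) $ with basis $Y^{(2)}$, because $\left( F(Y)\right) ^{(2)}$ is free over $kG\left( Y^{(1)}\right) $ with basis $Y^{(2)}$. Therefore $\left( kG\right) ^{\left\vert X^{(2)}\right\vert }\cong \left( kG\right) ^{\left\vert Y^{(2)}\right\vert }$ as $kG$-modules.

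It then remains to observe that the group algebra $kG$ of a free group $G$ has the invariant basis number property: applying $-\otimes _{kG}k$ along the augmentation $kG\rightarrow k$ (sending each group element to $1$) sends a free $kG$-module of rank $m$ to the $k$-vector space $k^{m}$, so $\left( kG\right) ^{m}\cong \left( kG\right) ^{n}$ forces $m=n$. Hence $\left\vert X^{(2)}\right\vert =\left\vert Y^{(2)}\right\vert $ and the variety has the IBN property. There is no serious obstacle in this proof; the only non-formal point is recognizing that $kG$ has IBN, which the augmentation map settles at once (indeed this needs no hypothesis on $k$; the assumption $\mathrm{char}\,k=0$ will be used only for the later computation of $\mathfrak{A/Y}$), and the only thing requiring care is the bookkeeping of the semilinearity that lets one pass from the representation-isomorphism to a genuine $kG$-module isomorphism.
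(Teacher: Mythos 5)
Your proof is correct and uses essentially the same invariants as the paper: the rank of the abelianization of the free group for the first sort, and the $k$-dimension of $M/\Delta M$ (which is exactly your $M\otimes _{kG}k$ along the augmentation) for the second sort. The only difference is presentational -- you route the second sort through transport of structure and the IBN property of $kG$, while the paper computes $\dim _{k}\left( M/\Delta \circ M\right) =\left\vert X^{\left( 2\right) }\right\vert $ directly -- and your remark that $\mathrm{char}\,k=0$ is not needed here is also accurate.
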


\begin{proof}
$\left\vert X^{\left( 1\right) }\right\vert $ is equal to the rang of the
free abelian group $G\left( X^{\left( 1\right) }\right) /\left[ G\left(
X^{\left( 1\right) }\right) ,G\left( X^{\left( 1\right) }\right) \right] $.

We consider in $kG\left( X^{\left( 1\right) }\right) $ the ideal of
augmentation $\Delta $. We denote $kG\left( X^{\left( 1\right) }\right)
\circ X^{\left( 2\right) }=M$, $G\left( X^{\left( 1\right) }\right) =G$. $%
\Delta \circ M$ will be a submodule of the module $M$ and quotient module $%
M/\Delta \circ M$ will be a $kG\left( X^{\left( 1\right) }\right) /\Delta $%
-module, i., e., a $k$-linear space. We will prove that $\mathrm{Sp}%
_{k}\left( x_{i}^{\left( 2\right) }+\Delta \circ M\mid i\in I\right)
=M/\Delta \circ M$. Indeed, for every $m\in M$ the $m=\sum\limits_{i\in
I}f_{i}\circ x_{i}^{\left( 2\right) }$ holds, where $f_{i}\in kG$. $%
f_{i}=\delta \left( f_{i}\right) 1+h_{i}$, where $\delta :kG\rightarrow k$
is a homomorphism of augmentation, $h_{i}\in \Delta $, $1\in G$. So $m\equiv
\sum\limits_{i\in I}\delta \left( f_{i}\right) x_{i}^{\left( 2\right)
}\left( \func{mod}\Delta \circ M\right) $. We will prove that $\left\{
x_{i}^{\left( 2\right) }+\Delta \circ M\mid i\in I\right\} $ is a linear
independent set. Indeed, if $\sum\limits_{i\in I}\alpha _{i}x_{i}^{\left(
2\right) }\in \Delta \circ M$, where $\left\{ \alpha _{i}\mid i\in I\right\} 
$, then $\sum\limits_{i\in I}\alpha _{i}x_{i}^{\left( 2\right)
}=\sum\limits_{i\in I}\alpha _{i}1\circ x_{i}^{\left( 2\right)
}=\sum\limits_{i\in I}h_{i}\circ x_{i}^{\left( 2\right) }$, where $h_{i}\in
\Delta $. $\left\{ x_{i}^{\left( 2\right) }\mid i\in I\right\} $ is a set of
the free generators of the module $M$, so $\alpha _{i}1\in \Delta $ and $%
\alpha _{i}=0$ for every $i\in I$. Therefore $\left\vert X^{\left( 2\right)
}\right\vert =\dim _{k}\left( M/\Delta \circ M\right) $.
\end{proof}

\begin{proposition}
Condition (\ref{monoiso}) fulfills in the variety of the all representations
of groups.
\end{proposition}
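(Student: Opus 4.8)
The plan is to follow the pattern of the two previous subsections. By the preceding Proposition the variety has the IBN property, so any automorphism $\Phi$ of $\Theta^0$ induces an automorphism $\varphi$ of the additive monoid $\mathbb{N}^2$, and $\varphi$ permutes the minimal set of generators $\{e_1,e_2\}$. Only two permutations are possible: the identity and the transposition $e_1\leftrightarrow e_2$. If $\varphi$ is the identity, then $\Phi(F(x^{(i)}))\cong F(x^{(i)})$ for $i=1,2$ and for every generator of the corresponding sort, which is exactly Condition \ref{monoiso}. So the entire task is to rule out the transposition.

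Assume $\varphi(e_1)=e_2$ and $\varphi(e_2)=e_1$. Fix an integer $n>2$, put $A=F(x_1^{(1)},\ldots ,x_n^{(1)})$ and $B=F(x_1^{(1)},x_2^{(1)})$, and note that $A$ and $B$ have no free generators of the second sort, so $A^{(2)}=B^{(2)}=\{0\}$. I would build an embedding $\iota:A\hookrightarrow B$ as follows: define the homomorphism $\iota$ by sending $x_1^{(1)},\ldots ,x_n^{(1)}$ to the elements $(x_1^{(1)})^{j}\,x_2^{(1)}\,(x_1^{(1)})^{-j}$, $1\le j\le n$, of the free group $G(x_1^{(1)},x_2^{(1)})=B^{(1)}$. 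These elements form part of a free basis of the kernel of the retraction $G(x_1^{(1)},x_2^{(1)})\to G(x_1^{(1)})$ that kills $x_2^{(1)}$, hence they freely generate a subgroup of rank $n$; therefore $\iota$ is injective on the first sort, it is trivially injective on the second sort, and $\ker \iota=\Delta_A$, so $\iota$ is an embedding.

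Since, as noted above, an automorphism of $\Theta^0$ sends embeddings to embeddings (Proposition \ref{injection}), $\Phi(\iota):\Phi(A)\hookrightarrow\Phi(B)$ is again an embedding. By the IBN property and the assumed transposition, $\Phi(A)\cong F(x_1^{(2)},\ldots ,x_n^{(2)})$ and $\Phi(B)\cong F(x_1^{(2)},x_2^{(2)})$; both of these algebras have trivial group part, so their second-sort parts are just the $k$-vector spaces $k^{n}$ and $k^{2}$. The restriction of $\Phi(\iota)$ to the second sort is a map that respects $0$, $-$, $+$ and all the scalar operations $\lambda\in k$, hence it is $k$-linear, and it is injective because $\ker\Phi(\iota)=\Delta_{\Phi(A)}$. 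Thus we would obtain an injective $k$-linear map $k^{n}\hookrightarrow k^{2}$ with $n>2$, a contradiction. Hence $\varphi$ is the identity and Condition \ref{monoiso} holds.

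The only point requiring a little care is the construction of $\iota$, i.e. the classical fact that a free group of rank $2$ contains free subgroups of arbitrary finite rank (which follows from the Reidemeister--Schreier description of the kernel above); everything else is a direct transcription of the arguments for actions of semigroups and for automatons, with ``the cardinality of a sort'' replaced by ``the $k$-dimension of the second-sort part''.
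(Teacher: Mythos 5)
Your proof is correct and follows exactly the route the paper intends: it reduces the problem via the IBN property to excluding the transposition of sorts, embeds the free group of rank $n$ into the free group of rank $2$, and derives a contradiction from the impossibility of embedding an $n$-dimensional linear space into a $2$-dimensional one. The paper's own proof is only a two-line sketch referring back to Proposition \ref{ActCond}; you have simply supplied the details (the conjugates $(x_1^{(1)})^{j}x_2^{(1)}(x_1^{(1)})^{-j}$ as a free basis, and the $k$-linearity of the second-sort part of $\Phi(\iota)$) that the sketch leaves implicit.
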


\begin{proof}
The proof of this Proposition is similar to the proof of the Proposition \ref%
{ActCond}: we use the fact that we can embed the free group generated by $n$
free generators to the free group generated by $2$ free generators and the
fact that we can not embed the $n$-dimension linear space to the $2$%
-dimension linear space when $n>2$.
\end{proof}

\begin{theorem}
\label{grouprepgroup}$\mathfrak{A/Y\cong }\mathrm{Aut}k$ for the variety of
the all representations of groups, where $\mathrm{Aut}k$ is a group of the
all automorphisms of the field $k$.
\end{theorem}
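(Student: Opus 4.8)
The plan is to compute the group $\mathfrak{S}$ of strongly stable automorphisms by the method of verbal operations (Theorem \ref{methodofverbaloperations}, which is available since Condition \ref{monoiso} holds in this variety), then to identify the subgroup $\mathfrak{S}\cap\mathfrak{Y}$ by Proposition \ref{intersectioncriterion}; the conclusion follows from Theorem \ref{factorisation}, which gives $\mathfrak{A/Y}\cong\mathfrak{S}/(\mathfrak{S}\cap\mathfrak{Y})$. So the real work is to show $\mathfrak{S}/(\mathfrak{S}\cap\mathfrak{Y})\cong\mathrm{Aut}\,k$.

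First I would determine all systems of words $W=\{w_\omega\mid\omega\in\Omega\}$ subject to conditions Op1) and Op2). As in the proof of Theorem \ref{assTheorem}, a length-and-embedding argument (using Proposition \ref{injection} and the fact that a free group on $n>2$ generators embeds into a free group on $2$ generators) forces the group operations to be essentially rigid: $w_1=1$, $w_{-1}(x^{(1)})=(x^{(1)})^{-1}$, and $w_\cdot(x_1^{(1)},x_2^{(1)})$ is either $x_1^{(1)}x_2^{(1)}$ or the opposite word $x_2^{(1)}x_1^{(1)}$; likewise $w_\circ(x^{(1)},x^{(2)})=x^{(1)}\circ x^{(2)}$, since a bijection $s_{F(x^{(1)},x^{(2)})}$ subject to Op2) cannot exist for any other word. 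For the linear-space operations I would work in $F(x_1^{(2)},x_2^{(2)})$, whose second-sort part is the $k$-space $k x_1^{(2)}\oplus k x_2^{(2)}$ (the free group on the empty set of first-sort generators being trivial): each of $w_0$, $w_-$, $w_+$, $w_\lambda$ must therefore be a $k$-linear combination of the generators. Requiring that $s_F$ be a generator-preserving isomorphism onto $F_W^{\ast}$ and using the linear-space identities (neutrality of $0$, commutativity and associativity of $+$, distributivity of the $\lambda$'s over $+$, and the composition rule $\lambda\mu = (\lambda\mu)$) forces $w_0=0$, $w_+(x_1^{(2)},x_2^{(2)})=x_1^{(2)}+x_2^{(2)}$, $w_-(x^{(2)})=-x^{(2)}$, and $w_\lambda(x^{(2)})=\sigma(\lambda)x^{(2)}$ for a map $\sigma:k\rightarrow k$ which, by these same identities, is additive, multiplicative and bijective, i.e. $\sigma\in\mathrm{Aut}\,k$. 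Conversely, each $\sigma\in\mathrm{Aut}\,k$ (with the standard choice $w_\cdot=x_1^{(1)}x_2^{(1)}$) yields such a system $W_\sigma$, realised by the bijections $s_F$ that act as the identity on the group part and coefficientwise by $\sigma$ on the module part.

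Next I would show that $\Phi\mapsto\sigma$ is a group homomorphism $\mathfrak{S}\rightarrow\mathrm{Aut}\,k$. Composing the bijections $s_F^{\sigma_1}$ and $s_F^{\sigma_2}$ described above gives $s_F^{\sigma_1\sigma_2}$ on the module part (and the identity on the group part), so by Proposition \ref{automorphismbijections} the composite of the corresponding strongly stable automorphisms has field-automorphism $\sigma_1\sigma_2$. The previous paragraph shows this homomorphism is surjective; its kernel consists of the strongly stable automorphisms whose system of words is the standard one except possibly for the opposite multiplication word $w_\cdot=x_2^{(1)}x_1^{(1)}$.

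Finally I would identify this kernel with $\mathfrak{S}\cap\mathfrak{Y}$. If $\sigma=\mathrm{id}$ and $W$ is the fully standard system then $F_W^{\ast}=F$ for every $F$, and the identity bijections $\tau_F=\mathrm{id}_F$ verify the hypotheses of Proposition \ref{intersectioncriterion}, so $\Phi$ is inner. For the opposite-multiplication system I would exhibit the natural family of isomorphisms $\tau_F:F\rightarrow F_W^{\ast}$ given on each free group by $g\mapsto g^{-1}$ and compatibly $k$-linearly on the module part; naturality $\tau_B\mu=\mu\tau_A$ holds because group homomorphisms intertwine inversion, so this automorphism is inner as well (alternatively, if this system turns out to violate Op2), it simply does not occur). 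Conversely, if $\Phi\in\mathfrak{S}\cap\mathfrak{Y}$ then by Proposition \ref{intersectioncriterion} there are isomorphisms $\tau_F:F\rightarrow F_W^{\ast}$ with $\tau_B\mu=\mu\tau_A$ for all $\mu$; evaluating this on the free generators of $F(x^{(2)})$ and on module homomorphisms between objects generated by second-sort elements forces $w_\lambda(x^{(2)})=\lambda x^{(2)}$ for all $\lambda\in k$, i.e. $\sigma=\mathrm{id}$. Hence $\mathfrak{S}\cap\mathfrak{Y}$ is exactly the kernel, so $\mathfrak{A/Y}\cong\mathfrak{S}/(\mathfrak{S}\cap\mathfrak{Y})\cong\mathrm{Aut}\,k$.

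The main obstacle I expect is the bookkeeping in the first step: pinning down every $w_\omega$ from Op1), Op2) and, in particular, squeezing out of the linear-space identities the fact that $\sigma$ respects both field operations and is bijective; a secondary subtlety is the clean disposal, in the last step, of the opposite-multiplication word via Proposition \ref{intersectioncriterion}.
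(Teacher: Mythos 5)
Your overall strategy --- compute $\mathfrak{S}$ via Theorem \ref{methodofverbaloperations}, cut it down by $\mathfrak{S}\cap \mathfrak{Y}$ via Proposition \ref{intersectioncriterion}, and conclude by Theorem \ref{factorisation} --- is exactly the paper's, and your handling of the purely group-theoretic words and the purely linear-space words agrees with the paper (which delegates those to Morimoto's lemma and to the arguments of \cite{PlotkinZhitAutCat} and \cite{TsurAutomEqLinAlg}). The genuine gap is your one-line disposal of $w_{\circ }$. You assert that $w_{\circ }\left( x^{\left( 1\right) },x^{\left( 2\right) }\right) =x^{\left( 1\right) }\circ x^{\left( 2\right) }$ is the only possibility ``since a bijection subject to Op2) cannot exist for any other word,'' apparently by analogy with the semigroup-action case. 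But here $\left( F\left( x^{\left( 1\right) },x^{\left( 2\right) }\right) \right) ^{\left( 2\right) }=kG\left( x^{\left( 1\right) }\right) \circ x^{\left( 2\right) }$ is the free module over the group algebra, so the candidates are all words $w\left( x^{\left( 1\right) }\right) \circ x^{\left( 2\right) }$ with $w\left( x^{\left( 1\right) }\right) $ an arbitrary element of $kG\left( x^{\left( 1\right) }\right) $, and eliminating them is the hardest part of the paper's proof: one shows that $w$ restricts to a multiplicative homomorphism $G\left( x^{\left( 1\right) }\right) \rightarrow kG\left( x^{\left( 1\right) }\right) $, extends it to an automorphism of the rational function field $k\left( x^{\left( 1\right) }\right) $, writes that automorphism as a fractional linear transformation, and by a case analysis reduces to $w\left( x^{\left( 1\right) }\right) =x^{\left( 1\right) }$ or $w\left( x^{\left( 1\right) }\right) =\left( x^{\left( 1\right) }\right) ^{-1}$. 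Moreover the second case genuinely occurs: the system with $w_{\cdot }=x_{2}^{\left( 1\right) }\cdot x_{1}^{\left( 1\right) }$ and $w_{\circ }=\left( x^{\left( 1\right) }\right) ^{-1}\circ x^{\left( 2\right) }$ satisfies Op1) and Op2), so $\mathfrak{S}\cong \mathrm{Aut}k\times
\mathbb{Z}
_{2}$, not $\mathrm{Aut}k$ as your determination of the words would give.

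This error propagates into your last step. With the standard action word the opposite multiplication is indeed incompatible with $\left( g_{1}g_{2}\right) \circ v=g_{1}\circ \left( g_{2}\circ v\right) $, so of your two alternatives it is ``this system does not occur'' that applies to the system you wrote down --- but then your kernel is trivial and the extra $
\mathbb{Z}
_{2}$ that actually has to be absorbed into $\mathfrak{Y}$ is the one you missed. The family $\tau _{F}$ acting by $g\mapsto g^{-1}$ on the group sort and identically on the module sort is an isomorphism $F\rightarrow F_{W}^{\ast }$ precisely for the system with \emph{both} the opposite multiplication and the inverted action (check it on $g\circ v$: one needs $w_{\circ }\left( g^{-1},v\right) =g\circ v$, which fails for $w_{\circ }=x^{\left( 1\right) }\circ x^{\left( 2\right) }$), and that is the computation the paper performs to show this extra automorphism is inner. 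Your final answer is therefore correct only because the part of $\mathfrak{S}$ you failed to find happens to lie in $\mathfrak{Y}$; as written, the proof neither finds it nor disposes of it.
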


\begin{proof}
We will find the systems of words 
\begin{equation}
W=\left\{ w_{1},w_{-1},w_{\cdot },w_{0},w_{-},w_{\lambda }\left( \lambda \in
k\right) ,w_{+},w_{\circ }\right\}  \label{groupssignature}
\end{equation}%
subjects of conditions Op1) and Op2). It is clear that for $w_{1},w_{0}\in
F\left( \varnothing \right) $ we have only one possibility: $w_{1}=1$, $%
w_{0}=0$. More precisely - $w_{1}=1^{\left( 1\right) }$, $w_{0}=0^{\left(
2\right) }$. By \cite{Morimoto} we have two possibilities: $w_{\cdot }\left(
x_{1}^{\left( 1\right) },x_{2}^{\left( 1\right) }\right) =x_{1}^{\left(
1\right) }\cdot x_{2}^{\left( 1\right) }$ or $w_{\cdot }\left( x_{1}^{\left(
1\right) },x_{2}^{\left( 1\right) }\right) =x_{2}^{\left( 1\right) }\cdot
x_{1}^{\left( 1\right) }$. After this by \cite[Proposition 2.1]{TsurNilp} we
have that for $w_{-1}\left( x^{\left( 1\right) }\right) \in F\left(
x^{\left( 1\right) }\right) $ we have only one possibility: $w_{-1}\left(
x^{\left( 1\right) }\right) =\left( x^{\left( 1\right) }\right) ^{-1}$.

By consideration which was used in \cite{PlotkinZhitAutCat} and in \cite%
{TsurAutomEqLinAlg}, for $w_{+}\left( x_{1}^{\left( 2\right) },x_{2}^{\left(
2\right) }\right) \in F\left( x_{1}^{\left( 2\right) },x_{2}^{\left(
2\right) }\right) $ we have only one possibility: $w_{+}\left( x_{1}^{\left(
2\right) },x_{2}^{\left( 2\right) }\right) =x_{1}^{\left( 2\right)
}+x_{2}^{\left( 2\right) }$. After this it is clear that for $w_{-}\left(
x^{\left( 2\right) }\right) \in F\left( x^{\left( 2\right) }\right) $ we
have only one possibility: $w_{-}\left( x^{\left( 2\right) }\right)
=-x^{\left( 2\right) }$. By consideration which was used in \cite%
{PlotkinZhitAutCat} and in \cite{TsurAutomEqLinAlg}, for $w_{\lambda }\left(
x^{\left( 2\right) }\right) \in F\left( x^{\left( 2\right) }\right) $ the $%
w_{\lambda }\left( x^{\left( 2\right) }\right) =\varphi \left( \lambda
\right) x^{\left( 2\right) }$ holds, where $\varphi \in \mathrm{Aut}k$.

$w_{\circ }\left( x^{\left( 1\right) },x^{\left( 2\right) }\right) \in
\left( F\left( x^{\left( 1\right) },x^{\left( 2\right) }\right) \right)
^{\left( 2\right) }$.$\left( F\left( x^{\left( 1\right) },x^{\left( 2\right)
}\right) \right) ^{\left( 2\right) }$ $=kG\left( x^{\left( 1\right) }\right)
\circ x^{\left( 2\right) }$. Therefore $w_{\circ }\left( x^{\left( 1\right)
},x^{\left( 2\right) }\right) =w\left( x^{\left( 1\right) }\right) \circ
x^{\left( 2\right) }$, where $w\left( x^{\left( 1\right) }\right) \in
kG\left( x^{\left( 1\right) }\right) $. $s_{F\left( x^{\left( 1\right)
},x^{\left( 2\right) }\right) }:F\left( x^{\left( 1\right) },x^{\left(
2\right) }\right) \rightarrow \left( F\left( x^{\left( 1\right) },x^{\left(
2\right) }\right) \right) _{W}^{\ast }$ is an isomorphism, so for every $%
g_{1},g_{2}\in G\left( x^{\left( 1\right) }\right) $ the $w_{\circ }\left(
g_{1},w_{\circ }\left( g_{2},x^{\left( 2\right) }\right) \right) =w_{\circ
}\left( w_{\cdot }\left( g_{1},g_{2}\right) ,x^{\left( 2\right) }\right) $
must hold. $G\left( x^{\left( 1\right) }\right) $ is a commutative group, so 
$w_{\cdot }\left( g_{1},g_{2}\right) =g_{1}g_{2}$. Therefore $w_{\circ
}\left( g_{1},w_{\circ }\left( g_{2},x^{\left( 2\right) }\right) \right)
=w_{\circ }\left( g_{1}g_{2},x^{\left( 2\right) }\right) $. $w_{\circ
}\left( g_{1},w_{\circ }\left( g_{2},x^{\left( 2\right) }\right) \right)
=w_{\circ }\left( g_{1},w\left( g_{2}\right) \circ x^{\left( 2\right)
}\right) =w\left( g_{1}\right) \circ \left( w\left( g_{2}\right) \circ
x^{\left( 2\right) }\right) =\left( w\left( g_{1}\right) w\left(
g_{2}\right) \right) \circ x^{\left( 2\right) }$, $w_{\circ }\left(
g_{1}g_{2},x^{\left( 2\right) }\right) =w\left( g_{1}g_{2}\right) \circ
x^{\left( 2\right) }$. Hence $w\left( g_{1}\right) w\left( g_{2}\right)
=w\left( g_{1}g_{2}\right) $ and $w:G\left( x^{\left( 1\right) }\right)
\rightarrow kG\left( x^{\left( 1\right) }\right) $ is a multiplicative
homomorphism. If we define $w\left( \sum\limits_{i=n}^{m}\alpha _{i}\left(
x^{\left( 1\right) }\right) ^{i}\right) =\sum\limits_{i=n}^{m}\alpha
_{i}w\left( x^{\left( 1\right) }\right) ^{i}$, where $\alpha _{i}\in k$, $%
n,m\in 
\mathbb{Z}
$, we achieve that $w:kG\left( x^{\left( 1\right) }\right) \rightarrow
kG\left( x^{\left( 1\right) }\right) $ is a homomorphism. If we define $%
w\left( \frac{f\left( x^{\left( 1\right) }\right) }{h\left( x^{\left(
1\right) }\right) }\right) =\frac{w\left( f\left( x^{\left( 1\right)
}\right) \right) }{w\left( h\left( x^{\left( 1\right) }\right) \right) }$,
where $f\left( x^{\left( 1\right) }\right) ,h\left( x^{\left( 1\right)
}\right) \in k\left[ x^{\left( 1\right) }\right] $, we can extend this $w$
to the endomorphism of the field $k\left( x^{\left( 1\right) }\right) $. The 
$w_{\circ }\left( 1,x^{\left( 2\right) }\right) =w\left( 1\right) \circ
x^{\left( 2\right) }=1\circ x^{\left( 2\right) }$ must hold, so $w\left(
1\right) =1$ and $\ker w=0$. $\mathrm{im}s_{F\left( x^{\left( 1\right)
},x^{\left( 2\right) }\right) }\supset \left( F\left( x^{\left( 1\right)
},x^{\left( 2\right) }\right) \right) ^{\left( 2\right) }=kG\left( x^{\left(
1\right) }\right) \circ x^{\left( 2\right) }$, so for every $f\left(
x^{\left( 1\right) }\right) \in kG\left( x^{\left( 1\right) }\right) \in
kG\left( x^{\left( 1\right) }\right) $ there exists $h\left( x^{\left(
1\right) }\right) \in kG\left( x^{\left( 1\right) }\right) $ such that $%
f\left( x^{\left( 1\right) }\right) \circ x^{\left( 2\right) }=s_{F\left(
x^{\left( 1\right) },x^{\left( 2\right) }\right) }\left( h\left( x^{\left(
1\right) }\right) \circ x^{\left( 2\right) }\right) =w_{\circ }\left(
h\left( x^{\left( 1\right) }\right) \circ x^{\left( 2\right) }\right)
=w\left( h\left( x^{\left( 1\right) }\right) \right) \circ x^{\left(
2\right) }$ and $f\left( x^{\left( 1\right) }\right) =w\left( h\left(
x^{\left( 1\right) }\right) \right) $. So $\mathrm{im}w\supset kG\left(
x^{\left( 1\right) }\right) $ and $\mathrm{im}w=k\left( x^{\left( 1\right)
}\right) $. Therefore $w$ is an automorphism of the $k\left( x^{\left(
1\right) }\right) $. So $w\left( x^{\left( 1\right) }\right) =\frac{%
ax^{\left( 1\right) }+b}{cx^{\left( 1\right) }+d}$, where $a,b,c,d\in k$, $%
ad-bc\neq 0$.

But $w\left( x^{\left( 1\right) }\right) =\sum\limits_{i=n}^{m}\alpha
_{i}\left( x^{\left( 1\right) }\right) ^{i}\in kG\left( x^{\left( 1\right)
}\right) $, $\alpha _{i}\in k$, $\alpha _{n},\alpha _{m}\neq 0$, $n,m\in 
\mathbb{Z}
$. So $ax^{\left( 1\right) }+b=w\left( x^{\left( 1\right) }\right) \left(
cx^{\left( 1\right) }+d\right) $. If $a,b,c,d\neq 0$ then $m+1=1$, $n=0$, so 
$w\left( x^{\left( 1\right) }\right) \in k$. This is a contradiction with $%
ad-bc\neq 0$. If $a=0$ then $b\neq 0$, $c\neq 0$, $w\left( x^{\left(
1\right) }\right) =\frac{1}{\alpha x^{\left( 1\right) }+\beta }$ and $%
w\left( x^{\left( 1\right) }\right) \left( \alpha x^{\left( 1\right) }+\beta
\right) =1$, where $\alpha ,\beta \in k$, $\alpha \neq 0$. If in this case $%
\beta \neq 0$, then $m+1=0$, $n=0$. It is impossible. So $\beta =0$ and $%
w\left( x^{\left( 1\right) }\right) =\alpha ^{-1}\left( x^{\left( 1\right)
}\right) ^{-1}$. The $w\left( \left( x^{\left( 1\right) }\right) ^{2}\right)
=\left( w\left( x^{\left( 1\right) }\right) \right) ^{2}$ must holds, but $%
w\left( \left( x^{\left( 1\right) }\right) ^{2}\right) =\alpha ^{-1}\left(
x^{\left( 1\right) }\right) ^{-2}$, $\left( w\left( x^{\left( 1\right)
}\right) \right) ^{2}=\alpha ^{-2}\left( x^{\left( 1\right) }\right) ^{-2}$.
Hence $\alpha =1$ and $w\left( x^{\left( 1\right) }\right) =\left( x^{\left(
1\right) }\right) ^{-1}$. If $b=0$ then $d\neq 0$ and $w\left( x^{\left(
1\right) }\right) =\frac{ax^{\left( 1\right) }}{cx^{\left( 1\right) }+d}$.
So $w\left( x^{\left( 1\right) }\right) \left( cx^{\left( 1\right)
}+d\right) =ax^{\left( 1\right) }$ and $m+1=1$, $n=1$. It is impossible. If $%
c=0$ then $d\neq 0$, $a\neq 0$ and $w\left( x^{\left( 1\right) }\right)
=\alpha x^{\left( 1\right) }+\beta $, where $\alpha ,\beta \in k$, $\alpha
\neq 0$. From $w\left( \left( x^{\left( 1\right) }\right) ^{2}\right)
=\left( w\left( x^{\left( 1\right) }\right) \right) ^{2}$ we conclude that $%
\alpha \left( x^{\left( 1\right) }\right) ^{2}+\beta =\alpha ^{2}\left(
x^{\left( 1\right) }\right) ^{2}+2\alpha \beta x^{\left( 1\right) }+\beta
^{2}$ and $\beta =0$, $\alpha =1$. So $w\left( x^{\left( 1\right) }\right)
=x^{\left( 1\right) }$. If $d=0$ then then $c\neq 0$, $b\neq 0$ and $w\left(
x^{\left( 1\right) }\right) =\alpha \left( x^{\left( 1\right) }\right)
^{-1}+\beta $, where $\alpha ,\beta \in k$, $\alpha \neq 0$. As above from $%
w\left( \left( x^{\left( 1\right) }\right) ^{2}\right) =\left( w\left(
x^{\left( 1\right) }\right) \right) ^{2}$ we conclude that $w\left(
x^{\left( 1\right) }\right) =\left( x^{\left( 1\right) }\right) ^{-1}$.
Therefore we have two possibilities: $w_{\circ }\left( x^{\left( 1\right)
},x^{\left( 2\right) }\right) =x^{\left( 1\right) }\circ x^{\left( 2\right)
} $ or $w_{\circ }\left( x^{\left( 1\right) },x^{\left( 2\right) }\right)
=\left( x^{\left( 1\right) }\right) ^{-1}\circ x^{\left( 2\right) }$.

We will prove that the situation when $w_{\cdot }\left( x_{1}^{\left(
1\right) },x_{2}^{\left( 1\right) }\right) =x_{1}^{\left( 1\right) }\cdot
x_{2}^{\left( 1\right) }$, $w_{\circ }\left( x^{\left( 1\right) },x^{\left(
2\right) }\right) =\left( x^{\left( 1\right) }\right) ^{-1}\circ x^{\left(
2\right) }$ is impossible. Indeed, there is an isomorphism $s_{F\left(
x_{1}^{\left( 1\right) },x_{2}^{\left( 1\right) },x^{\left( 2\right)
}\right) }:F\left( x_{1}^{\left( 1\right) },x_{2}^{\left( 1\right)
},x^{\left( 2\right) }\right) \rightarrow \left( F\left( x_{1}^{\left(
1\right) },x_{2}^{\left( 1\right) },x^{\left( 2\right) }\right) \right)
_{W}^{\ast }$ , so the $w_{\circ }\left( x_{1}^{\left( 1\right) },w_{\circ
}\left( x_{2}^{\left( 1\right) },x^{\left( 2\right) }\right) \right)
=w_{\circ }\left( w_{\cdot }\left( x_{1}^{\left( 1\right) },x_{2}^{\left(
1\right) }\right) ,x^{\left( 2\right) }\right) $ must hold. But in our
situation 
\begin{equation*}
w_{\circ }\left( x_{1}^{\left( 1\right) },w_{\circ }\left( x_{2}^{\left(
1\right) },x^{\left( 2\right) }\right) \right) =\left( x_{1}^{\left(
1\right) }\right) ^{-1}\circ \left( \left( x_{2}^{\left( 1\right) }\right)
^{-1}\circ x^{\left( 2\right) }\right) =\left( \left( x_{1}^{\left( 1\right)
}\right) ^{-1}\cdot \left( x_{2}^{\left( 1\right) }\right) ^{-1}\right)
\circ x^{\left( 2\right) }
\end{equation*}
and 
\begin{equation*}
w_{\circ }\left( w_{\cdot }\left( x_{1}^{\left( 1\right) },x_{2}^{\left(
1\right) }\right) ,x^{\left( 2\right) }\right) =\left( x_{1}^{\left(
1\right) }\cdot x_{2}^{\left( 1\right) }\right) ^{-1}\circ x^{\left(
2\right) }=\left( \left( x_{2}^{\left( 1\right) }\right) ^{-1}\cdot \left(
x_{1}^{\left( 1\right) }\right) ^{-1}\right) \circ x^{\left( 2\right) }.
\end{equation*}

Also the situation when $w_{\cdot }\left( x_{1}^{\left( 1\right)
},x_{2}^{\left( 1\right) }\right) =x_{2}^{\left( 1\right) }\cdot
x_{1}^{\left( 1\right) }$, $w_{\circ }\left( x^{\left( 1\right) },x^{\left(
2\right) }\right) =x^{\left( 1\right) }\circ x^{\left( 2\right) }$ is
impossible, because in this situation $w_{\circ }\left( x_{1}^{\left(
1\right) },w_{\circ }\left( x_{2}^{\left( 1\right) },x^{\left( 2\right)
}\right) \right) =x_{1}^{\left( 1\right) }\circ \left( x_{2}^{\left(
1\right) }\circ x^{\left( 2\right) }\right) =\left( x_{1}^{\left( 1\right)
}\cdot x_{2}^{\left( 1\right) }\right) \circ x^{\left( 2\right) }$ and $%
w_{\circ }\left( w_{\cdot }\left( x_{1}^{\left( 1\right) },x_{2}^{\left(
1\right) }\right) ,x^{\left( 2\right) }\right) =\left( x_{2}^{\left(
1\right) }\cdot x_{1}^{\left( 1\right) }\right) \circ x^{\left( 2\right) }$.

So for systems of the words (\ref{groupssignature}) we have these
possibilities:%
\begin{equation*}
W=\{w_{1}=1^{\left( 1\right) },w_{-1}\left( x^{\left( 1\right) }\right)
=\left( x^{\left( 1\right) }\right) ^{-1},w_{\cdot }\left( x_{1}^{\left(
1\right) },x_{2}^{\left( 1\right) }\right) =x_{1}^{\left( 1\right) }\cdot
x_{2}^{\left( 1\right) },
\end{equation*}%
\begin{equation}
w_{0}=0^{\left( 2\right) },w_{-}\left( x^{\left( 2\right) }\right)
=-x^{\left( 2\right) },w_{\lambda }\left( x^{\left( 2\right) }\right)
=\varphi \left( \lambda \right) x^{\left( 2\right) }\left( \lambda \in
k\right) ,  \label{groupreallist1}
\end{equation}%
\begin{equation*}
w_{+}\left( x_{1}^{\left( 2\right) },x_{2}^{\left( 2\right) }\right)
=x_{1}^{\left( 2\right) }+x_{2}^{\left( 2\right) },w_{\circ }\left(
x^{\left( 1\right) },x^{\left( 2\right) }\right) =x^{\left( 1\right) }\circ
x^{\left( 2\right) }\}
\end{equation*}%
or%
\begin{equation*}
W=\{w_{1}=1^{\left( 1\right) },w_{-1}\left( x^{\left( 1\right) }\right)
=\left( x^{\left( 1\right) }\right) ^{-1},w_{\cdot }\left( x_{1}^{\left(
1\right) },x_{2}^{\left( 1\right) }\right) =x_{2}^{\left( 1\right) }\cdot
x_{1}^{\left( 1\right) },
\end{equation*}%
\begin{equation}
w_{0}=0^{\left( 2\right) },w_{-}\left( x^{\left( 2\right) }\right)
=-x^{\left( 2\right) },w_{\lambda }\left( x^{\left( 2\right) }\right)
=\varphi \left( \lambda \right) x^{\left( 2\right) }\left( \lambda \in
k\right) ,  \label{groupreallist2}
\end{equation}%
\begin{equation*}
w_{+}\left( x_{1}^{\left( 2\right) },x_{2}^{\left( 2\right) }\right)
=x_{1}^{\left( 2\right) }+x_{2}^{\left( 2\right) },w_{\circ }\left(
x^{\left( 1\right) },x^{\left( 2\right) }\right) =\left( x^{\left( 1\right)
}\right) ^{-1}\circ x^{\left( 2\right) }\},
\end{equation*}%
where $\varphi \in \mathrm{Aut}k$.

Now we will prove that conditions for Op1) and Op2) fulfills for all these
systems of the words. By direct computations we can prove that if $H\in
\Theta $ then, for all these systems of the words $W$, in the algebra $%
H_{W}^{\ast }$ fulfill all identities (axioms) which define our variety $%
\Theta $. So $H_{W}^{\ast }\in \Theta $. Therefore for every $F=F\left(
X\right) \in \mathrm{Ob}\Theta ^{0}$ there exists homomorphism $%
s_{F}:F\rightarrow F_{W}^{\ast }$ such that $\left( s_{F}\right) _{\mid
X}=id_{X}$.

We will prove that $s_{F}$ is an an isomorphism. We assume that $W$ has form
(\ref{groupreallist2}). First of all we prove that $s_{F}$ is a
monomorphism. We consider $g=\left( x_{i_{1}}^{\left( 1\right) }\right)
^{\varepsilon _{1}}\cdot \ldots \cdot \left( x_{i_{n}}^{\left( 1\right)
}\right) ^{\varepsilon _{n}}\in G\left( X^{\left( 1\right) }\right) $, where 
$x_{i_{1}}^{\left( 1\right) },\ldots ,x_{i_{n}}^{\left( 1\right) }\in
X^{\left( 1\right) }$, $\varepsilon _{1},\ldots ,\varepsilon _{n}\in \left\{
1,-1\right\} $, $n\in 
\mathbb{N}
$ and in the word we can not make cancellations. We also can not make
cancellations in the word $s_{F}\left( g\right) =\left( x_{i_{n}}^{\left(
1\right) }\right) ^{\varepsilon _{n}}\cdot \ldots \cdot \left(
x_{i_{1}}^{\left( 1\right) }\right) ^{\varepsilon _{1}}$. So if $s_{F}\left(
g\right) =1^{\left( 1\right) }$ then $n=0$ and $g=1$. Now we consider $\sum
f_{i}\circ x_{i}^{\left( 2\right) }\in kG\left( X^{\left( 1\right) }\right)
\circ X^{\left( 2\right) }$, such that $s_{F}\left( \sum f_{i}\circ
x_{i}^{\left( 2\right) }\right) =\sum s_{F}\left( f_{i}\circ x_{i}^{\left(
2\right) }\right) =0$. If $f_{i}=\sum\limits_{g\in I}\lambda _{i,g}g\in
kG\left( X^{\left( 1\right) }\right) $, and $x_{i}^{\left( 2\right) }\in
X^{\left( 2\right) }$, then $s_{F}\left( f_{i}\circ x_{i}^{\left( 2\right)
}\right) =s_{F}\left( \sum\limits_{g\in I}\lambda _{i,g}\left( g\circ
x_{i}^{\left( 2\right) }\right) \right) =\sum\limits_{g\in I}\varphi \left(
\lambda _{i,g}\right) \left( g^{-1}\circ x_{i}^{\left( 2\right) }\right)
=\left( \sum\limits_{g\in I}\varphi \left( \lambda _{i,g}\right)
g^{-1}\right) \circ x_{i}^{\left( 2\right) }\in kG\left( X^{\left( 1\right)
}\right) \circ x_{i}^{\left( 2\right) }$, where $\varphi \in \mathrm{Aut}k$%
.. So, if $\sum s_{F}\left( f_{i}\circ x_{i}^{\left( 2\right) }\right) =0$
then the $s_{F}\left( f_{i}\circ x_{i}^{\left( 2\right) }\right) =0$ holds
for every $x_{i}^{\left( 2\right) }\in X^{\left( 2\right) }$ and $%
\sum\limits_{g\in I}\varphi \left( \lambda _{i,g}\right) g^{-1}=0$. Hence $%
\varphi \left( \lambda _{i,g}\right) =0$ and $\lambda _{i,g}=0$ holds for
every $g\in I$. Therefore $f_{i}\circ x_{i}^{\left( 2\right) }=0$ and $\sum
f_{i}\circ x_{i}^{\left( 2\right) }=0$. So $s_{F}$ is a monomorphism.

For every $g=\left( x_{i_{1}}^{\left( 1\right) }\right) ^{\varepsilon
_{1}}\cdot \ldots \cdot \left( x_{i_{n}}^{\left( 1\right) }\right)
^{\varepsilon _{n}}\in G\left( X^{\left( 1\right) }\right) $ the $%
s_{F}\left( \left( x_{i_{n}}^{\left( 1\right) }\right) ^{\varepsilon
_{n}}\cdot \ldots \cdot \left( x_{i_{1}}^{\left( 1\right) }\right)
^{\varepsilon _{1}}\right) =g$ holds. And for every $v=\sum f_{i}\circ
x_{i}^{\left( 2\right) }\in kG\left( X^{\left( 1\right) }\right) \circ
X^{\left( 2\right) }$, where $f_{i}=\sum\limits_{g\in I}\lambda _{i,g}g$,
the $s_{F}\left( \sum\limits_{g\in I}\varphi ^{-1}\left( \lambda
_{i,g}\right) \left( g^{-1}\circ x_{i}^{\left( 2\right) }\right) \right)
=f_{i}\circ x_{i}^{\left( 2\right) }$ holds. Therefore $v\in \mathrm{im}%
s_{F} $. Hence $s_{F}$ is an an isomorphism. If $W$ has form (\ref%
{groupreallist1}) we can even easier prove that $s_{F}$ is an isomorphism.

Now we know all elements of the group $\mathfrak{S}$: by Theorem \ref%
{methodofverbaloperations} they are all strongly stable automorphisms $\Phi
^{W}$, which correspond to the systems of words (\ref{groupreallist1}) and (%
\ref{groupreallist2}). Now we will study the multiplication in this group.
The automorphisms $\Phi ^{W}$, which correspond to the system of words (\ref%
{groupreallist1}) we will denote by $\Phi \left( \varphi ,1\right) $ and the
automorphisms $\Phi ^{W}$, which correspond to the system of words (\ref%
{groupreallist2}) we will denote by $\Phi \left( \varphi ,\sigma \right) $.
We consider $\Phi _{1},\Phi _{2}\in \mathfrak{S}$. We denote, as in the
Section \ref{verbal}, $S^{\Phi _{i}}=\left\{ s_{i,F}:F\rightarrow
F_{W_{i}}^{\ast }\mid F\in \mathrm{Ob}\Theta ^{0}\right\} $, where $%
W_{i}=W^{\Phi _{i}}$, $i=1,2$. The automorphism $\Phi _{2}\Phi _{1}$ acts on
morphisms of the category $\Theta ^{0}$ according the formula (\ref%
{biject_action}) by bijections $S^{\Phi _{2}\Phi _{1}}=\left\{
s_{2,F}s_{1,F}\mid F\in \mathrm{Ob}\Theta ^{0}\right\} $. The system of
words $W=W^{\Phi _{2}\Phi _{1}}$ can be calculated by the formula (\ref%
{wordformula}). If $\Phi _{i}=\Phi \left( \varphi _{i},\sigma \right) $, $%
i=1,2$, then $W=W^{\Phi _{2}\Phi _{1}}$ has form (\ref{groupssignature})
with $w_{\cdot }\left( x_{1}^{\left( 1\right) },x_{2}^{\left( 1\right)
}\right) =s_{2,F}s_{1,F}\left( x_{1}^{\left( 1\right) }\cdot x_{2}^{\left(
1\right) }\right) =s_{2,F}\left( x_{2}^{\left( 1\right) }\cdot x_{1}^{\left(
1\right) }\right) =x_{1}^{\left( 1\right) }\cdot x_{2}^{\left( 1\right) }$, $%
w_{\lambda }\left( x^{\left( 1\right) }\right) =s_{2,F}s_{1,F}\left( \lambda
x^{\left( 1\right) }\right) =s_{2,F}\left( \varphi _{1}\left( \lambda
\right) x^{\left( 1\right) }\right) =\varphi _{2}\varphi _{1}\left( \lambda
\right) x^{\left( 1\right) }$ and $w_{\circ }\left( x^{\left( 1\right)
},x^{\left( 2\right) }\right) =s_{2,F}s_{1,F}\left( x^{\left( 1\right)
}\circ x^{\left( 2\right) }\right) =s_{2,F}\left( \left( x^{\left( 1\right)
}\right) ^{-1}\circ x^{\left( 2\right) }\right) =x^{\left( 1\right) }\circ
x^{\left( 2\right) }$. Therefore $\Phi \left( \varphi _{2},\sigma \right)
\Phi \left( \varphi _{1},\sigma \right) =\Phi \left( \varphi _{2}\varphi
_{1},1\right) $. By similar calculation we prove that $\Phi \left( \varphi
_{2},\sigma \right) \Phi \left( \varphi _{1},1\right) =$ $\Phi \left(
\varphi _{2},1\right) \Phi \left( \varphi _{1},\sigma \right) =\Phi \left(
\varphi _{2}\varphi _{1},\sigma \right) $ and $\Phi \left( \varphi
_{2},1\right) \Phi \left( \varphi _{1},1\right) =\Phi \left( \varphi
_{2}\varphi _{1},1\right) $. Therefore $\mathfrak{S\cong }\mathrm{Aut}%
k\times 
\mathbb{Z}
_{2}$.

Now we must calculate the group $\mathfrak{S\cap Y}$. If $\Phi =\Phi \left(
\varphi ,\sigma \right) $ or $\Phi =\Phi \left( \varphi ,1\right) $ and $%
\varphi \neq id_{k}$ then $\Phi $ is not inner automorphism. The proof of
this fact is even easier than proof of the similar fact from \cite[%
Proposition 4.1]{TsurAutomEqLinAlg}. If $\Phi =\Phi \left( id_{k},\sigma
\right) $ than for every $F\in \mathrm{Ob}\Theta ^{0}$ we define the mapping 
$\tau _{F}:F\rightarrow F$ by this way: if $g\in F^{\left( 1\right) }$ then $%
\tau _{F}\left( g\right) =g^{-1}$, if $v\in F^{\left( 2\right) }$ then $\tau
_{F}\left( v\right) =v$. We will that prove $\tau _{F}:F\rightarrow \left(
F\right) _{W}^{\ast }$, where $W=W^{\Phi }$, is an isomorphism. For every $%
g\in F^{\left( 1\right) }$ the $\tau _{F}\left( g^{-1}\right) =\left(
g^{-1}\right) ^{-1}=g=w_{-1}\left( \tau _{F}\left( g\right) \right) $ holds.
Also $g_{1},g_{2}\in F^{\left( 1\right) }$ the $\tau _{F}\left( g_{1}\cdot
g_{2}\right) =\left( g_{1}\cdot g_{2}\right) ^{-1}=g_{2}^{-1}\cdot
g_{1}^{-1}=w_{\cdot }\left( \tau _{F}\left( g_{1}\right) ,\tau _{F}\left(
g_{2}\right) \right) $ holds for every $g_{1},g_{2}\in F^{\left( 1\right) }$%
. And the $w_{\circ }\left( \tau _{F}\left( g\right) \circ \tau _{F}\left(
v\right) \right) =w_{\circ }\left( g^{-1}\circ v\right) =g\circ v=\tau
_{F}\left( g\circ v\right) $ holds for every $g\in F^{\left( 1\right) }$, $%
v\in F^{\left( 2\right) }$. Also it is clear that $\tau _{F}$ is a
bijection. So $\tau _{F}:F\rightarrow \left( F\right) _{W}^{\ast }$ is an
isomorphism. For every $F_{1},F_{2}\in \mathrm{Ob}\Theta ^{0}$ and every $%
\mu \in \mathrm{Mor}_{\Theta ^{0}}\left( F_{1},F_{2}\right) $ we have that
the $\tau _{F_{2}}\mu \left( g\right) =\mu \left( g\right) ^{-1}=\mu \tau
_{F_{1}}\left( g\right) $ holds for every $g\in F_{1}^{\left( 1\right) }$
and the $\tau _{F_{2}}\mu \left( v\right) =\mu \left( v\right) =\mu \tau
_{F_{1}}\left( v\right) $ holds for every $v\in F_{1}^{\left( 2\right) }$.
So, by Proposition \ref{intersectioncriterion}, $\Phi $ is an inner
automorphism. Therefore $\mathfrak{A/Y\cong S/S\cap Y\cong }\mathrm{Aut}k$.
\end{proof}

Now we will give an example of two representations of groups which are
automorphically equivalent but not geometrically equivalent. In \cite%
{PlotkinZhitAutCat} this matter was not discussed. We take $k=%
\mathbb{Q}
\left( \theta _{1},\theta _{2}\right) $ - the transcendental extension of
degree $2$ of the field $%
\mathbb{Q}
$. We consider a free representation $F=F\left( x^{\left( 1\right)
},x^{\left( 2\right) }\right) $. $F^{\left( 1\right) }=G\left( x^{\left(
1\right) }\right) $, $F^{\left( 2\right) }=kG\left( x^{\left( 1\right)
}\right) \circ x^{\left( 2\right) }$. In the representation $F$ we will
consider the congruence $T$ generated by pair $\left( x^{\left( 1\right)
}\circ x^{\left( 2\right) },\theta _{1}x^{\left( 2\right) }\right) $. The
pair of the normal subgroup of $F^{\left( 1\right) }$ and $kG\left(
x^{\left( 1\right) }\right) $-submodule of $F^{\left( 2\right) }$ which
corresponds to this congruence (see \cite{PlotkinVovsi}) is $\left( \left\{
1^{\left( 1\right) }\right\} ,\left( x^{\left( 1\right) }-\theta _{1}\right)
kG\left( x^{\left( 1\right) }\right) \circ x^{\left( 2\right) }\right) $. We
will denote the quotient representation $F/T=H$. We will denote the natural
epimorphism $\tau :F\rightarrow F/T$. The equalities $\tau \left( x^{\left(
1\right) }\right) ^{m}\circ \tau \left( x^{\left( 2\right) }\right) =\theta
_{1}^{m}\tau \left( x^{\left( 2\right) }\right) $ hold in the $H$ for every $%
m\in 
\mathbb{Z}
$. We will consider the automorphism $\varphi $ of the field $k$ such that $%
\varphi \left( \theta _{1}\right) =\theta _{2}$, $\varphi \left( \theta
_{2}\right) =\theta _{1}$. $W$ will be the system of words which has form (%
\ref{groupreallist1}) with the mentioned $\varphi $. By Corollary \ref%
{hwautomequiv} from Proposition \ref{sbijection} the representations $H$ and 
$H_{W}^{\ast }$ are automorphically equivalent.

\begin{proposition}
\label{groupContrExample}The representations $H$ and $H_{W}^{\ast }$ are not
geometrically equivalent.
\end{proposition}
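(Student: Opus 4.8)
The plan is to find a single free algebra over which $H$ and $H_W^{\ast}$ have different families of algebraically closed sets; by the criterion stated right after the definition of geometric equivalence this suffices. The free algebra will be $F=F(x^{(1)},x^{(2)})$ itself and the set will be $T$. First I would check that $T\in Cl_H(F)$: since the natural epimorphism $\tau\colon F\to H=F/T$ has $\ker\tau=T$ and hence lies in $T'_H$, we get $T''_H\subseteq\ker\tau=T$, and combined with the general inclusion $T\subseteq T''_H$ this yields $T''_H=T$. So the whole matter is to show that $T\notin Cl_{H_W^{\ast}}(F)$.

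Next I would make the two algebras explicit. The group part $G(x^{(1)})$ of $F$ is infinite cyclic and the module part is the free $kG(x^{(1)})$-module on $x^{(2)}$; passing to $H=F/T$, the group part $H^{(1)}$ is still infinite cyclic on $\tau(x^{(1)})$ while $H^{(2)}$ is one-dimensional over $k$ with basis $\tau(x^{(2)})$ and $\tau(x^{(1)})^{m}\circ\tau(x^{(2)})=\theta_1^{m}\tau(x^{(2)})$. The algebra $H_W^{\ast}$ has the same sorted underlying set, the same group operations and the same action $\circ$ (because in the system $W$ of form (\ref{groupreallist1}) the group words are trivial and $w_{\circ}(x^{(1)},x^{(2)})=x^{(1)}\circ x^{(2)}$), but its scalar multiplication is twisted: $\lambda\cdot_{\ast}v=\varphi(\lambda)v$. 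Rewriting $\tau(x^{(1)})\circ\tau(x^{(2)})=\theta_1\tau(x^{(2)})$ in these twisted scalars, and using that $\varphi$ is the involution interchanging $\theta_1$ and $\theta_2$, gives $\tau(x^{(1)})\circ\tau(x^{(2)})=\theta_2\cdot_{\ast}\tau(x^{(2)})$; that is, $H_W^{\ast}$ is the representation of the infinite cyclic group whose generator acts on the one-dimensional space $k\cdot_{\ast}\tau(x^{(2)})$ by multiplication by $\theta_2$.

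The key computation is that of the solutions of $T$ in $H_W^{\ast}$. A homomorphism $\psi\colon F\to H_W^{\ast}$ is determined by $\psi(x^{(1)})=\tau(x^{(1)})^{\ell}$ for some $\ell\in\mathbb{Z}$ and $\psi(x^{(2)})=\mu\cdot_{\ast}\tau(x^{(2)})$ for some $\mu\in k$. The generating pair $(x^{(1)}\circ x^{(2)},\theta_1 x^{(2)})$ of $T$ lies in $\ker\psi$ precisely when $\psi(x^{(1)})\circ\psi(x^{(2)})=\theta_1\cdot_{\ast}\psi(x^{(2)})$; since $\tau(x^{(1)})^{\ell}\circ\tau(x^{(2)})=\theta_2^{\ell}\cdot_{\ast}\tau(x^{(2)})$ and $\tau(x^{(2)})$ is a $\cdot_{\ast}$-basis, this reduces to the scalar identity $\mu(\theta_2^{\ell}-\theta_1)=0$ in the field $k$. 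Because $\theta_1$ and $\theta_2$ are algebraically independent over $\mathbb{Q}$, no power $\theta_2^{\ell}$ equals $\theta_1$, so $\mu=0$: every $\psi\in T'_{H_W^{\ast}}$ kills $x^{(2)}$. Hence $(x^{(2)},0^{(2)})\in\ker\psi$ for all such $\psi$, so $(x^{(2)},0^{(2)})\in T''_{H_W^{\ast}}$; but $(x^{(2)},0^{(2)})\notin T$, because $x^{(2)}=1\circ x^{(2)}$ does not lie in the submodule $(x^{(1)}-\theta_1)kG(x^{(1)})\circ x^{(2)}$ — the element $x^{(1)}-\theta_1$ is not a unit of $kG(x^{(1)})=k[x^{(1)},(x^{(1)})^{-1}]$. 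Therefore $T\subsetneq T''_{H_W^{\ast}}$, so $T\notin Cl_{H_W^{\ast}}(F)$ whereas $T\in Cl_H(F)$, and $H$ and $H_W^{\ast}$ are not geometrically equivalent.

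The main obstacle I anticipate is the bookkeeping for the twisted operations of $H_W^{\ast}$ — in particular being careful that it is $\varphi^{-1}$ which converts the original scalars into the $\ast$-scalars, so that the generator of $H_W^{\ast}$ "acts by $\theta_2$" (here this coincides with acting by $\varphi(\theta_1)$ only because $\varphi$ is an involution, but the point needs checking), and making sure that $T$ really is algebraically closed over $H$ so that the asymmetry between the two sides is genuine. Everything else, including the appeal to algebraic independence of $\theta_1$ and $\theta_2$, is a brief computation.
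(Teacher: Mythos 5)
Your proof is correct. It takes a genuinely different route from the paper's. The paper argues by contradiction: assuming geometric equivalence, it transports $T$ through the machinery of Section \ref{alggeometry} (Theorem \ref{alfaformula}, Proposition \ref{sbijection} and its Corollary \ref{hwautomequiv}) to conclude $s_{F}\left( T\right) \in Cl_{H}(F)$, where $s_{F}\left( T\right) $ corresponds to $\left( \left\{ 1^{\left( 1\right) }\right\} ,\left( x^{\left( 1\right) }-\theta _{2}\right) kG\left( x^{\left( 1\right) }\right) \circ x^{\left( 2\right) }\right) $, and then computes $\left( s_{F}\left( T\right) \right) _{H}^{\prime \prime }$ over the original $H$ by factoring each $\psi \in \mathrm{Hom}\left( F,H\right) $ through $\mathrm{End}\left( F\right) $ and using that $\left( x^{\left( 1\right) }-\theta _{1}\right) kG\left( x^{\left( 1\right) }\right) $ is a prime ideal together with $\theta _{1}^{m}\neq \theta _{2}$. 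You instead compute $T_{H_{W}^{\ast }}^{\prime \prime }$ directly in the twisted algebra: since only the scalar multiplication is twisted and $H^{\left( 2\right) }$ is one-dimensional, $\mathrm{Hom}\left( F,H_{W}^{\ast }\right) $ is parametrized by $\left( \ell ,\mu \right) \in \mathbb{Z}\times k$, and $T\subseteq \ker \psi $ reduces to $\mu \left( \theta _{2}^{\ell }-\theta _{1}\right) =0$, forcing $\mu =0$ by algebraic independence. The two computations are mirror images of one another under $s_{F}$ and rest on the same arithmetic fact (no monomial relation between $\theta _{1}$ and $\theta _{2}$); yours is more elementary and self-contained, dispensing with the transport lemmas, but it leans on an explicit description of $\mathrm{Hom}\left( F,H_{W}^{\ast }\right) $ that is available only because the module part of $H$ is one-dimensional, whereas the paper's prime-ideal argument is the form that survives in higher rank. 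Your verification that $T\in Cl_{H}(F)$ and that $\left( x^{\left( 2\right) },0^{\left( 2\right) }\right) \notin T$ because $x^{\left( 1\right) }-\theta _{1}$ is not a unit of $k\left[ x^{\left( 1\right) },\left( x^{\left( 1\right) }\right) ^{-1}\right] $ is sound, and the bookkeeping with $\varphi ^{-1}$ versus $\varphi $ (harmless here since $\varphi $ is an involution) is handled correctly.
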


\begin{proof}
$\left( T\right) _{H}^{\prime \prime }=\bigcap\limits_{\substack{ \psi \in 
\mathrm{Hom}\left( F,H\right)  \\ T\subseteq \ker \psi }}\ker \psi $, $\ker
\tau =T$, so $\left( T\right) _{H}^{\prime \prime }=T$ and $T\in Cl_{H}(F)$.
If $H$ and $H_{W}^{\ast }$ are geometrically equivalent then $T\in
Cl_{H_{W}^{\ast }}(F)$ so, by Theorem \ref{alfaformula} and Corollary \ref%
{hwautomequiv} from Proposition \ref{sbijection}, $s_{F}\left( T\right) \in
Cl_{H}(F)$. The pair of the normal subgroup of $F^{\left( 1\right) }$ and $%
kG\left( x^{\left( 1\right) }\right) $-submodule of $F^{\left( 2\right) }$
which corresponds to the congruence $s_{F}\left( T\right) $ is $\left(
\left\{ 1^{\left( 1\right) }\right\} ,\left( x^{\left( 1\right) }-\theta
_{2}\right) kG\left( x^{\left( 1\right) }\right) \circ x^{\left( 2\right)
}\right) $. $\left( s_{F}\left( T\right) \right) _{H}^{\prime \prime
}=\bigcap\limits_{\substack{ \psi \in \mathrm{Hom}\left( F,H\right)  \\ %
s_{F}\left( T\right) \subseteq \ker \psi }}\ker \psi $. If $\psi \in \mathrm{%
Hom}\left( F,H\right) $ then by projective propriety of the free algebras
there exists $\alpha \in \mathrm{End}\left( F\right) $ such that $\tau
\alpha =\psi $. $\left( x^{\left( 1\right) }\circ x^{\left( 2\right)
},\theta _{2}x^{\left( 2\right) }\right) \in s_{F}\left( T\right) $, so if $%
s_{F}\left( T\right) \subseteq \ker \psi $ then $\tau \left( \alpha \left(
x^{\left( 1\right) }\right) \circ \alpha \left( x^{\left( 2\right) }\right)
\right) =\tau \left( \theta _{2}\alpha \left( x^{\left( 2\right) }\right)
\right) $ and $\left( \alpha \left( x^{\left( 1\right) }\right) \circ \alpha
\left( x^{\left( 2\right) }\right) ,\theta _{2}\alpha \left( x^{\left(
2\right) }\right) \right) \in T$. $\alpha \left( x^{\left( 1\right) }\right)
=\left( x^{\left( 1\right) }\right) ^{m}$, $\alpha \left( x^{\left( 2\right)
}\right) =f\left( x^{\left( 1\right) }\right) \circ x^{\left( 2\right) }$,
where $m\in 
\mathbb{Z}
$, $f\left( x^{\left( 1\right) }\right) \in kG\left( x^{\left( 1\right)
}\right) $. So $\left( \left( x^{\left( 1\right) }\right) ^{m}f\left(
x^{\left( 1\right) }\right) \circ x^{\left( 2\right) },\theta _{2}f\left(
x^{\left( 1\right) }\right) \circ x^{\left( 2\right) }\right) \in T$. It
means that $\left( \left( x^{\left( 1\right) }\right) ^{m}-\theta
_{2}\right) f\left( x^{\left( 1\right) }\right) \circ x^{\left( 2\right)
}\in \left( x^{\left( 1\right) }-\theta _{1}\right) kG\left( x^{\left(
1\right) }\right) \circ x^{\left( 2\right) }$ or $\left( \left( x^{\left(
1\right) }\right) ^{m}-\theta _{2}\right) f\left( x^{\left( 1\right)
}\right) \in \left( x^{\left( 1\right) }-\theta _{1}\right) kG\left(
x^{\left( 1\right) }\right) $. The ideal $\left( x^{\left( 1\right) }-\theta
_{1}\right) kG\left( x^{\left( 1\right) }\right) $ is a prime ideal in the $%
kG\left( x^{\left( 1\right) }\right) $. Therefore or $\left( x^{\left(
1\right) }\right) ^{m}-\theta _{2}\in \left( x^{\left( 1\right) }-\theta
_{1}\right) kG\left( x^{\left( 1\right) }\right) $, or $f\left( x^{\left(
1\right) }\right) \in \left( x^{\left( 1\right) }-\theta _{1}\right)
kG\left( x^{\left( 1\right) }\right) $. If $\left( x^{\left( 1\right)
}\right) ^{m}-\theta _{2}\in \left( x^{\left( 1\right) }-\theta _{1}\right)
kG\left( x^{\left( 1\right) }\right) $ then $\left( \theta _{1}\right)
^{m}-\theta _{2}=0$, but it is not possible. If $f\left( x^{\left( 1\right)
}\right) \in \left( x^{\left( 1\right) }-\theta _{1}\right) kG\left(
x^{\left( 1\right) }\right) $ then $\psi \left( x^{\left( 2\right) }\right)
=0$. So $\left( s_{F}\left( T\right) \right) _{H}^{\prime \prime
}=\bigcap\limits_{\substack{ \psi \in \mathrm{Hom}\left( F,H\right)  \\ %
s_{F}\left( T\right) \subseteq \ker \psi }}\ker \psi \supseteq \Delta
_{F^{\left( 1\right) }}\cup \left( F^{\left( 2\right) }\right)
^{2}\supsetneqq s_{F}\left( T\right) $. Therefore $s_{F}\left( T\right)
\notin Cl_{H}(F)$. This contradiction finishes the prove.
\end{proof}

\subsection{Representations of Lie algebras.}

In this Subsection we reprove result of \cite{ShestTsur}. $\Theta $ will be
a variety of the all representations of Lie algebras over linear spaces over
field $k$. We assume that $k$ has a characteristic $0$. $\Gamma =\left\{
1,2\right\} $: the first sort is a sort of elements Lie algebras, the second
sort is a sort of vectors of linear spaces. 
\begin{equation*}
\Omega =\left\{ 0^{\left( 1\right) },-^{\left( 1\right) },\lambda ^{\left(
1\right) }\left( \lambda \in k\right) ,+^{\left( 1\right) },\left[ ,\right]
,0^{\left( 2\right) },-^{\left( 2\right) },\lambda ^{\left( 2\right) }\left(
\lambda \in k\right) ,+^{\left( 2\right) },\circ \right\} .
\end{equation*}%
$0^{\left( 2\right) }$, $-^{\left( 2\right) }$, $\lambda ^{\left( 2\right)
}\left( \lambda \in k\right) $, $+^{\left( 2\right) }$ are the operations in
the linear space, $0^{\left( 1\right) }$, $-^{\left( 1\right) }$, $\lambda
^{\left( 1\right) }\left( \lambda \in k\right) $, $+^{\left( 1\right) }$ are
the similar operations in the Lie algebra. $\left[ ,\right] $ is the Lie
brackets; this operation has type $\tau _{\left[ ,\right] }=\left(
1,1;1\right) $. $\circ $ is an operation of the action of elements of the
Lie algebra on vectors from the linear space, $\tau _{\circ }=\left(
1,2;2\right) $.

A free algebras $F\left( X\right) $ in our variety has this form: $\left(
F\left( X\right) \right) ^{\left( 1\right) }=L\left( X^{\left( 1\right)
}\right) $, where $L\left( X^{\left( 1\right) }\right) $ is a free Lie
algebra with the set of free generators $X^{\left( 1\right) }$. $\left(
F\left( X\right) \right) ^{\left( 2\right) }=A\left( X^{\left( 1\right)
}\right) \circ X^{\left( 2\right) }=\bigoplus\limits_{i\in I}\left( A\left(
X^{\left( 1\right) }\right) \circ x_{i}^{\left( 2\right) }\right) $, where $%
A\left( X^{\left( 1\right) }\right) $ is a free associative algebra with
unit generated by the set of free generators $X^{\left( 1\right) }$ and $%
\left( x_{i_{1}}^{\left( 1\right) }\ldots x_{i_{n}}^{\left( 1\right)
}\right) \circ v^{\left( 2\right) }$ we understand as $x_{i_{1}}^{\left(
1\right) }\circ \left( \ldots \circ \left( x_{i_{n}}^{\left( 1\right) }\circ
v^{\left( 2\right) }\right) \right) $ and so on by linearity, $%
x_{i_{1}}^{\left( 1\right) },\ldots ,x_{i_{n}}^{\left( 1\right) }\in
X^{\left( 1\right) }$, $v^{\left( 2\right) }\in \left( F\left( X\right)
\right) ^{\left( 2\right) }$.

By \cite[Theorem 5.1]{ShestTsur} the variety $\Theta $ has an IBN propriety.

\begin{proposition}
Condition (\ref{monoiso}) fulfills in the variety of the all representations
of Lie algebras.
\end{proposition}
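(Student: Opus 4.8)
The plan is to follow the proof of Proposition \ref{ActCond}. By \cite[Theorem 5.1]{ShestTsur} the variety $\Theta$ has an IBN propriety, so, as explained after Definition \ref{IBN}, every automorphism $\Phi$ of the category $\Theta^{0}$ induces an automorphism $\varphi$ of the additive monoid $\mathbb{N}^{2}$ which permutes the minimal generating set $\left\{ e_{1},e_{2}\right\}$. Since a two-element set admits only the identity permutation and one transposition, it suffices to rule out $\varphi\left( e_{1}\right) =e_{2}$, $\varphi\left( e_{2}\right) =e_{1}$; then $\varphi=\mathrm{id}$ gives $\Phi\left( F\left( x^{\left( 1\right) }\right) \right) \cong F\left( x^{\left( 1\right) }\right) $ and $\Phi\left( F\left( x^{\left( 2\right) }\right) \right) \cong F\left( x^{\left( 2\right) }\right) $, which is Condition \ref{monoiso}.

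The two ingredients replacing their group-theoretic analogues from the previous subsection are: (a) for every $n$ the free Lie algebra $L\left( x_{1}^{\left( 1\right) },\ldots,x_{n}^{\left( 1\right) }\right) $ of rank $n$ embeds into $L\left( x_{1}^{\left( 1\right) },x_{2}^{\left( 1\right) }\right) $ --- indeed, by the Shirshov--Witt theorem every subalgebra of a free Lie algebra is free, and a free Lie algebra of rank $2$ has free subalgebras of every finite rank (for instance its derived subalgebra is free of countably infinite rank); and (b) an $n$-dimensional $k$-vector space does not embed into a $2$-dimensional $k$-vector space when $n>2$. I would first use (a) to obtain an embedding $\iota :F\left( x_{1}^{\left( 1\right) },\ldots,x_{n}^{\left( 1\right) }\right) \hookrightarrow F\left( x_{1}^{\left( 1\right) },x_{2}^{\left( 1\right) }\right) $ in $\Theta^{0}$; this works because the sort-$2$ components of these two free algebras are trivial, so a homomorphism between them is an embedding precisely when its restriction to sort $1$ is injective. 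Since every automorphism of $\Theta^{0}$ carries embeddings to embeddings (the remark following Proposition \ref{injection}), $\Phi\left( \iota\right) $ is an embedding of $\Phi\left( F\left( x_{1}^{\left( 1\right) },\ldots,x_{n}^{\left( 1\right) }\right) \right) $ into $\Phi\left( F\left( x_{1}^{\left( 1\right) },x_{2}^{\left( 1\right) }\right) \right) $. If $\varphi$ were the transposition, then through the IBN correspondence $\Phi\left( F\left( x_{1}^{\left( 1\right) },\ldots,x_{n}^{\left( 1\right) }\right) \right) \cong F\left( x_{1}^{\left( 2\right) },\ldots,x_{n}^{\left( 2\right) }\right) $, whose sort-$2$ component is an $n$-dimensional $k$-space, whereas $\Phi\left( F\left( x_{1}^{\left( 1\right) },x_{2}^{\left( 1\right) }\right) \right) \cong F\left( x_{1}^{\left( 2\right) },x_{2}^{\left( 2\right) }\right) $ has a $2$-dimensional sort-$2$ component; restricting $\Phi\left( \iota\right) $ to sort $2$ yields an embedding of $k$-spaces contradicting (b) as soon as $n>2$. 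Hence $\varphi=\mathrm{id}$ and the Proposition follows.

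The main obstacle will be fact (a): in the semigroup and group cases the corresponding embedding is produced by an explicit cancellation argument inside the free object, whereas here one must invoke the Shirshov--Witt theorem (or exhibit an explicit free generating family in $L\left( x_{1}^{\left( 1\right) },x_{2}^{\left( 1\right) }\right) $ and verify its freeness) and then check that the induced homomorphism is a congruence-theoretic embedding, that is, has kernel $\Delta$. Once (a) is in place, the remaining steps --- invoking Proposition \ref{injection}, tracking the transposition through the IBN correspondence, and the dimension count --- are routine and parallel to Proposition \ref{ActCond} and to the representations-of-groups case.
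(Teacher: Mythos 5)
Your proof is correct and takes essentially the same route as the paper's, which likewise reduces the problem, via the IBN property and Proposition \ref{injection}, to the two facts that the rank-$n$ free Lie algebra embeds into the rank-$2$ one and that an $n$-dimensional $k$-space does not embed into a $2$-dimensional one for $n>2$. The only difference is in how the Lie-algebra embedding is justified: the paper cites the explicit free family $[x_{1},x_{2}]$, $[x_{1},[x_{1},x_{2}]]$, $[x_{1},[x_{1},[x_{1},x_{2}]]],\ldots$ from \cite[2.4.2]{BahturinLieIdent}, whereas you invoke the Shirshov--Witt theorem; both are adequate.
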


\begin{proof}
The proof of this Proposition is similar to the proof of the Proposition \ref%
{ActCond}. We use the following two facts. The first is: we can embed the
free Lie algebra generated by $n$ free generators to the free Lie algebra
generated by $2$ free generators, for example, by \cite[2.4.2]%
{BahturinLieIdent} the elements $\left[ x_{1},x_{2}\right] $, $\left[ x_{1},%
\left[ x_{1},x_{2}\right] \right] $, $\left[ x_{1},\left[ x_{1},\left[
x_{1},x_{2}\right] \right] \right] $ and so on are free in the algebra $%
L\left( x_{1},x_{2}\right) $. And the second is: we can not embed the $n$%
-dimension linear space to the $2$-dimension linear space when $n>2$.
\end{proof}

\begin{theorem}
$\mathfrak{A/Y\cong }\mathrm{Aut}k$ for the variety of the all
representations of Lie algebras.
\end{theorem}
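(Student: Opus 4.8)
The plan is to mimic the proof of Theorem~\ref{grouprepgroup} (the analogous statement for representations of groups). By the preceding Proposition, Condition~\ref{monoiso} holds in this variety, so Theorem~\ref{methodofverbaloperations} puts $\mathfrak{S}$ in bijection with the systems of words $W=\{w_{0}^{(1)},w_{-}^{(1)},w_{\lambda}^{(1)}(\lambda\in k),w_{+}^{(1)},w_{[,]},w_{0}^{(2)},w_{-}^{(2)},w_{\lambda}^{(2)}(\lambda\in k),w_{+}^{(2)},w_{\circ}\}$ fulfilling conditions Op1) and Op2); and since $\mathfrak{A/Y}\cong\mathfrak{S}/(\mathfrak{S}\cap\mathfrak{Y})$ (Theorem~\ref{factorisation} and the remark following Theorem~\ref{innerautomprop}), it suffices to (i) classify the admissible $W$, (ii) compute the composition law on $\mathfrak{S}$, and (iii) identify the inner ones.

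For (i): the vector-space operations on each sort $i\in\{1,2\}$ are pinned down exactly as in \cite{PlotkinZhitAutCat}, \cite{TsurAutomEqLinAlg} and in the proof of Theorem~\ref{grouprepgroup}, giving $w_{0}^{(i)}=0^{(i)}$, $w_{+}^{(i)}(x_{1}^{(i)},x_{2}^{(i)})=x_{1}^{(i)}+x_{2}^{(i)}$, $w_{-}^{(i)}(x^{(i)})=-x^{(i)}$, and $w_{\lambda}^{(i)}(x^{(i)})=\varphi_{i}(\lambda)x^{(i)}$ with $\varphi_{i}\in\mathrm{Aut}\,k$ (here one uses that the free Lie algebra on a single generator is one-dimensional, so $w_{\lambda}^{(1)}$ really is a scalar relabelling). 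Next, $w_{\circ}\in(F(x^{(1)},x^{(2)}))^{(2)}=A(x^{(1)})\circ x^{(2)}$, so $w_{\circ}=p(x^{(1)})\circ x^{(2)}$ for a polynomial $p\in k[x^{(1)}]$; iterating $w_{\circ}$ shows that the induced $s_{F(x^{(1)},x^{(2)})}$ is not surjective if $\deg p\geq 2$ and not injective if $\deg p\leq 0$, so $\deg p=1$, and the identity $0\circ v=0$ (valid in $H_{W}^{\ast}\in\Theta$) kills the constant term, leaving $w_{\circ}(x^{(1)},x^{(2)})=c\,x^{(1)}\circ x^{(2)}$ with $c\in k^{\ast}$. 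Likewise $w_{[,]}\in L(x_{1}^{(1)},x_{2}^{(1)})$ must be homogeneous of degree exactly $2$ (a lower-degree word is forced to vanish by antisymmetry together with Jacobi, a higher-degree word destroys surjectivity of $s_{F}$), so $w_{[,]}(x_{1}^{(1)},x_{2}^{(1)})=\gamma[x_{1}^{(1)},x_{2}^{(1)}]$ with $\gamma\in k^{\ast}$. Feeding these into the bilinearity axiom $(\lambda a)\circ v=\lambda(a\circ v)$ gives $\varphi_{1}=\varphi_{2}=:\varphi$, and into the action axiom $[a,b]\circ v=a\circ(b\circ v)-b\circ(a\circ v)$ gives $\gamma=c$. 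Thus each admissible $W$ is given by a pair $(\varphi,c)\in\mathrm{Aut}\,k\times k^{\ast}$; conversely, for every such pair the corresponding $W$ does fulfill Op1), Op2) --- one checks, as in the proof of Theorem~\ref{grouprepgroup}, that $H_{W}^{\ast}\in\Theta$ for every $H\in\Theta$ and that the resulting $s_{F}$ is an isomorphism.

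For (ii): evaluating formula~(\ref{wordformula}) on the composite system of bijections $\{s_{2,F}s_{1,F}\}$ yields $\Phi(\varphi_{2},c_{2})\Phi(\varphi_{1},c_{1})=\Phi(\varphi_{2}\varphi_{1},\,c_{2}\varphi_{2}(c_{1}))$, so $\mathfrak{S}\cong k^{\ast}\rtimes\mathrm{Aut}\,k$. For (iii): I claim $\mathfrak{S}\cap\mathfrak{Y}=\{\Phi(id_{k},c)\mid c\in k^{\ast}\}$, the normal $k^{\ast}$. That $\Phi(id_{k},c)$ is inner follows from Proposition~\ref{intersectioncriterion} with the central function $\tau_{F}$ given by $\tau_{F}(a)=c^{-1}a$ for $a\in F^{(1)}$ and $\tau_{F}(v)=v$ for $v\in F^{(2)}$: a direct check shows $\tau_{F}:F\rightarrow F_{W}^{\ast}$ is an isomorphism and $\tau_{F_{2}}\mu=\mu\tau_{F_{1}}$ for every $\mu$. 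That $\Phi(\varphi,c)$ with $\varphi\neq id_{k}$ is not inner is obtained as in the proof of Theorem~\ref{grouprepgroup} (apply Proposition~\ref{intersectioncriterion} to $F(x^{(2)})$ and the scaling endomorphisms $x^{(2)}\mapsto\lambda x^{(2)}$; see also \cite[Proposition~4.1]{TsurAutomEqLinAlg}). Combining, $\mathfrak{A/Y}\cong\mathfrak{S}/(\mathfrak{S}\cap\mathfrak{Y})\cong(k^{\ast}\rtimes\mathrm{Aut}\,k)/k^{\ast}\cong\mathrm{Aut}\,k$.

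The main obstacle is step (i), and within it the treatment of $w_{[,]}$ and $w_{\circ}$: one has to argue the degree bounds so that $s_{F}$ is bijective \emph{simultaneously} for all $F\in\mathrm{Ob}\,\Theta^{0}$, not just for the two-generator free algebras, and then verify that the Lie-representation axioms force $\gamma=c$ (and $\varphi_{1}=\varphi_{2}$); everything else runs parallel to the group case.
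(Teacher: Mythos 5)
Your proposal is correct and follows essentially the same route as the paper: classify the admissible systems of words to get $\mathfrak{S}\cong k^{\ast}\rtimes\mathrm{Aut}\,k$ with the composition law $\Phi(\varphi_{2},c_{2})\Phi(\varphi_{1},c_{1})=\Phi(\varphi_{2}\varphi_{1},c_{2}\varphi_{2}(c_{1}))$, then show $\mathfrak{S}\cap\mathfrak{Y}$ is exactly the $k^{\ast}$ factor. The only (harmless) deviations are local: the paper pins down $w_{\circ}$ via the scalar-compatibility identity with $\lambda=2$ rather than your generation/surjectivity degree count, and its central function rescales both sorts by $a^{-1}$ rather than only sort $1$ by $c^{-1}$ --- both of your variants check out.
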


\begin{proof}
We will find the systems of words%
\begin{equation}
W=\left\{ w_{0^{\left( 1\right) }},w_{-^{\left( 1\right) }},w_{\lambda
^{\left( 1\right) }},w_{\lambda ^{\left( 2\right) }}\left( \lambda \in
k\right) ,w_{+^{\left( 1\right) }},w_{\left[ ,\right] },w_{0^{\left(
2\right) }},w_{-^{\left( 2\right) }},w_{+^{\left( 2\right) }},w_{\circ
}\right\}  \label{repLiesign}
\end{equation}%
subjects of conditions Op1) and Op2).

As in the proof of the Theorem \ref{grouprepgroup} we have that $%
w_{0^{\left( 1\right) }}=0^{\left( 1\right) }$, $w_{-^{\left( 1\right)
}}\left( x^{\left( 1\right) }\right) =-x^{\left( 1\right) }$, $w_{\lambda
^{\left( 1\right) }}\left( x^{\left( 1\right) }\right) =\varphi \left(
\lambda \right) x^{\left( 1\right) }$, $w_{+^{\left( 1\right) }}\left(
x_{1}^{\left( 1\right) },x_{2}^{\left( 1\right) }\right) =x_{1}^{\left(
1\right) }+x_{2}^{\left( 1\right) }$, $w_{0^{\left( 2\right) }}=0^{\left(
2\right) }$, $w_{-^{\left( 2\right) }}\left( x^{\left( 2\right) }\right)
=-x^{\left( 2\right) }$, $w_{\lambda ^{\left( 2\right) }}\left( x^{\left(
2\right) }\right) =\psi \left( \lambda \right) x^{\left( 2\right) }$, $%
w_{+^{\left( 2\right) }}\left( x_{1}^{\left( 2\right) },x_{2}^{\left(
2\right) }\right) =x_{1}^{\left( 2\right) }+x_{2}^{\left( 2\right) }$, where 
$\varphi ,\psi \in \mathrm{Aut}k$. By \cite[2.5]{PlotkinZhitAutCat} $w_{%
\left[ ,\right] }\left( x_{1}^{\left( 1\right) },x_{2}^{\left( 1\right)
}\right) =a\left[ x_{1}^{\left( 1\right) },x_{2}^{\left( 1\right) }\right] $%
, where $a\in k\setminus \left\{ 0\right\} $.

$w_{\circ }\left( x^{\left( 1\right) },x^{\left( 2\right) }\right) =f\left(
x^{\left( 1\right) }\right) \circ x^{\left( 2\right) }$, where $f\left(
x^{\left( 1\right) }\right) \in k\left[ x^{\left( 1\right) }\right] $. There
is an isomorphism $s_{F\left( x^{\left( 1\right) },x^{\left( 2\right)
}\right) }:F\left( x^{\left( 1\right) },x^{\left( 2\right) }\right)
\rightarrow \left( F\left( x^{\left( 1\right) },x^{\left( 2\right) }\right)
\right) _{W}^{\ast }$, so the $w_{\lambda ^{\left( 2\right) }}\left(
w_{\circ }\left( x^{\left( 1\right) },x^{\left( 2\right) }\right) \right)
=w_{\circ }\left( w_{\lambda ^{\left( 1\right) }}\left( x^{\left( 1\right)
}\right) ,x^{\left( 2\right) }\right) $ must hold for every $\lambda \in k$. 
$k$ has a characteristic $0$, so $2\in 
\mathbb{Q}
\subseteq k$, $2^{i}=2$ if and only if $i=1$, and the $\varphi \left(
2\right) =2$ holds for every $\varphi \in \mathrm{Aut}k$. We denote $f\left(
x^{\left( 1\right) }\right) =\sum\limits_{i=0}^{n}\alpha _{i}\left(
x^{\left( 1\right) }\right) ^{i}$, where $\alpha _{i}\in k$. Then $%
w_{\lambda ^{\left( 2\right) }}\left( w_{\circ }\left( x^{\left( 1\right)
},x^{\left( 2\right) }\right) \right) =\psi \left( 2\right) f\left(
x^{\left( 1\right) }\right) \circ x^{\left( 2\right)
}=\sum\limits_{i=0}^{n}2\alpha _{i}\left( x^{\left( 1\right) }\right)
^{i}\circ x^{\left( 2\right) }$, $w_{\circ }\left( w_{\lambda ^{\left(
1\right) }}\left( x^{\left( 1\right) }\right) ,x^{\left( 2\right) }\right)
=\sum\limits_{i=0}^{n}\alpha _{i}\left( \varphi \left( 2\right) x^{\left(
1\right) }\right) ^{i}\circ x^{\left( 2\right)
}=\sum\limits_{i=0}^{n}2^{i}\alpha _{i}\left( x^{\left( 1\right) }\right)
^{i}\circ x^{\left( 2\right) }$. So $\alpha _{i}=0$ if $i\neq 1$ and $%
w_{\circ }\left( x^{\left( 1\right) },x^{\left( 2\right) }\right)
=bx^{\left( 1\right) }\circ x^{\left( 2\right) }$, where $b\in k\setminus
\left\{ 0\right\} $.

Also the%
\begin{equation*}
w_{\circ }\left( w_{\left[ ,\right] }\left( x_{1}^{\left( 1\right)
},x_{2}^{\left( 1\right) }\right) ,x^{\left( 2\right) }\right) =w_{\circ
}\left( x_{1}^{\left( 1\right) },w_{\circ }\left( x_{2}^{\left( 1\right)
},x^{\left( 2\right) }\right) \right) -w_{\circ }\left( x_{2}^{\left(
1\right) },w_{\circ }\left( x_{1}^{\left( 1\right) },x^{\left( 2\right)
}\right) \right)
\end{equation*}%
must hold in the $F\left( x_{1}^{\left( 1\right) },x_{2}^{\left( 1\right)
},x^{\left( 2\right) }\right) $.%
\begin{equation*}
w_{\circ }\left( w_{\left[ ,\right] }\left( x_{1}^{\left( 1\right)
},x_{2}^{\left( 1\right) }\right) ,x^{\left( 2\right) }\right) =ab\left( %
\left[ x_{1}^{\left( 1\right) },x_{2}^{\left( 1\right) }\right] \circ
x^{\left( 2\right) }\right) ,
\end{equation*}
\begin{equation*}
w_{\circ }\left( x_{1}^{\left( 1\right) },w_{\circ }\left( x_{2}^{\left(
1\right) },x^{\left( 2\right) }\right) \right) -w_{\circ }\left(
x_{2}^{\left( 1\right) },w_{\circ }\left( x_{1}^{\left( 1\right) },x^{\left(
2\right) }\right) \right) =
\end{equation*}%
\begin{equation*}
b^{2}\left( x_{1}^{\left( 1\right) }\circ \left( x_{2}^{\left( 1\right)
}\circ x^{\left( 2\right) }\right) -x_{2}^{\left( 1\right) }\circ \left(
x_{1}^{\left( 1\right) }\circ x^{\left( 2\right) }\right) \right) =
\end{equation*}%
\begin{equation*}
b^{2}\left( \left[ x_{1}^{\left( 1\right) },x_{2}^{\left( 1\right) }\right]
\circ x^{\left( 2\right) }\right) ,
\end{equation*}%
so $a=b$.

Also the $w_{\circ }\left( w_{\lambda ^{\left( 1\right) }}\left( x^{\left(
1\right) }\right) ,x^{\left( 2\right) }\right) =w_{\circ }\left( x^{\left(
1\right) },w_{\lambda ^{\left( 2\right) }}\left( x^{\left( 2\right) }\right)
\right) $ must hold for every $\lambda \in k$. $w_{\circ }\left( w_{\lambda
^{\left( 1\right) }}\left( x^{\left( 1\right) }\right) ,x^{\left( 2\right)
}\right) =a\varphi \left( \lambda \right) x^{\left( 1\right) }\circ
x^{\left( 2\right) }$, $w_{\circ }\left( x^{\left( 1\right) },w_{\lambda
^{\left( 2\right) }}\left( x^{\left( 2\right) }\right) \right) =a\psi \left(
\lambda \right) x^{\left( 1\right) }\circ x^{\left( 2\right) }$, so $\varphi
=\psi $. So for systems of the words (\ref{repLiesign}) we have these
possibilities:%
\begin{equation*}
W=\{w_{0^{\left( 1\right) }}=0^{\left( 1\right) },w_{-^{\left( 1\right)
}}\left( x^{\left( 1\right) }\right) =-x^{\left( 1\right) },w_{\lambda
^{\left( 1\right) }}\left( x^{\left( 1\right) }\right) =\varphi \left(
\lambda \right) x^{\left( 1\right) }\left( \lambda \in k\right) ,
\end{equation*}%
\begin{equation}
w_{+^{\left( 1\right) }}\left( x_{1}^{\left( 1\right) },x_{2}^{\left(
1\right) }\right) =x_{1}^{\left( 1\right) }+x_{2}^{\left( 1\right) },w_{ 
\left[ ,\right] }\left( x_{1}^{\left( 1\right) },x_{2}^{\left( 1\right)
}\right) =a\left[ x_{1}^{\left( 1\right) },x_{2}^{\left( 1\right) }\right] ,
\label{repLiereallist}
\end{equation}%
\begin{equation*}
w_{0^{\left( 2\right) }}=0^{\left( 2\right) },w_{-^{\left( 2\right) }}\left(
x^{\left( 2\right) }\right) =-x^{\left( 2\right) },w_{\lambda ^{\left(
2\right) }}\left( x^{\left( 2\right) }\right) =\varphi \left( \lambda
\right) x^{\left( 2\right) }\left( \lambda \in k\right) ,
\end{equation*}%
\begin{equation*}
w_{+^{\left( 2\right) }}\left( x_{1}^{\left( 2\right) },x_{2}^{\left(
2\right) }\right) =x_{1}^{\left( 2\right) }+x_{2}^{\left( 2\right)
},w_{\circ }\left( x^{\left( 1\right) },x^{\left( 2\right) }\right)
=ax^{\left( 1\right) }\circ x^{\left( 2\right) }\},
\end{equation*}%
where $\varphi \in \mathrm{Aut}k$, $a\in k^{\ast }$.

Now we will prove that conditions for Op1) and Op2) fulfills for all these
systems of the words. By direct computations we can prove that if $H\in
\Theta $ then, for all these systems of the words $W$, in the algebra $%
H_{W}^{\ast }$ fulfill all identities (axioms) which define our variety $%
\Theta $. So $H_{W}^{\ast }\in \Theta $. Therefore for every $F=F\left(
X\right) \in \mathrm{Ob}\Theta ^{0}$ there exists homomorphism $%
s_{F}:F\rightarrow F_{W}^{\ast }$ such that $\left( s_{F}\right) _{\mid
X}=id_{X}$.

We will prove that $s_{F}$ is an an isomorphism. First of all we prove that $%
s_{F}$ is a monomorphism. We consider a basis $E$ of the free Lie algebra $%
L\left( X^{\left( 1\right) }\right) $ such that the elements of $E$ are
monomials of $L\left( X^{\left( 1\right) }\right) $. If $l=\sum\limits_{e\in
E^{\prime }}\alpha _{e}e\in L$, where $\alpha _{e}\in k$, $E^{\prime
}\subset E$, $\left\vert E^{\prime }\right\vert <\infty $, and $s_{F}\left(
l\right) =\sum\limits_{e\in E^{\prime }}\varphi \left( \alpha _{e}\right)
a^{l\left( e\right) -1}e=0$, where $l\left( e\right) $ is a length of the
monomial $e$, then the $\varphi \left( \alpha _{e}\right) a^{l\left(
e\right) }=0$ holds for every $e\in E^{\prime }$. $a\neq 0$, so $\varphi
\left( \alpha _{e}\right) =0$ and $\alpha _{e}=0$ for every $e\in E^{\prime
} $. Hence $l=0$. Now we consider $v\in \left( F\left( X\right) \right)
^{\left( 2\right) }=A\left( X^{\left( 1\right) }\right) \circ X^{\left(
2\right) }$. $v=\sum\limits_{i\in I}f_{i}\circ x_{i}^{\left( 2\right) }$,
where $\left\{ x_{i}^{\left( 2\right) }\mid i\in I\right\} =X^{\left(
2\right) }$, $f_{i}\in A\left( X^{\left( 1\right) }\right) $. $s_{F}\left(
v\right) =\sum\limits_{i\in I}s_{F}\left( f_{i}\circ x_{i}^{\left( 2\right)
}\right) $. We denote $f_{i}=\sum\limits_{j\in J_{i}}\beta _{i,j}m_{j}$,
where $\beta _{i,j}\in k$, $m_{j}$ are monomials of $A\left( X^{\left(
1\right) }\right) $, $\left\vert J_{i}\right\vert <\infty $. $s_{F}\left(
f_{i}\circ x_{i}^{\left( 2\right) }\right) =\sum\limits_{j\in J_{i}}\varphi
\left( \beta _{i,j}\right) a^{\deg \left( m_{j}\right) }m_{j}\circ
x_{i}^{\left( 2\right) }\in A\left( X^{\left( 1\right) }\right) \circ
x_{i}^{\left( 2\right) }$. Hence if $s_{F}\left( v\right) =0$ then the $%
s_{F}\left( f_{i}\circ x_{i}^{\left( 2\right) }\right) =0$ holds for every $%
i\in I$. Therefore the $\varphi \left( \beta _{i,j}\right) =0$ and $\beta
_{i,j}=0$ holds for every $i\in I$ and every $j\in J_{i}$. So $v=0$ and $%
s_{F}$ is a monomorphism.

By previous calculation we can see that for every $f\in F\left( X\right) $
there exists $h\in F\left( X\right) $, such that $s_{F}\left( v\right) =f$.
So $s_{F}$ is an isomorphism.

Now we know all elements of the group $\mathfrak{S}$: they are all strongly
stable automorphisms $\Phi ^{W}$, which correspond to the systems of words (%
\ref{repLiereallist}). Now we will study the multiplication in this group.
The automorphism $\Phi ^{W}$, which correspond to the system of words (\ref%
{repLiereallist}) we will denote by $\Phi \left( \varphi ,a\right) $. We
consider $\Phi _{i}=\Phi \left( \varphi _{i},a_{i}\right) $, $i=1,2$. The
system of bijections corresponding to the automorphism $\Phi _{i}$ we denote 
$S^{\Phi _{i}}=\left\{ s_{i,F}:F\rightarrow F_{W_{i}}^{\ast }\mid F\in 
\mathrm{Ob}\Theta ^{0}\right\} $. The system of words $W=W^{\Phi _{2}\Phi
_{1}}$ can be calculated by the formula (\ref{wordformula}): the $W=W^{\Phi
_{2}\Phi _{1}}$ has form (\ref{repLiesign}) with $w_{\lambda ^{\left(
1\right) }}\left( x^{\left( 1\right) }\right) =s_{2,F}s_{1,F}\left( \lambda
x^{\left( 1\right) }\right) =s_{2,F}\left( \varphi _{1}\left( \lambda
\right) x^{\left( 1\right) }\right) =\varphi _{2}\varphi _{1}\left( \lambda
\right) x^{\left( 1\right) }$, similarly $w_{\lambda ^{\left( 2\right)
}}\left( x^{\left( 2\right) }\right) =\varphi _{2}\varphi _{1}\left( \lambda
\right) x^{\left( 2\right) }$, $w_{\left[ ,\right] }\left( x_{1}^{\left(
1\right) },x_{2}^{\left( 1\right) }\right) =s_{2,F}s_{1,F}\left[
x_{1}^{\left( 1\right) },x_{2}^{\left( 1\right) }\right] =s_{2,F}\left( a_{1}%
\left[ x_{1}^{\left( 1\right) },x_{2}^{\left( 1\right) }\right] \right)
=\varphi _{2}\left( a_{1}\right) s_{2,F}\left[ x_{1}^{\left( 1\right)
},x_{2}^{\left( 1\right) }\right] =a_{2}\varphi _{2}\left( a_{1}\right) %
\left[ x_{1}^{\left( 1\right) },x_{2}^{\left( 1\right) }\right] $, similarly 
$w_{\circ }\left( x^{\left( 1\right) },x^{\left( 2\right) }\right)
=a_{2}\varphi _{2}\left( a_{1}\right) x^{\left( 1\right) }\circ x^{\left(
2\right) }$. So $\Phi \left( \varphi _{2},a_{2}\right) \Phi \left( \varphi
_{1},a_{1}\right) =\Phi \left( \varphi _{2}\varphi _{1},a_{2}\varphi
_{2}\left( a_{1}\right) \right) $.

If $\Phi =\Phi \left( \varphi ,a\right) $ and $\varphi \neq id_{k}$ then $%
\Phi $ is not an inner automorphism. The proof of this fact is even easier
than proof of the similar fact from \cite[Proposition 4.1]{TsurAutomEqLinAlg}%
.

Now we consider the automorphism $\Phi =\Phi \left( id_{k},a\right) $. We
can define for every $F\in \mathrm{Ob}\Theta ^{0}$ a mapping $\tau
_{F}:F\rightarrow F_{W}^{\ast }$, such that $\tau _{F}\left( f\right)
=a^{-1}f$ for every $f\in F$. This mapping is a homomorphism: for example%
\begin{equation*}
\tau _{F}\left[ f_{1}^{\left( 1\right) },f_{2}^{\left( 1\right) }\right]
=a^{-1}\left[ f_{1}^{\left( 1\right) },f_{2}^{\left( 1\right) }\right] ,
\end{equation*}%
\begin{equation*}
w_{\left[ ,\right] }\left( \tau _{F}\left( f_{1}^{\left( 1\right) }\right)
,\tau _{F}\left( f_{2}^{\left( 1\right) }\right) \right) =a\left[ \tau
_{F}\left( f_{1}^{\left( 1\right) }\right) ,\tau _{F}\left( f_{2}^{\left(
1\right) }\right) \right] =
\end{equation*}%
\begin{equation*}
a\left[ a^{-1}f_{1}^{\left( 1\right) },a^{-1}f_{2}^{\left( 1\right) }\right]
=a^{-1}\left[ f_{1}^{\left( 1\right) },f_{2}^{\left( 1\right) }\right]
\end{equation*}%
and%
\begin{equation*}
\tau _{F}\left( f^{\left( 1\right) }\circ f^{\left( 2\right) }\right)
=a^{-1}\left( f^{\left( 1\right) }\circ f^{\left( 2\right) }\right) ,
\end{equation*}%
\begin{equation*}
w_{\circ }\left( \tau _{F}\left( f^{\left( 1\right) }\right) ,\tau
_{F}\left( f^{\left( 2\right) }\right) \right) =a\left( a^{-1}f^{\left(
1\right) }\circ a^{-1}f^{\left( 2\right) }\right) =a^{-1}\left( f^{\left(
1\right) }\circ f^{\left( 2\right) }\right)
\end{equation*}%
and so on. Obviously, that $\tau _{F}$ is an invertible mapping. So $\tau
_{F}$ is an isomorphism. For every $F_{1},F_{2}\in \mathrm{Ob}\Theta ^{0}$
and every $\mu \in \mathrm{Mor}_{\Theta ^{0}}\left( F_{1},F_{2}\right) $ the%
\begin{equation*}
\mu \tau _{F_{1}}\left( f\right) =\mu \left( a^{-1}f\right) =a^{-1}\mu
\left( f\right) =\tau _{F_{2}}\mu \left( f\right)
\end{equation*}%
holds for every $f\in F_{1}$. Therefore, by Proposition \ref%
{intersectioncriterion}, $\Phi $ is an inner automorphism. So $\mathfrak{%
A/Y\cong S/S\cap Y\cong }\mathrm{Aut}k$.
\end{proof}

Now we will give an example of two representations of Lie algebras which are
automorphically equivalent but not geometrically equivalent. This example
will by similar to the \cite[Example 3]{TsurAutomEqVarLinAlg}. Congruences
are the kernels of homomorphisms. So it is easy to see that for every
congruence in the free representation of the Lie algebra $F\left( X\right) $
corresponds subrepresentation $Q$, such that $Q^{\left( 1\right) }$ is an
ideal of the Lie algebra $L\left( X^{\left( 1\right) }\right) $, $Q^{\left(
2\right) }$ is a $A\left( X^{\left( 1\right) }\right) $-submodule of the
module $A\left( X^{\left( 1\right) }\right) \circ X^{\left( 2\right) }$ and
the $l^{\left( 1\right) }\circ v^{\left( 2\right) }\in Q^{\left( 2\right) }$
holds for every $l^{\left( 1\right) }\in Q^{\left( 1\right) }$ and every $%
v^{\left( 2\right) }\in A\left( X^{\left( 1\right) }\right) \circ X^{\left(
2\right) }$.

We will consider an arbitrary field $k$ with characteristic $0$ and strongly
stable automorphism $\Phi =\Phi \left( \varphi ,1\right) $ where $\varphi
\neq id_{k}$. So there is $\lambda \in k$ such $\varphi \left( \lambda
\right) \neq \lambda $. We will consider the free representation $F=F\left(
x_{1}^{\left( 1\right) },x_{2}^{\left( 1\right) },x^{\left( 2\right)
}\right) $. In $L=L\left( x_{1}^{\left( 1\right) },x_{2}^{\left( 1\right)
}\right) $ there are linear independent elements%
\begin{equation*}
\left[ x_{1}^{\left( 1\right) },\left[ x_{1}^{\left( 1\right) },\left[
x_{1}^{\left( 1\right) },\left[ x_{1}^{\left( 1\right) },x_{2}^{\left(
1\right) }\right] \right] \right] \right] =e_{1},\left[ x_{1}^{\left(
1\right) },\left[ x_{1}^{\left( 1\right) },\left[ \left[ x_{1}^{\left(
1\right) },x_{2}^{\left( 1\right) }\right] ,x_{2}^{\left( 1\right) }\right] %
\right] \right] =e_{2},
\end{equation*}%
\begin{equation*}
\left[ x_{1}^{\left( 1\right) },\left[ \left[ \left[ x_{1}^{\left( 1\right)
s},x_{2}^{\left( 1\right) }\right] ,x_{2}^{\left( 1\right) }\right]
,x_{2}^{\left( 1\right) }\right] \right] =e_{3},\left[ \left[ x_{1}^{\left(
1\right) },\left[ x_{1}^{\left( 1\right) },x_{2}^{\left( 1\right) }\right] %
\right] ,\left[ x_{1}^{\left( 1\right) },x_{2}^{\left( 1\right) }\right] %
\right] =e_{4},
\end{equation*}%
\begin{equation*}
\left[ \left[ x_{1}^{\left( 1\right) },x_{2}^{\left( 1\right) }\right] ,%
\left[ \left[ x_{1}^{\left( 1\right) },x_{2}^{\left( 1\right) }\right]
,x_{2}^{\left( 1\right) }\right] \right] =e_{5},\left[ \left[ \left[ \left[
x_{1}^{\left( 1\right) },x_{2}^{\left( 1\right) }\right] ,x_{2}^{\left(
1\right) }\right] ,x_{2}^{\left( 1\right) }\right] ,x_{2}^{\left( 1\right) }%
\right] =e_{6}.
\end{equation*}%
These elements will be linear independent also in $A=A\left( x_{1}^{\left(
1\right) },x_{2}^{\left( 1\right) }\right) $. In $A$ we will consider the
two sided ideal $\left\langle x_{1}^{\left( 1\right) },x_{2}^{\left(
1\right) }\right\rangle =N$ and the two sided ideal $I=\left\langle
t,N^{6}\right\rangle $, where $t=\lambda e_{2}+e_{4}$. In $F$ we will
consider the congruence $T$ corresponds to the subrepresentation $Q$, such
that $Q^{\left( 1\right) }=\left\{ 0\right\} $, $Q^{\left( 2\right) }=I\circ
x^{\left( 2\right) }$. We will denote $H=F/T$ and $W=W^{\Phi }$. As above $H$
and $H_{W}^{\ast }$ are automorphically equivalent.

\begin{proposition}
\label{LieRepContrExamp}The representations $H$ and $H_{W}^{\ast }$ are not
geometrically equivalent.
\end{proposition}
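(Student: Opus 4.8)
The plan is to follow the proof of Proposition~\ref{groupContrExample}. Since $H=F/T$ and the natural epimorphism $\tau:F\to H$ has kernel $T$, we get $T_{H}^{\prime\prime}\subseteq\ker\tau=T\subseteq T_{H}^{\prime\prime}$, so $T\in Cl_{H}(F)$. If $H$ and $H_{W}^{\ast}$ were geometrically equivalent we would get $T\in Cl_{H_{W}^{\ast}}(F)$, hence, by Theorem~\ref{alfaformula} together with Corollary~\ref{hwautomequiv} from Proposition~\ref{sbijection}, $s_{F}(T)\in Cl_{H}(F)$. It is therefore enough to prove that $s_{F}(T)\notin Cl_{H}(F)$.

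First I would compute $s_{F}(T)$. Because $\Phi=\Phi(\varphi,1)$, the associated system of words $W$ has $w_{[,]}=[x_{1}^{(1)},x_{2}^{(1)}]$, $w_{\circ}=x^{(1)}\circ x^{(2)}$, $w_{\lambda^{(i)}}(x^{(i)})=\varphi(\lambda)x^{(i)}$, and the words for the operations $0$, $-$, $+$ trivial. Hence $s_{F}$ fixes the free generators, commutes with $[\,,\,]$, $\circ$ and $+$, and twists scalars by $\varphi$; equivalently $s_{F}$ acts on $L(x_{1}^{(1)},x_{2}^{(1)})$ and on $A(x_{1}^{(1)},x_{2}^{(1)})$ by applying $\varphi$ to the coefficients while fixing the monomials. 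Consequently $s_{F}(T)$ is the congruence corresponding to the subrepresentation $Q^{\prime}$ with $(Q^{\prime})^{(1)}=\{0\}$ and $(Q^{\prime})^{(2)}=I^{\prime}\circ x^{(2)}$, where $I^{\prime}=\langle t^{\prime},N^{6}\rangle$ and $t^{\prime}=\varphi(\lambda)e_{2}+e_{4}$. I would also record that, $t$ and $t^{\prime}$ being homogeneous of degree $5$, one has $I=kt\oplus N^{6}$ and $I^{\prime}=kt^{\prime}\oplus N^{6}$ as $k$-subspaces of $A(x_{1}^{(1)},x_{2}^{(1)})$.

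The heart of the proof is to show that the pair $(e_{2}\circ x^{(2)},0)$ lies in $(s_{F}(T))_{H}^{\prime\prime}$ but not in $s_{F}(T)$. It is not in $s_{F}(T)$, since $e_{2}\notin kt^{\prime}$ (the $e_{i}$ being linearly independent) and hence $e_{2}\notin I^{\prime}$. To see that it lies in $(s_{F}(T))_{H}^{\prime\prime}$, take any $\psi\in\mathrm{Hom}(F,H)$ with $s_{F}(T)\subseteq\ker\psi$, put $l_{i}=\psi(x_{i}^{(1)})\in L(x_{1}^{(1)},x_{2}^{(1)})=H^{(1)}$, and choose $g\in A(x_{1}^{(1)},x_{2}^{(1)})$ with $\psi(x^{(2)})$ equal to the class of $g\circ x^{(2)}$. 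Then $\psi(e_{i}\circ x^{(2)})$ is the class of $e_{i}(l_{1},l_{2})\,g\circ x^{(2)}$, and the assumption $s_{F}(T)\subseteq\ker\psi$ is exactly the condition $\bigl(\varphi(\lambda)e_{2}(l_{1},l_{2})+e_{4}(l_{1},l_{2})\bigr)g\in I$; I must deduce from it that $e_{2}(l_{1},l_{2})\,g\in I$, i.e.\ that $\psi(e_{2}\circ x^{(2)})=0$. I would split on the degree-one parts $l_{1}^{(1)},l_{2}^{(1)}$ of $l_{1},l_{2}$. If they are linearly dependent, then the only possible degree-$5$ contribution to $e_{2}(l_{1},l_{2})$ comes from substituting the degree-one parts, and it vanishes because the Lie subalgebra generated by two multiples of a single element is abelian; thus $e_{2}(l_{1},l_{2})\in N^{6}$ and $e_{2}(l_{1},l_{2})g\in N^{6}\subseteq I$. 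If they are linearly independent, let $h\in GL_{2}(k)$ be the matrix with rows $l_{1}^{(1)},l_{2}^{(1)}$; then the degree-$5$ part of $\varphi(\lambda)e_{2}(l_{1},l_{2})+e_{4}(l_{1},l_{2})$ is $\rho(h)(t^{\prime})$, where $\rho$ denotes the natural action of $GL_{2}(k)$ by substitutions on the degree-$5$ component $L_{5}$ of $L(x_{1}^{(1)},x_{2}^{(1)})$. The decisive input is that $\rho(h)(t^{\prime})$ is never a scalar multiple of $t$: the space $ke_{2}\oplus ke_{4}$ is the two-dimensional weight space of $L_{5}$ of multidegree $(3,2)$, $L_{5}$ splits as a direct sum of two irreducible $GL_{2}(k)$-submodules of dimensions $4$ and $2$, the vectors $t$ and $t^{\prime}$ are non-proportional in $ke_{2}\oplus ke_{4}$ because $\varphi(\lambda)\neq\lambda$, and a short weight-space analysis shows that the only $h$ for which $\rho(h)(t^{\prime})$ could be proportional to $t$ act on $t^{\prime}$ by a scalar, which would force $t^{\prime}\parallel t$. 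Comparing degree-$5$ parts in $\bigl(\varphi(\lambda)e_{2}(l_{1},l_{2})+e_{4}(l_{1},l_{2})\bigr)g\in I=kt\oplus N^{6}$ therefore forces the constant term of $g$ to vanish, i.e.\ $g\in N$; then $e_{2}(l_{1},l_{2})g$ has degree $\geq 6$, so again $e_{2}(l_{1},l_{2})g\in N^{6}\subseteq I$. In every case $\psi(e_{2}\circ x^{(2)})=0$, whence $(e_{2}\circ x^{(2)},0)\in(s_{F}(T))_{H}^{\prime\prime}\setminus s_{F}(T)$, contradicting $s_{F}(T)\in Cl_{H}(F)$ and finishing the proof.

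The step I expect to be the real obstacle is this last algebraic input --- verifying that the iterated commutators $e_{1},\dots,e_{6}$, and in particular the pair $e_{2},e_{4}$ out of which $t$ is built, sit in the degree-$5$ component in sufficiently general position that no endomorphism of $F$ can carry the $H_{W}^{\ast}$-relation $\varphi(\lambda)(e_{2}\circ x^{(2)})+e_{4}\circ x^{(2)}=0$ into $H$ without also collapsing $e_{2}\circ x^{(2)}$. This is exactly where the decomposition of $L_{5}$ into irreducible $GL_{2}(k)$-modules and the accompanying orbit/weight-space computation enter, and where the hypothesis $\varphi(\lambda)\neq\lambda$ is indispensable.
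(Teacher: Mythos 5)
Your proposal is correct and follows the same skeleton as the paper's proof: establish $T\in Cl_{H}(F)$, reduce the claim to $s_{F}\left( T\right) \notin Cl_{H}(F)$, identify $s_{F}\left( T\right) $ with the congruence attached to $\left\langle t^{\prime },N^{6}\right\rangle \circ x^{\left( 2\right) }$ where $t^{\prime }=\varphi \left( \lambda \right) e_{2}+e_{4}$, and then analyse an arbitrary $\psi \in \mathrm{Hom}\left( F,H\right) $ with $s_{F}\left( T\right) \subseteq \ker \psi $ by splitting on the constant term of the coefficient of $\psi \left( x^{\left( 2\right) }\right) $ and on the linear (in)dependence of the degree-one parts of $\psi \left( x_{1}^{\left( 1\right) }\right) ,\psi \left( x_{2}^{\left( 1\right) }\right) $. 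The one substantive divergence is in the crucial ``general position'' step: the paper simply cites the computation of \cite[Example 3]{TsurAutomEqVarLinAlg} for the fact that $\alpha s_{F}\left( t\right) \in \mathrm{sp}_{k}\left\{ t\right\} +N^{6}$ forces the degree-one parts to be dependent, whereas you prove this inline by decomposing the degree-$5$ homogeneous component of $L\left( x_{1}^{\left( 1\right) },x_{2}^{\left( 1\right) }\right) $ into the irreducible $GL_{2}\left( k\right) $-modules of dimensions $4$ and $2$ and arguing on the two-dimensional weight space $ke_{2}\oplus ke_{4}$ of multidegree $\left( 3,2\right) $. This makes the argument self-contained, at the cost of a case analysis you only sketch: to nail down the claim that $\rho \left( h\right) \left( t^{\prime }\right) \in kt$ is impossible for invertible $h$, you should treat separately the cases where $t^{\prime }$ (or $t$) has zero component in one of the two irreducible summands; in each degenerate case one is reduced to a one-dimensional weight line and gets $t\parallel t^{\prime }$ directly, while in the generic case membership of $\rho \left( h\right) \left( t^{\prime }\right) $ in the $\left( 3,2\right) $-weight line of the four-dimensional summand $\det \otimes \mathrm{Sym}^{3}$ forces $h$ to be diagonal, hence to act on $t^{\prime }$ by a scalar -- exactly as you say. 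A further cosmetic difference is that you exhibit the single element $\left( e_{2}\circ x^{\left( 2\right) },0\right) $ in $\left( s_{F}\left( T\right) \right) _{H}^{\prime \prime }\setminus s_{F}\left( T\right) $, while the paper shows the whole of $\mathrm{sp}_{k}\left\{ e_{1},\ldots ,e_{6}\right\} \circ x^{\left( 2\right) }$ lies in the closure; either suffices for the contradiction.
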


\begin{proof}
As above $T\in Cl_{H}(F)$. If $H$ and $H_{W}^{\ast }$ are geometrically
equivalent then $T\in Cl_{H_{W}^{\ast }}(F)$ and $s_{F}\left( T\right) \in
Cl_{H}(F)$, where $s_{F}:F\rightarrow F_{W}^{\ast }$ is an isomorphism
subject of condition Op2). We will calculate $\left( s_{F}\left( T\right)
\right) _{H}^{\prime \prime }$ with a view to prove that $\left( s_{F}\left(
T\right) \right) _{H}^{\prime \prime }\neq s_{F}\left( T\right) $. This
contradiction will finish the prove.

If $s_{F}\left( T\right) $ is a congruence then it corresponds to the
subrepresentation $s_{F}\left( Q\right) $, such that $\left( s_{F}\left(
Q\right) \right) ^{\left( 1\right) }=\left\{ 0\right\} $, $\left(
s_{F}\left( Q\right) \right) ^{\left( 2\right) }=s_{F}\left( I\right) \circ
x^{\left( 2\right) }$, where $s_{F}\left( I\right) =\left\langle s_{F}\left(
t\right) ,N^{6}\right\rangle $. We denote the natural epimorphism $\tau
:F\rightarrow F/T$. As above for every $\psi \in \mathrm{Hom}\left(
F,H\right) $ there exists $\alpha \in \mathrm{End}\left( F\right) $ such
that $\tau \alpha =\psi $. If $s_{F}\left( T\right) \subseteq \ker \psi $,
then $\alpha \left( s_{F}\left( t\circ x^{\left( 2\right) }\right) \right)
=\alpha s_{F}\left( t\right) \circ \alpha \left( x^{\left( 2\right) }\right)
\in I\circ x^{\left( 2\right) }$. $\alpha \left( x^{\left( 2\right) }\right)
=\left( \mu +n\left( x_{1}^{\left( 1\right) },x_{2}^{\left( 1\right)
}\right) \right) \circ x^{\left( 2\right) }$, where $\mu \in k$, $n\left(
x_{1}^{\left( 1\right) },x_{2}^{\left( 1\right) }\right) \in N$.%
\begin{equation*}
\alpha s_{F}\left( t\right) \circ \alpha \left( x^{\left( 2\right) }\right)
=\alpha s_{F}\left( t\right) \left( \mu +n\left( x_{1}^{\left( 1\right)
},x_{2}^{\left( 1\right) }\right) \right) \circ x^{\left( 2\right) }\in
I\circ x^{\left( 2\right) }
\end{equation*}%
if and only if%
\begin{equation}
\alpha s_{F}\left( t\right) \left( \mu +n\left( x_{1}^{\left( 1\right)
},x_{2}^{\left( 1\right) }\right) \right) \in I=\mathrm{sp}_{k}\left\{
t\right\} +N^{6}.  \label{RepLieKernCond}
\end{equation}%
$s_{F}\left( t\right) =\varphi \left( \lambda \right) e_{2}+e_{4}\in
L^{5}\subset N^{5}$, so $\alpha s_{F}\left( t\right) \in L^{5}\subset N^{5}$%
. If $\mu =0$, then $\alpha s_{F}\left( t\right) \left( \mu +n\left(
x_{1}^{\left( 1\right) },x_{2}^{\left( 1\right) }\right) \right) \in
N^{6}\subset I$. But, if $\mu =0$, the $\alpha \left( l\right) \left( \mu
+n\left( x_{1}^{\left( 1\right) },x_{2}^{\left( 1\right) }\right) \right)
\in N^{6}\subset I$ holds for every $l\in N^{5}$. In this case $\left( 
\mathrm{sp}_{k}\left\{ e_{1},\ldots ,e_{6}\right\} \right) \circ x^{\left(
2\right) }\subseteq \ker \psi $, where $\ker \psi $ we understand as
subrepresentation corresponding to this congruence. If $\mu \neq 0$, then
from (\ref{RepLieKernCond}) we conclude that $\alpha s_{F}\left( t\right)
\in I=\mathrm{sp}_{k}\left\{ t\right\} +N^{6}$. It fulfills, as we can see
from the calculation of the \cite[Example 3]{TsurAutomEqVarLinAlg}, if and
only if $\alpha \left( x_{1}^{\left( 1\right) }\right) $ and $\alpha \left(
x_{2}^{\left( 1\right) }\right) $ are liner depend modulo $L^{2}$. In this
case $\alpha \left[ x_{1}^{\left( 1\right) },x_{2}^{\left( 1\right) }\right]
\in L^{3}$, $\alpha \left( \mathrm{sp}_{k}\left\{ e_{1},\ldots
,e_{6}\right\} \right) \subseteq L^{6}\subset N^{6}\subset I$ and also $%
\left( \mathrm{sp}_{k}\left\{ e_{1},\ldots ,e_{6}\right\} \right) \circ
x^{\left( 2\right) }\subseteq \ker \psi $. Therefore $\left( s_{F}\left(
T\right) \right) _{H}^{\prime \prime }=\bigcap\limits_{\substack{ \psi \in 
\mathrm{Hom}\left( F,H\right)  \\ s_{F}\left( T\right) \subseteq \ker \psi }}%
\ker \psi \supseteq \left( \mathrm{sp}_{k}\left\{ e_{1},\ldots
,e_{6}\right\} \right) \circ x^{\left( 2\right) }$ and $\left( s_{F}\left(
T\right) \right) _{H}^{\prime \prime }\neq s_{F}\left( T\right) $.
\end{proof}

\section{Acknowledgements.}

I am thankful to Prof. I. P. Shestakov who was very heedful to my research.

I acknowledge the support by FAPESP - Funda\c{c}\~{a}o de Amparo \`{a}
Pesquisa do Estado de S\~{a}o Paulo (Foundation for Support Research of the
State S\~{a}o Paulo), project No. 2010/50948-2.

\end{document}